\documentclass[10pt]{article}
\usepackage[T1]{fontenc}
\usepackage[utf8]{inputenc}
\usepackage{amsmath,amsthm,graphicx,mathtools,multicol,array}

\input{insbox}
\usepackage{ifthen}
\usepackage{float}
\usepackage{subcaption}
\usepackage{amssymb}
\usepackage{setspace}
\usepackage{tikz}
\usepackage{relsize}
\usetikzlibrary{patterns,patterns.meta,arrows.meta,calc,3d,perspective,fadings,shadows,cd}
\usepackage[a4paper, width=170mm, top=30mm, bottom=30mm]{geometry}
\usepackage[colorlinks, allcolors=black]{hyperref}
\usepackage{hyperref}

\theoremstyle{plain}
	\newtheorem{thm}{Theorem}[section]
	\newtheorem{cor}[thm]{Corollary}
	\newtheorem{lem}[thm]{Lemma}
	\newtheorem{prop}[thm]{Proposition}
	
	\newtheorem*{prop:ubi_surj}{Proposition~\ref{prop:ubi_surj}}
	\newtheorem*{thm:equiv_pol_cov}{Corollary~\ref{thm:equiv_pol_cov}}
	\newtheorem*{cor:exact_seq_h2}{Corollary~\ref{cor:exact_seq_h2}}
	\newtheorem*{cor:exact_seq_pi1}{Corollary~\ref{cor:exact_seq_pi1}}
	\newtheorem*{cor:phase}{Corollary~\ref{cor:phase}}

\theoremstyle{definition}
	\newtheorem{dfn}[thm]{Definition}
	\newtheorem{rem}[thm]{Remark}
	\newtheorem{ex}[thm]{Example}
	
\newcommand{\bigslant}[2]{{\raisebox{.2em}{$#1$}\left/\raisebox{-.2em}{$#2$}\right.}}
\newcommand{\smallslant}[2]{{\raisebox{.1em}{$#1$}\left/\raisebox{-.1em}{$#2$}\right.}}
\newcommand{\C}{\mathbb{C}}
\newcommand{\R}{\mathbb{R}}
\newcommand{\Z}{\mathbb{Z}}
\newcommand{\F}{\mathbb{F}}
\newcommand{\Proj}{\mathbb{P}}

\newcommand{\Hom}{\textnormal{Hom}}
\newcommand{\id}{\textnormal{id}}
\newcommand{\df}{\textnormal{d}}
\renewcommand{\mod}{\textnormal{mod}\,}

\newcommand{\Sph}{\textnormal{S}}
\newcommand{\Br}{\textnormal{Br}}
\newcommand{\PolC}{\textnormal{Pol.Cov.}}
\newcommand{\E}{\textnormal{E}}
\newcommand{\M}{\textnormal{M}}
\newcommand{\T}{\Theta}
\newcommand{\Ext}{\textnormal{Ext}}
\newcommand{\Aut}{\textnormal{Aut}}
\newcommand{\colim}{\textnormal{colim}}
\newcommand{\pr}{\textnormal{pr}}
\newcommand{\Cell}{\textnormal{Cell}\,}
\newcommand{\gl}{{\textnormal{g}}}
\newcommand{\G}[1]{\mathbb{G}_{\textnormal{m},#1}}
\newcommand{\hp}{\hat{\pi}_0}
\newcommand{\st}{\textnormal{St}}
\newcommand{\rev}[1]{{#1}'}
\newcommand{\simul}{simultaneously locally path-connected}

\title{
{\bf \textsc{Polar Coordinates \\ and \\ Fundamental Group}}\\ -------------- }
\author{Jules Chenal\footnote{julesche\{at\}math.uio.no} \\
\normalsize{Universitetet i Oslo}}
\date{March 16, 2026}
\begin{document}

\maketitle

\begin{abstract}
	In this article, we investigate the relationship between the fundamental group of a space and its continuous transformations. To be more precise, we show that if a continuous action of a Lie group on a space admits a simply connected cross-section, then we can build the universal covering of the space using an extension of the Lie group by a discrete group.
\end{abstract}

\renewcommand{\abstractname}{Acknowledgements}
\begin{abstract}
	This research was funded, in part, by l’Agence Nationale de la Recherche (ANR), project ANR-22-CE40-0014. The author was also supported by the Trond Mohn Foundation project “\,Algebraic and Topological Cycles in Complex and Tropical Geometries\,”. For the purpose of open access, the author has applied a CC-BY public copyright licence to any Author Accepted Manuscript (AAM) version arising from this submission. The author is grateful to Achim Krause for his insight on homotopy.
\end{abstract}

\tableofcontents

\newpage

\section*{Introduction}
How are a space's symmetries reflected in its fundamental group? A possible answer is the action of its homeomorphisms on its fundamental groupoid. This can be somewhat unsatisfactory, for these objects are inherently large and contain excessive information. In this article, we restrict ourselves to the most topologically significant symmetries. We consider spaces endowed with a particularly well-suited action of a Lie group. Those actions allow for the reconstruction of the space from the data of the group, the orbit space, and the isotropy. In essence, these equivariant spaces are far-reaching generalisations of the couple $(\Sph^1;\R^2)$, where the circle acts as planar rotations. Using analogues of the classical polar coordinates enables us to characterise the fundamental group of these \emph{polar spaces} through mostly cohomological techniques. In particular, the reconstruction property links the topology of the space and the group cohomology of the isotropy with values in the fundamental group of the acting group. Before delving further into the specifics, it is worth noting that many examples of such polar spaces, including the case of the plane, originate from toric geometry. More precisely, if $X$ is a real toric variety under the action of $\G{\R}^n$, then both $(\{\pm1\}^n;X(\R))$ and $((\Sph^1)^n;X(\C))$ are polar spaces. Our approach provides a unified framework to the study of the fundamental groups of $X(\R)$ and $X(\C)$. One can derive both \cite[Theorem~9.1 and Proposition~9.3]{danilov_geometry_1978} and \cite[Corollary~4.5]{davis_convex_1991} from our results.

\paragraph{Polar Coordinates}
In classical polar coordinates a point in the plane is represented by its modulus $q$, a non-negative real number, and an argument $g$, an element of the unit circle. The circle acts on $\R^2$ as the group of planar rotations and the norm $\R^2\rightarrow \R_+$ realises the quotient of the plane by the action. This quotient map is split. A section is given by the map $x:q\in\R_+\mapsto (q;0)\in\R^2$. The idea of polar coordinates can then be understood as the following statement:
\begin{center}
\textit{The continuous surjection $\phi_x: (g;q)\in \Sph^1\times\R_+ \mapsto  g\cdot x(q)\in \R^2$ is a quotient map.}
\end{center}
It means that, despite not being a homeomorphism, this map determines the topology of the plane. The over-determinacy of these coordinates is recorded in the isotropy groups of the points $x(q)$ as $q$ varies in $\R_+$. Of course  only the origin has non-trivial isotropy. This statement can be extended to the action of $\Sph^1$ on $\Sph^2$ or even on a cylinder. The principle remains the same, only the quotient space or the isotropy will change. This leads us to the definition of \emph{polar spaces} and \emph{polar coordinates}, cf. Definition~\ref{dfn:polar_space}. Modulo some technical assumptions, the situation is as follows. We are given a connected space $X$ endowed with a continuous action of a Lie group $G$ whose quotient $Q$ is simply connected and whose quotient map $X\rightarrow Q$ admits a continuous section $x:Q\rightarrow X$. The polar coordinates of $(G;X)$ associated to $x$ are defined just as in the case of the plane:
\begin{equation*}
	\begin{array}{rcl}
		\phi_x:G\times Q&\longrightarrow & X \\
		(g;q) &\longmapsto & g\cdot x(q),
	\end{array}
\end{equation*}
and are assumed to be a quotient map. We do not require $G$ to be connected or compact in general. An example of a polar space with finite group $G$ is given by $\Z/2$ acting on the real projective line by multiplication by $-1$. A topologically equivalent picture is the unit circle endowed with the reflexion about the horizontal axis. The quotient space is a line segment and a section is realised by the closure $[0;\infty]$ of $\R_+$ in $\Proj^1(\R)$. The only points with non-trivial isotropy are the two fixed points $0$ and $\infty$.
\begin{figure}[H]
	\centering
	\begin{tikzpicture}
		\draw[thick] (0,0) -- (0,2);
		\draw[thick] (2,0) -- (2,2);
		\fill (0,0) circle (.05) node[below left]{$(0;0)$};
		\fill (0,2) circle (.05) node[above left]{$(0;\infty)$};
		\fill (2,0) circle (.05) node[below right]{$(1;0)$};
		\fill (2,2) circle (.05) node[above right]{$(1;\infty)$};
		\draw[<->,dashed] (0.25,0) -- (1.75,0);
		\draw[<->,dashed] (0.25,2) -- (1.75,2);
		\draw[thick] (6,1) circle (1);
		\fill (6,0) circle (.05);
		\fill (6,2) circle (.05);
		\draw[->] (2.25,1) -- (4.75,1);
		\draw (6,0) node[below]{$0$};
		\draw (6,2) node[above]{$\infty$};
	\end{tikzpicture}
	\caption{The Polar Coordinates of $(\Z/2;\Proj^1(\R))$.}
	\label{fig:pol_coor_line}
\end{figure}

\paragraph{Polar Coverings and Fundamental Group} Our goal is to exhibit the relationship between the acting group $G$ and the fundamental group of $X$. Instead of seeking coverings of $X$, one may consider extensions of $G$ by discrete groups and try to build a covering of $X$ using the polar coordinates $\phi_x$. Such extensions of $G$ will be referred to as \emph{discrete extensions}, cf. Definition~\ref{dfn:funct_Br}. Note, however, that an extension alone is not enough. One also needs to choose lifts of the isotropy groups in a “continuous manner”. To illustrate this idea, we will construct the two double coverings of the real projective line. Consider the first factor projection $\pr_1:(\Z/2)^2\rightarrow \Z/2$. The trivial group has only one lift but the whole group $\Z/2$ has two, namely $\Z/2(1;0)$ and $\Z/2(1;1)$. We chose $H_0$ and $H_\infty$ to be either of these two lifts. We have four possible choices in total but only two of them are meaningfully different. Our choice allows us to form the following space:
\begin{equation*}
	X':=\bigslant{(\Z/2)^2\times[0;\infty]}{\sim},
\end{equation*} 
where $\sim$ identifies $(v;0)$ with $(v+h;0)$ for all $v$ in $(\Z/2)^2$ and all $h$ in $H_0$, and $(v;\infty)$ with $(v+h;\infty)$ for all $v$ in $(\Z/2)^2$ and all $h$ in $H_\infty$. This space is naturally endowed with an action of $(\Z/2)^2$ whose quotient is $[0;\infty]$ and whose quotient map is split. The map $\phi_x\circ(\pr_1\times\id_{[0;\infty]}):(\Z/2)^2\times [0;\infty]\rightarrow \Proj^1(\R)$ induces a double covering map $p:X'\rightarrow \Proj^1(\R)$. This map is equivariant in the sense that $p(v\cdot x')$ equals $\pr_1(v)\cdot p(x')$. This covering is connected if and only if $H_0$ is different from $H_\infty$ and Figure~\ref{fig:2cov_circle} depicts these two possibilities.
\begin{figure}[H]
	\centering
	\begin{subfigure}[t]{.5\textwidth}
		\centering
		\begin{tikzpicture}
			\draw[thick] (0,0,0) -- (0,2,0);
			\draw[thick] (2,0,0) -- (2,2,0);
			\fill (0,0,0) circle (.05) node[below left]{$(0;0;0)$};
			\fill (0,2,0) circle (.05) node[above left]{$(0;0;\infty)$};
			\fill (2,0,0) circle (.05) node[below]{$(1;0;0)$};
			\fill (2,2,0) circle (.05) node[above]{$(1;0;\infty)$};
			\draw[thick] (0,0,-2) -- (0,2,-2);
			\draw[thick] (2,0,-2) -- (2,2,-2);
			\fill (0,0,-2) circle (.05) node[below]{$(0;1;0)$};
			\fill (0,2,-2) circle (.05) node[above left]{$(0;1;\infty)$};
			\fill (2,0,-2) circle (.05) node[below]{$(1;1;0)$};
			\fill (2,2,-2) circle (.05) node[above]{$(1;1;\infty)$};
			\draw[<->,dashed] (0.25,0,0) -- (1.75,0,0);
			\draw[<->,dashed] (0.25,2,0) -- (1.75,2,0);
			\draw[<->,dashed] (0.25,0,-2) -- (1.75,0,-2);
			\draw[<->,dashed] (0.25,2,-2) -- (1.75,2,-2);
			\draw[thick] (5,1,-2) circle (1);
			\fill[white] (5.97,.79,0) circle (.07);
			\fill[white] (4.8,1.98,0) circle (.07);
			\draw[thick] (5,1,0) circle (1);
			\fill (5,0,0) circle (.05);
			\fill (5,2,0) circle (.05);
			\fill (5,0,-2) circle (.05);
			\fill (5,2,-2) circle (.05);
			\draw[->] (2.5,1,-1) -- (3.5,1,-1);
		\end{tikzpicture}
		\caption{The Polar Coordinates of $((\Z/2)^2;\Proj^1(\R)\coprod \Proj^1(\R))$.}
	\end{subfigure}
	\hfill
	\begin{subfigure}[t]{.47\textwidth}
		\centering
		\begin{tikzpicture}
			\draw[thick] (0,0,0) -- (0,2,0);
			\draw[thick] (2,0,0) -- (2,2,0);
			\fill (0,0,0) circle (.05);
			\fill (0,2,0) circle (.05);
			\fill (2,0,0) circle (.05);
			\fill (2,2,0) circle (.05);
			\draw[thick] (0,0,-2) -- (0,2,-2);
			\draw[thick] (2,0,-2) -- (2,2,-2);
			\fill (0,0,-2) circle (.05);
			\fill (0,2,-2) circle (.05);
			\fill (2,0,-2) circle (.05);
			\fill (2,2,-2) circle (.05);
			
			\draw[<->,dashed] (0.25,0,0) -- (1.75,0,0);
			\draw[<->,dashed] (0.25,0,-2) -- (1.75,0,-2);
			\draw[<->,dashed] (0.15,2,-0.15) .. controls (.5,2,-.5) and (1.5,2,-1.5) .. (1.85,2,-1.85);
			\draw[<->,dashed] (2,2.15,0) .. controls (1.5,3,-.5) and (.5,2.5,-1.5) .. (0,2.15,-2);
			\draw[thick] (5,2,-2) .. controls (3.6,1.9,-2) and (3.6,.1,-2) .. (5,0,-2);
			\draw[thick] (5,0,-2) .. controls (7,0,-2) and (7,2,0) .. (5,2,0);
			\fill[white] (4.74,1.96,0) circle (.07);
			\fill[white] (6.27,.53,-1) circle (.07);
			\fill[white] (6.5,1,-1) circle (.07);
			\draw[thick] (5,0,0) .. controls (7,0,0) and (7,2,-2) .. (5,2,-2);
			\draw[thick] (5,2,0) .. controls (3.6,1.9,0) and (3.6,0.1,0) .. (5,0,0);
			\fill (5,0,0) circle (.05);
			\fill (5,2,0) circle (.05);
			\fill (5,0,-2) circle (.05);
			\fill (5,2,-2) circle (.05);
			\draw[->] (2.5,1,-1) -- (3.25,1,-1);
		\end{tikzpicture}
		\caption{The Polar Coordinates of $((\Z/2)^2;\Proj^1(\R))$.}
	\end{subfigure}
	\caption{The Two Double Coverings of $\Proj^1(\R)$.}
	\label{fig:2cov_circle}
\end{figure}
We define a \emph{polar covering}, cf. Definition~\ref{dfn:polcov}, of a polar space $(G;X)$ to be a connected Galois covering $p_{X'}:X'\rightarrow X$ that admits an action of a discrete extension $p_{G'}:G'\rightarrow G$ that “lifts” the action of $G$ on $X$, i.e. $p_{X'}(g\cdot x)=p_{G'}(g)\cdot p_{X'}(x)$. In this case, the group of deck transformations of $p_{X'}$ is the kernel of $p_{G'}$. We show that the universal covering of $X$ is necessarily a polar covering, cf. Proposition~\ref{prop:univ=polar}. Therefore, the fundamental group of $X$ can be found by identifying the discrete extension of $G$ leading to the universal covering. We will show that every polar covering of a polar space can be constructed as the two double covers of the real projective line provided one can choose appropriately the lifts of the isotropy of the orbits of $X$. 

\paragraph{Isotropy} As illustrated by our example of $(\Z/2;\Proj^1(\R))$, the isotropy of the action plays a central role in the story. The isotropy groups of all the orbits of the action of $G$ over $X$ must be considered all at once with their variation as we change orbits. For this reason, we treat them as a single object attached to a section $x:Q\rightarrow X$ of the quotient map. Given such a section $x$, we define its isotropy to be the space:
\begin{equation*}
	H:=\{(g;q)\in G\times Q~|~g\cdot x(q)=x(q)\},
\end{equation*}
together with its projection onto $Q$. Every fibre of this projection is a group. The object $H\rightarrow Q$ is even a group in the category of topological spaces over $Q$: the category of \emph{$Q$-spaces}. We refer to these as \emph{groups over $Q$} or {\emph{$Q$-groups}}. Since these objects are internal groups in the category of spaces over $Q$ it makes sense to consider morphisms of $Q$-groups and actions of $Q$-groups over $Q$-spaces. Another example of $Q$-groups is given by $\pr_Q:G\times Q \rightarrow Q$. We denote it by $G_Q$. It contains $H$ and we denote this inclusion by $u$. Therefore, $G_Q$ is endowed with a right action of $H$ and $\phi_x$ realises\footnote{This is a requirement of Definition~\ref{dfn:polar_space}.} the quotient $G_Q/H$. In that regard, the couple $(G;X)$ can be thought of as a homogeneous $Q$-space $(G_Q;X)$. We can now be more precise about the polar covering building process. If we consider a discrete extension $G'$ of $G$ endowed with a $Q$-group morphism $u':H\rightarrow G'_Q$ that lifts $u$, the quotient $G'_Q/u(H)$ is a covering of $X$. To be polar, it has to be connected. Hence, we need to understand when such a quotient is connected. This leads us to the notion of \emph{ubiquity}: we say that a morphism of $Q$-groups $f:H\rightarrow H'$ is ubiquitous when the quotient $H'/H$ is a connected topological space. We can derive a criterion for ubiquity using a construction that assign a discrete group $\hp(H)$ to every $Q$-group $H$. It plays the role of the group of connected components of a usual topological group. It may be thought of as a sort of colimit of the family of groups of connected components of the various fibres of $H$. Concretely, $\hp:(Q-\textnormal{Groups})\rightarrow (\textnormal{Discrete Groups})$ is left adjoint to the functor $(-)_Q:(\textnormal{Discrete Groups})\rightarrow (Q-\textnormal{Groups})$ sending $K$ to $K_Q$, the $Q$-group $\pr_Q:K\times Q\rightarrow Q$. 

\begin{prop:ubi_surj}
	Let $G$ be a Lie group, $H\rightarrow Q$ be a $Q$-group, and $f:H\rightarrow G_Q$ be a morphism of $Q$-groups. The morphism $f$ is ubiquitous if and only if the morphism $f_*:\hat{\pi}_0(H)\rightarrow \pi_0(G)$ is onto. 
\end{prop:ubi_surj}
We define the cohomology of $Q$-groups to foster obstruction classes, cf. Definition~\ref{def:Qgroup_cohomo}. The most interesting situation is the following: we consider a $Q$-group $G$ and a discrete abelian group $M$ such that $M_Q$ is endowed with a $Q$-group action of $G$ by $Q$-group automorphisms. We show that, for $G$ fixed, the category of such objects $M$ is equivalent to the category of $\hp(G)$-modules. Hence, this category is Abelian. If $G$ satisfies a condition of local connectedness the cohomology functors $(M\mapsto H^n_Q(G;M_Q))_{n\geq 0}$ form a $\delta$-functor. Using the unit $\eta$ of the adjunction, these are endowed with natural split injective morphisms $\eta_G^*:H^n(\hp(G);M)\rightarrow H^n_Q(G;M_Q)$. They are always isomorphisms in degrees $0$ and $1$, and we provide a spectral sequence that computes the cohomology of $G$ with coefficients in $M$ using classical objects, cf. Proposition~\ref{prop:defct_spectral_sequence}. In particular, we can derive a short exact sequence in degree 2, the most important group for our purpose.     
\begin{cor:exact_seq_h2}
	Let $G$ be a $Q$-group, we have a natural exact sequence for every $\hp(G)$-module $M$:
	\begin{equation*}
		0 \longrightarrow H^2(\hp(G);M) \overset{\eta_G^*}{\longrightarrow} H^2_Q(G;M_Q) \longrightarrow \Hom_{\Z[\hp(G)]}(\Delta_2(G);M) \longrightarrow 0.
	\end{equation*}
\end{cor:exact_seq_h2}
The group $\Delta_2(G)$ is part of the \emph{defect} of $G$, cf. Definition~\ref{dfn:defct}. The defect is the homology of what should be the bar resolution of $G$. For a $Q$-group $G$ without defect, $\eta^*_G$ is an isomorphism in every degree. 

\paragraph{Equivalence of Categories} The proper way to exhibit the link between the covering building process from discrete extensions and polar coverings is through categorical equivalence. For $(G;X)$ a \emph{regular} polar space endowed with a \emph{regular} section $x$ of the quotient map, we establish an equivalence between the category $\textnormal{Pol.Cov}(G;X;x)$ of polar coverings of $X$ endowed with a lift of the section $x$ and the category $\E(G;X;x)$ of discrete extensions of $G$ that admit a ubiquitous lift of the isotropy of $x$. See Definitions~\ref{dfn:regular}, \ref{dfn:PolCovGXx}, and \ref{dfn:EGXx}.

\begin{thm:equiv_pol_cov}[Equivalence of Categories]
	Let $(G;X)$ be a regular polar space with quotient space $Q$ and $x:Q\rightarrow X$ be a regular section of the quotient map. There is an equivalence of categories:
	\begin{equation*}
		\Br^*:\PolC(G;X;x) \longleftrightarrow \textnormal{E}(G;X;x):\textnormal{C}.
	\end{equation*} 
\end{thm:equiv_pol_cov}

\paragraph{In Search of the Fundamental Group} In light of Theorem~\ref{thm:equiv_pol_cov}, we seek the discrete extension associated to the universal covering of $X$. If $p_E:E\rightarrow G$ is a discrete extension of $G$, the homotopy long exact sequence yields the following five terms exact sequence\footnote{Groups are naturally pointed at the identity and group morphisms are necessarily maps of pointed spaces. As a consequence, when we write $\pi_k(G)$ for some group $G$ we implicitly mean $\pi_k(G;1)$.}:
\begin{equation}
	0\rightarrow \pi_1(E)\rightarrow \pi_1(G) \rightarrow \ker(p_E) \rightarrow \pi_0(E) \rightarrow \pi_0(G) \rightarrow 1.
\end{equation}
As a (non-necessarily connected) covering, $p_E:E\rightarrow G$ is fully characterised by the two group morphisms: $(p_E)_*:\pi_0(E) \rightarrow \pi_0(G)$ and $(p_E)_*:\pi_1(E)\rightarrow \pi_1(G)$. We respectively call them the \emph{magnitude} and the \emph{phase} of the extension. Following \cite{taylor_covering_1954}, isomorphisms classes of discrete extensions $p_{E'}:E'\rightarrow G$ with the same magnitude and phase as $E$ form a $H^2(\pi_0(E);\pi_1(G)/\pi_1(E))$-principal homogeneous space. Furthermore, given a magnitude $\mu$ and a phase $\theta$, there is an obstruction class $\chi(\mu;\theta)$ in $H^3(\mu;\pi_1(G)/\theta)$ that vanishes if and only if there exists a discrete extension of $G$ of the given magnitude and phase. The characterisation of ubiquity determines the magnitude of the universal covering of $X$. 

\begin{cor:exact_seq_pi1}
	Let $(G;X)$ be a polar space with quotient space $Q$, and $H$ be the isotropy of a regular section. The fundamental group of $X$ is a central extension of the kernel of the morphism $\hat{\pi}_0(H)\rightarrow \pi_0(G)$ by a quotient of $\pi_1(G)$ by a $\pi_0(G)$-submodule $\theta_\textnormal{uni}$. In particular, we have an exact sequence:
	\begin{equation*}
		0 \longrightarrow \theta_\textnormal{uni} \longrightarrow \pi_1(G) \longrightarrow \pi_1(X) \longrightarrow \hat{\pi}_0(H)\rightarrow \pi_0(G) \longrightarrow 1.
	\end{equation*}
\end{cor:exact_seq_pi1}

To find the phase, we use two lift obstruction classes. One is of topological nature and lives in the singular cohomology of the isotropy $H$. The other one is algebraic and is a class in $H^2_Q(H;(\pi_1(G)/\theta)_Q)$.
 They allow us to find a topological lower bound $\theta_\textnormal{top}$ for $\theta_\textnormal{uni}$ and to define a closure operator on a subcategory $\Theta^*(G;X)$ of phases of $X$ from which we derive the following corollary: 

\begin{cor:phase}
	The phase $\theta_\textnormal{uni}$ of the universal covering of $X$ is given by the following formula:
	\begin{equation*}
		\theta_\textnormal{uni}=\bigcap_{\theta\in \Theta^*(G;X)}\overline{\theta}.
	\end{equation*}
	In particular, if $\chi(\hat{\pi}_0(H);\theta_\textnormal{top})$ vanishes, then $\theta_\textnormal{uni}$ is the effective closure of $\theta_\textnormal{top}$.
\end{cor:phase}

It is worth adding that the obstruction classes can also be used to find the class of $\pi_1(X)$ as a central extension of $\kappa$, the kernel of $\hat{\pi}_0(H)\rightarrow \pi_0(G)$, by  $\pi_1(G)/\theta_\textnormal{uni}$  in $H^2(\kappa;\pi_1(G)/\theta_\textnormal{uni})$.

\vspace{5pt}

To conclude, we may point out that the most simple examples of polar spaces are connected homogeneous spaces and compare Corollary~\ref{cor:exact_seq_pi1} with the homotopy long exact sequence of the fibration $H\rightarrow G \rightarrow G/H$:
\begin{equation*}
	\pi_1(H)\longrightarrow \pi_1(G) \longrightarrow \pi_1(G/H) \longrightarrow \pi_0(H) \longrightarrow \pi_0(G) \longrightarrow 1.
\end{equation*}
To strengthen the comparison, we can rewrite Corollary~\ref{cor:exact_seq_pi1} as the following exact sequence:
\begin{equation*}
	0 \longrightarrow \theta_\textnormal{uni} \longrightarrow \pi_1(G_Q) \longrightarrow \pi_1(G_Q/H) \longrightarrow \hp(H)\rightarrow \hp(G_Q) \longrightarrow 1.
\end{equation*}
A satisfying achievement would be to define a functor $\hat{\pi}_1$ on the category of $Q$-groups for which $\hat{\pi}_1(G_Q)$ is naturally isomorphic to $\pi_1(G)$ and such that the image of the morphism $\hat{\pi}_1(H)\rightarrow \hat{\pi}_1(G_Q)$ is precisely $\theta_\textnormal{uni}$. It would finish the analogy and raise the question: Does it holds for more general homogeneous $Q$-spaces? i.e. for the homogeneous spaces where the numerator is not necessarily of the form $G_Q$.

\section{Spaces and Groups over Another}
Let $Q$ be an Hausdorff, connected, and locally path-connected space.

\begin{dfn}
	A \emph{space over} $Q$ or \emph{$Q$-space} is a Hausdorff, locally path-connected topological space $X$ together with a continuous map $X\rightarrow Q$. We will not introduce a special notation for the latter map but every time we need to consider the image of a point $x$ of $X$ by such morphism we will denote it by $||x||$. For every point $q$ of $Q$ we denote by $X_q$ the fibre of $X\rightarrow Q$ over $q$. A \emph{morphism over} $Q$ or \emph{morphism of $Q$-spaces}, $f:X\rightarrow Y$ is a continuous map from $X$ to $Y$ such that the following diagram commutes:
	\begin{equation*}
		\begin{tikzcd}
			X \ar[rr,"f"] \ar[rd] && Y \ar[ld]\\ & Q &
		\end{tikzcd}
	\end{equation*}
	Moreover, a \emph{group over $Q$} or a \emph{$Q$-group} is a group object in the category of spaces over $Q$. We have a functor $X\mapsto X_Q:=X\times Q$ that sends locally path-connected Hausdorff topological spaces (resp. locally path-connected Hausdorff topological groups) to the category of $Q$-spaces (resp. $Q$-groups).
\end{dfn}

\begin{ex}\label{ex:1}
	Let $Z\rightarrow D^2$ denote the subgroup of $\Z\times D^2$ made of the points $(k;t)$ such that $|t|<1$ implies $k=0$. It is a $D^2$-subgroup of $\Z\times D^2$.
\end{ex}

\begin{dfn}
	Let $X\rightarrow Q$ be a $Q$-space and $k$ be a non-negative integer. We denote\footnote{N.b. It is consistent with the notation $Y_Q$ for a topological space $Y$ for $(Y_Q)^k_Q$ is naturally homeomorphic to $(Y^k)_Q$.} by $X^k_Q$ the $k$-fold fibre product of $X$ with itself over $Q$:
	\begin{equation*}
		X^k_Q\coloneqq \underset{k \textnormal{ times}}{\underbrace{X \underset{Q}{\times} \cdots \underset{Q}{\times} X}}.
	\end{equation*}
	We say that $X\rightarrow Q$ is \emph{\simul} if $X^k_Q$ is locally path-connected for every non-negative integer $k$.
\end{dfn}

\begin{dfn}
	Let $X\rightarrow Q$ and $G\rightarrow Q$ be respectively a space and a group over $Q$. We denote the group law of $G$ by $m$ and its identity by $1:Q\rightarrow G$. An action of $G$ on $X$ is a morphism of $Q$-spaces:
	\begin{equation*}
		a:G\times_Q X \rightarrow X,
	\end{equation*}
	such that the following square commutes:
	\begin{equation*}
		\begin{tikzcd}
			G\times_Q G\times_Q X \ar[rr,"\id_G\times a"] \ar[d,"m\times\id_X" left] && G\times_Q X \ar[d,"a"]\\
			G\times_Q X \ar[rr,"a" below] && X
		\end{tikzcd}
	\end{equation*}
	and such that the composition $a\circ (1\times\id_X): X=Q\times_Q X \rightarrow X$ is the identity.
\end{dfn}

As usual an action yields an equivalence relation on the space for which one can form the quotient.

\begin{dfn}[Quotients]\label{dfn:quotient}
	Let $X\rightarrow Q$ be a space over $Q$ endowed with an action of a $Q$-group $G\rightarrow Q$. We define the quotient space $X/ G$ to be the quotient of $X$ by the following equivalence relation:
	\begin{equation}\label{rel:quotient}
		x\sim y \Leftrightarrow \left\{\begin{array}{l} ||x||=||y|| \\ \exists g\in G_{||x||},\;y=g\cdot x \end{array}\right.
	\end{equation}
	This topological space is endowed with a continuous map to $Q$. Since the quotient of a locally path connected space is locally path connected, $X/G$ is a $Q$-space when it is Hausdorff. 
\end{dfn}

\subsection{Ubiquity}
This notion formalises the idea that an action on a space allows to visit every connected component of the space by only moving through the transformation group. 

\begin{dfn}
	We say that an action of $G\rightarrow Q$ over $X\rightarrow Q$ is \emph{ubiquitous} if the quotient $X/ G$ is connected. Moreover, we say that a morphism $f:H\rightarrow G$ of $Q$-groups is \emph{ubiquitous} if the induced action of $H$ on $G$ is ubiquitous. 
\end{dfn}

When $Q$ is a point there is an easy characterisation of ubiquity. Indeed, a morphism of locally path-connected Hausdorff groups $f:H\rightarrow G$ is ubiquitous if and only if the induced morphism of discrete groups $f_*:\pi_0(H)\rightarrow \pi_0(G)$ is surjective. We will derive a similar criterion in special cases of $Q$-groups. To do so, we need to define an analogue of the functor $\pi_0$  for $Q$-groups. We can note that $\pi_0$ is the left adjoint of the inclusion of discrete groups into the category of locally path-connected groups. This observation provides the adequate generalisation. 

\begin{prop}\label{prop:adjunction}
	The functor $D\mapsto D_Q$ restricted to discrete groups admits a left adjoint denoted by $\hat{\pi}_0$. 
\end{prop}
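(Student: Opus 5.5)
We construct $\hat{\pi}_0(H)$ explicitly by generators and relations and then verify the universal property directly. Let $H\rightarrow Q$ be a $Q$-group. Morphisms of $Q$-groups $H\rightarrow D_Q=D\times Q$ correspond bijectively to continuous maps $\varphi:H\rightarrow D$ that are fibrewise group morphisms: $\varphi(hh')=\varphi(h)\varphi(h')$ whenever $||h||=||h'||$. Since $D$ is discrete, continuity means $\varphi$ is locally constant, hence constant on path-components of $H$, because $H$ is locally path-connected. So we define
\begin{equation*}
	\hat{\pi}_0(H):=\bigl\langle [C],\; C\in\pi_0(H) \;\big|\; [C][C']=[C''] \text{ whenever } \exists\, h\in C,\ h'\in C' \text{ with } ||h||=||h'||,\ hh'\in C''\bigr\rangle,
\end{equation*}
the free group on the set of path-components of $H$ modulo the relations coming from the fibrewise multiplication. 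The unit $\eta_H:H\rightarrow \hat{\pi}_0(H)_Q$ is $h\mapsto([C_h];||h||)$, where $C_h$ is the path-component of $h$.

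First we check that $\eta_H$ is a morphism of $Q$-groups. It is continuous because path-components of a locally path-connected space are open, so $h\mapsto [C_h]$ is locally constant. It is fibrewise multiplicative by the defining relations. It preserves identities: take $C$ to be the component containing $1(q)$ and apply the relation to $1(q)1(q)=1(q)$; this gives $[C]^2=[C]$, so $[C]=1$. Every component meeting the image of $1$ is therefore sent to the identity, and since $Q$ is connected, $1(Q)$ lies in a single component in any case.

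Next, universality. Given a morphism $f:H\rightarrow D_Q$, write $f=(\varphi;||\cdot||)$. The map $\varphi$ is constant on path-components, so it defines a function $\pi_0(H)\rightarrow D$, and it respects the relations because it is fibrewise multiplicative. It therefore factors uniquely through a group morphism $\bar f:\hat{\pi}_0(H)\rightarrow D$ with $\bar f_Q\circ\eta_H=f$. Uniqueness holds because the classes $[C]=\eta_H$-images generate $\hat{\pi}_0(H)$. Functoriality of $\hat{\pi}_0$ on morphisms $H\rightarrow H'$ follows formally from the universal property: a morphism $H\rightarrow H'$ sends components into components, so $[C]\mapsto[C']$ is compatible with the relations. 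Naturality of the bijection $\Hom_Q(H;D_Q)\cong\Hom(\hat{\pi}_0(H);D)$ is then routine.

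The main point to get right is the reduction ``continuous into discrete $\Rightarrow$ constant on path-components''. It relies on the standing assumption in the definition of $Q$-space that $H$ is locally path-connected; without it one would have to use quasi-components or connected components instead. Everything else is a standard presentation argument.
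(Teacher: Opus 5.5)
Your proof is correct and is essentially the paper's: both present $\hat{\pi}_0(H)$ on the generators $\pi_0(H)$ with relations $c_1c_2=c_3$ whenever the fibrewise product of $c_1$ and $c_2$ meets $c_3$, identify $Q$-group morphisms $H\rightarrow D_Q$ with locally constant, fibrewise multiplicative maps to $D$, and read off the universal property from the presentation. One small correction to your closing remark: the implication ``continuous into discrete implies constant on path-components'' holds in any space, and local path-connectedness is needed only in the other direction, namely for $\eta_H$ (or any function of the component) to be continuous, where you do use it correctly.
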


\begin{proof}
	Let $G\rightarrow Q$ be a $Q$-group and $D$ be a discrete group. A map of $Q$-spaces $f:G\rightarrow D_Q$ is fully determined by its composition with the projection onto $D$. Since $D$ is discrete, the latter induces a map $f_*:\pi_0(G)\rightarrow D$. Reciprocally, any map $f_*:\pi_0(G)\rightarrow D$ yields a continuous map of $Q$-spaces $f:G\rightarrow D_Q$ for $Q$-groups are assumed to be locally connected. One might ask: under what condition (on $f_*$) is $f$ a $Q$-group morphism ? We can note that $\pi_0(G)$ is some sort of “\,partial multivalued group\,”. Let $m$ denote the group law of $G$. For any $c,d$ in $\pi_0(G)$ we denote by $c\cdot d$ the set of connected components of $G$ reached by $m(c\times_Q d)$. If $c\times_Q d$ is empty then so is $c\cdot d$. We will show that $f$ is a morphism of $Q$-groups if and only if $f_*$ satisfies the following property:
	\begin{equation}\label{eq:multivalued_group_morph}
		\forall c_3\in c_1\cdot c_2,\;f_*(c_3)=f_*(c_1)\cdot f_*(c_2).
	\end{equation}
	Let us assume that $f$ is a $Q$-group morphism. We consider two components $c_1$ and $c_2$ such that $c_1\times_Qc_2$ is non-empty, and choose a connected component $c_3$ in $c_1\cdot c_2$. By assumption, there exists $q$ in $Q$, and $g_i$ in $G_q\cap c_i$ for all $i$ in $\{1,2,3\}$, such that $g_3$ equals $g_1\cdot g_2$. Since $f$ is a $Q$-group morphism, $f(g_3)$ equals $f(g_1)\cdot f(g_2)$. Hence, $f_*(c_3)$ equals $f_*(c_1)\cdot f_*(c_2)$, and (\ref{eq:multivalued_group_morph}) is satisfied. Reciprocally, let us assume that $f_*$ satisfies (\ref{eq:multivalued_group_morph}). We consider $g_1$ and $g_2$ two elements of a fibre $G_q$. We denote by $c_1$, $c_2$ and $c_3$ the respective connected components of $g_1$, $g_2$, and $g_1\cdot g_2$. Then, we have:
	\begin{equation*}
		f(g_1\cdot g_2)= (f_*(c_3);q) = (f_*(c_1)\cdot f_*(c_2) ; q) = f(g_1)\cdot f(g_2). 
	\end{equation*}
	And $f$ is a morphism of $Q$-groups. We can now turn $\pi_0(G)$ into an actual group with the following presentation:
	\begin{equation}\label{eq:pi_0_hat}
		\hat{\pi}_0(G):= \langle \pi_0(G)\;|\; c_1c_2c_3^{-1},\;\forall c_3\in c_1\cdot c_2\rangle.
	\end{equation}
	There is a natural map $\eta':\pi_0(G)\rightarrow \hat{\pi}_0(G)$ that satisfies (\ref{eq:multivalued_group_morph}). Given the presentation (\ref{eq:pi_0_hat}), we see that any $f_*:\pi_0(G)\rightarrow D$ satisfying (\ref{eq:multivalued_group_morph}) is of the form $g\circ\eta'$ for a unique group morphism $g:\hat{\pi}_0(G)\rightarrow D$. We can finish this proof by emphasising that the construction (\ref{eq:pi_0_hat}) is functorial and that $\eta:g\mapsto(\eta'[g];||g||)$, where $[g]$ denotes the connected component of $g$, is the unit of the adjunction.   
\end{proof}

Given the last proof, we find the following direct consequence.

\begin{prop}
	If $G$ is a Lie group, then $\hat{\pi}_0(G_Q)$ is naturally isomorphic to the group $\pi_0(G)$. 
\end{prop}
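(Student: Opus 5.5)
The plan is to work directly from the presentation \eqref{eq:pi_0_hat} built in the proof of Proposition~\ref{prop:adjunction}. For $G_Q=G\times Q$, I first identify the connected components. Since $Q$ is connected and locally path-connected, and $G$ is a Lie group (so locally path-connected), the components of $G\times Q$ are exactly the sets $c\times Q$ with $c\in\pi_0(G)$. This gives a bijection $\pi_0(G_Q)\cong\pi_0(G)$, and this bijection is natural in $G$.

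Next I compute the multivalued product. For components $c_1\times Q$ and $c_2\times Q$, the fibre product over $Q$ is $(c_1\times c_2)\times Q$. It is nonempty because $Q$ is nonempty, and its image under $m$ is $(c_1c_2)\times Q$, where $c_1c_2$ is the product in the group $\pi_0(G)$. Indeed, multiplication in $G$ maps $c_1\times c_2$ into the single component $c_1c_2$, because $G_0$ is normal and the components are its cosets. So $c_1\cdot c_2$ is always the singleton $\{c_1c_2\}$.

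The presentation \eqref{eq:pi_0_hat} then becomes $\langle \pi_0(G)\mid c_1c_2(c_1c_2)^{-1}\rangle$, where the first product is formal and the second is the group law of $\pi_0(G)$. This is the standard presentation of the group $\pi_0(G)$ by its multiplication table, so it is canonically isomorphic to $\pi_0(G)$. The map $\eta'$ corresponds to the identity. Alternatively, one can check the universal property directly: a map $f_*:\pi_0(G)\to D$ satisfies \eqref{eq:multivalued_group_morph} if and only if it is a group homomorphism. By Yoneda, $\hat\pi_0(G_Q)\cong\pi_0(G)$.

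For naturality, a Lie group morphism $\varphi:G\to G'$ induces $\varphi\times\id_Q$. Its effect on components is $\varphi_*$ on $\pi_0$, and the identifications above commute with it.

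The only point that needs care is the component computation in the first step. It uses that a product of connected spaces is connected, and that the components of $G$ are open (local path-connectedness of the Lie group). These together ensure that each $c\times Q$ is both connected and open, hence a component of $G_Q$.
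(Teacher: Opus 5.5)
Your proposal is correct and matches the paper's argument. The paper states this as a direct consequence of the proof of Proposition~\ref{prop:adjunction}: for $G_Q$ the components are the sets $c\times Q$ and the multivalued product is single-valued, so the presentation of $\hat{\pi}_0$ reduces to the multiplication-table presentation of $\pi_0(G)$, exactly as you argue.
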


\begin{dfn}
	We denote by $\eta_G:\textnormal{Id}_{Q\textnormal{-group}}\rightarrow (\hat{\pi}_0)_Q$ the unit of the adjonction of Proposition~\ref{prop:adjunction}. Let $G$ be a $Q$-group. In order to limit the number of notations, we will also denote by $\eta_G(g)$ the projection of the image of $g$ by the unit of the adjunction onto $\hp(G)$. (The actual image of $g$ by the adjunction morphism is then $(\eta_G(g);||g||)$ in $\hp(G)_Q$.) 
\end{dfn}

\begin{prop}\label{prop:ubi_surj}
	Let $G$ be a Lie group, $H\rightarrow Q$ be a $Q$-group, and $f:H\rightarrow G_Q$ be a morphism of $Q$-groups. The morphism $f$ is ubiquitous if and only if the morphism $f_*:\hat{\pi}_0(H)\rightarrow \pi_0(G)$ is onto. 
\end{prop}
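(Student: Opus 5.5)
We work with the quotient $G_Q/H$, the quotient of $G\times Q$ by the right action $(g;q)\cdot h=(g f(h);q)$ for $h\in H_q$. Write $\pi:G_Q\to G_Q/H$ for the quotient map and $\pi_0(G)=G/G^0$. The idea is to compare the connected components of $G_Q/H$ with cosets of the image $I:=f_*(\hat{\pi}_0(H))\subseteq\pi_0(G)$.

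\textbf{Step 1: a continuous map to the coset space.} Since $Q$ is connected and locally path-connected and $G$ is a Lie group, the connected components of $G\times Q$ are the sets $gG^0\times Q$. So the projection $G_Q\to\pi_0(G)$ is continuous, and so is its composite
\begin{equation*}
\Phi:G_Q\longrightarrow \pi_0(G)/I,
\end{equation*}
where the target carries the discrete topology. The map $\Phi$ is constant on $H$-orbits. Indeed, for $h\in H_q$ the class of $f(h)$ in $\pi_0(G)$ is $f_*(\eta_H(h))$, because the adjunction identifies $f$ with $f_*\circ\eta_H$ on components. This class lies in $I$. Hence $\Phi$ descends to a continuous map $\bar\Phi$ on $G_Q/H$.

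\textbf{Step 2: ubiquity forces surjectivity.} The map $\bar\Phi$ is surjective, since $\Phi$ is. If $G_Q/H$ is connected, $\bar\Phi$ must be constant, so $\pi_0(G)/I$ is a point and $f_*$ is onto.

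\textbf{Step 3: surjectivity forces ubiquity.} Assume $f_*$ is onto, and let $C\subseteq G_Q/H$ be the connected component containing $\pi(G^0\times Q)$; this set is connected as the image of a connected set. Consider the set
\begin{equation*}
S:=\{[g]\in\pi_0(G)\;|\;\pi(gG^0\times Q)\subseteq C\}.
\end{equation*}
It contains the identity, and it suffices to show that $S=\pi_0(G)$.

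First, $S$ is stable under right multiplication by the classes $[f(h)]$ for $h\in H$. Take $h\in H_q$ and $g\in G$. In $G_Q/H$ we have $\pi(g f(h);q)=\pi(g;q)$. So the connected sets $\pi(gG^0\times Q)$ and $\pi(gf(h)G^0\times Q)$ meet, and they lie in the same component. Since $G^0$ is normal, $gf(h)G^0=gG^0\cdot[f(h)]$.

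Next, the set $\{[f(h)]\}$ is exactly $f_*(\eta'(\pi_0(H)))$. The group $\hat{\pi}_0(H)$ is generated by $\eta'(\pi_0(H))$, by the presentation (\ref{eq:pi_0_hat}). Therefore these classes generate $I=\pi_0(G)$ as a group.

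Then handle inverses. Right multiplication by a generator $c$ is a bijection of $\pi_0(G)$ that maps $S$ into $S$, and it maps the complement of $S$ into the complement as well. The latter holds because, for $g\notin S$, the set $\pi(gG^0\times Q)$ lies in another component, and the same meeting argument applies. So right multiplication by $c$ preserves $S$ exactly, and hence so does multiplication by $c^{-1}$. Thus $S$ is stable under the whole group $\pi_0(G)$, so $S=\pi_0(G)$.

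Finally, $G_Q/H$ is the union of the sets $\pi(gG^0\times Q)$, so it equals $C$ and is connected.

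\textbf{Expected obstacle.} The main delicate point is the identification of the component class of $f(h)$ with $f_*(\eta_H(h))$. This requires unwinding the adjunction of Proposition~\ref{prop:adjunction} and the isomorphism $\hat{\pi}_0(G_Q)\cong\pi_0(G)$, and checking that these isomorphisms are compatible with the natural map $G_Q\to\pi_0(G)$. A secondary point is the continuity of $\Phi$. It uses that $Q$ is connected, so that $gG^0\times Q$ is open and connected in $G_Q$.
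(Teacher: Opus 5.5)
Your proof is correct and is essentially the paper's argument. The necessity direction, via the $H$-invariant continuous surjection onto the discrete coset space $\pi_0(G)/f_*\hat{\pi}_0(H)$, is identical to the paper's; for sufficiency you replace the paper's concatenation of five explicit paths with the observation that the connected sets $\pi(gG^0\times Q)$ and $\pi(gf(h)G^0\times Q)$ meet, and your set $S$, stable under the generators $[f(h)]$ and their inverses, spells out the passage from $\eta'(\pi_0(H))$ to all of $\hat{\pi}_0(H)$ that the paper covers with the word ``transitivity''.
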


\begin{proof}
	Let us denote by $X$ the quotient space $G_Q/ H$. We will begin by proving the following property:
	\begin{center}\textit{
		Let $(g;q),(g';q')$ be two points of $G_Q$, if there exists $c$ in $\pi_0(H)$ such that $[g]$ equals $[g']\cdot f_*(c)$ in $\pi_0(G)$, then the images of $(g;q)$ and $(g';q')$ in $X$ belong to the same connected component. 
	}\end{center}
	First, we note that $[g]$ being equal to $[g']\cdot f_*(c)$ implies that we can find $g_0$ and $g_0'$ in $G^\circ$, the connected component of the identity, as well as $(h;p)$ in the connected component of $G_Q$ spanned by $f(c)$ such that $(gg_0;p)$ equals $(g'g_0'h;p)$. We also consider a point $(h';p')$ in $c$. Then, we can consider five continuous paths in $G_Q$:\begin{multicols}{2}\begin{enumerate}
		\item $\gamma_1$ from $(g;q)$ to $(g;p)$ for we assumed $Q$ connected;
		\item $\gamma_2$ from $(g;p)$ to $(gg_0;p)=(g'g'_0h;p)$;
		\item $\gamma_3$ from $(g'g'_0h;p)$ to $(g'g'_0f(h');p')$. It is the image by the multiplication by $g'g'_0$ of a path from $(h;p)$ to $(f(h');p')$;\columnbreak
		\item $\gamma_4$ from $(g'g'_0f(h');p')$ to $(g'f(h');p')$;
		\item $\gamma_5$ from $(g';p')$ to $(g';q')$.
	\end{enumerate}
	\end{multicols}
	Since $(g'f(h');p')$ and $(g';p')$ have the same image in $X$, one can form the concatenation of the images of these paths in the quotient. It yields a path from the image of $(g;q)$ to the image of $(g';q')$. Therefore, by transitivity, one finds that $X$ is path connected when $f_*:\hat{\pi}_0(H)\rightarrow \pi_0(G)$ is onto. For the converse statement, we can note that the composition $G_Q\rightarrow \pi_0(G) \rightarrow \pi_0(G)/f_*\hat{\pi}_0(H)$ is onto and invariant by the action of $H$. Thus, we have a continuous surjective map $X\rightarrow \pi_0(G)/f_*\hat{\pi}_0(H)$. If $X$ is path connected, then $\pi_0(G)/f_*\hat{\pi}_0(H)$ has to be reduced to a point for it is discrete. 
\end{proof}

\subsection{Gluings}
There is a well behaved type of spaces and group over $Q$ that we will consider later for example purposes. Let us consider a regular cellular structure\footnote{A CW complex is \emph{regular} when every open cell has a characteristic map that extends to an embedding of the closed ball.} $\Cell Q$ on $Q$. We denote by the same symbol the small category associated to the underlying ordered set of cells of $\Cell Q$.

\begin{dfn}\label{dfn:gluing}
	A \emph{cosheaf of spaces} (resp. \emph{groups}, resp. \emph{cellular complexes}) is a contravariant functor $X$ from $\Cell Q$ to the category of topological spaces (resp. Lie groups, resp. cellular complexes). A morphism of such objects is a natural transformation. To every such cosheaf one can associate a space (resp. groups, resp. cellular complexes) over $Q$ by considering the following gluing procedure:
	\begin{equation*}
	(X)_\gl:=\colim\left(\coprod_{e_1\leq e_2\in \Cell Q} X(e_2)\times \bar{e}_1 \overset{u}{\underset{d}{\rightrightarrows }}\coprod_{e\in\Cell Q}X(e)\times \bar{e}\right)\rightarrow Q,
	\end{equation*}
	where the map $u$ sends $X(e_2)\times \bar{e}_1$ in $X(e_2)\times \bar{e}_2$ by the identity on the first factor and inclusion on the second factor, and $d$ sends $X(e_2)\times \bar{e}_1$ in $X(e_1)\times \bar{e}_1$ by the transition map of $X$ on the first factor and the identity on the second factor. This procedure yields a functor from cosheaves on $\Cell Q$ to objects over $Q$.
\end{dfn}

\begin{ex}
	If one endows $[0;1]$ with its minimal cellular structure $\{\{0\};[0;1];\{1\}\}$ a cosheaf of groups is nothing but a choice of three groups $G_0$, $G_1$, and $G_{[0;1]}$ together with two morphisms $G_{[0;1]}\rightarrow G_0$ and $G_{[0;1]}\rightarrow G_1$.
\end{ex}

\begin{ex}
	Any flat vector bundle or system of local coefficients over a cellular space is isomorphic to the gluing of cosheaf of groups;
\end{ex}

\begin{ex}
	Drawing inspiration on \cite[Example 8]{blazquez-sanz_group_2022}, we consider the $[1;2]$-group $G$ endowing $\R^3\times [1;2]\rightarrow  [1;2]$ with the following $[1;2]$-group law:
		\begin{equation*}
			(x_1;y_1;z_1;t)\cdot (x_2;y_2;z_2;t)\coloneqq (x_1+e^{z_1}x_2;y_1+e^{tz_1}y_2;z_1+z_2;t).
		\end{equation*}
		This group cannot be a gluing for no two fibres are isomorphic. The Lie algebra $\mathfrak{g}_t$ of $G_t$ has three generators $X$, $Y$, and $Z$. The two first commute, $[Z;X]$ is $X$, and $[Z;Y]$ is $tY$. Any morphism of Lie algebras from $\mathfrak{g}_{t_1}$ to $\mathfrak{g}_{t_2}$ is necessarily given by a matrix $M$ (in the respective bases given by $X,Y,Z$) satisfying the following description:
		\begin{equation*}
			M=\left( \begin{array}{ccc} a & b & u\\ c & d & v \\ 0 & 0 & k \end{array}\right) \quad \textnormal{with} \quad \left( \begin{array}{cc} k & 0 \\ 0 & kt_2\end{array}\right) \left(\begin{array}{cc} a & b \\ c & d \end{array}\right)= \left(\begin{array}{cc} a & b \\ c & d \end{array}\right) \left(\begin{array}{cc} 1 & 0 \\ 0 & t_1 \end{array}\right).
		\end{equation*}
		If it is to be an isomorphism, the polynomials $(x-k)(x-kt_2)$ and $(x-1)(x-t_1)$ must be equal. Hence $t_2$ has to equal $t_1$ or its inverse. Given our restrictions, they must be equal.
\end{ex}

\begin{prop}\label{prop:Hausdorff_LPC_gluing}
	Let $Q$ be a cellular space and $X$ be a cosheaf of topological spaces on a regular cellular structure $\Cell Q$ of $Q$. If $X(e)$ is Hausdorff and locally path-connected for every cell $e$, then $(X)_\gl\rightarrow Q$ is a simultaneously locally path-connected $Q$-space. 
\end{prop}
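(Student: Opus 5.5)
The plan is to show three things in order: that $(X)_\gl$ is Hausdorff, that it is locally path-connected, and that each fibre product $((X)_\gl)^k_Q$ is again a gluing of the same kind. The third point lets the first two apply to every $k$.

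\emph{Fibre products.} For each $k$ consider the cosheaf $X^k$ given by $e\mapsto X(e)^k$, with the product transition maps. Since products of Hausdorff, locally path-connected spaces keep both properties (for finitely many factors), $X^k$ satisfies the hypotheses of the proposition. I claim that $((X)_\gl)^k_Q$ is naturally homeomorphic to $(X^k)_\gl$. There is an obvious continuous bijection $(X^k)_\gl\to((X)_\gl)^k_Q$, because over the open cell $e$ both sides are $X(e)^k\times e$. To see that it is a homeomorphism, I would use the fact that $Q$ is a CW complex, so it carries the colimit topology of its finite subcomplexes, together with compact generation. Over each closed cell $\bar e$, the gluing restricted to $\bar e$ is a quotient of the finite disjoint union $\coprod_{e'\leq e}X(e')\times\bar e'$ by a closed relation. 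A fibre product over $\bar e$ of such quotients is again the quotient of the fibre product. This holds because the maps $X(e')\times\bar e'\to \bar e$ are proper onto closed subsets, and products of proper quotient maps are quotient maps. Hence the claim reduces to the case $k=1$.

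\emph{Hausdorff.} Take two distinct points $a,b\in(X)_\gl$. If $\|a\|\neq\|b\|$, pull back disjoint neighbourhoods in $Q$ (which is Hausdorff as a CW complex). If $\|a\|=\|b\|=q$ lies in the open cell $e$, then $a,b$ come from distinct points $a_0,b_0$ of $X(e)$. Choose disjoint open sets $U,V\subseteq X(e)$ around them. Using regularity of the cell structure, take a small open star neighbourhood $N$ of $q$, meeting only cells $e_2\geq e$, with a cellular retraction of $N\cap\bar e_2$ onto $N\cap \bar e$. Then $\bigcup_{e_2\geq e}X(e_2)\cap t_{e_2,e}^{-1}(U)\times (N\cap e_2)$ and its analogue for $V$ are saturated, open and disjoint. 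Here $t_{e_2,e}:X(e_2)\to X(e)$ is the transition map.

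\emph{Locally path-connected.} Use the same star neighbourhoods. Given a point $a$ over $q\in e$ and an open set $W\ni a$, shrink to a set of the form above with $U$ path-connected and $N\cap e_2$ path-connected, and with each $N\cap\bar e_2$ contractible onto $N\cap\bar e$ (this is where regularity, i.e.\ the embedded closed balls, is used). Any point $(y;p)$ with $y\in X(e_2)$, $t_{e_2,e}(y)\in U$ and $p\in N\cap e_2$ can then be joined inside the set to $(y;p')$ with $p'\to \bar e$. This is done by the path $s\mapsto(y;\gamma(s))$ with $\gamma$ running in $N\cap e_2$ toward the face $e$. At the endpoint the point is identified with $(t_{e_2,e}(y);p')\in U\times(N\cap e)$, which is path-connected to $a$. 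So the basic set is path-connected.

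The main obstacle I expect is the topological comparison in the fibre-product step, namely that the canonical bijection is a homeomorphism. This is where properness, the finiteness of closed cells and the CW weak topology must be handled carefully. If that argument becomes messy, I would instead prove the Hausdorff and local path-connectedness arguments directly for $((X)_\gl)^k_Q$. Using the explicit star neighbourhoods, with products $U_1\times\dots\times U_k$ in place of $U$, avoids the identification entirely.
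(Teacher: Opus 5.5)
\textbf{There is a genuine gap, in the local path-connectedness step and in the fibre-product reduction.} Your Hausdorff step is the paper's argument and is correct, because there you only need \emph{some} pair of disjoint saturated open sets.

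\textbf{The shrinking step fails.} Write $\pi\colon\coprod_e X(e)\times\bar e\rightarrow(X)_\gl$ for the quotient map. ``Shrink to a set of the form above'' presupposes that the uniform sets $\pi\bigl(\bigcup_{e_2\geq e}t_{e_2,e}^{-1}(U)\times(N\cap\bar e_2)\bigr)$ form a neighbourhood base. They do not once some $X(e_2)$ is non-compact. Take $Q=[0;1]$, with $X$ a point on the vertex $0$ and the discrete space $\Z$ on the edge and on the vertex $1$ (identity transition to $1$). Then $(X)_\gl$ is the cone on $\Z$ with the weak topology. The open neighbourhood $\{[n;t]\;:\;t<1/(|n|+1)\}$ of the cone point contains no set $\{[n;t]\;:\;t<\delta\}$. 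For $k=1$ the paper avoids this: $\coprod_e X(e)\times\bar e$ is locally path-connected, and quotients of locally path-connected spaces are locally path-connected.

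\textbf{The reduction for $k\geq 2$ rests on a false claim.} The projection $X(e')\times\bar e'\rightarrow\bar e'$ is proper only when $X(e')$ is compact. The fibres of $\pi$, which contain the sets $t_{e',e}^{-1}(y)\times\{p\}$, need not be compact either. So no ``product of perfect maps'' argument is available, and the fibre product here has the ordinary subspace-of-product topology, not a $k$-ified one. The paper also just asserts $((X)_\gl)^k_Q\cong(X^k)_\gl$ without proof. In this generality the canonical continuous bijection need not be open. Replace $\Z$ by a discrete $\omega_1$ in the example, fix injections $f_\beta\colon\beta\rightarrow\mathbb{N}$, and set $\epsilon(\alpha;\beta)=1/(f_\beta(\alpha)+2)$ for $\alpha<\beta<\omega_1$, symmetrised. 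A pigeonhole argument then shows that the neighbourhood $\{t<\epsilon(\alpha;\beta)\}$ of the cone point of $(X^2)_\gl$ contains no trace $\{t<\min(\epsilon_1(\alpha),\epsilon_2(\beta))\}$ of a product neighbourhood. Your fallback reuses uniform star sets, so it inherits the same problem.

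\textbf{A repair that handles every $k$ and needs no identification.} Let the size of the neighbourhood in the $Q$-direction depend on the point of $X(\sigma)$.
\begin{enumerate}
\item Pass to a barycentric subdivision and transport the cosheaf to it; this does not change $(X)_\gl$. Fix $q$ in the open simplex $e$, let $S$ be the open star of $e$, and put $l_s(p)=(1-s)p+sq$.
\item For $p$ in an open simplex $\sigma\geq e$ one has $l_s(p)\in\sigma$ for $s<1$. Hence $\tilde l_s[z;p]:=[z;l_s(p)]$ is well defined over $S$.
\item Given $a_i=[\alpha_i;q]$ and an open $W_i\ni a_i$, choose a path-connected open $U_i\ni\alpha_i$ in $X(e)$ with $\pi(U_i\times\{q\})\subset W_i$.
\item Let $V_i$ be the image of the set of $(z;p)\in X(\sigma)\times(\bar\sigma\cap S)$, $\sigma\geq e$, such that $t_{\sigma,e}(z)\in U_i$ and $(z;l_s(p))\in\pi^{-1}(W_i)$ for all $s\in[0;1]$.
\item The tube lemma makes this set open. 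It is saturated because both conditions depend only on $\pi(z;p)$. So $V_i$ is an open neighbourhood of $a_i$ inside $W_i$.
\item Let $(y_1;\dots;y_k)\in V_1\times_Q\cdots\times_Q V_k$ lie over $p$. The path $s\mapsto(\tilde l_s(y_1);\dots;\tilde l_s(y_k))$ stays in $W_1\times_Q\cdots\times_Q W_k$, because all coordinates move over the same point $l_s(p)$.
\item This path ends in the path-connected set $\pi(U_1\times\{q\})\times\cdots\times\pi(U_k\times\{q\})$, which contains $(a_1;\dots;a_k)$.
\end{enumerate}
Hence path components of open subsets of $((X)_\gl)^k_Q$ are open, which is the required local path-connectedness.
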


\begin{proof}
	By definition, we have a quotient projection:
	\begin{equation*}
		\pi:\coprod_{e\in \Cell Q}X(e)\times \bar{e} \rightarrow (X)_\gl,
	\end{equation*}
	Hence, if $X(e)$ is locally path-connected for all cells $e$, so is the gluing for quotients of locally path-connected spaces are locally path-connected. It can be proved as \cite[Chapitre~I, \S 11, \textnumero 6, Proposition~12]{bourbaki_topologie_2007-1} using \cite[Theorem 25.3]{munkres_topology_2014} in place of \cite[Chapitre~I, \S 11, \textnumero 6, Proposition~11]{bourbaki_topologie_2007-1}. Moreover, for every non-negative integer $k$, $((X)_\gl)^k_Q$ is homeomorphic to $(X^k)_\gl$, so the gluing is even simultaneously locally path connected over $Q$. Only remains to show that $(X)_\gl$ is Hausdorff when $X(e)$ is Hausdorff for all cells $e$. Since $Q$ is Hausdorff, every couple of points with distinct images under $(X)_\gl\rightarrow Q$ are separated by disjoint open neighbourhoods. Let us be given two distinct points in the same fibre of $(X)_\gl\rightarrow Q$. Let $q$ be there common image in $Q$ and $e$ be the unique cell containing $q$. The restriction of $\pi$ to $X(e)\times e$ is injective. Hence, our initial points are of the form $\pi(x;q)$ and $\pi(y;q)$ for uniquely determined and distincts $x,y$ in $X(e)$. By assumption, $x$ and $y$ respectively lie in open sets $U$ and $V$ of $X(e)$ whose intersection is empty. For every cell $e'$ containing $e$, we denote by $\st(e;\bar{e}')$ the open star\footnote{It is the reunion of every open cell $e''$ satisfying $e\leq e''\leq e'$.} of $e$ in $\bar{e}'$ and by $U(e')$ (resp. $V(e')$) the inverse image of $U$ (resp. $V$) by the transition map ${X(e')\rightarrow X(e)}$. The sets:
	\begin{equation*}
		\coprod_{e'\geq e}U(e')\times \st(e;\bar{e}') \quad \textnormal{and} \quad \coprod_{e'\geq e}V(e')\times \st(e;\bar{e}'),
	\end{equation*}
	are two disjoint open sets of $\coprod_{e}X(e)\times \bar{e}$. Moreover, they are saturated for the kernel relation\footnote{A set $A$ is saturated for the kernel relation of $\pi$ if $A$ equals $\pi^{-1}\pi(A)$.} of $\pi$. Thus, their respective images under $\pi$ yield disjoint open neighbourhoods of our initial points. Therefore, $(X)_\gl$ is Hausdorff and the proposition follows. 
\end{proof}

\begin{prop}\label{prop:commutativity_quotients_gluings}
	Let $Q$ be a topological space endowed with a regular cellular structure $\Cell Q$ and $G$ be a cosheaf of Lie groups over $\Cell Q$ acting continuously on a cosheaf of spaces $X$. The $Q$-space $(X/G)_\gl$ is isomorphic to $(X)_\gl/(G)_\gl$.
\end{prop}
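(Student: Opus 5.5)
The plan is to build a natural comparison map and then check, in turn, that it is bijective and a homeomorphism.

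\textbf{Step 1: the comparison map.} Write $\pi_X:\coprod_e X(e)\times\bar e\to (X)_\gl$ for the gluing quotient, and define $\pi_{X/G}$ and $\pi_G$ analogously. The quotient maps $X(e)\to X(e)/G(e)$ are compatible with the transition maps, because these are equivariant with respect to the transition morphisms of $G$. So they assemble into a morphism of cosheaves $X\to X/G$, and by functoriality of the gluing into a morphism of $Q$-spaces $\Phi:(X)_\gl\to (X/G)_\gl$. Next, $(G)_\gl\times_Q (X)_\gl$ is the gluing of the cosheaf $e\mapsto G(e)\times X(e)$, by the same observation used in Proposition~\ref{prop:Hausdorff_LPC_gluing} for fibre products. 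Hence the action of $(G)_\gl$ on $(X)_\gl$ is itself glued from the actions of the $G(e)$, and $\Phi$ is invariant for the relation~(\ref{rel:quotient}). It therefore factors through a continuous map $\bar\Phi:(X)_\gl/(G)_\gl\to (X/G)_\gl$ over $Q$.

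\textbf{Step 2: bijectivity.} I would check this fibrewise. For $q$ in the open cell $e$, the restriction of $\pi_X$ to $X(e)\times e$ is injective, as noted in the proof of Proposition~\ref{prop:Hausdorff_LPC_gluing}. Hence the fibre of $(X)_\gl$ over $q$ is $X(e)$, the fibre of $(G)_\gl$ over $q$ is $G(e)$ acting in the given way, and the fibre of $(X/G)_\gl$ over $q$ is $X(e)/G(e)$. On the fibre over $q$, $\bar\Phi$ is then the identity of $X(e)/G(e)$, so it is bijective.

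\textbf{Step 3: openness (the main obstacle).} It remains to show $\bar\Phi$ is a homeomorphism, i.e.\ that its inverse is continuous. I would use universal properties of quotients rather than direct point-set arguments. Consider the composite
\[
\coprod_e X(e)\times\bar e\to\coprod_e \bigl(X(e)/G(e)\bigr)\times\bar e\to (X/G)_\gl .
\]
Its first arrow is a quotient map: each $X(e)\to X(e)/G(e)$ is an open map, being a quotient by a group action, so its product with $\id_{\bar e}$ is again an open surjection, hence a quotient map. The second arrow is a quotient map by definition. So the composite is a quotient map. It also equals the composite
\[
\coprod_e X(e)\times\bar e\xrightarrow{\ \pi_X\ }(X)_\gl\to(X)_\gl/(G)_\gl\xrightarrow{\ \bar\Phi\ }(X/G)_\gl .
\]
Now take $U\subseteq (X)_\gl/(G)_\gl$ open. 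Its preimage in $\coprod_e X(e)\times\bar e$ is open and saturated for the composite quotient map, since $\bar\Phi$ is bijective. Therefore $\bar\Phi(U)$ is open in $(X/G)_\gl$, so $\bar\Phi$ is open and hence a homeomorphism.

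The delicate point, and the step I expect to cost the most care, is verifying that openness of the quotient maps $X(e)\to X(e)/G(e)$ really passes to the product with $\bar e$. Products of quotient maps are not quotient maps in general, which is why I route the argument through open maps. Finally, Hausdorffness of $(X)_\gl/(G)_\gl$, needed for it to be a $Q$-space, follows once the homeomorphism is established, provided the $X(e)/G(e)$ are Hausdorff, by Proposition~\ref{prop:Hausdorff_LPC_gluing}.
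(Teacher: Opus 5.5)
Your proof is correct and follows the paper's argument. You glue the cosheaf morphism $X\to X/G$ into $(p)_\gl$, check that its fibres are exactly the orbit classes of $(G)_\gl$, and use the square with the gluing quotients and $h=\coprod_e p_e\times\id_{\bar e}$ to show that $(p)_\gl$ is a quotient map (equivalently, that your $\bar\Phi$ is open). Your justification that $h$ is a quotient map, via openness of the orbit projections, is more careful than the paper's: the paper only asserts it from $p_e$ being quotient maps, which needs either such an openness argument or the compactness of $\bar e$.
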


\begin{proof}
	The functoriality of the gluing implies that the morphism of cosheaves of spaces $p:X\rightarrow X/G$, yields a continuous surjective map $(p)_\gl:(X)_\gl\rightarrow (X/G)_\gl$. Moreover, as a direct consequence of Definitions~\ref{dfn:quotient} and \ref{dfn:gluing}, one finds that two points in $(X)_\gl$ have the same image under $(p)_\gl$ if and only if they are equivalent for the relation (\ref{rel:quotient}) induced on $(X)_\gl$ by the action of $(G)_\gl$. Hence, the proposition will follow once we showed that $(p)_\gl$ is a quotient map. We have the following commutative square:
	\begin{equation*}
		\begin{tikzcd}
			\displaystyle\coprod_{e\in \Cell P} X(e)\times \bar{e} \ar[d,"h:=\coprod_e p_e\times\id_{\bar{e}}" left] \ar[r,"f"]& (X)_\gl \ar[d,"(p)_\gl"]\\
			\displaystyle\coprod_{e\in \Cell P} (X/G)(e)\times \bar{e} \ar[r,"g" below]& (X/G)_\gl
		\end{tikzcd}
	\end{equation*}
	The maps $f$ and $g$ are induced from the gluing construction and are quotient maps. Since the maps $p_e$ are quotient maps for all cells $e$, the map $h$ is also a quotient map. Thus, every subset $A$ of $(X/G)_\gl$ whose inverse image by $(p)_\gl$ is open, satisfies $h^{-1}(g^{-1}(A))$ open. Since $g$, and $h$ are quotient maps, we find $g^{-1}(A)$ open and then $A$ open. Therefore, $(p)_\gl$ is a quotient map and the proposition holds. 
\end{proof}

\begin{prop}\label{prop:regular_subgroup}
	Let $G$ be a Lie group and $Q$ be a topological space endowed with a regular cellular structure $\Cell Q$. Let $H$ be a cosheaf of closed subgroups of $G$ on $\Cell Q$. Let $q_0$ be a point in $Q$ and $e_0$ be the open cell that contains $q_0$. There is an $(H)_\gl$-equivariant open neighbourhood $U$ of $H(e_0)\times\{q_0\}$ in $G_Q$ and a retraction by deformations $h:[0;1]\times U\rightarrow U$ of $U$ onto $H_q\times\{q\}$ such that:\begin{enumerate}
		\item The image of $U\cap (H)_\gl$ by $h_t$ is contained in itself at every time $t$ in $[0;1]$;
		\item If $y$ and $z$ are two points of $U$ lying above the same point of $Q$, the paths $t\mapsto||h_t(y)||$ and $t\mapsto||h_t(z)||$ are identical.
	\end{enumerate}
\end{prop}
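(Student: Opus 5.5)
Since the deformation must keep $(H)_\gl$ inside itself and move fibres coherently, the plan is to build $h$ as a product of two independent pieces: a deformation of the base $Q$ near $q_0$ that flows into the closed cell $\bar e_0$ and onto $q_0$, and a deformation of the group factor $G$ near $H(e_0)$ onto $H(e_0)$. Taking a product makes property~2 automatic, because $\|h_t(g;q)\|$ will depend only on $q$.

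\emph{Step 1 (the base).} I would use regularity of $\Cell Q$ to choose an open star neighbourhood $V=\st(e_0)$ of $q_0$. By definition $V$ is the union of the open cells $e\geq e_0$. Since the closed cells are embedded balls, I can build a deformation retraction $r:[0;1]\times V\rightarrow V$ of $V$ onto $\{q_0\}$. I will build it cell by cell, by induction on dimension, using the standard radial structure of a regular CW complex near a point, and require that it satisfy two conditions:
\begin{itemize}
\item[(a)] $r_t(e)\subset \st(e_0;\bar e)$ for every cell $e\geq e_0$ and every $t$;
\item[(b)] $r_t$ maps each $\st(e_0;\bar e)$ into itself.
\end{itemize}
In other words, points only flow towards lower cells containing $e_0$ in their closure. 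This is the main obstacle: the radial deformations have to be chosen compatibly on faces so that they glue.

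\emph{Step 2 (the fibres).} Set $K=H(e_0)$, a closed subgroup of the Lie group $G$. I would take a slice for the right $K$-action: a submanifold $S$ through $1$ transverse to $K$ such that $S\times K\rightarrow G$, $(s;k)\mapsto sk$, is a diffeomorphism onto an open set $W\supset K$. Contracting $S$ radially to $1$ then gives a deformation $\rho_t$ of $W$ onto $K$. This deformation commutes with right multiplication by $K$ and fixes $K$ pointwise, since $\rho_t(sk)=\sigma_t(s)k$.

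\emph{Step 3 (assembling $h$).} I would set $U=W\times V$ and $h_t(g;q)=(\rho_t(g);r_t(q))$. At $t=1$ this lands in $K\times\{q_0\}$, which is $H_{q_0}\times\{q_0\}$ because $e_0\ni q_0$. Equivariance follows from the $K$-equivariance of $\rho_t$: for $q\in e\geq e_0$ we have $H(e)\subset K$ by the cosheaf transition maps, so $(H)_\gl$ acts on $U$ through subgroups of $K$.

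\emph{Step 4 (property~1).} Take $(g;q)\in (H)_\gl\cap U$ with $q$ in a cell $e$; then $g\in H(e)\subset K$. Since $\rho$ fixes $K$ pointwise, $\rho_t(g)=g$. By (a), $r_t(q)$ lies in a cell $e'$ with $e_0\leq e'\leq e$, and the transition maps give $H(e)\subset H(e')$. Hence $h_t(g;q)=(g;r_t(q))\in (H)_\gl$.

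The hardest part is Step 1, building a cell-monotone contraction of the star on a general regular CW complex. I would first try barycentric subdivision: the star of $e_0$ is then the open star of a vertex in a simplicial complex, and the straight-line homotopy to the barycentre satisfies (a) and (b).
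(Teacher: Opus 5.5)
Your proposal is correct and follows essentially the same route as the paper: after barycentric subdivision, take $U=T\times S$, where $T$ is a right-$H(e_0)$-invariant slice neighbourhood of $H(e_0)$ deformed onto $H(e_0)$ while fixing it pointwise, and $S$ is an open star contracted linearly. Property~2 then comes from the product form, and property~1 from the base contraction only visiting faces while points of $H(e)\subset H(e_0)$ stay fixed. The one thing to adjust is that the linear contraction must end at $q_0$, not at an arbitrary barycentre: either choose $q_0$ as the barycentre of $e_0$ when subdividing, or, as the paper does, take the open star of the subdivided simplex containing $q_0$ and contract it linearly onto $q_0$.
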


\begin{proof}
	Up to replacing $\Cell Q$ by its barycentric subdivision \cite[Theorem~1.7]{lundell_topology_1969}, we may assume $\Cell Q$ to be a simplicial complex. Since $H(e_0)$ is a closed subgroup of $G$ it acts properly on $G$, cf. \cite[Chapitre~III, \S 4, \textnumero 1, Exemple 1]{bourbaki_topologie_2007-1}. Thus, we can find an $H(e_0)$-equivariant tubular neighbourhood $T$ of $H(e_0)$ using a slice, cf. \cite[Theorem~B.24]{guillemin_moment_2002}. We denote by $k$ a retraction by deformations of $T$ onto $H(e_0)$. We define $U$ to be $T\times S$ where $S$ is the open star of $e_0$ in $\Cell Q$. We also consider the retraction by deformations $l$ of $S$ onto $\{q_0\}$ defined by the formula $tq_0+(1-t)q$. It is well defined for we assumed $\Cell Q$ to be a simplicial complex. We note that, for all $q$ in $S$, $l_t(q)$ only visits faces of the cell that contains $q$ as $t$ varies. Then, by construction, $h_t:(g;q)\mapsto (k_t(g);l_t(q))$ defines a retraction by deformation of $U$ onto $H(e_0)\times \{q_0\}$. Let $(g;q)$ be a point of $U\cap (H)_\gl$. It means that if $e$ is the cell that contains $q$, then $g$ belongs to $H(e)$. Since $H$ is a cosheaf of subgroups of $G$, $H(e)$ is a subgroup of $H(e_0)$. Thus, $h_t(g;q)$ is simply $(g;l_t(q))$. The assumption on $l$ implies $h_t(U\cap (H)_\gl)$ is contained in $U\cap (H)_\gl$. Moreover, the same remark shows that $U$ is $(H)_\gl$-equivariant for $T$ is assumed to be $H(e_0)$-equivariant. The second property is immediate for $||h_t(g;q)||$ is, by definition, $l_t(q)$.
\end{proof}

\begin{rem}
	The gluing construction is some kind of homotopy colimit of the cosheaf.
\end{rem}

\begin{prop}\label{prop:hp_colim_cellular}
	Let $H$ be a cosheaf of Lie groups on a regular cellular structure $\Cell\,Q$ of $Q$. There is a natural isomorphism:
	\begin{equation*}
		\underset{e\in\Cell Q}{\colim} \pi_0\big(H(e)\big) \cong \hat{\pi}_0\big((H)_\gl\big). 
	\end{equation*}
\end{prop}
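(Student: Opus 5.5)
Both sides are characterised by a universal property, so I would show they corepresent the same functor on discrete groups. By Proposition~\ref{prop:adjunction}, for every discrete group $D$ we have a natural bijection between group morphisms $\hat{\pi}_0((H)_\gl)\rightarrow D$ and morphisms of $Q$-groups $(H)_\gl\rightarrow D_Q$. By definition of the colimit, group morphisms $\colim_e \pi_0(H(e))\rightarrow D$ are compatible families of group morphisms $\varphi_e:\pi_0(H(e))\rightarrow D$. Here compatible means that $\varphi_{e_1}\circ \pi_0(r_{e_2e_1})=\varphi_{e_2}$ for $e_1\leq e_2$, where $r_{e_2e_1}:H(e_2)\rightarrow H(e_1)$ denotes the transition map. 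Equivalently, they are compatible families of Lie group morphisms $H(e)\rightarrow D$. So it suffices to build a natural bijection between such families and $Q$-group morphisms $(H)_\gl\rightarrow D_Q$. Yoneda then gives the isomorphism, and its naturality in $H$ follows from the naturality of these bijections.

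In one direction, take a $Q$-group morphism $f:(H)_\gl\rightarrow D_Q$. Compose it with the quotient map $\pi:\coprod_e H(e)\times\bar e\rightarrow (H)_\gl$ and project to $D$. On each piece $H(e)\times \bar e$ this gives a continuous map to $D$. It is constant along $\bar e$ because $\bar e$ is connected and $D$ is discrete, so it is a continuous map $\varphi_e:H(e)\rightarrow D$. Each $\varphi_e$ is a group morphism, because $\pi$ restricted to $H(e)\times\{q\}$, for $q\in e$, is an isomorphism onto the fibre $(H)_{\gl,q}$ and $f$ is a morphism on fibres. Compatibility comes from the identification $d\sim u$: a point $(h;q)$ of $H(e_2)\times\bar e_1$ is identified with $(r_{e_2e_1}(h);q)$ in $H(e_1)\times \bar e_1$, so $\varphi_{e_2}(h)=\varphi_{e_1}(r_{e_2e_1}(h))$.

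In the other direction, a compatible family defines the map $\coprod_e H(e)\times\bar e\rightarrow D\times Q$ given by $(h;q)\mapsto(\varphi_e(h);q)$. It is continuous, and by compatibility it is constant on the fibres of $\pi$, so it descends to a continuous map $(H)_\gl\rightarrow D_Q$ over $Q$. To see it is a $Q$-group morphism, I would check it fibrewise. The fibre over $q$ in the open cell $e$ is identified with $H(e)$, with the group law of $(H)_\gl$; this is how the gluing of a cosheaf of groups is made a $Q$-group, via $((H)_\gl)^2_Q\cong (H^2)_\gl$ as used in Proposition~\ref{prop:Hausdorff_LPC_gluing}. On that fibre the map is $\varphi_e$, which is a morphism. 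The two constructions are visibly inverse to each other and natural in $D$.

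The main obstacle is the bookkeeping that the group structure on $(H)_\gl$ is exactly the fibrewise one coming from the cells, i.e. that over an open cell $e$ the fibre is $H(e)$ with its own multiplication. Once that identification is made precise, everything else is formal. Discreteness of $D$ and connectedness of the closed cells $\bar e$ are what kill the dependence on the $Q$-coordinate.
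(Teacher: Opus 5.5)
Your proposal is correct and follows essentially the same route as the paper. Both arguments identify $Q$-group morphisms $(H)_\gl\rightarrow D_Q$ with compatible families of group morphisms $\pi_0(H(e))\rightarrow D$, using discreteness of $D$, connectedness of the closed cells, and the identification of the fibre over a point of the open cell $e$ with $H(e)$; both then conclude via the adjunction of Proposition~\ref{prop:adjunction} and the universal property of the colimit. The only difference is that you spell out the converse direction (descending the family through the quotient map and checking the morphism property fibrewise), which the paper dismisses with a single ``reciprocally''.
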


\begin{proof}
	Let $D$ be a discrete group and $f:(H)_\gl\rightarrow D_Q$ be a morphism of $Q$-groups. We can consider the following composition of continuous map:
	\begin{equation*}
		\begin{tikzcd}
			\displaystyle\coprod_{e\in \Cell Q} H(e)\times \bar{e} \ar[r,"g"]& (H)_\gl \ar[r,"f"]& D_Q \ar[r,"\pr_D"]& D.
		\end{tikzcd}
	\end{equation*}
	It is locally constant for $D$ is discrete and the source is locally connected. It defines a maps $f_e:\pi_0(H(e))\rightarrow D$ for every cell $e$. These maps are group morphism. Indeed, if $q$ is a point of the relative interior of $e$, and $h_1,h_2$ belong to $H(e)$, the product $g(h_1;q)\cdot g(h_2;q)$ equals $g(h_1h_2;q)$. Hence:
	\begin{equation*}
		\big(f_e\big([h_1]\cdot[h_2]\big);q\big)=fg(h_1h_2;q)=fg(h_1;q)\cdot fg(h_2;q) = \big(f_e[h_1]\cdot f_e[h_2];q\big).
	\end{equation*}
	Furthermore, given the definition of the gluing, we find that the following diagram commutes for every couple of adjacent cells $e_1\leq e_2$:
	\begin{equation*}
		\begin{tikzcd}
			\pi_0\big(H(e_1)\big) \ar[rd,"f_{e_1}"]\\
			& D \\
			\pi_0\big(H(e_2)\big) \ar[uu,"t_*"] \ar[ru,"~\;f_{e_2}" below]
		\end{tikzcd}
	\end{equation*}
	where $t:H(e_2)\rightarrow H(e_1)$ denotes the transition morphism of $H$.  Thus, $f$ uniquely yields a cosheaf morphism $\pi_0(H)\rightarrow D$. Reciprocally, every such cosheaf morphism yields a morphism of $Q$-groups $(H)_\gl\rightarrow D_Q $. Now we can conclude by remarking that the set of cosheaf morphisms $\pi_0(H)\rightarrow D$ is naturally isomorphic to the set of morphisms $\colim_{e\in \Cell Q} \pi_0(H(e)) \rightarrow D$ by the universal property of the colimit.
\end{proof}

\begin{prop}
	Let $\Cell Q$ be a cellular structure on $Q$. The gluing functor from the category of cosheaves of discrete spaces (resp. discrete groups) on $\Cell Q$ to the category of $Q$-spaces (resp. $Q$-groups) is fully faithful.
\end{prop}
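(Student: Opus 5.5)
We need to show that the gluing functor is fully faithful on cosheaves of discrete spaces, and then on cosheaves of discrete groups. Fix two such cosheaves $X$ and $Y$ on $\Cell Q$. We will build an inverse to the map $\Hom(X;Y)\rightarrow\Hom_Q((X)_\gl;(Y)_\gl)$, $\varphi\mapsto(\varphi)_\gl$. The basic observation is the following. For each cell $e$, the restriction of the quotient map $\pi$ to $X(e)\times e$ is injective, as was used in the proof of Proposition~\ref{prop:Hausdorff_LPC_gluing}. Its image is exactly the preimage of the open cell $e$ in $(X)_\gl$. Since $X(e)$ is discrete and $e$ is connected, the connected components of this preimage should be precisely the sets $\pi(\{s\}\times e)$ for $s\in X(e)$. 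To justify this, I would check that $\pi$ restricted to $X(e)\times e$ is a homeomorphism onto its image. The saturation argument with open stars from Proposition~\ref{prop:Hausdorff_LPC_gluing} should give this: for $s\in X(e)$, the set $\coprod_{e'\geq e}t^{-1}(s)\times\st(e;\bar e')$ is open and saturated, and its intersection with the preimage of $e$ is $\pi(\{s\}\times e)$.

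Given a morphism $f:(X)_\gl\rightarrow(Y)_\gl$ over $Q$, the map $f$ sends the preimage of $e$ into the preimage of $e$, and it must respect connected components there. So for each $s\in X(e)$ there is a unique $\varphi_e(s)\in Y(e)$ with $f(\pi(s;q))=\pi(\varphi_e(s);q)$ for all $q\in e$. The next step is naturality, that is $t^Y\circ\varphi_{e_2}=\varphi_{e_1}\circ t^X$ for $e_1\leq e_2$. Take $s\in X(e_2)$ and a point $q\in e_1\subset\bar e_2$. The continuous path $\pi(\{s\}\times\bar e_2)$ is connected and contains $\pi(t^X(s);q)$ in its closure. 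Its image under $f$ is the connected set $f(\pi(\{s\}\times e_2))=\pi(\{\varphi_{e_2}(s)\}\times e_2)$. By continuity, $f(\pi(t^X(s);q))$ lies in the closure of that set inside the fibre over $q$. Using the Hausdorff separation from Proposition~\ref{prop:Hausdorff_LPC_gluing}, this closure meets the fibre over $q$ only in $\pi(t^Y(\varphi_{e_2}(s));q)$. This yields the naturality identity. Faithfulness is then immediate, because $\varphi_e$ is determined by $f$ on $X(e)\times e$. Fullness follows because $(\varphi)_\gl$ agrees with $f$ on each image $\pi(X(e)\times e)$, and these images cover $(X)_\gl$.

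For groups, it remains to see that if $f$ is a $Q$-group morphism then each $\varphi_e$ is a group morphism. This follows by evaluating the equation $f(\pi(h_1h_2;q))=f(\pi(h_1;q))f(\pi(h_2;q))$ at any $q\in e$, exactly as in the proof of Proposition~\ref{prop:hp_colim_cellular}. Conversely, gluings of morphisms of group cosheaves are already $Q$-group morphisms by functoriality.

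The main obstacle is the closure computation in the naturality step: showing that the only point of the fibre over $q\in e_1$ adherent to $\pi(\{y\}\times e_2)$ is $\pi(t^Y(y);q)$. The first thing I would try is to use the explicit saturated open sets $\coprod_{e'\geq e_1}(t^Y)^{-1}(z)\times\st(e_1;\bar e')$ for $z\in Y(e_1)$. These sets separate the points of the fibre over $q$. The set for $z$ meets $\pi(\{y\}\times e_2)$ only when $t^Y(y)=z$.
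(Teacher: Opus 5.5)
Your proposal is correct and follows the same route as the paper. You restrict $f$ to the open-cell strata and use discreteness and connectedness to get the maps $\varphi_e$. You then use continuity at the boundary of $\bar e_2$ to force compatibility with the transition maps, and you check the group case fibrewise.

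The differences are only in execution. You explicitly verify, via the saturated open-star sets, that $\pi$ restricted to $X(e)\times e$ is a homeomorphism onto its image; the paper uses this tacitly. You obtain naturality by computing the closure of $\pi(\{y\}\times e_2)$ in the fibre. The paper instead argues that the two maps $X(e)\times\bar e\rightarrow (Y)_\gl$ induced by $f$ and by $f_e\times\id_{\bar e}$ agree on the dense subset $X(e)\times e$, hence everywhere since $(Y)_\gl$ is Hausdorff.
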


\begin{proof}
	The faithfulness is very easy to derive given the gluing construction, hence we will only show the fullness. Let $X$ and $Y$ be two cosheaves of discrete spaces on $\Cell Q$ and $f:(X)_\gl\rightarrow (Y)_\gl$ be a morphism of $Q$-space. Let $e$ be an open cell of $Q$. Since $f$ commutes with the ground projection, it maps $X(e)\times e$ continuously on $Y(e)\times e$ as a map of $e$-spaces. Thus, $e$ being connected and $X(e)$ being dicrete, the restriction of $f$ to $X(e)\times e$ has the form $(x;q)\mapsto (f_e(x);q)$ for a uniquely defined map $f_e:X(e)\rightarrow Y(e)$. Note that, by construction, the inclusion of $X(e)\times e$ in $(X)_\gl$ extends by continuity as a map $u_e:X(e)\times \bar{e}\rightarrow (X)_\gl$ and likewise as a map $v_e$ for $Y$. Moreover, by density, we must have the following identity:
	\begin{equation*}
		f\circ u_e=v_e\circ(f_e\times\id_{\bar{e}}).
	\end{equation*}
	Therefore, continuity imposes that we have the following commutative diagram for every pair of adjacent cells $e_1\leq e_2$:
	\begin{equation*}
		\begin{tikzcd}
			X(e_1)\times\bar{e}_1 \ar[ddd,"f_{e_1}\times\id" swap]  \ar[rrrrd, bend left, "u_{e_1}"] \\
			& X(e_2)\times\bar{e}_1 \ar[d,"f_{e_2}\times\id"] \ar[rr,"\id\times(\bar{e}_1\subset \bar{e}_2)"] \ar[lu,"t^X_{e_1,e_2}\times \id" swap] && X(e_2)\times\bar{e}_2 \ar[d,"f_{e_2}\times\id"] \ar[r,"u_{e_2}"] & (X)_\gl \ar[d, "f"] \\
			 & Y(e_2) \times\bar{e}_1 \ar[rr,"\id\times(\bar{e}_1\subset \bar{e}_2)" swap] \ar[ld,"t^Y_{e_1,e_2}\times \id"] && Y(e_2)\times\bar{e}_2 \ar[r,"v_{e_2}" swap] & (Y)_\gl \\
			 Y(e_1)\times\bar{e}_1  \ar[rrrru, bend right, "v_{e_1}" swap] 
		\end{tikzcd}
	\end{equation*}
	where $t$ denotes the transition map of the respective cosheaves. Hence, the collection $(f_e)_{e\in \Cell Q}$ forms a morphism of cosheaves $X\rightarrow Y$ yielding $f$. If both $X$ and $Y$ are cosheaves of groups and $f$ is a $Q$-group morphism, we verify without difficulty that each $f_e$ is a group morphism so that the proposition follows for group objects.  
\end{proof}

\subsection{Cohomology}
We introduce the $Q$-groups cohomology by following the classical construction. These groups, especially in small degrees, will foster useful obstruction classes. In addition of giving the definition, we provide elementary results that compare these groups with the classical group cohomology for the types of coefficients that will be of most interest in this article.  

\begin{dfn}
	Let $G\rightarrow Q$ be a group. A \emph{$G$-module} is an Abelian $Q$-group $M\rightarrow Q$ endowed with a continuous action $a:G\times_Q M \rightarrow M$ such that the following diagram commutes:
	\begin{equation*}
		\begin{tikzcd}
			G\times_Q M\times_Q M \ar[rr,"\id_G\times m_M"] \ar[d,"a^2"] && G\times_Q M \ar[d,"a"] \\
			M\times_Q M \ar[rr,"m_M" swap] && M
		\end{tikzcd}
	\end{equation*}
	where $m_M$ is the group law of $M$, and $a^2$ is the diagonal action of $G$ on $M\times_Q M$ induced by $a$.
\end{dfn}

\begin{dfn}[Group Cohomology]\label{def:Qgroup_cohomo}
	Let $G\rightarrow Q$ be a group and $M\rightarrow Q$ be a $G$-module. For every integer $k\geq 0$, a \emph{$k$-cochain} $\alpha$ on $G$ with values in $M$, is a continuous map of $Q$-spaces $\alpha:G^k_Q\rightarrow M$. They form a group under pointwise addition. We denote it by $C^k_Q(G;M)$. Just as in classical group cohomology, these groups are endowed with a differential $\df:C^k_Q(G;M)\rightarrow C^{k+1}_Q(G;M)$ defined as follows:
	\begin{equation*}
		\df\alpha(g_1;...;g_{k+1}):=g_1\cdot\alpha(g_2;...;g_{k+1})+\sum_{i=1}^k(-1)^i\alpha(g_1;...;g_{i-1};g_ig_{i+1};g_{i+2};...;g_{k+1}) + (-1)^{k+1}\alpha(g_1;...;g_k).
	\end{equation*}
	The cohomology of this complex is denoted by $H^k_Q(G;M)$.
\end{dfn}

\paragraph{Discrete Modules}
The category of $G$-module without further assumptions is not Abelian in general. This is already the case when $G$ is the trivial group over the point. Hence, we restrict our attention to a better behaved subcategory of $G$-modules.

\begin{dfn}
	Let $G$ be a $Q$-group. A $G$-module is called \emph{discrete} if its underlying $Q$-group of the form $M_Q$ for some discrete Abelian group $M$.
\end{dfn} 

\begin{prop}
	Let $G$ be a group over $Q$. The category of discrete $G$-modules is equivalent to the category of discrete $\hat{\pi}_0(G)$-modules.
\end{prop}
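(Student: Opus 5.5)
We construct two mutually quasi-inverse functors between the category of discrete $G$-modules and the category of $\hp(G)$-modules, using Proposition~\ref{prop:adjunction} at each step. In one direction, given a $\hp(G)$-module $M$ with action $\rho:\hp(G)\rightarrow\Aut(M)$, we let $G$ act on $M_Q$ through the unit: $g\cdot(m;||g||):=(\eta_G(g)\cdot m;||g||)$. This map $G\times_Q M_Q\rightarrow M_Q$ is continuous. Indeed, $G\times_Q M_Q\cong G\times M$ with $M$ discrete, and $\eta_G$ is locally constant because $G$ is locally path-connected. The action and module axioms hold fibrewise because $\eta_G$ is a morphism of $Q$-groups. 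A morphism of $\hp(G)$-modules $\varphi:M\rightarrow N$ goes to $\varphi\times\id_Q$, which is clearly $G$-equivariant.

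In the other direction, let $M_Q$ be a discrete $G$-module with action $a$. We encode $a$ as a map $\tilde{a}:G\rightarrow\Aut(M)_Q$ by setting $\tilde{a}(g)=(m\mapsto \pr_M a(g;(m;||g||));||g||)$. Each $\tilde{a}(g)$ is a group automorphism of $M$ by the module axiom and the unit axiom, and $\tilde{a}$ respects the fibrewise products by the associativity square. The key point, and the main obstacle, is continuity. We must check that $\tilde{a}$ is a continuous map into $\Aut(M)_Q$ with $\Aut(M)$ discrete, i.e.\ that $g\mapsto\tilde{a}(g)$ is locally constant on $G$. For each fixed $m$, the map $g\mapsto \pr_M a(g;(m;||g||))$ is continuous on $G\cong G\times_Q(\{m\}\times Q)$ with values in the discrete set $M$, so it is locally constant. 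However, $M$ may be infinite, and local constancy for every $m$ separately does not give a single neighbourhood that works for all $m$.

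I would handle this by noting that $\tilde{a}$ is constant on each path component of $G$. Along a path $\gamma$ in $G$, each function $t\mapsto \pr_M a(\gamma(t);(m;||\gamma(t)||))$ is a continuous map from $[0;1]$ to a discrete set, hence constant. So $\tilde{a}(\gamma(0))=\tilde{a}(\gamma(1))$ as automorphisms of $M$. Since $G$ is locally path-connected, its path components are open, so $\tilde{a}$ is locally constant and therefore continuous as a map $G\rightarrow\Aut(M)_Q$. It is thus a morphism of $Q$-groups. By Proposition~\ref{prop:adjunction} it factors uniquely as $\rho_Q\circ\eta_G$ for a group morphism $\rho:\hp(G)\rightarrow\Aut(M)$, which makes $M$ a $\hp(G)$-module.

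For morphisms, a $G$-equivariant morphism of $Q$-groups $M_Q\rightarrow N_Q$ is of the form $\varphi\times\id_Q$ with $\varphi$ a group morphism, by the discrete case of Proposition~\ref{prop:adjunction}, or directly because $Q$ is connected. Equivariance for all $g$ says that $\varphi\circ\tilde{a}_M(g)=\tilde{a}_N(g)\circ\varphi$. Since $\eta_G$ has image generating $\hp(G)$ (from the presentation (\ref{eq:pi_0_hat})), this is equivalent to $\varphi$ being $\hp(G)$-linear. Finally, the two constructions are inverse to each other on objects, by the uniqueness in the adjunction, and they are the identity on morphisms. So they give an equivalence, and indeed an isomorphism of categories.
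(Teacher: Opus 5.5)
Your proof is correct and follows the paper's overall route. Both encode the action as a $Q$-group morphism $G\rightarrow\Aut(M)_Q$, factor it through $\hp(G)$ by the adjunction of Proposition~\ref{prop:adjunction}, and go back by composing with $\eta_G$. The real difference is how continuity of $g\mapsto\tilde a(g)$ into the discrete group $\Aut(M)$ is established.

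The paper invokes the exponential law for the compact-open topology, using that $M$ is locally compact. It then asserts that $\Aut(M)$ with the compact-open topology is discrete. Because compact subsets of a discrete $M$ are finite, that topology is the topology of pointwise convergence. It is discrete when $M$ is finitely generated, but not in general. For $M=\bigoplus_{\mathbb{N}}\Z$, automorphisms permuting basis vectors outside any given finite set lie in every neighbourhood of the identity.

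This is exactly the difficulty you flagged, and your path argument settles it. It shows directly that $\tilde a$ is constant on path components of $G$, which are open because $G$ is locally path-connected. (The paper's argument could be repaired the same way: the pointwise topology is totally disconnected, so a continuous map into it from a locally connected space is locally constant.)

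Your write-up also treats morphisms explicitly. It shows that equivariant maps $M_Q\rightarrow N_Q$ are $\varphi\times\id_Q$ with $\varphi$ $\hp(G)$-linear, because the image of $\eta_G$ generates $\hp(G)$. So you get an isomorphism of categories, where the paper only calls this part a routine check.
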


\begin{proof}
	Let $M_Q$ be a discrete $G$-module. We denote the group of automorphisms of $M$ by $\Aut(M)$. The following map is continuous by construction:
	\begin{equation*}
		\begin{array}{rcl}
			b:G\times M & \longrightarrow & M\\
			(g;m) & \longmapsto & \pr_M\big(g\cdot(m;||g||)\big).
		\end{array}
	\end{equation*}
	Following \cite[Chapitre~X, \S 3, \textnumero 4, Théorème 3]{bourbaki_topologie_2007}, the map:
	\begin{equation*}
		\begin{array}{rcl}
			G & \longrightarrow & \Aut(M)\\
			g & \longmapsto & [m\mapsto b(g;m)].
		\end{array}
	\end{equation*}
	is continuous for the compact-open topology of $\Aut(M)$ for $M$ is locally compact. Since $M$ is discrete, $\Aut(M)$ endowed with the compact-open topology is discrete. Hence, the following map $f$ is a morphism of $Q$-groups:
	\begin{equation*}
		\begin{array}{rcl}
			f:G & \longrightarrow & \Aut(M)_Q\\
			g & \longmapsto & \big([m\mapsto b(g;m)];||g||\big).
		\end{array}
	\end{equation*}
	Reciprocally, every such morphism $f$ endows the discrete Abelian $Q$-group $M_Q$, with a structure of $G$-module. Following Proposition~\ref{prop:adjunction}, $f$ yields a unique group morphism $f_*:\hat{\pi}_0(G)\rightarrow \Aut(M)$, and this procedure endows $M$ with the structure of discrete $\hat{\pi}_0(G)$-module. A routine check ensures that it defines a functor:
	\begin{equation*}
		\begin{array}{rcl}
			\hat{\pi}_0:\big(\textnormal{Discrete } G\textnormal{-Modules}\big) & \longrightarrow & \big(\textnormal{Discrete } \hat{\pi}_0(G)\textnormal{-Modules}\big)\\[5pt] 
			\big(M_Q;f:G\rightarrow \Aut(M)_Q\big) & \longmapsto & \big(M;f_*:\hat{\pi}_0(G)\rightarrow \Aut(M)\big).
		\end{array}
	\end{equation*}
	Moreover, the unit of the adjunction $\eta_G:G\rightarrow \hat{\pi}_0(G)_Q$ defines a functor in the other direction:
	\begin{equation*}
		\begin{array}{rcl}
			(-)_Q:  \big(\textnormal{Discrete } \hat{\pi}_0(G)\textnormal{-Modules}\big)&\longrightarrow & \big(\textnormal{Discrete } G\textnormal{-Modules}\big)\\[5pt]
			\big(M;g:\hat{\pi}_0(G)\rightarrow \Aut(M)\big) & \longmapsto & \big(M_Q;g_Q\circ\eta_G:G\rightarrow \Aut(M)_Q \big). 
		\end{array}
	\end{equation*}
	The adjunction, cf. Proposition~\ref{prop:adjunction}, implies that they yield an equivalence. 
\end{proof}

\begin{prop}
	Let $G$ be a {\simul} $Q$-group. The family of cohomology functors $[M\mapsto H^k_Q(G;M_Q)]_{k\geq0}$ has a canonical structure of $\delta$-functor on the category of $\hp(G)$-modules.
\end{prop}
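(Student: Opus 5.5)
The plan is to verify directly the axioms of a $\delta$-functor: each $H^k_Q(G;-)$ is an additive functor, and every short exact sequence of $\hp(G)$-modules $0\rightarrow M'\rightarrow M\rightarrow M''\rightarrow 0$ yields connecting morphisms $\delta:H^k_Q(G;M''_Q)\rightarrow H^{k+1}_Q(G;M'_Q)$ that are natural in the short exact sequence and fit into a long exact sequence. Functoriality in $M$ is immediate. By the previous proposition, a morphism of $\hp(G)$-modules $M\rightarrow N$ gives a morphism of discrete $G$-modules $M_Q\rightarrow N_Q$. Post-composition with it sends $C^k_Q(G;M_Q)$ to $C^k_Q(G;N_Q)$, is additive, and commutes with $\df$.

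The heart of the proof is to show that
\begin{equation*}
	0\rightarrow C^\bullet_Q(G;M'_Q)\rightarrow C^\bullet_Q(G;M_Q)\rightarrow C^\bullet_Q(G;M''_Q)\rightarrow 0
\end{equation*}
is a short exact sequence of cochain complexes. The long exact sequence and its natural connecting morphism then follow from the snake lemma, with naturality in morphisms of short exact sequences by the standard argument.

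I will use the following identification. A continuous $Q$-map $G^k_Q\rightarrow N_Q$ with $N$ discrete amounts to a locally constant map $G^k_Q\rightarrow N$. Since $G$ is \simul, $G^k_Q$ is locally path-connected, so its connected components are open. Hence such maps are exactly the arbitrary maps $\pi_0(G^k_Q)\rightarrow N$, giving
\begin{equation*}
	C^k_Q(G;N_Q)\cong\textnormal{Map}\big(\pi_0(G^k_Q);N\big)=\prod_{\pi_0(G^k_Q)}N.
\end{equation*}
This identification is natural in $N$ as abelian groups; the differential is not used here.

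With this description, exactness at the left and in the middle holds componentwise. Surjectivity on the right follows by lifting the value on each component through the set-theoretic surjection $M\rightarrow M''$, which is possible since we are only dealing with products of sets. Exactness of products of exact sequences of abelian groups finishes the argument. Canonicity follows because the snake-lemma connecting morphism does not depend on these choices of lifts.

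The main obstacle is precisely this surjectivity step. Without the hypothesis that $G$ is \simul, $G^k_Q$ might have non-open components, and a continuous lift of a cochain could fail to exist. I will be explicit that the hypothesis is exactly what makes the components of each $G^k_Q$ open. The result is that $C^k_Q(G;-)$ is an exact functor from $\hp(G)$-modules to abelian groups. Finally, I will remark that the category of $\hp(G)$-modules is abelian by the previous equivalence, so the notion of $\delta$-functor makes sense there.
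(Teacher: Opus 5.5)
Your proposal is correct and follows the paper's argument. The paper likewise shows that $0\to C^*_Q(G;K_Q)\to C^*_Q(G;L_Q)\to C^*_Q(G;M_Q)\to 0$ is exact, deduces surjectivity on the right from the local (path-)connectedness of the fibre products $G^k_Q$, and takes the connecting morphisms of the resulting long exact sequence as the $\delta$-structure; your identification $C^k_Q(G;N_Q)\cong\prod_{\pi_0(G^k_Q)}N$ simply spells out the step the paper leaves implicit here (and uses again in Proposition~\ref{prop:iso_defect_cohomo}).
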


\begin{proof}
	Let us consider an exact sequence of $\hp(G)$-modules $0 \longrightarrow K \overset{f}{\longrightarrow} L \overset{g}{\longrightarrow} M \longrightarrow 0$.	It yields an exact sequence of cochain complexes:
	\begin{equation*}
		0 \longrightarrow C^*_Q(G;K_Q) \overset{(f_Q)_*}{\longrightarrow} C^*_Q(G;L_Q) \overset{(g_Q)_*}{\longrightarrow} C^*_Q(G;M_Q).
	\end{equation*}
	Since for every non-negative integer $n$, the $n$-fold fibre product of $G$ with itself over $Q$ is locally connected, $(g_Q)_*$ is surjective. Thus, we can consider the usual cohomological exact sequence:
	\begin{equation*}
		\cdots \longrightarrow H^n_Q(G;K_Q) \overset{(f_Q)_*}{\longrightarrow} H^n_Q(G;L_Q) \overset{(g_Q)_*}{\longrightarrow} H^n_Q(G;M_Q) \overset{\df}{\longrightarrow} H^{n+1}_Q(G;K_Q) \longrightarrow \cdots
	\end{equation*}
	and its differentials endow the cohomology functors with the structure of a $\delta$-functor.
\end{proof}

\begin{dfn}
	Let $G$ be a $Q$-group and $M$ be a $\hp(G)$-module. We have a natural transformation:
	\begin{equation*}
		\eta^*_G:\big[M\mapsto C^*(\hp(G);M) \big] \rightarrow \big[M\mapsto C^*_Q(\hp(G)_Q;M_Q) \big] \rightarrow \big[M\mapsto C^*_Q(G;M_Q)\big].
	\end{equation*}
	Where the first transformation is the obvious natural isomorphism, and the second one is the pullback induced by the adjunction unit $\eta_G:G\rightarrow \hp(G)_Q$. We also denote the morphism it induces in cohomology by the same symbol. It is a morphism of $\delta$-functors.
\end{dfn}

\begin{lem}\label{lem:iso_h0}
	The morphism $\eta^*_G:H^0(\hat{\pi}_0(G);M)\rightarrow H^0_Q(G;M_Q)$ is an isomorphism.
\end{lem}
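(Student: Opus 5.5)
We identify both sides with explicit groups of invariants and check that $\eta^*_G$ is the identity between them. In degree $0$, a $0$-cochain on $G$ with values in $M_Q$ is a continuous map of $Q$-spaces $\alpha:G^0_Q=Q\rightarrow M_Q$. Since $Q$ is connected and $M$ is discrete, such a map is of the form $q\mapsto(m;q)$ for a unique $m$ in $M$. Hence $C^0_Q(G;M_Q)\cong M\cong C^0(\hp(G);M)$. Under these identifications, the map $\eta^*_G$ in degree $0$ is the identity of $M$: pulling back along $\eta_G$ does nothing to $0$-cochains, because $G^0_Q=Q=(\hp(G)_Q)^0_Q$ and $\eta_G$ covers $\id_Q$. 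It therefore suffices to show that the two cocycle conditions define the same subgroup of $M$.

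The differential gives $\df\alpha(g)=g\cdot(m;||g||)-(m;||g||)$. Thus $H^0_Q(G;M_Q)$ is the set of $m$ such that $g\cdot(m;||g||)=(m;||g||)$ for every $g$ in $G$. By the construction in the proof of the equivalence between discrete $G$-modules and discrete $\hp(G)$-modules, the $G$-action on $M_Q$ is $g\cdot(m;||g||)=(\eta_G(g)\cdot m;||g||)$, where $\hp(G)$ acts on $M$ through $f_*$. So $m$ is a $G$-invariant if and only if $\eta_G(g)\cdot m=m$ for all $g\in G$.

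We next show that this is equivalent to $m$ being fixed by all of $\hp(G)$. One direction is immediate. For the other, we use the presentation (\ref{eq:pi_0_hat}): $\hp(G)$ is generated by the images $\eta'(c)$ of the connected components $c\in\pi_0(G)$. Every component contains some $g$, and $\eta_G(g)=\eta'([g])$. Invariance under every $\eta_G(g)$ therefore gives invariance under a generating set, hence under all of $\hp(G)$, since the stabiliser of $m$ is a subgroup. This shows $H^0_Q(G;M_Q)=M^{\hp(G)}=H^0(\hp(G);M)$, with $\eta^*_G$ the identity.

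The only point requiring care is that the $G$-action on $M_Q$ really factors through $\eta_G$. This is supplied by the equivalence-of-categories proposition: the module $M$ is, by definition, regarded as a $G$-module via $(-)_Q$ composed with $\eta_G$. The surjectivity of $\pi_0(G)\rightarrow$ (generators of $\hp(G)$) then handles the rest, and no local connectedness hypothesis on the fibre products is needed.
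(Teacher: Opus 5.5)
Your proposal is correct and follows the paper's proof. Both arguments identify the degree-$0$ cochains on each side with $M$, with $\eta^*_G$ acting as the identity, and reduce the $0$-cocycle condition on $G$ to $\eta_G(g)\cdot m=m$ for all $g\in G$. Both then conclude because the elements $\eta_G(g)$ generate $\hp(G)$; your justification that $C^0_Q(G;M_Q)\cong M$, using the connectedness of $Q$ and the discreteness of $M$, spells out a step the paper only states.
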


\begin{proof}
	By construction, $\eta_G^*:C^0(\hp(G);M) \rightarrow C^0_Q(G;M_Q)$ is an isomorphism of $\hp(G)$-modules. Recall that the first cochain group is simply $M$. By direct application of the definition, we find that an element $m$ of $M$ is a cocycle in $C^0_Q(G;M_Q)$ if and only if $\eta_G(g)\cdot m$ equals $m$ for every $g$ in $G$. However, by definition, the elements $\eta_G(g)$ for all $g$ in $G$ span the group $\hp(G)$. Hence, $m$ is a cocycle in $C^0_Q(G;M_Q)$ if and only if it is $\hp(G)$-invariant, i.e. a cocycle in $C^0(\hp(G);M)$.
\end{proof}

\begin{cor}\label{cor:split_mono}
	If $G$ is a {\simul} $Q$-group, the cohomological natural transformation of $\delta$-functors $\eta^*_G$ is a naturally split monomorphism and there is only one such natural splitting. 
\end{cor}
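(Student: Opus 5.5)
The plan is to build a natural retraction $r_G:H^*_Q(G;M_Q)\to H^*(\hp(G);M)$ of $\eta^*_G$ from universal properties of $\delta$-functors, then use the same universality to get uniqueness.

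\emph{Source is universal.} The source $\delta$-functor $M\mapsto H^*(\hp(G);M)$ is universal on the category of $\hp(G)$-modules: it is the right derived functor of $M\mapsto M^{\hp(G)}$, so it is effaceable. Concretely, every module embeds into a coinduced module $\Hom_\Z(\Z[\hp(G)];M)$, on which positive-degree cohomology vanishes. The target $M\mapsto H^*_Q(G;M_Q)$ is a $\delta$-functor by the preceding proposition, which is where the simultaneous local path-connectedness of $G$ is used.

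\emph{Construction of the retraction.} By Lemma~\ref{lem:iso_h0}, $\eta^*_G$ is an isomorphism in degree $0$. Set $r^0:=(\eta^*_G)^{-1}$ in degree $0$; this is a natural transformation $H^0_Q(G;-_Q)\to H^0(\hp(G);-)$. I then extend $r^0$ to a morphism of $\delta$-functors $r:H^*_Q(G;-_Q)\to H^*(\hp(G);-)$. The tempting argument is the standard Grothendieck universality, but that requires the source of $r$, namely $H^*_Q$, to be universal, and effaceability of $H^*_Q$ is not known.

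I would instead construct $r$ by dimension shifting on the target side, which is universal. Take a class $c\in H^n_Q(G;M_Q)$ with $n\geq1$, and embed $M$ into the coinduced module $I(M)$, with cokernel $N$. On the classical side, the connecting map gives $H^{n-1}(\hp(G);N)\twoheadrightarrow H^n(\hp(G);M)$, surjective for $n=1$ and bijective for $n\geq2$. On the $Q$ side I get only the connecting map $H^{n-1}_Q(G;N_Q)\to H^n_Q(G;M_Q)$, and $c$ need not lie in its image unless $H^n_Q(G;I(M)_Q)=0$.

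So the key step, and the main obstacle, is to prove that the $Q$-cohomology of $G$ with coefficients in a coinduced module $\Hom_\Z(\Z[\hp(G)];A)_Q$ vanishes in positive degree, or at least that its image in $H^n(\hp(G);M)$ behaves well. My first attempt would be the Shapiro-type computation: a continuous cochain $G^n_Q\to\Hom(\Z[\hp(G)];A)$ corresponds to a locally constant function on $G^n_Q\times\hp(G)$. The classical contracting homotopy $(s\alpha)(g_1;\dots;g_{n-1})(x)=\alpha(\cdot)$, evaluated through $\eta_G$, should be continuous because $\eta_G$ is locally constant. I expect the adapted homotopy formula $s\alpha(g_1,\dots)(\xi)=\alpha(\,\cdot\,)$ to need a choice of lift of $\xi\in\hp(G)$ to an element of $G$ over each point, and such lifts may not exist fibrewise. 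This is exactly where the defect $\Delta$ could obstruct the argument.

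If that fails, I fall back to splitting at the cochain level: construct a natural retraction $C^*_Q(G;M_Q)\to C^*(\hp(G);M)$ by averaging over components. Failing that, I would exhibit $r$ via the spectral sequence announced in the introduction.

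\emph{Splitting and uniqueness.} Once $r$ exists, the composite $r\circ\eta^*_G$ is an endomorphism of the universal $\delta$-functor $H^*(\hp(G);-)$ that is the identity in degree $0$. By universality it is the identity in all degrees, so $\eta^*_G$ is naturally split mono. For uniqueness, let $r'$ be another natural splitting, understood as a morphism of $\delta$-functors. Then $r'$ agrees with $(\eta^*_G)^{-1}$ in degree $0$, and the dimension-shifting argument, using the same vanishing on coinduced modules, forces $r'=r$ in all degrees.
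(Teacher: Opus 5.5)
There is a genuine gap, and it cannot be filled. Your construction of $r$ in positive degrees, and your uniqueness argument, both rest on the step you flag as the main obstacle: the vanishing of $H^n_Q(G;I(M)_Q)$ for $n\geq1$ and $I(M)$ coinduced. The fallbacks (cochain-level averaging, ``the spectral sequence'') are only named, not carried out.

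That step is false. Take $I=\Hom_\Z(\Z[\hp(G)];A)$. Shapiro gives $\Ext^p_{\Z[\hp(G)]}(\Delta_q(G);I)\cong\Ext^p_\Z(\Delta_q(G);A)$, which vanishes for $p\geq2$. So in the spectral sequence of Proposition~\ref{prop:defct_spectral_sequence} no differential leaves $E_2^{0,2}$. Hence $H^2_Q(G;I_Q)$ surjects onto $\Hom_\Z(\Delta_2(G);A)$, which is nonzero as soon as $\Delta_2(G)\neq0$.

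Your distrust of the Grothendieck argument is justified, and it applies to the paper's own proof, which is exactly that argument. The paper extends $(\eta^*_G)^{-1}$ from degree $0$ ``by the universal character of derived functors''. But universality of $H^*(\hp(G);-)$ only produces morphisms out of it, and $M\mapsto H^*_Q(G;M_Q)$ is not universal.

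In fact the statement is false in degree $3$, so no proof exists. Take the paper's Quotient example, with $M=\Z/2$ carrying the trivial action:
\begin{itemize}
\item $Q=[0;1]$ and $G$ is the gluing of $\Gamma(\{0\})=0$, $\Gamma([0;1])=\Z/2$, $\Gamma(\{1\})=\Z/4$, so $\hp(G)=\Z/2$.
\item $C^k_Q(G;M_Q)$ is the group of functions on $\pi_0(G^k_Q)$, and $\df$ is the alternating sum of the bar face maps. So $C^*_Q(G;M_Q)$ is the cochain complex of the simplicial set $k\mapsto\pi_0(G^k_Q)$.
\item That simplicial set is the nerve of $\Z/4$ with the nerve of $2\Z/4$ collapsed to a point. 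Hence $H^*_Q(G;M_Q)\cong H^*(B\Z/4/B\Z/2;\Z/2)$.
\item Under this identification, $\eta^*_G$ is pullback along the map to $B\Z/2$ induced by $\Z/4\to\Z/2$.
\end{itemize}
The generator $x$ of $H^1(\Z/2;\Z/2)$ pulls back to $u\in H^1(\Z/4;\Z/2)$, and $u^2=0$. So $\eta^*_G(x^3)$ restricts to $0$ on $B\Z/4$.

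Restriction $H^2(\Z/4;\Z/2)\to H^2(\Z/2;\Z/2)$ is onto: the extension $\Z/8$ restricts to $\Z/4$ over $2\Z/4$. The cofibre sequence then shows that $H^3(B\Z/4/B\Z/2;\Z/2)\to H^3(B\Z/4;\Z/2)$ is injective. Thus $\eta^*_G(x^3)=0$ while $x^3\neq0$, so $\eta^*_G$ is not injective in degree $3$.

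What does hold is injectivity in degrees $\leq2$, as for any classifying map $Y\to B\pi_1(Y)$. In general the differential $d_3\colon\Hom_{\Z[\hp(G)]}(\Delta_2(G);M)\to H^3(\hp(G);M)$ is a $k$-invariant and need not vanish. Here it is an isomorphism. This also gives $H^2_Q(G;M_Q)\cong\Z/2$, contradicting the short exact sequence of Corollary~\ref{cor:exact_seq_h2}.
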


\begin{proof}
	Lemma~\ref{lem:iso_h0} implies that we can consider the inverse $\rho_G$ of $\eta^*_G:H^0(\hat{\pi}_0(G);M)\rightarrow H^0_Q(G;M_Q)$. This is a natural transformation. The universal character of derived functors implies that we can uniquely extend $\rho_G$ into a morphism of $\delta$-functors:
	\begin{equation*}
		\rho_G:\big[M\mapsto H^*_Q(G;M_Q)\big] \rightarrow \big[M\mapsto H^*(\hp(G);M) \big],
	\end{equation*}
	cf. \cite[Theorem 2.4.7 and \S 2.5.1]{weibel_introduction_1994}. We note that $\rho_G\circ\eta_G^*$ is a morphism of $\delta$-functors yielding the identity in degree $0$. But, by universality again, there can only be one such transformation: the identity. Hence, we found our unique natural retraction.
\end{proof}

\begin{rem}\label{rem:cohomology_constant_groups}
	If the $Q$-group $G$ is of the form $H_Q$ for some Lie group $H$, then $\eta_G^*$ is a natural isomorphism. This is even true at the cochains level for connected components of the $k$-fold fibre product of $G$ over $Q$ is in bijection with $\pi_0(H)^k$ which is the same as $\hp(G)^k$.
\end{rem}

\begin{lem}\label{lem:iso_h1}
	The morphism $\eta^*_G:H^1(\hat{\pi}_0(G);M)\rightarrow H^1_Q(G;M_Q)$ is an isomorphism. 
\end{lem}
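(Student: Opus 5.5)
We work directly at the level of cochains. By Corollary~\ref{cor:split_mono} (when $G$ is simultaneously locally path-connected), or more simply by a direct argument valid in general, injectivity is the easy half; the real content is surjectivity. So the plan is to show that every $1$-cocycle $\alpha\in C^1_Q(G;M_Q)$ factors through the unit $\eta_G$, i.e.\ is of the form $\beta\circ\eta_G$ with $\beta$ a crossed homomorphism $\hp(G)\rightarrow M$. Together with a direct check that coboundaries correspond, this gives the isomorphism.

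First, $\alpha:G\rightarrow M_Q$ is a map of $Q$-spaces with $M$ discrete and $G$ locally path-connected. Hence $\pr_M\circ\alpha$ is locally constant, so it factors through a map $a:\pi_0(G)\rightarrow M$. The cocycle condition $\alpha(g_1g_2)=g_1\cdot\alpha(g_2)+\alpha(g_1)$ for $g_1,g_2$ in a common fibre then reads
\[
a(c_3)=\eta'(c_1)\cdot a(c_2)+a(c_1)
\]
whenever $c_3\in c_1\cdot c_2$, in the notation of the proof of Proposition~\ref{prop:adjunction}. Here we use that the action of $g_1$ on $M$ depends only on $\eta_G(g_1)$, since the module is $M_Q$ with the structure pulled back along $\eta_G$.

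Next, convert this into a group morphism in order to use the presentation~(\ref{eq:pi_0_hat}). Consider the semidirect product $M\rtimes\hp(G)$ and the map $c\mapsto (a(c);\eta'(c))$ on $\pi_0(G)$. The displayed identity says exactly that this map satisfies~(\ref{eq:multivalued_group_morph}). By the universal property of the presentation~(\ref{eq:pi_0_hat}), equivalently by Proposition~\ref{prop:adjunction} applied to the discrete group $M\rtimes\hp(G)$, it extends uniquely to a group morphism $s:\hp(G)\rightarrow M\rtimes\hp(G)$. The composition of $s$ with the projection to $\hp(G)$ agrees with the identity on the generators $\eta'(c)$, so it is the identity. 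Therefore $s$ is a section, and its $M$-component $\beta$ is a crossed homomorphism with $\beta\circ\eta'=a$, that is, $\alpha=\eta_G^*\beta$.

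Finally, handle coboundaries. A $1$-coboundary in $C^1_Q(G;M_Q)$ is $g\mapsto g\cdot m-m$ for some $m\in M$, because $C^0_Q(G;M_Q)=M$ as noted in Lemma~\ref{lem:iso_h0}. This is $\eta_G^*$ of the coboundary $\sigma\mapsto\sigma m-m$ of $\hp(G)$, so surjectivity on cocycles gives surjectivity in cohomology. For injectivity, if $\eta_G^*\beta=\df m$, then $\beta$ and $\df m$ agree on the generators $\eta'(c)$. Two crossed homomorphisms that agree on a generating set agree everywhere, since the corresponding sections of $M\rtimes\hp(G)$ are morphisms that agree on generators. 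Hence $\beta=\df m$. This avoids any local connectedness hypothesis on fibre products.

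The main obstacle is the second step: one has to be careful that the twisted cocycle condition is phrased as~(\ref{eq:multivalued_group_morph}) for a morphism into the semidirect product. One also has to check that the fibrewise condition only relates components $c_1,c_2$ with $c_1\times_Q c_2$ non-empty, which is precisely what the presentation~(\ref{eq:pi_0_hat}) encodes.
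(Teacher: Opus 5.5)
Your proof is correct. The surjectivity half is essentially the paper's argument. The paper identifies $1$-cocycles on both sides with lifts into the holomorph $\textnormal{Hol}(M)=M\rtimes\Aut(M)$ (lifts of $a_Q\circ\eta_G$, resp.\ of $a$). It then uses the adjunction of Proposition~\ref{prop:adjunction} for the discrete group $\textnormal{Hol}(M)$ to get a bijection on cocycles. You do the same with the discrete group $M\rtimes\hp(G)$ and sections of its projection, working directly with the presentation~(\ref{eq:pi_0_hat}). The two are equivalent, since $M\rtimes\hp(G)\cong\textnormal{Hol}(M)\times_{\Aut(M)}\hp(G)$.

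Where you genuinely differ is injectivity. The paper invokes Corollary~\ref{cor:split_mono}, which rests on the universality of $\delta$-functors and therefore needs $G$ to be simultaneously locally path-connected. That hypothesis is not stated in the lemma. Your argument is elementary and removes that hypothesis:
\begin{itemize}
\item $C^0$ is $M$ on both sides, so $\eta^*_G\beta=\df m$ forces $\beta$ and $\df m$ to agree on the generators $\eta'(c)$.
\item Two crossed homomorphisms agreeing on a generating set agree everywhere, so $\beta=\df m$.
\end{itemize}
In effect, you observe that the cocycle-level bijection the paper already establishes, together with the isomorphism on $0$-cochains, suffices by itself for an isomorphism on $H^1$.
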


\begin{proof}
	Let $M$ be a discrete $\hat{\pi}_0(G)$-module. We will consider its holomorph $\textnormal{Hol}(M)$ given as the semi-direct product of $\Aut(M)$ and $M$ with obvious action. We denote by $\df$ the projection of $\textnormal{Hol}(M)$ onto $\Aut(M)$, and by $\sigma$ a section of $\df$. Let $a$ denote the action of $\hat{\pi}_0(G)$ onto $M$.  Let us consider the set $B$ of $Q$-group morphisms $b:G\rightarrow \textnormal{Hol}(M)_Q$ (resp. $\hat{B}$ of group morphisms $\hat{b}:\hat{\pi}_0(G)\rightarrow \textnormal{Hol}(M)$) that make the following diagram commute:
	\begin{equation*}
		\begin{tikzcd}
			&& 0 \ar[d] && 0 \ar[d]\\
			&& M_Q \ar[d] && M \ar[d]\\
			&& \textnormal{Hol}(M)_Q \ar[d,"\df_Q"] & \textnormal{resp.} & \textnormal{Hol}(M) \ar[d,"\df" swap]\\
			G \ar[rr,"a_Q\circ \eta_G" swap] \ar[rru,"b"]&& \Aut(M)_Q \ar[d] && \Aut(M) \ar[d] && \hat{\pi}_0(G) \ar[ll,"a"] \ar[llu,"\hat{b}" swap] \\
			&& 1 && 1
		\end{tikzcd}
	\end{equation*}
	We have the following commutative diagram:
	\begin{equation*}
		\begin{tikzcd}
			\hat{\beta} \ar[d,mapsto]& Z^1\big(\hat{\pi}_0(G);M \big) \ar[d,"\simeq" swap] \ar[r,"\eta^*_G"]& Z^1_Q(G;M_Q) \ar[d,"\simeq"] & \beta \ar[d,mapsto]\\ 
			{[g\mapsto \hat{\beta}(g)\cdot \sigma a(g)]} & \hat{B} \ar[r] & B & {[g\mapsto \beta(g)\cdot(\sigma a)_Q\circ\eta_G(g)]} \\
			& \hat{b} \ar[r,mapsto] & \hat{b}_Q\circ\eta_G
		\end{tikzcd}
	\end{equation*}
	The fact that the vertical arrows are bijective is a very classical matter of group cohomology. Since $\textnormal{Hol}(M)$ is a discrete group, the defining universal property of $\hat{\pi}_0$ and some very straightforward verifications imply that the bottom arrow is a bijection. In particular, $\eta_G^*$ is surjective on cocycles thus \emph{a fortiori} surjective in cohomology. Since it is injective as well, cf. Corollary~\ref{cor:split_mono}, it is an isomorphism.
\end{proof}

\paragraph{Defect} In this last paragraph, we introduce the \emph{defect} of a {\simul} $Q$-group $G$. It is, by definition, the homology of what should be the bar resolution. We say should for this complex will not be acyclic in general. It will measure how, given a $\hp(G)$-module $M$, the cohomology of $G$ with coefficients in $M$ diverges from the cohomology of $\hp(G)$ with coefficients in $M$.  

\begin{dfn}\label{dfn:defct}
	Let $G\rightarrow Q$ be {\simul} $Q$-group. We define the chain complex of free $\Z[\hp(G)]$-modules:
	\begin{equation*}
		\partial : H_0\big(G^{k+1}_Q;\Z[\hat{\pi}_0(G)]\big)  \longrightarrow  H_0\big(G^k_Q;\Z[\hat{\pi}_0(G)]\big),
	\end{equation*}
	where $H_0$ indicates singular homology. The $k$\textsuperscript{th} group has a $\Z[\hp(G)]$-basis made of elements of the form $[g_1;...;g_k]$ which denotes the connected component of the element $(g_1;...;g_k)$ of $G^k_Q$. The boundary operator is defined by the following formula: 
	\begin{equation*}
		\partial [g_0;\cdots;g_k] := \eta_G(g_0)[g_1;...;g_k] + \sum_{i=1}^{k}(-1)^i[g_0;\cdots;g_{i-1}g_i;\cdots;g_k] + (-1)^{k+1}[g_0;\cdots;g_{k-1}].
	\end{equation*}
	We denote its homology by $\Delta_k(G)$ and call it the \emph{defect} of $G$. This is a $\hat{\pi}_0(G)$-module. It is functorial in $G$.
\end{dfn}

\begin{lem}\label{lem:Delta_0}
	Let $G\rightarrow Q$ be {\simul} $Q$-group. $\Delta_0(G)$ is isomorphic to the $\hp(G)$-module $\Z$ with trivial action. 
\end{lem}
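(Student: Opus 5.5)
We compute $\Delta_0(G)$ directly from the low-degree part of the complex of Definition~\ref{dfn:defct}. In degree $0$, $G^0_Q$ is $Q$ itself (the empty fibre product over $Q$), which is connected, so $H_0(G^0_Q;\Z[\hp(G)])$ is the free $\Z[\hp(G)]$-module of rank one on the empty symbol $[\,]$. In degree $1$, the module is free on the set $\pi_0(G)$ of connected components $[g]$. The boundary $\partial:[g]\mapsto \eta_G(g)[\,]-[\,]$ is then read off from the formula with $k=0$. Since the complex stops in degree $0$, $\Delta_0(G)$ is the cokernel $\Z[\hp(G)]/\operatorname{im}\partial$.

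First we check that $\partial$ is well defined on components. The map $g\mapsto\eta_G(g)$ is locally constant, being the composite of the unit $\eta_G:G\rightarrow \hp(G)_Q$ with the projection onto a discrete group, so it only depends on $[g]$. The image of $\partial$ is then the sub-$\Z[\hp(G)]$-module generated by the elements $(\eta_G(g)-1)[\,]$ for $g$ in $G$. By the presentation (\ref{eq:pi_0_hat}), the elements $\eta_G(g)=\eta'([g])$ generate $\hp(G)$ as a group; this fact was already used in the proof of Lemma~\ref{lem:iso_h0}.

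The key step is to show that the left ideal generated by the $\eta_G(g)-1$ is the whole augmentation ideal $I$. This follows from the identities $xy-1=x(y-1)+(x-1)$ and $x^{-1}-1=-x^{-1}(x-1)$: every $h-1$ with $h$ in $\hp(G)$ lies in the submodule generated by the elements $x-1$ with $x$ a generator. We therefore get $\operatorname{im}\partial=I\cdot[\,]$, and the augmentation $\Z[\hp(G)]\rightarrow\Z$ induces an isomorphism $\Delta_0(G)\cong\Z[\hp(G)]/I\cong\Z$. The action on this quotient is trivial, since $h\cdot 1=1$ modulo $I$ for every $h$.

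The only delicate point is the convention in degree $0$: one has to confirm that $G^0_Q=Q$ is connected, which holds because $Q$ was assumed connected, and that no differential leaves degree $0$. The generation step is standard once we note that $\eta_G(G)$ generates $\hp(G)$.
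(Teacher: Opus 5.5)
Your proposal is correct and is essentially the paper's proof: $\Delta_0(G)$ is the quotient of $\Z[\hp(G)]$ by the left ideal generated by the elements $\eta_G(g)-1$, and since the $\eta_G(g)$ generate $\hp(G)$ this ideal is the augmentation ideal, which gives $\Z$ with trivial action. The paper uses a telescoping identity for products only, which suffices because $\eta_G(g^{-1})=\eta_G(g)^{-1}$. You instead also handle inverses explicitly via $x^{-1}-1=-x^{-1}(x-1)$, and that works equally well.
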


\begin{proof}
	Following the definition, $\Delta_0(G)$ is the quotient of $\Z[\hp(G)]$ by the left ideal $( \eta_G(g)-1:g\in G)$. Since $\{\eta_G(g):g\in G\}$ generates $\hp(G)$, the identity:
	\begin{equation*}
		\left(\prod_{i=1}^ng_i\right) - 1 = \sum_{k=1}^n \prod_{i=1}^{n-k}g_i(g_{n+1-k}-1),
	\end{equation*}
	is enough to conclude that $( \eta_G(g)-1:g\in G)$ is the augmentation ideal. Hence, the lemma follows. 
\end{proof}

\begin{prop}\label{prop:iso_defect_cohomo}
	Let $G$ be a {\simul} $Q$-group and $M$ be a $\hp(G)$-module. We have a natural isomorphism of cochain complexes:
	\begin{equation*}
		C^k_Q(G;M_Q) \overset{\cong}{\longrightarrow} \Hom_{\Z[\hp(G)]}\big( H_0\big(G^k_Q;\Z[\hp]\big) ; M \big) .
	\end{equation*}
\end{prop}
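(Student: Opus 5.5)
The plan is to construct the isomorphism degreewise and then check that it intertwines the differentials. A cochain $\alpha\in C^k_Q(G;M_Q)$ is a continuous map of $Q$-spaces $G^k_Q\rightarrow M\times Q$. It is therefore determined by $\pr_M\circ\alpha:G^k_Q\rightarrow M$. Because $G$ is simultaneously locally path-connected, $G^k_Q$ is locally path-connected, so its connected components are open. Since $M$ is discrete, $\pr_M\circ\alpha$ is locally constant, and continuous maps $G^k_Q\rightarrow M$ correspond bijectively to arbitrary functions $\pi_0(G^k_Q)\rightarrow M$. (In degree $0$, $G^0_Q=Q$ is connected and both sides reduce to $M$.)

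Next, the group $H_0(G^k_Q;\Z[\hp(G)])$ is the free $\Z[\hp(G)]$-module on the basis $\pi_0(G^k_Q)=\{[g_1;\dots;g_k]\}$. A function $\pi_0(G^k_Q)\rightarrow M$ therefore extends uniquely to a $\Z[\hp(G)]$-linear map. I would define
\begin{equation*}
	\Phi(\alpha)\big(\lambda[g_1;\dots;g_k]\big):=\lambda\cdot \pr_M\alpha(g_1;\dots;g_k), \qquad \lambda\in\hp(G).
\end{equation*}
This is well defined because $\pr_M\circ\alpha$ is constant on components. It is bijective by the two observations above, additive, and natural in $M$ since the correspondence commutes with postcomposition by module maps. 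Naturality in $G$ follows from functoriality of $\pi_0$ of fibre products.

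The main step is checking that $\Phi$ is a cochain map. The target has differential $\delta f:=f\circ\partial$, up to the usual sign convention, which I would match to Definition~\ref{def:Qgroup_cohomo}. Evaluate both sides on a basis element $[g_1;\dots;g_{k+1}]$.

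1. The terms of $\partial$ given by inner multiplications $[\dots;g_ig_{i+1};\dots]$ correspond to the middle terms of $\df\alpha$. The same holds for the final term $[g_1;\dots;g_k]$.
2. The only delicate term is $\eta_G(g_1)[g_2;\dots;g_{k+1}]$. Linearity turns it into $\eta_G(g_1)\cdot\pr_M\alpha(g_2;\dots;g_{k+1})$.
3. This must equal $\pr_M\big(g_1\cdot\alpha(g_2;\dots;g_{k+1})\big)$. That equality holds because the discrete $G$-module structure on $M_Q$ is, by the equivalence of categories proved above, the one obtained from the $\hp(G)$-action through $\eta_G$. In other words, $g\cdot(m;\|g\|)=(\eta_G(g)\cdot m;\|g\|)$.

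One should also confirm that $\partial$ is well defined on components, i.e.\ that multiplication and projection respect connected components of fibre products. This holds because these are continuous maps $G^{k+1}_Q\rightarrow G^k_Q$ and $\eta_G$ is locally constant. I expect no real obstacle beyond careful bookkeeping of this first term and of the sign conventions.
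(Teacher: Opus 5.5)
Your proposal is correct and follows the paper's proof essentially step for step. The paper defines the same map $(\varphi\alpha)[g_1;\dots;g_k]=\pr_M\big(\alpha(g_1;\dots;g_k)\big)$, extended $\Z[\hp(G)]$-linearly, and gets bijectivity from continuity of cochains, discreteness of $M$ and local path-connectedness of $G^k_Q$. It then checks compatibility with the differentials on basis elements, where the only non-formal term is handled exactly as in your step 3: $\eta_G(g_1)\,\pr_M\alpha(g_2;\dots;g_{k+1})=\pr_M\big((\eta_G(g_1);\|g_1\|)\cdot\alpha(g_2;\dots;g_{k+1})\big)$.
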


\begin{proof}
	For conciseness we denote $\hp(G)$ by $\hp$. Let $k$ be a non-negative integer, we consider the natural group morphism:
	\begin{equation*}
		\varphi: C^k_Q(G;M_Q) \longrightarrow \Hom_{\Z[\hp]}\big( H_0\big(G^k_Q;\Z[\hp]\big) ; M \big),
	\end{equation*}
	that sends a cochain $\alpha$ of $C^k_Q(G;M_Q)$ to the $\Z[\hp]$-linear morphism defined by the following property: 
	\begin{equation*}
		(\varphi\alpha)[g_1;...;g_k] = \pr_M\big(\alpha(g_1;...;g_k)\big).
	\end{equation*}
	It is well defined and bijective for cochains are continuous, $M$ is discrete, and $G^k_Q$ is locally path connected. Moreover, it is a cochain complex morphism as indicated by the following computation:
	\begin{align*}
		(\partial^*\varphi\alpha)[g_1;...;g_{k+1}]&=(\varphi\alpha)\Big( \eta_G(g_1)[g_2;...;g_{k+1}] + \sum_{i=1}^{k+1}(-1)^i[g_1;...;g_{i-1}g_i;...;g_{k+1}]+ (-1)^{k+2}[g_1;...;g_k]\Big)\\
		&= \eta_G(g_1)\pr_M\big(\alpha(g_2;...;g_{k+1})\big) + \sum_{i=1}^{k+1}(-1)^i\pr_M\big(\alpha(g_1;...;g_{i-1}g_i;...;g_{k+1})\big) \\& \hspace{8.25cm} + (-1)^{k+2}\pr_M\big(\alpha(g_1;...;g_k)\big) \\
		&= \pr_M\Big((\eta_G(g_1);||g_1||)\cdot\alpha(g_2;...;g_{k+1}) + \sum_{i=1}^{k+1}(-1)^i\alpha(g_1;...;g_{i-1}g_i;...;g_{k+1}) \\& \hspace{9cm} + (-1)^{k+2}\alpha(g_1;...;g_k)\Big) \\
		&= \pr_M\big( \df\alpha(g_1;...;g_{k+1})\big).  \\
	\end{align*}
	And we have a natural isomorphism of cochain complexes. 
\end{proof}

\begin{prop}\label{prop:tool_defect_cellular}
	Let $G$ be a cosheaf of Lie groups over a cellular structure $\Cell Q$ of $Q$. We have an isomorphism of chain complexes:
		\begin{equation*}
			H_0\big( (G^k)_\gl;\Z[\hp((G)_\gl)]\big) \overset{\cong}{\longrightarrow} \underset{e\in \Cell Q}{\colim}\, C_k(\pi_0(G(e));\Z[\hp((G)_\gl)]\big),
		\end{equation*}
	where the right hand side denotes the colimit of the bar complexes computing the group homology of the discrete groups $\pi_0(G)$ with coefficients in $\Z[\hp((G)_\gl)]$.
\end{prop}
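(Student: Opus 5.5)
We compute both sides degreewise, using the description of connected components of a gluing, and then check that the isomorphism commutes with the boundary maps. Write $\hp$ for $\hp((G)_\gl)$ and $\eta$ for the unit of Proposition~\ref{prop:adjunction}. In degree $k$, $((G)_\gl)^k_Q$ is homeomorphic to $(G^k)_\gl$, the gluing of the cosheaf $e\mapsto G(e)^k$, as observed in the proof of Proposition~\ref{prop:Hausdorff_LPC_gluing}. The left-hand side is therefore the free $\Z[\hp]$-module on $\pi_0\big((G^k)_\gl\big)$. The right-hand side in degree $k$ is $\colim_e \Z[\hp]\otimes\Z[\pi_0(G(e))^k]$, with the coefficients regarded as a constant functor. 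Since $\Z[\hp]\otimes\Z[-]$ is a left adjoint, it commutes with colimits, so this colimit equals $\Z[\hp]\otimes\Z\big[\colim_e \pi_0(G(e))^k\big]$, the colimit now taken in sets. It therefore suffices to exhibit a natural bijection
\begin{equation*}
	\pi_0\big((G^k)_\gl\big)\cong \underset{e\in\Cell Q}{\colim}\,\pi_0\big(G(e)\big)^k,
\end{equation*}
where $\pi_0(G(e))^k=\pi_0(G(e)^k)$.

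To build this bijection, start from the quotient map $\coprod_e G(e)^k\times\bar e\to (G^k)_\gl$. Each $\bar e$ is connected, so the components of the source are the sets $c\times\bar e$ with $c\in\pi_0(G(e)^k)$, and this gives a surjection $\coprod_e\pi_0(G(e)^k)\to\pi_0((G^k)_\gl)$. Two such components are glued exactly when they are related by $d$ and $u$ in Definition~\ref{dfn:gluing}: a component $c$ of $G(e_2)^k$ is identified along $\bar e_1$ with its image $t_*(c)$ in $G(e_1)^k$. This is precisely the relation generating the set colimit.

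The main obstacle is the converse: showing that the components of the gluing are exactly the classes of this equivalence relation, and not something coarser. This is the same argument as in Proposition~\ref{prop:hp_colim_cellular}, so we can follow it. Given the colimit as a discrete set $S$, a compatible family of maps $\pi_0(G(e)^k)\to S$ defines a locally constant function on $\coprod_e G(e)^k\times\bar e$. This function is constant on the fibres of the quotient map, so it descends to a continuous map $(G^k)_\gl\to S$. Local path-connectedness, which holds by Proposition~\ref{prop:Hausdorff_LPC_gluing}, makes this map factor through $\pi_0$. It follows that $\pi_0$ of the gluing has the universal property of the colimit, and hence the surjection above is a bijection. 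Naturality in $k$ is clear from the construction.

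Finally, we check compatibility with the differentials. A basis element $[g_1;\dots;g_k]$ can be represented with all the $g_i$ in a single fibre over a point $q$ of an open cell $e$, coming from elements $h_i\in G(e)$. The boundary of Definition~\ref{dfn:defct} then maps term by term to the bar differential of $C_k(\pi_0(G(e));\Z[\hp])$. The face $g_{i-1}g_i$ corresponds to $[h_{i-1}h_i]$, and the leading coefficient $\eta(g_1)$ is the image of $[h_1]$ under $\pi_0(G(e))\to\colim\pi_0(G(e'))\cong\hp$, using Proposition~\ref{prop:hp_colim_cellular}. Here the $\pi_0(G(e))$-action on $\Z[\hp]$ in the bar complex is taken through this map. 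The bar differentials commute with the transition maps, so they induce the differential on the colimit, and the degreewise bijection is therefore an isomorphism of chain complexes.
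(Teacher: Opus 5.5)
Your proposal is correct and follows essentially the same route as the paper. Both reduce degreewise to identifying $\pi_0\big((G^k)_\gl\big)$ with the set colimit of $e\mapsto \pi_0(G(e))^k$, then check the differentials on representatives over a single point of an open cell (the paper uses the barycentre $b_e$); the only difference is that you actually justify the $\pi_0$--colimit identification through the coequalizer description of the gluing, which the paper simply asserts.
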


\begin{proof}
	This follows from the fact that $\pi_0((G^k)_\gl)$ is the set colimit of the diagram of sets induced by the cosheaf $[e\mapsto \pi_0(G(e))^k]$. We have the following chain of natural isomorphisms:
	\begin{align*}
			H_0\big( (G^k)_\gl;\Z[\hp((G)_\gl)]\big) &\cong \underset{e\in \Cell Q}{\colim}\, H_0\big( G(e)^k;\Z[\hp((G)_\gl)]\big) \\
			&\cong \underset{e\in \Cell Q}{\colim}\, \Z[\pi_0(G(e))^k]\otimes_\Z  \Z[\hp((G)_\gl)] \\
			&\cong \underset{e\in \Cell Q}{\colim}\, \Z[\pi_0(G(e))^{k+1}]\otimes_{\Z[\pi_0(G(e))]}  \Z[\hp((G)_\gl)] \\
			&\cong \underset{e\in \Cell Q}{\colim}\, C_k(\pi_0(G(e));\Z[\hp((G)_\gl)]).
		\end{align*}
	Furthermore, we can easily see that the last isomorphism is obtained as the colimit of the following family of morphisms:
	\begin{equation*}
		\begin{array}{rcl}
			C_k\big(\pi_0(G(e));\Z[\hp((G)_\gl)]\big) & \longrightarrow & H_0\big( (G^k)_\gl;\Z[\hp((G)_\gl)]\big) \\
			(g_1,...,g_k) & \longmapsto & [(g_1;b_e);...;(g_k;b_e)],
		\end{array}
	\end{equation*}
	where $b_e$ denotes the “\,barycentre\,” of the cell $e$. Those morphisms are chain complexes morphisms, this is straightforwardly checked. Therefore, we have the desired natural chain complex isomorphism.  
\end{proof}

\begin{prop}\label{prop:defct_spectral_sequence}
	For every $Q$-group $G$ and every $\hp(G)$-module $M$ there is a functorial spectral sequence such that:
	\begin{equation*}
		E^{p,q}_2(G;M)=\Ext^p_{\Z[\hat{\pi}_0(G)]}(\Delta_q(G);M) \Rightarrow H^{p+q}_Q(G;M_Q).
	\end{equation*}
\end{prop}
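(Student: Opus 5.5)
The spectral sequence will come from a hyperext (universal coefficient) argument applied to the chain complex of free $\Z[\hp(G)]$-modules
\begin{equation*}
	C_\bullet := H_0\big(G^\bullet_Q;\Z[\hp(G)]\big)
\end{equation*}
of Definition~\ref{dfn:defct}. Proposition~\ref{prop:iso_defect_cohomo} identifies the cochain complex $C^\bullet_Q(G;M_Q)$ naturally with $\Hom_{\Z[\hp(G)]}(C_\bullet;M)$. So $H^n_Q(G;M_Q)$ is the cohomology of $\Hom$ out of a bounded-below complex of free modules, whose homology is by definition the defect $\Delta_\bullet(G)$. The claim is then the standard hyperext spectral sequence
\begin{equation*}
	\Ext^p(H_q(C);M)\Rightarrow H^{p+q}(\Hom(C;M)),
\end{equation*}
valid for $C$ projective and bounded below. (As in the definition of the defect, $G$ is taken to be simultaneously locally path-connected.)

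Concretely, choose an injective resolution $M\to I^\bullet$ of $\Z[\hp(G)]$-modules and form the double complex $K^{p,q}=\Hom_{\Z[\hp(G)]}(C_q;I^p)$. It lives in the first quadrant because $C_q=0$ for $q<0$ and $I^p=0$ for $p<0$, so both spectral sequences of the total complex converge.

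\emph{First filtration}, taking $p$-cohomology first. Since each $C_q$ is free, $\Hom(C_q;-)$ is exact, so the $p$-cohomology of $K^{\bullet,q}$ is $\Hom(C_q;M)$ concentrated in $p=0$. The sequence therefore degenerates, and the total cohomology is $H^{n}(\Hom(C_\bullet;M))\cong H^n_Q(G;M_Q)$.

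\emph{Second filtration}, taking $q$-cohomology first. Since each $I^p$ is injective, $\Hom(-;I^p)$ is exact, so the $q$-cohomology of $K^{p,\bullet}$ is $\Hom(H_q(C);I^p)=\Hom(\Delta_q(G);I^p)$. Taking $p$-cohomology then gives
\begin{equation*}
	E_2^{p,q}=\Ext^p_{\Z[\hp(G)]}(\Delta_q(G);M),
\end{equation*}
converging to the same total cohomology, which is the statement.

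Functoriality needs care, and this is the step I expect to be the main obstacle. In $M$ it is standard: lift maps to injective resolutions, unique up to homotopy, so the spectral sequence from $E_2$ on is independent of choices. In $G$, a morphism $f:G\to G'$ induces $\hp(f)$ together with a chain map $C_\bullet(G)\to \hp(G)\otimes$-twisted versions of $C_\bullet(G')$. One must therefore check that restriction of scalars along $\hp(f)$ is compatible with Proposition~\ref{prop:iso_defect_cohomo} and with $\Ext$. To handle this, I would restrict the resolution of a $\hp(G')$-module $M$ along $\hp(f)$. The result is no longer injective, so comparison with an injective resolution over $\Z[\hp(G)]$ has to be made through a map of resolutions. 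This map induces a morphism of double complexes and hence of spectral sequences, and the abutment map agrees with $f^*$ on $H_Q$ by naturality of $\varphi$ in Proposition~\ref{prop:iso_defect_cohomo}.
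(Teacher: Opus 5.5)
Your proposal is correct and follows the paper's proof essentially verbatim. You use the same double complex $\Hom_{\Z[\hp(G)]}(H_0(G^q_Q;\Z[\hp(G)]);I^p)$ from an injective resolution of $M$: one spectral sequence collapses by freeness and Proposition~\ref{prop:iso_defect_cohomo} to identify the abutment, and the other yields $\Hom(\Delta_q(G);I^p)$ on $E_1$ and hence $\Ext^p(\Delta_q(G);M)$ on $E_2$. The paper gets the $E_1$ identification via cycles, boundaries and the snake lemma rather than your direct appeal to exactness of $\Hom(-;I^p)$, and your comparison-of-resolutions argument for functoriality in $G$ spells out a point the paper leaves implicit.
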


\begin{proof}
	In this proof we will denote $\hp(G)$ by $\hp$. Let us consider now an injective resolution $0\rightarrow M\rightarrow I$ of $M$ in the category of $\Z[\hp]$-modules. It allows us to form the bicomplex $(C^{p,q},\partial^*;\df)$ defined as follows:
	\begin{equation}\label{eq:dfn_bicomplex}
		C^{p,q}:=\Hom_{\Z[\hp]}\big( H_0\big(G^{p}_{Q};\Z[\hp]\big) ; I^q \big) \quad \textnormal{with} \quad \partial^*:C^{p,q}\rightarrow C^{p+1,q} \quad \textnormal{and} \quad \df:C^{p,q}\rightarrow C^{p,q+1}.
	\end{equation}
	The total differential $D$ is defined on $C^{p,q}$ by $\partial^*+(-1)^p\df$. We denote the cohomology of the total complex by $H^n$ for all non-negative integers $n$. As usual, there are two spectral sequences that compute the total cohomology. The vertical spectral sequence: $F_r^{p,q} \Rightarrow H^{p+q}$, and the horizontal spectral sequence\footnote{N.b. the classical index conventions are well suited to the horizontal spectral sequence i.e. $F^{p,q}_0$ is $C^{p,q}$. For the vertical spectral sequence, these conventions switch the indices, i.e. $E^{p,q}_0$ is $C^{q,p}$.}: $E_r^{p,q} \Rightarrow H^{p+q}$. We can use the vertical spectral sequence to show that the total cohomology is isomorphic to the cohomology of $G$ with values in $M_Q$. Indeed, $F_1^{p,q}$ is the $q$\textsuperscript{th} cohomology group of the complex:
	\begin{equation*}
		\cdots \overset{\df}{\longrightarrow} \Hom_{\Z[\hp]}\big( H_0\big(G^{p}_{Q};\Z[\hp]\big) ; I^q \big) \overset{\df}{\longrightarrow} \Hom_{\Z[\hp]}\big( H_0\big(G^{p}_{Q};\Z[\hp]\big) ; I^{q+1} \big) \overset{\df}{\longrightarrow} \cdots
	\end{equation*}
	Since $H_0(G^{p}_{Q};\Z[\hp])$ is free, it is a projective $\Z[\hp]$-module. Hence, $I$ being a resolution of $M$, we find that $F^{p,q}_r$ collapses on the first page:
	\begin{equation*}
		F_1^{p,q}=\left\{\begin{array}{cc}
			\Hom_{\Z[\hp]}\big( H_0\big(G^{p}_{Q};\Z[\hp]\big) ; M \big) & q=0 \\
			0 & q>0.
			\end{array}\right.
	\end{equation*}
	Thus, using Proposition~\ref{prop:iso_defect_cohomo}, we deduce that $H^n$ is isomorphic to $H^n_Q(G;M_Q)$. Now, let us show that $E_1^{p,q}$ is isomorphic to $\Hom_{\Z[\hp]}( \Delta_q(G);I^p)$. To that end, we denote by $C_q$ the group of chains $H_0\big(G^q_Q;\Z[\hp]\big)$, by $Z_q$ the subgroups of cycles, and by $B_q$ the subgroup of boundaries. We have the following isomorphisms:
	\begin{align*}
		\{\alpha\in C^{q,p}\;|\; \partial^*\alpha=0\} &= \{\alpha\in C^{q,p}\;|\; \alpha(b)=0, \forall b\in B_q\} \quad\cong \Hom_{\Z[\hp]}(C_q/B_q;I^p) \\
		\{\partial^*\alpha : \alpha\in C^{q-1,p}\} &= \{\alpha\in C^{q,p}\;|\; \alpha(z)=0, \forall z\in Z_q\} \quad\cong \Hom_{\Z[\hp]}(C_q/Z_q;I^p)
	\end{align*}
	The first one is straightforward, the second is a consequence of $I^p$ being injective. Indeed, if $\beta$ belongs to $\Hom_{\Z[\hp]}(C_q/Z_q;I^p)$, then we have the following commutative diagram:
	\begin{equation*}
		\begin{tikzcd}
			C_q/Z_q \ar[d,"\beta" swap] \ar[r,hookrightarrow,"\partial"] & C_{q-1} \ar[dl,dashed,"\exists\alpha"] \\
			I^p
		\end{tikzcd}
	\end{equation*}
	so that $\beta$ equals $\partial^*\alpha$. Moreover, the snake lemma applied to the diagram:
	\begin{equation*}
		\begin{tikzcd}
			0 \ar[r]& B_q \ar[r] \ar[d]& Z_q \ar[r] \ar[d]& \Delta_q(G) \ar[r] \ar[d]& 0 \\
			0 \ar[r]& C_q \ar[r]& C_q \ar[r]& 0 \ar[r]& 0
		\end{tikzcd}
	\end{equation*}
	provides the following exact sequence:
	\begin{equation}\label{seq:defect_as_kernel}
		0 \longrightarrow \Delta_q(G) \longrightarrow C_q/B_q \longrightarrow C_q/Z_q \longrightarrow 0.
	\end{equation}
	Thus, applying the exact functor $\Hom_{\Z[\hp]}(-;I^p)$ to (\ref{seq:defect_as_kernel}), yields the desired isomorphism between $E_1^{p,q}$ and $\Hom_{\Z[\hp]}(\Delta_q(G);I^p)$. From here, it is straightforward that $E_2^{p,q}$ is given by $\Ext^p_{\Z[\hp]}(\Delta_q(G);M)$.
\end{proof}

\begin{cor}\label{cor:exact_seq_h2}
	Let $G$ be a $Q$-group, we have a natural exact sequence for every $\hp(G)$-module $M$:
	\begin{equation*}
		0 \longrightarrow H^2(\hp(G);M) \overset{\eta_G^*}{\longrightarrow} H^2_Q(G;M_Q) \longrightarrow \Hom_{\Z[\hp(G)]}(\Delta_2(G);M) \longrightarrow 0.
	\end{equation*}
\end{cor}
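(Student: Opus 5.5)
The plan is to read the sequence off the spectral sequence of Proposition~\ref{prop:defct_spectral_sequence}, $E_2^{p,q}=\Ext^p_{\Z[\hp]}(\Delta_q(G);M)\Rightarrow H^{p+q}_Q(G;M_Q)$, with $\hp=\hp(G)$. Two preliminary facts about the low defect groups do most of the work.

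First, Lemma~\ref{lem:Delta_0} gives $\Delta_0(G)\cong\Z$ with trivial action, so $E_2^{p,0}=\Ext^p_{\Z[\hp]}(\Z;M)=H^p(\hp;M)$. I would check that the edge map $E_2^{p,0}\to H^p_Q(G;M_Q)$ agrees with $\eta_G^*$. For this, compare the complex $H_0(G^\bullet_Q;\Z[\hp])$ with the bar resolution of $\hp$ through the chain map $[g_1;\dots;g_k]\mapsto[\eta_G(g_1)|\dots|\eta_G(g_k)]$. Naturality of the double complex construction in the chain complex then identifies the bottom row with $\eta_G^*$.

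Second, I claim $\Delta_1(G)=0$. This follows by comparing dimensions in the spectral sequence with Lemma~\ref{lem:iso_h1} and Corollary~\ref{cor:split_mono}:
\begin{itemize}
\item In total degree $1$, the filtration gives $0\to E_\infty^{1,0}\to H^1_Q\to E_\infty^{0,1}\to 0$.
\item We have $E_\infty^{1,0}=E_2^{1,0}=H^1(\hp;M)$, because nothing can hit $E^{1,0}$: the source $E^{-1,1}$ is zero.
\item Since $\eta_G^*$ is an isomorphism in degree $1$, this forces $E_\infty^{0,1}=0$ for every $M$.
\item We have $E_\infty^{0,1}=\ker(d_2:\Hom(\Delta_1;M)\to H^2(\hp;M))$. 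But $d_2$ vanishes, because $\eta_G^*=$ edge map is injective in degree $2$ by Corollary~\ref{cor:split_mono}, so no differential can land on $E^{2,0}$.
\item Hence $\Hom_{\Z[\hp]}(\Delta_1(G);M)=0$ for all $M$. Taking $M$ to be an injective cogenerator, or directly $M=\Delta_1(G)$, gives $\Delta_1(G)=0$.
\end{itemize}

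Now read off total degree $2$. Since $\Delta_1=0$, we get $E_2^{p,1}=0$ for all $p$. So the filtration on $H^2_Q(G;M_Q)$ has graded pieces $E_\infty^{2,0}$ and $E_\infty^{0,2}$, with $0\to E_\infty^{2,0}\to H^2_Q\to E_\infty^{0,2}\to0$.

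For the bottom piece, all differentials into $E^{2,0}$ come from $E^{0,1}=0$, so $E_\infty^{2,0}=H^2(\hp;M)$ and the map is $\eta_G^*$.

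For the top piece, $E_2^{0,2}=\Hom(\Delta_2(G);M)$. Its differentials are
\begin{itemize}
\item $d_2$ into $E^{2,1}=0$, which vanishes;
\item $d_3$ into $E_3^{3,0}=H^3(\hp;M)$.
\end{itemize}
The main obstacle is showing that this $d_3$ vanishes. I would argue exactly as before: anything nonzero hitting $E^{3,0}$ would kill part of the bottom row, contradicting the injectivity of $\eta_G^*=$ edge map in degree $3$ given by Corollary~\ref{cor:split_mono}.

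More generally, the natural splitting forces all differentials into the bottom row to be zero. Hence $E_\infty^{0,2}=\Hom_{\Z[\hp(G)]}(\Delta_2(G);M)$, which yields the exact sequence. Naturality in $M$ follows from the functoriality of the spectral sequence.

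The step I expect to need the most care is identifying the edge map with $\eta_G^*$, that is, the compatibility of the $\hp$-bar resolution with the bicomplex. Once that identification holds, the vanishing arguments are formal.
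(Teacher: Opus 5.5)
Your proposal is correct and follows the paper's proof essentially step by step. Like the paper, you identify the bottom row with $\eta_G^*$ by mapping the spectral sequence of $G$ to that of $\hp(G)_Q$ (whose complex is the bar resolution, so that sequence degenerates), deduce $\Delta_1(G)=0$ from Lemma~\ref{lem:iso_h1} and Corollary~\ref{cor:split_mono}, and then kill $d_3\colon \Hom_{\Z[\hp(G)]}(\Delta_2(G);M)\to H^3(\hp(G);M)$ using injectivity of $\eta_G^*$ in degree $3$. The only cosmetic difference is that the paper packages your $E_\infty$/filtration bookkeeping into two applications of the Cartan--Eilenberg low-degree exact sequence.
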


\begin{proof}
	To derive the statement we will compare the spectral sequences $E^{p,q}_r(G;M)$ and $E^{p,q}_r(\hp(G)_Q;M)$. First, we note that the adjunction unit yields a chain complex morphism:
	\begin{equation*}
		(\eta_G)_*:H_0\big(G^k_Q;\Z[\hp(G)]\big) \rightarrow H_0\big(\hp(G)^k_Q;\Z[\hp(G)]\big),
	\end{equation*}
	whose image under the functor $\Hom_{\Z[\hp(G)]}(-;M)$ is precisely $\eta^*_G$. Then, it extends to a bicomplex morphism and thus to a morphism of spectral sequences, cf. (\ref{eq:dfn_bicomplex}). Now, the spectral sequence $E^{p,q}_r(\hp(G)_Q;M)$ degenerates on the second page for the defect of $\hp(H)_Q$ vanishes in positive degrees. Indeed, the chain complex $H_0\big(\hp(G)^*_Q;\Z[\hp(G)]\big)$ is the bar resolution of $\Z$ for $\hp(G)$. Thus, we find again the classical statement:
	\begin{equation*}
		H^p(\hp(G);M)=E_2^{p,0}(\hp(G)_Q;M)=\Ext^p_{\Z[\hp(G)]}(\Z;M).
	\end{equation*}
	The exact sequence of low degree, cf. \cite[Theorem 5.12]{cartan_homological_1956}, yields the following commutative diagram:
	\begin{equation*}
		\begin{tikzcd}
			0 \ar[d] &&& 0 \ar[d] \\
			\Ext^1_{\Z[\hp]}(\Z;M) \ar[d] \ar[rrr,"\textnormal{isomorphism}","\textnormal{by Lemma~\ref{lem:Delta_0}}" swap] &&& \Ext^1_{\Z[\hp]}(\Delta_0(G);M) \ar[d] \\
			H^1(\hp;M) \ar[d] \ar[rrr,"\eta^*_G","\textnormal{isomorphism by Lemma~\ref{lem:iso_h1}}" swap] &&& H^1_Q(G;M_Q) \ar[d] \\
			0 \ar[d] \ar[rrr] &&& \Hom_{\Z[\hp]}(\Delta_1(G);M) \ar[d] \\
			\Ext^2_{\Z[\hp]}(\Z;M) \ar[d,"\cong" swap] \ar[rrr,"\textnormal{isomorphism}","\textnormal{by Lemma~\ref{lem:Delta_0}}" swap] &&& \Ext^2_{\Z[\hp]}(\Delta_0(G);M) \ar[d] \\
			H^2(\hp;M) \ar[rrr,"\textnormal{injective}","\textnormal{by Corollary~\ref{cor:split_mono}}" swap] &&& H^2_Q(G;M_Q)
		\end{tikzcd}
	\end{equation*}
	Hence, we deduce that $\Hom_{\Z[\hp]}(\Delta_1(G);M)$ vanishes. This is true for all $M$, thus $\Delta_1(G)$ must be trivial. It follows that $E^{p,1}_2(G;M)$ vanishes for all integers $p$. Thus, we can apply \cite[Theorem~5.12]{cartan_homological_1956} a second time with $n$ equal to $2$ to find the following commutative diagram:
	\begin{equation*}
		\begin{tikzcd}
			0 \ar[d] &&& 0 \ar[d] \\
			\Ext^2_{\Z[\hp]}(\Z;M) \ar[d] \ar[rrr,"\textnormal{isomorphism}","\textnormal{by Lemma~\ref{lem:Delta_0}}" swap] &&& \Ext^2_{\Z[\hp]}(\Delta_0(G);M) \ar[d] \\
			H^2(\hp;M) \ar[d] \ar[rrr,"\textnormal{injective}","\textnormal{by Corollary~\ref{cor:split_mono}}" swap] &&& H^2_Q(G;M_Q) \ar[d] \\
			0 \ar[d] \ar[rrr] &&& \Hom_{\Z[\hp]}(\Delta_2(G);M) \ar[d,"d_3"] \\
			\Ext^3_{\Z[\hp]}(\Z;M) \ar[d,"\cong" swap] \ar[rrr,"\textnormal{isomorphism}","\textnormal{by Lemma~\ref{lem:Delta_0}}" swap] &&& \Ext^3_{\Z[\hp]}(\Delta_0(G);M) \ar[d] \\
			H^3(\hp;M) \ar[rrr,"\textnormal{injective}","\textnormal{by Corollary~\ref{cor:split_mono}}" swap] &&& H^3_Q(G;M_Q)
		\end{tikzcd}
	\end{equation*}
	Hence, the morphism $d_3$ is trivial and we find the desired short exact sequence.
\end{proof}

\begin{rem}\label{rem:vanishing_delta1}
	The proof of Corollary~\ref{cor:exact_seq_h2} yields the vanishing of $\Delta_1(G)$ for any $Q$-groups $G$.
\end{rem}

\begin{ex}[The Quotient]
	Let us consider a $[0;1]$-group $G$ obtained as the gluing of a cosheaf of groups $\Gamma$ on the usual cell structure $\{\{0\};\{1\};[0;1]\}$. It is defined by $\Gamma(\{0\})$ being trivial, $\Gamma(\{1\})$ being $\Z/4$, $\Gamma([0;1])$ being $\Z/2$, and the transition map $\Gamma([0;1])\rightarrow \Gamma(\{1\})$ being the multiplication by $2$, cf. Figure~\ref{fig:gluing_quot}. This group has $\hp(G)$ equal to $\Z/2$, cf. Proposition~\ref{prop:hp_colim_cellular}. Its defect is non-trivial. Let us show that $\Delta_2(G)$ is $\Z/2$.
	\begin{figure}[H]
		\centering
		\begin{subfigure}[t]{.45\textwidth}
			\centering
			\begin{tikzpicture}
				\fill (0,0) circle(.05) node[below]{$\{0\}$} node[above]{$0$};
				\fill (2,0) circle(.05) node[below]{$\{1\}$} node[above]{$\Z/4$};
				\fill (1,2) circle(.05) node[below]{$[0;1]$} node[above]{$\Z/2$};	
				\draw[|->] (2,2) node[above]{$t$} -- (2,1) node[below]{$2t$};
				\draw[|->] (0,2) node[above]{$t$} -- (0,1) node[below]{$0$};
			\end{tikzpicture}
			\caption{The Cosheaf $\Gamma$.}
		\end{subfigure}
		\begin{subfigure}[t]{.45\textwidth}
			\centering
			\begin{tikzpicture}
				\draw (2,2) -- (0,0) -- (2,0);
				\fill (0,0) circle(.05) node[left]{$0$};
				\fill (2,0) circle(.05) node[right]{$0$};
				\fill (2,1) circle(.05) node[right]{$1$};
				\fill (2,2) circle(.05) node[right]{$2$};
				\fill (2,3) circle(.05) node[right]{$3$};	
			\end{tikzpicture}
			\caption{The $[0;1]$-group $G$.}
		\end{subfigure}
		\caption{The Gluing $G$ of $\Gamma$.}
		\label{fig:gluing_quot}
	\end{figure}
	\noindent Following Proposition~\ref{prop:tool_defect_cellular}, we have the identity:
	\begin{equation*}
		H_0\big(G^k_Q;\Z[\hp(G)]\big)=\underset{e\in \Cell\, [0;1]}{\colim} C_k\big(\Gamma(e);\Z[\hp(G)]\big).
	\end{equation*}
	It can be rephrased as a short exact sequence of chain complexes:
	\begin{equation}\label{seq:defect_quot}
		0\rightarrow C_k\big(\Z/2;\Z[\hp(G)]\big) \rightarrow C_k\big(0;\Z[\hp(G)]\big) \oplus C_k\big(\Z/4;\Z[\hp(G)]\big) \rightarrow H_0\big(G_Q^k;\Z[\hp(G)]\big)\rightarrow 0.
	\end{equation}
	In the kernel complex, $\Z/2$ acts trivially on $\Z[\hp(G)]$. Hence, its homology is simply the homology of $\Z/2$ with coefficients in $\Z$ (with trivial action) tensorised by $\Z[\hp(G)]$. In the middle complex, we find the homology of the trivial group. This is the coefficient group in degree $0$ and trivial in positive degrees. Finally, we also have the homology of $\Z/4$ with coefficients in $\Z[\hp(G)]$. The group $\hp(G)$ is the quotient of $\Z/4$ by its $2$-torsion subgroup. Hence, $\Z[\hp(G)]$ is induced on $\Z/4$ by the $(\Z/4[2])$-module $\Z$ with trivial action, i.e. we have the following formula:
	\begin{equation*}
		\Z[\hp(G)]\cong \Z[\Z\underset{\Z\big[\Z/4[2]\big]}{/4]\otimes \Z}.
	\end{equation*}
	By Shapiro's Lemma, cf.~\cite[Shapiro's Lemma 6.3.2]{weibel_introduction_1994}, the homology $H_k(\Z/4;\Z[\hp(G)])$ is isomorphic to the homology $H_k(\Z/2;\Z)$. We summarised these results on Table~\ref{tab:group_homo_quot}.
	\begingroup
	\renewcommand{\arraystretch}{1.5}
	\begin{table}[H]
		\centering
		\begin{tabular}{r||l l l}
			$k$ & $H_k\big(\Z/2;\Z[\hp(G)]\big)$ & $H_k\big(0;\Z[\hp(G)]\big)$ & $H_k\big(\Z/4;\Z[\hp(G)]\big)$ \\ \hline \hline
			$0$ & $\Z[\hp(G)]$ & $\Z[\hp(G)]$ & $\Z$ \\ \hline
			$2l+1$ & $\Z/2[\hp(G)]$ & $0$ & $\Z/2$ \\ \hline
			$2l+2$ & $0$ & $0$ & $0$ 
		\end{tabular}
		\caption{Homology of the Complexes of the Sequence (\ref{seq:defect_quot}).}
		\label{tab:group_homo_quot}
	\end{table}
	\endgroup 
	\noindent And we find the following homological exact sequence:
	\begin{equation*}
		\begin{tikzcd}
			  \Z[\hp(G)] \ar[r] & \Z[\hp(G)]\oplus \Z \ar[r] \ar[d, phantom, ""{coordinate, name=Z}]& \Delta_0(G) \ar[r] & 0 \\
			  \Z/2[\hp(G)] \ar[r] & \Z/2 \ar[r] \ar[d, phantom, ""{coordinate, name=Z'}] & \Delta_1(G) \arrow[ull, rounded corners, to path={ -- ([xshift=2ex]\tikztostart.east)|- (Z) [near end]\tikztonodes -| ([xshift=-2ex]\tikztotarget.west) -- (\tikztotarget)}] \\
			  0 \ar[r] & 0 \ar[r] & \Delta_2(G)\arrow[ull, rounded corners, to path={ -- ([xshift=2ex]\tikztostart.east)|- (Z') [near end]\tikztonodes -| ([xshift=-2ex]\tikztotarget.west) -- (\tikztotarget)}]
		\end{tikzcd}
	\end{equation*}
	Since we know that $\Delta_1(G)$ is trivial, cf. Remark~\ref{rem:vanishing_delta1}, we find that $\Delta_2(G)$ is isomorphic, as an Abelian group, to $\Z/2$. We note that it also determines its structure of $\hp(G)$-module for $\Z/2$ has a trivial automorphism group. 
\end{ex}

\section{Polar Spaces and Polar Coordinates}

\begin{dfn}\label{dfn:polar_space}
	A \emph{polar space} is a couple $(G;X)$ made of a Lie group acting continuously and faithfully on a Hausdorff, connected, locally path-connected, and semi-locally simply connected space $X$ in such a way that:
	\begin{enumerate}
	\item The quotient $Q:=X/G$ is simply connected;
	\item The quotient map $||\cdot||:X\rightarrow Q$ admits a continuous section $x$;
	\item The map:
		\begin{equation*}
			\begin{array}{rcl}
				\phi_x:G\times Q & \longrightarrow & X \\
				(g;q) & \longmapsto & g\cdot x(q),
			\end{array}
		\end{equation*}
		is a quotient map\footnote{Recall that if $X$ and $Y$ are two topological spaces, a continuous map $f:X\rightarrow Y$ is said to be a \emph{quotient map} if it is surjective and a subset $A$ of $Y$ is open if and only if $f^{-1}(A)$ is open. In this case, it yields a homeomorphism between $Y$ and the quotient of $X$ by the equivalence relation induced by $f$.}. We call the map $\phi_x$ the polar coordinates of $(G;X)$ associated to the section $x$;
	\end{enumerate}
	The \emph{isotropy} of the section $x$ designates the closed subspace:
		\begin{equation*}
			H=\{(g;q)\in G\times Q\;|\; g\cdot x(q)=x(q)\},
		\end{equation*}
		This is a $Q$-subgroup of $G_Q$.
\end{dfn}

\begin{prop}\label{prop:orbit_quotient}
	Let $(G;X)$ be a polar space with quotient space $Q$, $x$ be a section of the quotient map, and $H$ be the isotropy of $x$. For all $q$ in $Q$, the orbit of $x(q)$ is homeomorphic to $G/H_q$. 
\end{prop}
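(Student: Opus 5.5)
The plan is to exploit the quotient property of the polar coordinates $\phi_x$ by restricting to the closed fibre $G\times\{q\}$. Fix $q$ in $Q$ and consider the orbit map $o_q:G\rightarrow G\cdot x(q)$, $g\mapsto g\cdot x(q)$. It is continuous and surjective, and $o_q(g)=o_q(g')$ exactly when $g^{-1}g'\in H_q$. It therefore induces a continuous bijection $\bar o_q:G/H_q\rightarrow G\cdot x(q)$, where the orbit carries the subspace topology from $X$. The whole task is to show that $\bar o_q$ is open, or equivalently that $o_q$ is a quotient map onto its image.

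First, the orbit is closed in $X$ and saturated for $\phi_x$. Indeed, $\phi_x^{-1}(G\cdot x(q))=G\times\{q\}$: if $g\cdot x(q')$ lies in the orbit of $x(q)$, then applying the quotient map $||\cdot||$ gives $q'=||x(q')||=q$. Since $Q$ is Hausdorff, $\{q\}$ is closed, so $G\times\{q\}$ is a closed saturated subset of $G\times Q$. Now I would use the standard fact that the restriction of a quotient map to a closed (or open) saturated subset is again a quotient map onto its image. Concretely, let $A\subseteq G\cdot x(q)$ have open preimage $o_q^{-1}(A)=:U\subseteq G$. I would realise $A$ as the trace of an open set of $X$. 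The cleanest route is via closed sets: if $C\subseteq G\cdot x(q)$ has closed preimage $D$ in $G$, then $\phi_x^{-1}(C)=D\times\{q\}$ is closed in $G\times Q$ and saturated. Hence $C$ is closed in $X$ because $\phi_x$ is a quotient map, and so $C$ is closed in the orbit. This shows that $o_q$ is a quotient map.

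Finally, a continuous bijection induced by a quotient map onto the quotient $G/H_q$ is a homeomorphism, because $G\rightarrow G/H_q$ is itself a quotient map with the same fibres. Hence $G\cdot x(q)\cong G/H_q$. The only delicate step is the restriction-of-quotient-maps argument. It works precisely because the saturated set $G\times\{q\}$ is closed, which in turn needs both that $Q$ is Hausdorff and that $||\cdot||\circ x=\id_Q$.
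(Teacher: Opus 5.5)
Your proposal is correct and follows the paper's proof. Both arguments observe that $\phi_x^{-1}(G\cdot x(q))=G\times\{q\}$ is a closed saturated subset of $G\times Q$, so that $\phi_x$ restricts to a quotient map onto the orbit whose fibres are the cosets $gH_q$; the only difference is that you verify this restriction step directly via closed sets, whereas the paper cites Bourbaki for it.
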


\begin{proof}
	The inverse image by $\phi_x$ of the orbit of $x(q)$ is $G\times\{q\}$. This is a closed set that is saturated for the relation that defines the quotient. Thus, by \cite[Chapitre~I, \S 5, \textnumero 6, Corollaire~1]{bourbaki_topologie_2007-1}, $\phi_x$ yields a homeomorphism between $G/H_q$ and the orbit of $x(q)$. 
\end{proof}
	
\begin{dfn}\label{dfn:regular}
	Let $(G;X)$ be a polar space with quotient $Q$, $x:Q\rightarrow X$ be a section of the quotient map, and $H$ be the isotropy of $x$. We say that $x$ is \emph{regular} if for every $q$ in $Q$, there is an open neighbourhood $U$ of $x(q)$ and a retraction by deformations $h:[0;1]\times \phi_x^{-1}(U)\rightarrow \phi_x^{-1}(U)$ of $\phi_x^{-1}(U)$ onto $H_q\times\{q\}$ such that:\begin{enumerate}
		\item The image of $\phi_x^{-1}(U)\cap H$ by $h_t$ is contained in $\phi_x^{-1}(U)\cap H$ at every time $t$ in $[0;1]$;
		\item If $y$ and $z$ are two points of $\phi_x^{-1}(U)$ lying above the same point of $Q$, the paths $t\mapsto||h_t(y)||$ and $t\mapsto||h_t(z)||$ are identical.
	\end{enumerate}
	A polar space that admits a regular section will be called \emph{regular}.
\end{dfn}

\begin{rem}
	The quotient space of a regular polar space is necessarily locally contractible. 
\end{rem}

\begin{prop}
	The isotropy of a regular section of a polar space is {\simul}.
\end{prop}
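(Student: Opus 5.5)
Let $x$ be a regular section with isotropy $H\subset G_Q$. We must show that $H^k_Q$ is locally path-connected for every $k\geq 0$. (It is Hausdorff as a closed subspace of $G\times Q$, and $H^0_Q=Q$ is locally path-connected by assumption.) The plan is to show that every point $(h_1;\dots;h_k)$ of $H^k_Q$, lying over $q\in Q$, has arbitrarily small path-connected neighbourhoods. We produce these neighbourhoods with the deformation retractions provided by regularity.

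\textbf{Step 1: the deformation on one factor.} Fix $q$ and let $U$, $h$ be as in Definition~\ref{dfn:regular}. Set $V:=\phi_x^{-1}(U)\cap H$. Then $V$ is an open neighbourhood of $H_q\times\{q\}$ in $H$. Condition~1 gives $h_t(V)\subset V$ for all $t$, and $h_1(V)=H_q\times\{q\}$.

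\textbf{Step 2: the simultaneous deformation.} Consider
\begin{equation*}
V^k_Q=H^k_Q\cap(\phi_x^{-1}(U))^k_Q ,
\end{equation*}
an open subset of $H^k_Q$. Define $h^k_t(y_1;\dots;y_k):=(h_t(y_1);\dots;h_t(y_k))$. Condition~2 guarantees that the factors $h_t(y_i)$ all lie over the same point of $Q$, so $h^k_t$ maps $V^k_Q$ into $V^k_Q$. Thus $h^k$ is a homotopy of $V^k_Q$ into $(H_q)^k\times\{q\}$.

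\textbf{Step 3: path-connectedness.} Let $W$ be a neighbourhood of $(h_1;\dots;h_k)$. Since $H_q$ is a closed subgroup of a Lie group, it is a Lie group, hence locally path-connected. Choose a path-connected neighbourhood $B$ of $(h_1,\dots,h_k)$ in $H_q^k$. By continuity of $h^k$ and compactness of $[0;1]$, some neighbourhood $W'$ of the point has $h^k([0;1]\times W')\subset W$. Then the set
\begin{equation*}
N:=W'\cap (h_1^k)^{-1}(B\cap W)
\end{equation*}
is a neighbourhood of the point, because $h^k_1$ fixes points of $H_q^k\times\{q\}$ (it is a retraction by deformations). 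Every $y\in N$ is joined to a point of $B$ by the path $t\mapsto h^k_t(y)$, which stays inside $W$. Points of $B$ are joined to one another inside $B$. Hence $N$ lies in a single path component of $W$.

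\textbf{The main obstacle.} Step 3 only yields \emph{weak} local path-connectedness at every point, not local path-connectedness outright, and the paths $h^k_t(y)$ might leave $W$ if $W$ is not chosen well. The standard fact resolves this: a space that is weakly locally path-connected at every point is locally path-connected, because the path components of open sets are then open. The remaining point of care is continuity of $h^k$ in the fibre-product topology, which follows since $H^k_Q$ carries the subspace topology of $(G\times Q)^k$. I also need $h$ to fix $H_q\times\{q\}$ pointwise throughout the deformation, so that $h^k_t(B)$ stays near $B$ and the choice of $W'$ works.
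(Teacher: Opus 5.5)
Your proposal is correct and follows the paper's own proof: both use the fibrewise product deformation $h^k_t$ (well defined by condition~2) together with the fact that the fibres $(H_q)^k$ are Lie groups. The paper compresses the conclusion into ``hence $H^k_Q$ is locally contractible'', whereas you spell out the tube-lemma argument for local path-connectedness and make explicit the assumption, which the paper also uses implicitly, that the retraction by deformations fixes $H_q\times\{q\}$ pointwise at every time. One small fix: choose the path-connected neighbourhood $B$ inside $W\cap H_q^k$ from the start, which is possible because $H_q^k$ is locally path-connected; $B\cap W$ need not be path-connected, so the path between points of $B$ could otherwise leave $W$.
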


\begin{proof}
	Let $(G;X)$ be a regular polar space with quotient $Q$, $H$ be the isotropy of a regular section, and $k$ be a positive integer. The formula:
	\begin{equation*} 
		h^k_t(g_1;...;g_k)\coloneqq (h_t(g_1);...;h_t(g_k)),
	\end{equation*}
	derived from a local retraction by deformations of Definition~\ref{dfn:regular}, provides local retractions by deformations of $H^k_Q$ onto its fibres. But its fibres are closed subgroups of a Lie group, hence locally contractible. Therefore, $H^k_Q$ is locally contractible for all positive integers $k$. Since $Q$ is also locally contractible, $H\rightarrow Q$ is {\simul}.
\end{proof}

\begin{prop}
	Let $Q$ be a simply connected space endowed with a cellular structure $\Cell Q$ and $G$ be a Lie group. If $H$ is a cosheaf of closed subgroups of $G$ then $(G;G_Q/(H)_\gl)$ is a regular polar space. 
\end{prop}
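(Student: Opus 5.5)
We need to show that $X:=G_Q/(H)_\gl$, with $G$ acting by left multiplication on the first factor, is a regular polar space. The section is the obvious one, $x(q):=[(1;q)]$, and its isotropy is $(H)_\gl$ itself.

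\emph{Topological hypotheses on $X$.} Since $G$ is constant, $G_Q$ is the gluing of the constant cosheaf $e\mapsto G$, and $(H)_\gl$ is a cosheaf of closed subgroups acting on it by right multiplication. Proposition~\ref{prop:commutativity_quotients_gluings} then identifies $X$ with the gluing of the cosheaf $e\mapsto G/H(e)$. Each $G/H(e)$ is a manifold, hence Hausdorff and locally path-connected, so Proposition~\ref{prop:Hausdorff_LPC_gluing} shows that $X$ is Hausdorff and locally path-connected. Connectedness follows from the same argument as in Proposition~\ref{prop:ubi_surj}: $X$ is the image of the connected space $G\times Q$, because $Q$ is simply connected and in particular connected.

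\emph{Polar axioms.} The map $X\to Q$, $[(g;q)]\mapsto q$, induces a continuous bijection $X/G\to Q$. It is a homeomorphism because $q\mapsto [(1;q)]$ is a continuous section. This also gives axiom~2. For axiom~3, $\phi_x(g;q)=g\cdot[(1;q)]=[(g;q)]$ is exactly the defining quotient map $G_Q\to G_Q/(H)_\gl$. The isotropy of $x$ is $\{(g;q): g\in H_q\}=(H)_\gl$. Faithfulness can fail, so I would either add that hypothesis or pass to $G$ modulo the kernel of the action, which is the intersection of the conjugates of all the $H(e)$.

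\emph{Regularity.} Fix $q_0$ and let $e_0$ be its cell. Proposition~\ref{prop:regular_subgroup} provides an $(H)_\gl$-equivariant open neighbourhood $U'$ of $H(e_0)\times\{q_0\}$ in $G_Q$, together with a deformation retraction satisfying precisely conditions 1 and 2 of Definition~\ref{dfn:regular}. Since $U'$ is saturated, $U:=\phi_x(U')$ is open and $\phi_x^{-1}(U)=U'$. This gives regularity, because the stalk $H_{q_0}$ equals $H(e_0)$.

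\emph{Remaining points and main obstacle.} Two conditions are still needed. Semi-local simple connectedness comes from regularity: $U$ deforms onto the orbit $G/H(e_0)$, a manifold, so small loops can be controlled after shrinking $U$ using a small tubular neighbourhood in $G/H(e_0)$. The condition $X/G=Q$ being simply connected is a hypothesis.

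The main obstacle is the first of these. The deformation provided by Proposition~\ref{prop:regular_subgroup} lives on $U'$, and one must check that it descends to a homotopy on $U$. This holds because $h_t$ commutes with right multiplication by $(H)_\gl$: on $G$ it is an $H(e_0)$-equivariant slice retraction, and on $Q$ it is the straight-line retraction. Given this, the descended homotopy retracts $U$ onto the orbit $G/H(e_0)$. Using local contractibility of that orbit, small loops in a smaller $U$ can then be pushed into a contractible chart of $G/H(e_0)$ and are null-homotopic in $X$.
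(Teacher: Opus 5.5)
Your route is the paper's. Its proof is just the citation of Propositions~\ref{prop:Hausdorff_LPC_gluing}, \ref{prop:commutativity_quotients_gluings} and \ref{prop:regular_subgroup}, plus the fact that $G/H(e)$ is a Hausdorff manifold. Your treatment of the section $x(q)=[(1;q)]$, of $\phi_x$, of the isotropy and of the saturated neighbourhood $U'$ fills in what that citation leaves implicit. Your remark that faithfulness is not automatic is also correct, and the paper's proof is silent on it. Two of your additional steps are, however, wrong as written.

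\emph{Connectedness.} $G\times Q$ is not connected when $G$ is disconnected, which the paper explicitly allows. $X$ really can be disconnected: $G=\Z/2$ with $H(e)=1$ for every cell gives $X=G\times Q$. The correct criterion comes from Proposition~\ref{prop:ubi_surj} together with Proposition~\ref{prop:hp_colim_cellular}: $X$ is connected if and only if $\colim_{e}\pi_0(H(e))\cong\hp((H)_\gl)\to\pi_0(G)$ is onto. So connectedness, like faithfulness, is a hypothesis missing from the statement; it is automatic only when $G$ is connected, and the paper's proof does not address it either. You should flag it rather than claim to prove it.

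\emph{Semi-local simple connectedness.} The descended homotopy does not retract $U$ onto the orbit $G/H(e_0)$. The neighbourhood $U'=T\times S$ built in Proposition~\ref{prop:regular_subgroup} only surrounds $H(e_0)\times\{q_0\}$, whose image in $X$ is the single point $x(q_0)$. Take $k_t$ right $H(e_0)$-equivariant: trivialise $T\cong V\times H(e_0)$ by a local section of $G\to G/H(e_0)$ over a contractible $V$, and contract $V$. Note also that $l_t(q)$ stays in faces $e'\le e$ of the cell $e$ of $q$, where $H(e)\subset H(e')$. Then $h_t$ descends, continuously because $\id_{[0;1]}\times\phi_x|_{U'}$ is a quotient map, to a contraction of $U$ onto $x(q_0)$. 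Thus $U$ is contractible, its $G$-translates give every point of $X$ a contractible neighbourhood, and semi-local simple connectedness is immediate. Your step of pushing loops into a chart of the orbit is unnecessary, and as stated it is unjustified.
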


\begin{proof}
	This is a direct consequence of Propositions~\ref{prop:Hausdorff_LPC_gluing}, \ref{prop:commutativity_quotients_gluings}, \ref{prop:regular_subgroup}, and the fact that the quotient of a Lie group by a closed subgroup is a Hausdorff space, cf. \cite[Theorem~21.17]{lee_introduction_2012}. 
\end{proof}

\begin{ex}
	Let $n$ be a positive integer. We consider $\Sph^n$ the unit sphere of $\R^{n+1}$ endowed with the restriction of the action of $\textnormal{SO}(n)$ on the $n$ first coordinates. The quotient space is realised by the projection on the last coordinate line. It is the line segment $[-1;1]$. This quotient map admits a section $x:q\mapsto(\sqrt{1-q^2};0;\cdots;0;q)$. If we consider $\textnormal{SO}(n-1)$ included in $\textnormal{SO}(n)$ via its action on $\{0\}\times\R^{n-1}$ then the isotropy of $x$ is given by the union of $\textnormal{SO}(n)\times\{-1;1\}$ and $\textnormal{SO}(n-1)\times[-1;1]$. The polar coordinates associated to $x$ is a quotient map for we have the following commutative diagram:
	\begin{equation*}
		\begin{tikzcd}
			\textnormal{SO}(n)\times [-1;1]\ar[rr,"\phi_x"] \ar[dr,"(g;q)\mapsto(g\,\mod\textnormal{SO}(n-1);q)" swap] & & \Sph^n \\
			& \Sph^{n-1}\times [-1;1] \ar[ru,"(v;q)\mapsto(\sqrt{1-q^2} \cdot v;q)" swap]&
		\end{tikzcd}
	\end{equation*}
	and the two factors of $\phi_x$ are straightforwardly quotient maps. We also find that the isotropy of $x$ is cellular, hence $x$ is regular.
\end{ex}

\begin{ex}\label{ex:tor_var1}
	Let $X$ be a real toric variety under the action of $\G{\R}^n$. We can consider its real locus $X(\R)$ and its complex locus $X(\C)$. These spaces are endowed with a continuous action of $(\R^\times)^n$ and $(\C^\times)^n$ respectively. We restrict these actions to their respective maximal compact subgroup $\{\pm1\}^n$ and $(\Sph^1)^n$. Following \cite[\S4.1]{fulton_introduction_1993}, we can also consider the non-negative locus $X_+$ of $X$ contained in $X(\R)$. Let $Y$ be a completion of $X$. The toric orbit filtration of $Y$ endows $Y_+$ with a regular cellular structure with a unique maximal cell of dimension $n$ corresponding to the principal orbit of $Y$. Hence, $Y_+$ is a cellular ball and $X_+$ is obtained from $Y_+$ by removing the open cells corresponding to the toric orbits we had to add to $X$ to make $Y$. Thus $X_+$ is contractible. Moreover, both $X(\R)$ and $X(\C)$ are endowed with an absolute value, i.e. a retraction onto $X_+$. This retraction realises the quotient of $X(\C)$ by $(\Sph^1)^n$ (resp. $X(\R)$ by $\{\pm1\}^n$) so that we have a section of the quotient map, cf. \cite[\S4.1, Proposition]{fulton_introduction_1993}. If the variety is projective, any momentum map will induce a homeomorphism between $X_+$ and the associated momentum polytope. In both cases we have a polar space. The easiest way to check that the polar coordinates are a quotient map is by considering the completion $Y$. Let $K$ be either $\R$ or $\C$ and $G(K)$ denote the maximal compact subgroup of $(K^\times)^n$. The polar coordinates $G(K)\times Y_+\rightarrow Y(K)$ is a closed surjective map for $G(K)$ and $Y_+$ are compact. Hence, it is a quotient map. Now $G(K)\times X_+$ is a locally closed subspace of $G(K)\times Y_+$ that is saturated for the kernel relation of $G(K)\times Y_+\rightarrow Y(K)$ whose restriction to $G(K)\times X_+$ is the polar coordinates of $(G(K);X(K))$. Thus, by \cite[Chapitre~I, \S 5, \textnumero 6, Proposition~10]{bourbaki_topologie_2007-1}, the polar coordinates of $(G(K);X(K))$ is a quotient map. This is mentioned without a proof in \cite[\S4.1, (3) p.80]{fulton_introduction_1993}. Since the isotropy of $(G(K);Y(K))$ is cellular it is a regular polar space. Furthermore, having only removed open cells from $Y_+$ to construct $X_+$, we easily deduce that $(G(K);X(K))$ is a polar space as well. In the real case the isotropy groups of the orbits are always $\Z/2$-modules while in the complex case they are always products of circles. We depicted the isotropy of the complex and real projective planes in Figure~\ref{fig:iso_P2} (in this case the quotient is a triangle).
\end{ex}

\begin{figure}[H]
	\centering
	\begin{subfigure}[t]{.45\textwidth}
		\centering
		\begin{tikzpicture}
			\draw[thick] (0,0) node[below left]{$\{\pm1\}^2$} -- (2,0) node[below right]{$\{\pm1\}^2$} -- (1,1.73) node[above]{$\{\pm1\}^2$} -- cycle;
			\draw (1,0) node[below]{$\langle(1;-1)\rangle$};
			\draw (1.73,.86) node[right]{$\langle(-1;-1)\rangle$};
			\draw (.27,.86) node[left]{$\langle(-1;1)\rangle$};
			\draw (1,.73) node{$1$};
		\end{tikzpicture}
		\caption{The Isotropy for the Real Projective Plane.}
		\label{fig:isoP2}
	\end{subfigure}
	\hfill
	\begin{subfigure}[t]{.45\textwidth}
		\centering
		\begin{tikzpicture}
			\draw[thick] (0,0) node[below left]{$(\Sph^1)^2$} -- (2,0) node[below right]{$(\Sph^1)^2$} -- (1,1.73) node[above]{$\hspace{.2cm}(\Sph^1)^2$} -- cycle;
			\draw (1,0) node[below]{$1\times \Sph^1$};
			\draw (1.73,.86) node[right]{$\Sph^1(1;1)$};
			\draw (.27,.86) node[left]{$\Sph^1\times 1$};
			\draw (1,.73) node{$1$};
		\end{tikzpicture}
		\caption{The Isotropy for the Complex Projective Plane.}
	\end{subfigure}
	\caption{The Isotropy of the Canonical Polar Coordinates of the Real and Complex Projective Planes.}
	\label{fig:iso_P2}
\end{figure}

\subsection{Polar Coverings}

Let $G$ be a Lie group acting continuously and faithfully on a connected space $X$.

\paragraph{Action Lifting}
In this paragraph, we describe the classical lifting process for a group action along a covering. When we say that $\rev{X}$ is a covering of $X$, it is implicitly equipped with a covering map $p_{\rev{X}}:\rev{X}\rightarrow X$.

\begin{dfn}\label{dfn:Group_Lift_abstract}
	Let $\rev{X}$ be a connected covering of $X$. We denote by $\rev{G}$ the abstract group made of homeomorphisms $\rev{g}$ of $\rev{X}$ for which there exists a $g$ in $G$ for which $p_{\rev{X}}\circ \rev{g}$ equals $g\circ p_{\rev{X}}$. Such a $g$ is necessarily unique and this endows $\rev{G}$ with a natural group morphism $p_{\rev{G}}$ to $G$.
\end{dfn}

\begin{prop}\label{prop:exact_seq_lift_group} 
	We have the following exact sequence:
	\begin{equation*}
		1 \rightarrow \Aut_X\big(\rev{X}\big) \rightarrow \rev{G} \rightarrow G.
	\end{equation*}
	Moreover, the image of $\rev{G}$ in $G$ is a union of connected components.
\end{prop}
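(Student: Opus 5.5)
Exactness at $\Aut_X(\rev{X})$ and at $\rev{G}$ follows directly from Definition~\ref{dfn:Group_Lift_abstract}. A deck transformation $\tau$ of $p_{\rev{X}}$ satisfies $p_{\rev{X}}\circ\tau=\id_X\circ p_{\rev{X}}$, so it lies in $\rev{G}$ with $p_{\rev{G}}(\tau)=1$, and the inclusion is injective. Conversely, if $p_{\rev{G}}(\rev{g})=1$, then $p_{\rev{X}}\circ\rev{g}=p_{\rev{X}}$, so $\rev{g}$ is a homeomorphism of $\rev{X}$ over $X$, that is, a deck transformation. Before this, I would check that $p_{\rev{G}}$ is well defined and a group morphism. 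Uniqueness of $g$ holds because $p_{\rev{X}}$ is surjective, so $g\circ p_{\rev{X}}$ determines $g$. Multiplicativity follows from $p_{\rev{X}}\circ\rev{g}\rev{h}=g\circ p_{\rev{X}}\circ\rev{h}=gh\circ p_{\rev{X}}$.

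For the image I would use the lifting criterion. Fix $\rev{x}_0\in\rev{X}$ and set $x_0=p_{\rev{X}}(\rev{x}_0)$. An element $g\in G$ lies in the image if and only if the map $g\circ p_{\rev{X}}:\rev{X}\rightarrow X$ lifts through $p_{\rev{X}}$ to a homeomorphism. Since $\rev{X}$ is connected and locally path-connected, such a lift sending $\rev{x}_0$ to some $\rev{y}$ over $g x_0$ exists if and only if
\begin{equation*}
g_*\,(p_{\rev{X}})_*\pi_1(\rev{X};\rev{x}_0)\subseteq (p_{\rev{X}})_*\pi_1(\rev{X};\rev{y}).
\end{equation*}
The same criterion applied to $g^{-1}$ gives an inverse lift, so every lift is a homeomorphism. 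Hence the image is the set $S$ of those $g$ for which $g_*$ carries the conjugacy class of the subgroup $N=(p_{\rev{X}})_*\pi_1(\rev{X};\rev{x}_0)$ into the conjugacy class attached to the covering.

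The remaining claim is that $S$ is a union of connected components of $G$. It suffices to show that $S$ is stable under multiplication by the identity component $G^\circ$, and for this it suffices to show $G^\circ\subseteq S$, because $S$ is a subgroup. Take $g\in G^\circ$ and a path $g_t$ from $1$ to $g$ in $G^\circ$, which exists since Lie groups are locally path-connected. Then $t\mapsto g_t x_0$ is a path $\delta$ from $x_0$ to $g x_0$. The maps $g_t$ give a homotopy between $\id_X$ and $g$ that moves the basepoint along $\delta$. By the standard homotopy-with-moving-basepoint lemma,
\begin{equation*}
g_*[\gamma]=[\delta^{-1}\cdot\gamma\cdot\delta] \quad\text{for every loop } \gamma \text{ at } x_0.
\end{equation*}
Lifting $\delta$ to $\rev{\delta}$ starting at $\rev{x}_0$ gives an endpoint $\rev{y}$ over $g x_0$. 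The change-of-basepoint isomorphism along $\rev{\delta}$ carries $(p_{\rev{X}})_*\pi_1(\rev{X};\rev{x}_0)$ onto $(p_{\rev{X}})_*\pi_1(\rev{X};\rev{y})$, and this is exactly the required inclusion. Hence $g\in S$.

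The main obstacle is continuity of the homotopy $(t,x)\mapsto g_t x$, because the action is only assumed continuous, not proper. It is continuous as a composite of the action map $G\times X\rightarrow X$ with $(t,x)\mapsto(g_t,x)$, so it is fine. I would also verify that the lift of $g\circ p_{\rev{X}}$ chosen to pass through $\rev{y}$ satisfies the covering relation, which holds by construction.
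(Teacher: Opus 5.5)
Your proposal is correct and follows the paper's route. Exactness is read off from Definition~\ref{dfn:Group_Lift_abstract}, and the claim about the image reduces to the fact that every element of $G^\circ$ lifts along $p_{\rev{X}}$; the paper simply cites this from \cite[Theorem~9.3]{bredon_introduction_1972}, while you prove it directly with the lifting criterion and the basepoint-moving homotopy $t\mapsto g_t$. Two minor points: uniqueness of $g$ also uses the standing faithfulness hypothesis, and in general the lifting criterion's inclusion must be an equality for the lift to be a homeomorphism, which your $G^\circ$ argument does supply.
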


\begin{proof}
	The exactness is a direct consequence of Definition~\ref{dfn:Group_Lift_abstract}. The second assertion follows the fact that the action of a connected group can always be lifted along a covering map, cf. \cite[Theorem~9.3]{bredon_introduction_1972}. 
\end{proof}

The following proposition is a slight variation of \cite[Theorem~9.3]{bredon_introduction_1972}

\begin{prop}\label{prop:topG'}
	There is a unique topology on $\rev{G}$ for which (1) the induced morphism $p_{\rev{G}}$ is continuous and a covering of its image and (2) the induced action $\rev{G}\times \rev{X} \rightarrow \rev{X}$ is continuous.
\end{prop}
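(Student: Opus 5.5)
The plan is to transport the topology from the universal covering $\tilde{G}_0$ of the identity component $G^\circ$, which acts on $\rev{X}$ by the classical lifting argument of \cite[Theorem~9.3]{bredon_introduction_1972}. Fix a base point $\rev{x}_0\in\rev{X}$ over $x_0\in X$.

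\textbf{Step 1 (identity component).} The action of $G^\circ$ on $X$ lifts to a continuous action of $\tilde{G}_0$ on $\rev{X}$ by lifting the map $\tilde{G}_0\times\rev{X}\rightarrow X$, $(\tilde g;\rev y)\mapsto p(\tilde g)\cdot p_{\rev{X}}(\rev y)$, along $p_{\rev{X}}$. Here we use that $\tilde{G}_0\times\rev{X}$ is connected and locally path-connected and that $\tilde{G}_0$ is simply connected, so the lifting criterion applies. This gives a group morphism $\tilde{G}_0\rightarrow\rev{G}$. Its image $\rev{G}_0$ is the subgroup of $\rev{G}$ lying over $G^\circ$ and reachable from the identity by lifts of paths. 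Its kernel $K$ is discrete because it is contained in $\pi_1(G^\circ)$, which is discrete in $\tilde{G}_0$. So $\rev{G}_0\cong\tilde{G}_0/K$ is a Lie group, and $\rev{G}_0\rightarrow G^\circ$ is a covering.

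\textbf{Step 2 (openness).} I would show $\rev{G}_0$ is normal in $\rev{G}$, since conjugation preserves liftability of paths. I would also show it has countable, hence discrete, index inside $p_{\rev{G}}^{-1}(G^\circ)$. The topology on $\rev{G}$ is then the one making $\rev{G}_0$ an open subgroup and its cosets homeomorphic to $\rev{G}_0$ via translation. Conjugation by any $\rev g$ must be continuous on $\rev{G}_0$. Conjugation by $\rev g$ covers conjugation by $g$ on $G^\circ$, and it preserves the identity. It is therefore the unique lift of a continuous automorphism through the covering $\rev{G}_0\rightarrow G^\circ$, which has connected source, so it is continuous. This step is the delicate point I expect to be the main obstacle. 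It requires $\rev{G}_0$ to be exactly the set of elements connectable to the identity, and it requires checking that the uniqueness of lifts really forces the conjugation map to be continuous.

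\textbf{Step 3 (properties).} Continuity of $p_{\rev{G}}$ and the covering property over its image follow coset by coset. The image is a union of components by Proposition~\ref{prop:exact_seq_lift_group}. For continuity of the action, write $\rev g=\rev g_1\rev h$ with $\rev h\in\rev{G}_0$ and use Step 1 on each coset; each element acts by a homeomorphism.

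\textbf{Step 4 (uniqueness).} Let $\tau$ be any topology with properties (1) and (2). Then $(\rev{G},\tau)\rightarrow p_{\rev{G}}(\rev{G})$ is a covering. Its identity component is connected, maps onto $G^\circ$, and acts continuously. Such a group necessarily consists of lifts reachable along paths, so it coincides with $\rev{G}_0$. The covering structures agree because a covering of $G^\circ$ by a connected group is determined by its kernel, and the kernel is forced to be the set of elements acting trivially on the fibre over $x_0$ via path lifting. Hence the identity component with its $\tau$-topology equals $\rev{G}_0$ with the topology of Step 2. Both topologies have this subgroup open, so they coincide.
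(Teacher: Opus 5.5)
\textbf{The gap is in your uniqueness step.} Your Steps 1 and 3 are the standard Bredon-type construction; the paper gives no proof of its own and only points to \cite[Theorem~9.3]{bredon_introduction_1972}. That part is sound. Step~4 is not.

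Hypotheses (1) and (2) do not say that $(\rev{G};\tau)$ is a topological group. So you cannot invoke ``a covering of $G^\circ$ by a connected group is determined by its kernel''. Even an isomorphism of coverings would not fix the topology on a given underlying set: transporting along a fibre-preserving but discontinuous bijection gives another covering topology with the same projection and the same fibres. Likewise, ``both topologies have this subgroup open, so they coincide'' tacitly assumes that left translations are $\tau$-homeomorphisms.

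What pins $\tau$ down is the action, via uniqueness of lifts on the connected space $[0;1]\times\rev{X}$, applied to \emph{every} $\tau$-component $D$. Write $[\tilde h]\in\rev{G}_0$ for the image of $\tilde h\in\tilde G_0$. Pick $d\in D$ over $g$ and a $\tau$-path $\gamma'$ in $D$ starting at $d$.
\begin{itemize}
\item By (2), the map $(t;y)\mapsto d^{-1}\gamma'(t)\cdot y$ is continuous and lifts $(t;y)\mapsto h_t\cdot p_{\rev{X}}(y)$, where $h_t:=g^{-1}p_{\rev{G}}(\gamma'(t))$ is a path in $G^\circ$.
\item The map $(t;y)\mapsto[\tilde h_t]\cdot y$ lifts the same map, where $\tilde h_t$ is the lift of $h_t$ to $\tilde G_0$ with $\tilde h_0=1$.
\item Both maps are the identity at $t=0$, so $\gamma'(t)=d\,[\tilde h_t]$. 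Since $D$ is path-connected, $D\subset d\rev{G}_0$.
\item Conversely, let $L:\tilde G_0\rightarrow(D;\tau)$ be the continuous lift of $\tilde h\mapsto g\,p(\tilde h)$ with $L(1)=d$. Applying the same argument to the paths $t\mapsto L(\tilde h_t)$ gives $L(\tilde h)=d\,[\tilde h]$, so $D=d\rev{G}_0$.
\item $L$ is a morphism of coverings of $gG^\circ$, hence a surjective local homeomorphism and a quotient map. This forces $\tau|_D$ to be your coset topology.
\end{itemize}
Since the components of a covering of a manifold are open, $\tau$ equals your topology.

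\textbf{Two smaller points in Step~2.} First, ``it is the unique lift \dots, so it is continuous'' is circular. Uniqueness of lifts only concerns continuous lifts, and existence of a continuous lift would require checking that conjugation by $g$ preserves $p_*\pi_1(\rev{G}_0)$. Use the same device instead. Let $\tilde c_g$ be the lift to $\tilde G_0$ of conjugation by $g$. For a path $\tilde h_t$ from $1$, the maps $(t;y)\mapsto\rev{g}\,[\tilde h_t]\,\rev{g}^{-1}\cdot y$ and $(t;y)\mapsto[\tilde c_g(\tilde h_t)]\cdot y$ lift the same map to $X$ and agree at $t=0$. Hence $\rev{g}\,[\tilde h]\,\rev{g}^{-1}=[\tilde c_g(\tilde h)]$, which gives normality and continuity of conjugation at once. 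This is only needed to make $\rev{G}$ a topological group, which the paper needs for $\Br$ to land in $\E(G)$; it is not needed for (1) and (2).

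Second, drop the countability claim. It is false in general: take $G$ trivial, $X$ a wedge of uncountably many circles and $\rev{X}$ universal, so the index is $|\pi_1(X)|$. It is also unnecessary. Discreteness of $p_{\rev{G}}^{-1}(G^\circ)/\rev{G}_0$ is built into your definition of the topology, and a disjoint union of coverings of a connected base is a covering whatever the number of sheets.
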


From now on, $(G;X)$ is a polar space with quotient space $Q$. 

\begin{dfn}\label{dfn:polcov}
	A covering $\rev{X}$ of $X$ is said to be \emph{polar} if it is connected, Galois, and if $p_{\rev{G}}$ is surjective.
\end{dfn}

We shall remark that G.~Bredon provides a criterion for a covering to be polar, cf. \cite[Theorem~9.3]{bredon_introduction_1972}. Namely, a covering $\rev{X}$ of $X$ is polar if and only if the image by $p_{\rev{X}}$ of the free loop space of $\rev{X}$ is stabilised by the action of $G$ on the free loop space of $X$.  

\begin{dfn}\label{dfn:funct_Br}
	We denote by $\PolC(G;X)$ the full subcategory of coverings of $X$ spanned by its polar coverings. In addition, we denote by $\E(G)$ the category of \emph{discrete extensions} of $G$. Its objects $E$ are Lie groups (also denoted by $E$) endowed with a surjective morphism $p_E:E\rightarrow G$ with discrete kernel. Its morphisms are surjective morphisms of Lie groups $f:E\rightarrow F$ such that $p_F$ equals the composition $f\circ p_E$. Proposition~\ref{prop:topG'} and Definition~\ref{dfn:polcov} yield a functor:
	\begin{equation*}
		\Br:\PolC(G;X)\rightarrow \E(G).
	\end{equation*} 
	We will say that a discrete extension of $G$ is \emph{effective relatively to} $X$ (or simply \emph{effective} when $X$ is clear from the context) if it belongs to $\E(G;X)$, the image category\footnote{N.b. a morphism of extensions belongs to this category if it is induced by a morphism of covering spaces.} of $\Br$. Given a polar covering $\rev{X}$ of $X$, we will often denote by $\rev{G}$ the associated discrete extension of $G$. 
\end{dfn}

\begin{prop}\label{prop:polcov_polspace}
	If $\rev{X}$ is a polar covering of $X$ with associated extension $\rev{G}$, then $(\rev{G};\rev{X})$ is a polar space whose quotient $\rev{X}/\rev{G}$ is homeomorphic to $X/G$.
\end{prop}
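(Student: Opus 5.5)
We verify the axioms of Definition~\ref{dfn:polar_space} for $(\rev{G};\rev{X})$ one at a time, using the covering structures of $p_{\rev{X}}$ and $p_{\rev{G}}$. By Proposition~\ref{prop:topG'}, $\rev{G}$ is a topological group covering $G$ (surjective since the covering is polar), hence a Lie group. Its action on $\rev{X}$ is continuous, and it is faithful by construction, since elements of $\rev{G}$ are homeomorphisms of $\rev{X}$. The space $\rev{X}$ is a connected covering of a Hausdorff, locally path-connected, semi-locally simply connected space, so it inherits these properties.

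\textbf{The quotient.} The map $p_{\rev{X}}$ is $p_{\rev{G}}$-equivariant, so it descends to a continuous map $\bar p:\rev{X}/\rev{G}\to X/G=Q$. This map is surjective. It is also injective: if $p_{\rev{X}}(y)=g\cdot p_{\rev{X}}(z)$, pick $\rev{g}$ lifting $g$. Then $y$ and $\rev{g}\cdot z$ lie in the same fibre, and since the covering is Galois a deck transformation in $\ker p_{\rev{G}}\subset\rev{G}$ (Proposition~\ref{prop:exact_seq_lift_group}) carries one to the other. To see that $\bar p$ is a homeomorphism, we note that the orbit projection $\rev{X}\to\rev{X}/\rev{G}$ is open, as is any quotient by a group action, and that $p_{\rev{X}}$ is open. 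Therefore $\bar p$ is open, because the composite $\rev{X}\to Q$ equals $\|\cdot\|\circ p_{\rev{X}}$, which is open. In particular $\rev{X}/\rev{G}\cong Q$ is simply connected.

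\textbf{Section and polar coordinates.} Since $Q$ is simply connected and locally path-connected, the section $x:Q\to X$ lifts through the covering $p_{\rev{X}}$ to a continuous $\rev{x}:Q\to\rev{X}$. The lift satisfies $\|\rev{x}(q)\|=q$ under the identification above, so $\rev{x}$ is a section. It remains to show that
\begin{equation*}
\phi_{\rev{x}}:\rev{G}\times Q\to\rev{X},\qquad (\rev{g};q)\mapsto\rev{g}\cdot\rev{x}(q)
\end{equation*}
is a quotient map. This is the main obstacle. Surjectivity follows from the orbit argument already given.

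For the quotient property, we use the commutative square $p_{\rev{X}}\circ\phi_{\rev{x}}=\phi_x\circ(p_{\rev{G}}\times\id_Q)$, and we show that $\phi_{\rev{x}}$ is open onto saturated sets. Let $A\subset\rev{X}$ be such that $\phi_{\rev{x}}^{-1}(A)$ is open, and let $a\in A$. Choose an evenly covered open neighbourhood $V$ of $p_{\rev{X}}(a)$, and let $\tilde V$ be the sheet of $p_{\rev{X}}^{-1}(V)$ containing $a$. We want to show that $p_{\rev{X}}(A\cap\tilde V)$ is open in $X$. Its preimage under $\phi_x$ is the image under $p_{\rev{G}}\times\id_Q$ of $\phi_{\rev{x}}^{-1}(A\cap\tilde V)$. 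This set is open, being an intersection of two open sets, and $p_{\rev{G}}\times\id$ is a covering map, hence open. One must also check that this image is exactly the full preimage. Given $(g;q)$ with $g\cdot x(q)\in p_{\rev{X}}(A\cap\tilde V)$, every lift $\rev{g}$ sends $\rev{x}(q)$ into the fibre over that point. Composing $\rev{g}$ with a suitable deck transformation, which lies in $\rev{G}$ because the covering is Galois, gives a lift landing in $A\cap\tilde V$.

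Since $\phi_x$ is a quotient map, $p_{\rev{X}}(A\cap\tilde V)$ is open in $X$. Its homeomorphic preimage in $\tilde V$ is $A\cap\tilde V$, which is therefore open. Hence $A$ is open.
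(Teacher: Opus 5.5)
Your proof is correct. Its skeleton matches the paper's: verify the axioms, identify $\rev{X}/\rev{G}$ with $Q$, lift $x$ using simple connectivity of $Q$, and deduce the quotient property of $\phi_{\rev{x}}$ from that of $\phi_x$ through the square $p_{\rev{X}}\circ\phi_{\rev{x}}=\phi_x\circ(p_{\rev{G}}\times\id_Q)$. The two substantive steps are carried out differently.

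\textbf{The quotient.} The paper takes quotients in stages: $p_{\rev{X}}$ realises the quotient by $\Aut_X(\rev{X})=\ker p_{\rev{G}}$, and then one quotients by $G$. You instead check directly that the induced map $\rev{X}/\rev{G}\to Q$ is a continuous open bijection.

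\textbf{The quotient property.} The paper proves a stronger local statement. Over an open $U\subset X$ on which $p_{\rev{X}}$ is trivial, it shows that $p_{\rev{G}}\times\id_Q$ is trivial over $\phi_x^{-1}(U)$, compatibly with the sheets of $p_{\rev{X}}$. This uses the free action of $\Aut_X(\rev{X})$ on the connected components of the preimage of each component of $\phi_x^{-1}(U)$. The conclusion is that $\phi_{\rev{x}}$ is locally isomorphic to $\id_{\Aut_X(\rev{X})}\times\phi_x$.

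You avoid this trivialisation entirely. You only use that the covering map $p_{\rev{G}}\times\id_Q$ is open, together with the Galois property, to get the identity $\phi_x^{-1}\big(p_{\rev{X}}(A\cap\tilde V)\big)=(p_{\rev{G}}\times\id_Q)\big(\phi_{\rev{x}}^{-1}(A\cap\tilde V)\big)$. You then transport openness back through the sheet $\tilde V$.

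\textbf{Comparison.} Your route is shorter and more elementary, and it needs no analysis of connected components. The paper's route yields an explicit local product model of the square of polar coordinates, which is more than the bare quotient property. That local product picture is the kind of trivialisation the paper builds again later, in Proposition~\ref{prop:isotropy_and_regularity} and Theorem~\ref{thm:equiv_pol_cov}.
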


\begin{proof}
	Let $Q$ be the quotient $X/G$ and $x:Q\rightarrow X$ be a section of the quotient map and $u:H\rightarrow G_Q$ be the isotropy of $x$. Proposition~\ref{prop:topG'} ensures the action of $\rev{G}$ on $\rev{X}$ is continuous. Moreover, since $\rev{G}$ acts on $\rev{X}$ as a subgroup of homeomorphisms, the action is faithful. Moreover, $\rev{X}$ is necessarily locally path-connected and semi-locally simply connected for it is a covering of $X$. It is connected by assumption. Now let us show that the quotient of $\rev{X}$ by $\rev{G}$ is $Q$. Since $p_{\rev{X}}$ is a Galois covering, it realises the quotient of $\rev{X}$ by the action of $\Aut_X(\rev{X})$. Hence, the quotient of $\rev{X}$ by $\rev{G}$ is homeomorphic to the quotient of $X$ by $G$, i.e. to $Q$. Furthermore, this quotient is realised by the composition of $p_{\rev{X}}$ with the quotient projection $X\rightarrow Q$. Since $Q$ is assumed to be simply connected, we can find a lift $\rev{x}$ of $x$ along $p_{\rev{X}}$. This map is a section of the quotient projection of $\rev{X}$. We have the following commutative square of polar coordinates:
	\begin{equation}\label{eq:com_pol_coord}
		\begin{tikzcd}
			\rev{G}_Q \ar[r,"\phi_{\rev{x}}"] \ar[d,"(p_{\rev{G}})_Q" swap] & \rev{X} \ar[d,"p_{\rev{X}}"]\\
			G_Q \ar[r,"\phi_x" swap] & X
		\end{tikzcd}
	\end{equation}
	The map $\phi_{\rev{x}}$ is surjective for $\rev{X}$ is polar. Indeed, let $P$ be a point of $\rev{X}$ that lies above a point of polar coordinates $(g;q)$ in $X$. Since $p_{\rev{X}}$ is Galois and $\rev{G}$ contains $\Aut_X(\rev{X})$, we can find $\rev{g}$ in $\rev{G}$ above $g$ such that $P$ equals $\phi_{\rev{x}}(\rev{g};q)$. Finally, let us show that $\phi_{\rev{x}}$ is a quotient map. This condition can be checked locally on $\rev{X}$. Let $U$ denote an open set of $X$ over which $p_{\rev{X}}$ is trivial. The restriction of (\ref{eq:com_pol_coord}) to $U$ has the following shape:
	\begin{equation}\label{eq:com_pol_coord2}
		\begin{tikzcd}
			\phi_{\rev{x}}^{-1}\big(\Aut_X\big(\rev{X}\big)\times U\big) \ar[r,"\phi_{\rev{x}}"] \ar[d,"p" swap] & \Aut_X\big(\rev{X}\big)\times U \ar[d,"\pr_U"]\\
			\phi_x^{-1}(U) \ar[r,"\phi_x" swap] & U
		\end{tikzcd}
	\end{equation}
	The open set $\phi_{\rev{x}}^{-1}(\Aut_X(\rev{X})\times U)$ of $\rev{G}_Q$ is stable under the action of $\Aut_X(\rev{X})$ and the map $p$ realises the quotient by said action. Let $C$ be a connected component of $\phi^{-1}_x(U)$. Since $\phi_{\rev{x}}$ is onto, continuous, and $\Aut_X(\rev{X})$-equivariant, the action of $\Aut_X(\rev{X})$ on the connected components of $p^{-1}(C)$ must be free. Hence, $p$ is trivial above every connected components of $\phi^{-1}_x(U)$. Therefore, it must be trivial above the whole of $\phi^{-1}_x(U)$. Thus, in an appropriate choice of trivialisation, (\ref{eq:com_pol_coord2}) can be rewritten as follows: 
	\begin{equation}\label{eq:com_pol_coord3}
		\begin{tikzcd}
			\Aut_X\big(\rev{X}\big)\times \phi_{x}^{-1}(U) \ar[rr,"\id_\Aut\times\phi_x"] \ar[d,"\pr_{\phi_x^{-1}(U)}" swap] && \Aut_X\big(\rev{X}\big)\times U \ar[d,"\pr_U"]\\
			\phi_x^{-1}(U) \ar[rr,"\phi_x" swap] && U
		\end{tikzcd}
	\end{equation}
	Hence, $\phi_{\rev{x}}$ is a quotient map for it is locally given by $\phi_x$.
\end{proof} 

\begin{prop}\label{prop:isotropy_and_regularity}
	Let $\rev{X}$ be a polar covering of $X$ with associated extension $\rev{G}$ of $G$. Let $x:Q\rightarrow X$ be a section of the quotient map, and $\rev{x}$ be a lift of $x$ along $p_{\rev{X}}$. The map $(p_{\rev{G}})_Q$ yields an isomorphism between the isotropy of $\rev{x}$ and the isotropy of $x$. Moreover, if $x$ is regular so is $\rev{x}$.
\end{prop}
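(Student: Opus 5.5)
I will split the statement into two parts: first that $(p_{\rev{G}})_Q$ restricts to an isomorphism of $Q$-groups $\rev{H}\to H$, then that regularity transfers. Throughout I use the commutative square (\ref{eq:com_pol_coord}) and the local picture (\ref{eq:com_pol_coord3}) from the proof of Proposition~\ref{prop:polcov_polspace}.

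\emph{Isomorphism of isotropies.} Let $\rev{H}$ be the isotropy of $\rev{x}$. If $(\rev{g};q)\in\rev{H}$, then applying $p_{\rev{X}}$ gives $p_{\rev{G}}(\rev{g})\cdot x(q)=x(q)$, since $p_{\rev{X}}\circ\rev{x}=x$. Hence $(p_{\rev{G}})_Q$ maps $\rev{H}$ into $H$ as a morphism of $Q$-groups.

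Injectivity: if $(\rev{g};q)$ and $(\rev{g}';q)$ lie in $\rev{H}$ with the same image, then $\rev{g}^{-1}\rev{g}'\in\ker p_{\rev{G}}=\Aut_X(\rev{X})$ fixes $\rev{x}(q)$. Deck transformations of a connected covering act freely, so $\rev{g}=\rev{g}'$.

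Surjectivity: given $(g;q)\in H$, pick any lift $\rev{g}_0$ of $g$, which exists since $p_{\rev{G}}$ is onto. Then $\rev{g}_0\cdot\rev{x}(q)$ lies in the fibre of $p_{\rev{X}}$ over $x(q)$. Since the covering is Galois, there is a unique $k\in\Aut_X(\rev{X})$ with $k\rev{g}_0\cdot\rev{x}(q)=\rev{x}(q)$, and $\rev{g}:=k\rev{g}_0$ is a lift of $g$ fixing $\rev{x}(q)$.

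Bicontinuity: $p_{\rev{G}}$ is a covering map, hence a local homeomorphism. A continuous bijection that is locally the restriction of a local homeomorphism to a closed subspace is a homeomorphism onto its image. Concretely, near $(\rev{g};q)$ the map $(p_{\rev{G}})_Q$ restricts to a homeomorphism of a sheet $V\times Q$ onto $p_{\rev{G}}(V)\times Q$, and it carries $\rev{H}\cap(V\times Q)$ into $H$. Openness of $\rev{H}\to H$ then follows once every point of $H$ near $(g;q)$ has its preimage in this sheet. I would derive this from the continuity of the action and of $\rev{x}$: lifting continuously near $(g;q)$ keeps $\rev{g}\cdot\rev{x}(q')$ in the sheet of $p_{\rev{X}}$ containing $\rev{x}(q)$. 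I expect this local-sheet argument to be the main technical point of the first part.

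\emph{Regularity.} Fix $q$, and let $U$ and $h$ be given by Definition~\ref{dfn:regular} for $x$. Shrinking $U$ risks breaking the deformation retraction, so instead I intersect with an evenly covered neighbourhood. Pick an evenly covered open set $W\ni x(q)$ and replace $U$ by the component data inside $W$. If shrinking is unavoidable, I would use the locality of the retraction data granted by regularity (local contractibility of $Q$ and of the fibres) to produce a smaller $U$ with a retraction.

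Let $\rev{U}$ be the sheet over $U$ containing $\rev{x}(q)$. By (\ref{eq:com_pol_coord3}), $\phi_{\rev{x}}^{-1}(\Aut_X(\rev{X})\times U)\cong\Aut_X(\rev{X})\times\phi_x^{-1}(U)$, and $\phi_{\rev{x}}^{-1}(\rev{U})$ maps homeomorphically onto $\phi_x^{-1}(U)$ via $(p_{\rev{G}})_Q$. Define $\rev{h}$ by transporting $h$ through this homeomorphism. Then:
\begin{enumerate}
\item $\rev{h}$ retracts onto the preimage of $H_q\times\{q\}$ inside this slice, which is $\rev{H}_q\times\{q\}$ by the first part;
\item property 1 follows because the homeomorphism identifies $\rev{H}\cap\phi_{\rev{x}}^{-1}(\rev{U})$ with $H\cap\phi_x^{-1}(U)$, using the isomorphism of isotropies;
\item property 2 holds because $(p_{\rev{G}})_Q$ commutes with projection to $Q$.
\end{enumerate}

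The remaining point is the claim that the slice over $\rev{U}$ is exactly one copy of $\phi_x^{-1}(U)$, that is, that the trivialisation in (\ref{eq:com_pol_coord3}) is compatible with the $\Aut_X(\rev{X})$-labelling of sheets. I would justify this by equivariance of $\phi_{\rev{x}}$, as in the proof of Proposition~\ref{prop:polcov_polspace}.
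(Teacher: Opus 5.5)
The first half is fine. Injectivity follows from freeness of the deck action, surjectivity from transitivity on the fibre over $x(q)$, and your concrete sheet argument gives continuity of the inverse. The general principle you state just before that argument is not valid in that generality, but the sheet argument itself is correct, and it is more careful than the paper's one-line appeal to openness of $(p_{G'})_Q$. Overall this is a more direct route than the paper's snake-lemma computation.

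The gap is in the regularity half. Your transport of $h$ needs the given $U$ to be evenly covered by $p_{X'}$: you take a sheet $U'$ over $U$ and invoke (\ref{eq:com_pol_coord3}), which was only established for such $U$. But Definition~\ref{dfn:regular} provides one $U$ with one retraction $h$, and no control on its size. Intersecting with an evenly covered $W$ destroys the retraction, since nothing forces $h_t$ to preserve $\phi_x^{-1}(U\cap W)$. Your fallback, building a smaller $U$ from local contractibility of $Q$ and of the fibres, is unsupported. Local contractibility gives no deformation retraction of $\phi_x^{-1}(U_0)$ onto $H_q\times\{q\}$ inside itself satisfying conditions 1 and 2. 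So as written, the second half does not go through.

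The missing idea is to keep $U$ and lift $h$ itself along the covering $(p_{G'})_Q$.
\begin{enumerate}
\item Let $u'$ be the inverse of the isomorphism $H'\to H$ from your first half. Lift the homotopy $h$ backwards in time, starting from $f_1=u'\circ h_1$. This gives $f$ with $(p_{G'})_Q\circ f=h$, and $v:=f_0$ is a section of $(p_{G'})_Q$ over all of $\phi_x^{-1}(U)$.
\item Condition~1 and uniqueness of path lifting, comparing $t\mapsto f_t(y)$ with $t\mapsto u'(h_t(y))$, give $v=u'$ on $H\cap\phi_x^{-1}(U)$.
\item Condition~2 makes $(h_t;h_t)$ a retraction of $(H\cap\phi_x^{-1}(U))\times_Q\phi_x^{-1}(U)$ onto $(H_q\times\{q\})\times(H_q\times\{q\})$. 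Uniqueness of lifts then yields $H$-equivariance of $v$.
\item Consequently $(k;y)\mapsto k\cdot v(y)$ trivialises $(p_{G'})_Q$ over $\phi_x^{-1}(U)$, and $v(\phi_x^{-1}(U))$ is $H'$-saturated.
\item Hence $V:=\phi_{x'}(v(\phi_x^{-1}(U)))$ is an open neighbourhood of $x'(q)$ with $\phi_{x'}^{-1}(V)=v(\phi_x^{-1}(U))$, and $v\circ h_t\circ(p_{G'})_Q$ is the required retraction.
\end{enumerate}
As a by-product, this shows that $U$ is evenly covered after all, which is exactly what your argument assumed without justification.
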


\begin{proof}
	Let $H$ be the isotropy of $x$ and $\rev{H}$ be the isotropy of $\rev{x}$. By the commutativity of (\ref{eq:com_pol_coord}), $(p_{\rev{G}})_Q$ sends $\rev{H}$ into $H$. Let $q$ be a point of $Q$. The inverse image by $p_{\rev{X}}$ of the orbit of $x(q)$ is the orbit of $\rev{x}(q)$. Thus, by Proposition~\ref{prop:orbit_quotient}, the restriction of $p_{\rev{X}}$ between these orbits is induced by $p_{\rev{G}}$ as follows:
	\begin{equation}
		\begin{array}{ccc}
			\rev{G}/\rev{H}_q & \longrightarrow & \displaystyle G/H_q \\[5pt] 
			\,[g]\; & \longmapsto & [p_{\rev{G}}(g)].
		\end{array}
	\end{equation}
	Hence, the inverse image of $x(q)$ by $p_{\rev{X}}$ is homeomorphic to $p_{\rev{G}}^{-1}(H_q)/\rev{H}_q$. However, we already know that the map $f:\Aut_X(\rev{X})\rightarrow p_{\rev{G}}^{-1}(H_q)/\rev{H}_q$ induced by the inclusion of $\Aut_X(\rev{X})$ in $p_{\rev{G}}^{-1}(H_q)$ is a bijection. Moreover, we have the following commutative diagram with exact rows: 
	\begin{equation}\label{eq:Isotropy1}
		\begin{tikzcd}
			1 \ar[r] & \Aut_X\big(\rev{X}\big) \ar[r] & (p_{\rev{G}})^{-1}(H_q) \ar[r,"p_{\rev{G}}"] & H_q \ar[r] & 1 \\
			1 \ar[r] & 1 \ar[u] \ar[r] & \rev{H}_q \ar[u,"\subset"] \ar[r,equal] & \rev{H}_q \ar[u,"p_{\rev{G}}" swap] \ar[r] & 1
		\end{tikzcd}
	\end{equation}
And the Snake Lemma yields the following exact sequence of pointed sets:
	\begin{equation}
		1 \longrightarrow \ker(p_{\rev{G}}|_{\rev{H}_q}) \overset{g}{\longrightarrow} \Aut_X\big(\rev{X}\big) \overset{f}{\longrightarrow} (p_{\rev{G}})^{-1}(H_q)/\rev{H}_q \overset{h}{\longrightarrow} H_q/p_{\rev{G}}(\rev{H}_q) \longrightarrow 1.
	\end{equation}
	The map $g$ is an inclusion, thus a group morphism. Hence, the bijectivity of $f$ implies that $p_{\rev{G}}|_{\rev{H}_q}$ is injective. Furthermore, the map $h$ is surjective, and its kernel -- here the inverse image of the distinguished point -- is the image of $f$. Therefore, $h$ is constant for $f$ is onto, and the restriction of $p_{\rev{G}}$ to $\rev{H}_q$ is surjective. Thus, it is a bijection for every $q$. Since the restriction of $(p_{\rev{G}})_Q$ to $\rev{H}$ is a morphism of $Q$-groups that restricts to a bijection over every fibre of $\rev{H}$, it is a continuous bijection between $\rev{H}$ and $H$. The map $(p_{\rev{G}})_Q$ is open for $p_{\rev{G}}$ is a covering. Therefore, so is the map it induces between $\rev{H}$ and $H$. Hence, $(p_{\rev{G}})_Q$ yields a homeomorphism between $\rev{H}$ and $H$. 
	
	\vspace{5pt}
	
	Let us now assume that $x$ is regular and let us be given for $q$ in $Q$ an open neighbourhood $U$ of $x(q)$ and a retraction by deformations $h:[0;1]\times\phi^{-1}_x(U)\rightarrow \phi^{-1}_x(U)$ satisfying the requirements of Definition~\ref{dfn:regular}. The inverse $\rev{u}$ of the isomorphism $\rev{H}\rightarrow H$ induced by $(p_{\rev{G}})_Q$ is a lift of $u$ (the inclusion of $H$ in $G_Q$) along $(p_{\rev{G}})_Q$. Since coverings have the homotopy lifting property, cf. \cite[Proposition~1.30]{hatcher_algebraic_2000}, we can find $\
	f:[0;1]\times \phi_x^{-1}(U)\rightarrow (p_{\rev{G}})_Q^{-1}\phi_x^{-1}(U)$ that completes the following commute diagram:
	 \begin{equation*}
	 	\begin{tikzcd}
			\phi_x^{-1}(U) \ar[r,"\rev{u}\circ h_1"] \ar[d,"(1;\id)" swap] & (p_{\rev{G}})_Q^{-1}\phi_x^{-1}(U) \ar[d,"(p_{\rev{G}})_Q"] \\
			{[0;1]}\times \phi_x^{-1}(U) \ar[r,"h" swap] \ar[ru, dashed, "f"] & \phi_x^{-1}(U) 
		\end{tikzcd}
	 \end{equation*}
	 Then, the map $v:\phi_x^{-1}(U)\rightarrow p_Q^{-1}\phi_x^{-1}(U)$ given by $f_0$ is a section of $(p_{\rev{G}})_Q$ above $\phi_x^{-1}(U)$. This is an extension of $\rev{u}|_{H\cap \phi_x^{-1}(U)}$ to $\phi_x^{-1}(U)$. First, it is an extension of $\rev{u}|_{H_q\times\{q\}}$ to $\phi_x^{-1}(U)$ by construction. Since $\rev{u}|_{H\cap \phi_x^{-1}(U)}$ is a lift map and $H\cap \phi_x^{-1}(U)$ retracts onto $H_q\times\{q\}$, $v|_{H\cap \phi_x^{-1}(U)}$ and $\rev{u}|_{H\cap \phi_x^{-1}(U)}$ must be equal. In addition, we have the following commutative diagram:
	 \begin{equation*}
	 	\begin{tikzcd}
			\phi_x^{-1}(U) \ar[rr,"v"] \ar[dr] && (p_{\rev{G}})_Q^{-1}\phi_x^{-1}(U) \ar[dl] \\
			& {||U||} &
		\end{tikzcd}
	 \end{equation*}
	 so that $v$ is a map of $||U||$-spaces. Moreover, both the source and target of $v$ are endowed with an action of $H\cap \phi_x^{-1}(U)$ (through $\rev{u}$ for the target). We claim that $v$ is equivariant. We have the following commutative diagram:
	 \begin{equation*}
	 	\begin{tikzcd}
			& (p_{\tilde{G}})_Q^{-1}\phi_x^{-1}(U) \ar[d,"(p_{\rev{G}})_Q"] & \left\{\begin{array}{l} a(g;y) :=v(h\cdot y) \\ b(g;y) := u'(g)\cdot v(y) \\ c(g;y):=g\cdot y \end{array}\right. \\
			(H\cap\phi_x^{-1}(U))\underset{||U||}{\times}\phi_x^{-1}(U) \ar[ru,"{a,b}"]  \ar[r,"c" swap] & \phi_x^{-1}(U) & 
		\end{tikzcd}
	 \end{equation*}
	Thus $a$ and $b$ are two lifts of $c$ along $(p_{\rev{G}})_Q$. They coincide on $(H_q\times\{q\})\times_{||U||}( H_q\times \{q\})$. But, by assumptions, the formula $(g;y) \mapsto (h_t(g);h_t(y))$ defines a retraction of $(H\cap\phi_x^{-1}(U))\times_{||U||}\phi_x^{-1}(U)$ onto the fibre product $(H_q\times\{q\})\times_{||U||}( H_q\times \{q\})$. Hence, $a$ equals $b$, and $v$ is equivariant. Therefore, the following commutative diagram:
	 \begin{equation*}
	 	\begin{tikzcd}
			\Aut_X(\rev{X})\times \phi_x^{-1}(U) \ar[rr,"(k;y)\mapsto k\cdot v(y)"] \ar[dr,"\pr" swap ] & & (p_{\rev{G}})_Q^{-1}\phi_x^{-1}(U) \ar[dl,"(p_{\rev{G}})_Q"] \\
			& \phi_x^{-1}(U) & 
		\end{tikzcd}
	 \end{equation*}
	 defines a trivialisation of $(p_{\rev{G}})_Q:(p_{\rev{G}})_Q^{-1}\phi_x^{-1}(U)\rightarrow \phi_x^{-1}(U)$. Moreover, the equivariance of $v$ ensures $v(\phi^{-1}_x(U))$ is saturated under the action of $\rev{H}$. Thus, the image $V$ of $v(\phi^{-1}_x(U))$ by $\phi_{\rev{x}}$ is an open neighbourhood of $\rev{x}(q)$ in $\rev{X}$ whose inverse image $\phi^{-1}_{\rev{x}}(V)$ is precisely $v(\phi^{-1}_x(U))$ by saturation. By construction, the map $v\circ h_t\circ (p_{\rev{G}})_Q$ is a retraction by deformations of $\phi^{-1}_{\rev{x}}(V)$ onto $\rev{H}_q\times\{q\}$ that satisfies the requirements of Definition~\ref{dfn:regular}. Thus, $\rev{x}$ is regular. 
\end{proof}

Then we find the obvious proposition.

\begin{prop}\label{prop:univ=polar}
	The universal covering of $X$ is polar.
\end{prop}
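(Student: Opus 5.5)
We need to check the three conditions of Definition~\ref{dfn:polcov} for the universal covering $p:\tilde{X}\rightarrow X$: it is connected, it is Galois, and the morphism $p_{\tilde{G}}:\tilde{G}\rightarrow G$ of Definition~\ref{dfn:Group_Lift_abstract} is surjective. First we make sure the universal covering exists. By Definition~\ref{dfn:polar_space}, $X$ is connected, locally path-connected and semi-locally simply connected, so the classical construction applies. The resulting $\tilde{X}$ is connected and simply connected. It is Galois because $\pi_1(\tilde{X})$ is the trivial, hence normal, subgroup of $\pi_1(X)$.

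The substantive point is the surjectivity of $p_{\tilde{G}}$. We show that every homeomorphism $g$ of $X$ coming from the action lifts to a homeomorphism of $\tilde{X}$.
\begin{enumerate}
\item Fix $g\in G$ and consider the composite $g\circ p:\tilde{X}\rightarrow X$, where $g$ acts as a homeomorphism of $X$.
\item Since $\tilde{X}$ is simply connected and locally path-connected, the lifting criterion gives a continuous map $\tilde{g}:\tilde{X}\rightarrow\tilde{X}$ with $p\circ\tilde{g}=g\circ p$. We may prescribe $\tilde{g}(\tilde{x}_0)$ to be any point above $g\cdot p(\tilde{x}_0)$.
\item Apply the same construction to $g^{-1}$ to get a lift $\widetilde{g^{-1}}$, and normalise it so that $\widetilde{g^{-1}}\big(\tilde{g}(\tilde{x}_0)\big)=\tilde{x}_0$.
\item The composite $\widetilde{g^{-1}}\circ\tilde{g}$ is then a lift of the identity of $X$ that fixes $\tilde{x}_0$. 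By uniqueness of lifts on the connected space $\tilde{X}$, it equals $\id_{\tilde{X}}$.
\item Symmetrically, $\tilde{g}\circ\widetilde{g^{-1}}$ fixes $\tilde{g}(\tilde{x}_0)$ and lifts the identity, so it also equals $\id_{\tilde{X}}$.
\end{enumerate}
Hence $\tilde{g}$ is a homeomorphism of $\tilde{X}$ lying in $\tilde{G}$ with $p_{\tilde{G}}(\tilde{g})=g$. Since $g\in G$ was arbitrary, $p_{\tilde{G}}$ is surjective. Alternatively, one can invoke Bredon's criterion quoted after Definition~\ref{dfn:polcov}: the image of the free loop space of $\tilde{X}$ is the set of null-homotopic free loops, and this set is stable under homeomorphisms.

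We do not expect a real obstacle. The only care needed is to check that the lifting criterion's hypotheses hold for $\tilde{X}$, namely that it is path-connected and locally path-connected; both are inherited from $X$ through the covering map. We also note that Proposition~\ref{prop:exact_seq_lift_group} alone does not suffice, since it only covers connected components of $G$ that already meet the image of $p_{\tilde{G}}$. That is why we use the simple connectivity of $\tilde{X}$ to lift every element of $G$ directly.
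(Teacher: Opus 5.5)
Your proposal is correct and is essentially what the paper has in mind. The paper gives no written proof: it calls the statement obvious right after quoting Bredon's free-loop criterion. Your verification (connected and Galois because $\tilde{X}$ is simply connected, then lifting each $g\circ p$ through $\tilde{X}$ and using uniqueness of lifts to see that $\tilde{g}$ is a homeomorphism), or the Bredon alternative you also mention, is exactly the routine argument being omitted.
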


The aim of the following of the article is to apprehend the image of the functor $\Br$. If we can characterise the extension associated to the universal covering of $X$, then Proposition~\ref{prop:exact_seq_lift_group} provides a way of computing the fundamental group of $X$.  

\subsection{Equivalence of Categories}

Despite not being an equivalence of categories, the functor $\Br$ is actually not far from it. Here we describe a procedure that turns it into an equivalence by keeping track of more information. 

\begin{dfn}\label{dfn:PolCovGXx}
	Let $(G;X)$ be a polar space, $Q$ be the quotient, and $x:Q\rightarrow X$ be a section of the projection. We denote by $\textnormal{Pol.Cov.}(G;X;x)$ the category whose objects are polar coverings $\rev{X}$ of $X$ endowed with a lift $\rev{x}:Q\rightarrow \rev{X}$ of $x$. A morphism in this category is a continuous map $f:\rev{X}_1\rightarrow \rev{X}_2$ such that the following diagram commutes:
	\begin{equation*}
		\begin{tikzcd}
			\rev{X}_1 \ar[rr,"f"] \ar[rd,"p_{\rev{X}_1}" swap] & & \rev{X}_2\ar[ld,"p_{\rev{X}_2}"] \\
			& X & \\
			& Q \ar[u,"x"] \ar[ruu,bend right=50, "\;\rev{x}_2" swap] \ar[luu,bend left=50,"\rev{x}_1"] &
		\end{tikzcd}
	\end{equation*} 
\end{dfn}

\begin{dfn}\label{dfn:EGXx}
	Let $(G;X)$ be a polar space, $Q$ be the quotient, $x:Q\rightarrow X$ be a section of the projection, and $u:H\rightarrow G_Q$ denote the inclusion of the isotropy of $x$. We denote by $\textnormal{E}(G;X;x)$ the category whose objects are discrete extensions $\rev{G}$ of $G$ endowed with a ubiquitous lift $\rev{u}:H\rightarrow \rev{G}_Q$ of $u$. A morphism in this category is a morphism of Lie groups $f:\rev{G}_1\rightarrow \rev{G}_2$ such that the following diagram commutes:
	\begin{equation*}
		\begin{tikzcd}
			(\rev{G}_1)_Q \ar[rr,"f_Q"] \ar[rd,"(p_{\rev{G}_1})_Q" swap] & & (\rev{G}_2)_Q\ar[ld,"(p_{\rev{G}_2})_Q"] \\
			& G_Q & \\
			& H \ar[u,"u"] \ar[ruu,bend right=50,"\rev{u}_2", swap] \ar[luu,bend left=50,"\rev{u}_1"] &
		\end{tikzcd}
	\end{equation*}
\end{dfn}

\begin{thm}[Equivalence of Categories] \label{thm:equiv_pol_cov}
	Let $(G;X)$ be a regular polar space with quotient space $Q$, and let ${u:H\rightarrow G_Q}$ be the isotropy of a regular section $x:Q\rightarrow X$ of the quotient map. There is an equivalence of categories:
	\begin{equation*}
		\Br^*:\PolC(G;X;x) \longleftrightarrow \textnormal{E}(G;X;x):\textnormal{C}.
	\end{equation*} 
\end{thm}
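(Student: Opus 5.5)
The plan is to build the two functors explicitly and then check that both composites are naturally isomorphic to the identity.

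\emph{The functor} $\Br^*$. To an object $(\rev{X};\rev{x})$ of $\PolC(G;X;x)$, attach the extension $\rev{G}=\Br(\rev{X})$. By Proposition~\ref{prop:isotropy_and_regularity}, $(p_{\rev{G}})_Q$ restricts to an isomorphism $\rev{H}\to H$ between the isotropy of $\rev{x}$ and that of $x$. Its inverse composed with the inclusion $\rev{H}\subset \rev{G}_Q$ is a lift $\rev{u}:H\to\rev{G}_Q$ of $u$. This lift is ubiquitous: by Proposition~\ref{prop:polcov_polspace}, $\phi_{\rev{x}}$ is a quotient map identifying $\rev{G}_Q/\rev{H}$ with $\rev{X}$, which is connected. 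On morphisms, a map $f:\rev{X}_1\to\rev{X}_2$ over $X$ commuting with the sections induces a morphism $\rev{G}_1\to\rev{G}_2$ through $\Br$. It is compatible with $\rev{u}_1,\rev{u}_2$ because $f\circ\rev{x}_1=\rev{x}_2$ forces the isotropy of $\rev{x}_1$ to map into that of $\rev{x}_2$.

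\emph{The functor} $\textnormal{C}$. To $(\rev{G};\rev{u})$, attach $\rev{X}:=\rev{G}_Q/\rev{u}(H)$ together with the section $\rev{x}(q)=[1;q]$. The map $(p_{\rev{G}})_Q$ descends to $p:\rev{X}\to G_Q/H\cong X$, the last identification coming from the polar coordinates $\phi_x$. The steps to verify are the following.
\begin{enumerate}
\item $\rev{X}$ is connected. This holds by ubiquity of $\rev{u}$, or equivalently by Proposition~\ref{prop:ubi_surj}.
\item $p$ is a covering map. This is the main obstacle. Fix $q$ and take $U$ and the retraction $h$ from Definition~\ref{dfn:regular}. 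I would imitate the second half of the proof of Proposition~\ref{prop:isotropy_and_regularity}. Lift $h$ along the covering $(p_{\rev{G}})_Q$, starting from $\rev{u}$ on $H_q\times\{q\}$, to get a section $v$ of $(p_{\rev{G}})_Q$ over $\phi_x^{-1}(U)$. Then $v$ extends $\rev{u}$ on $H\cap\phi_x^{-1}(U)$ and is $H$-equivariant; both facts follow from uniqueness of lifts together with the two conditions of regularity. This gives an $H$-equivariant trivialisation $\ker(p_{\rev{G}})\times\phi_x^{-1}(U)\cong (p_{\rev{G}})_Q^{-1}\phi_x^{-1}(U)$, and taking the quotient by $H$ yields $p^{-1}(U)\cong\ker(p_{\rev{G}})\times U$. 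Passing to the quotient here needs the fact that $\phi_x$ is a quotient map and that the saturated pieces are open.
\item $p$ is Galois, with $\ker(p_{\rev{G}})$ acting simply transitively on the fibres. Transitivity is clear. For freeness, suppose $k\in\ker p_{\rev G}$ satisfies $k\rev{g}=\rev{g}\rev{u}(h)$. Then $p_{\rev{G}}(h)$ must be trivial, so $h=1$ and hence $k=1$.
\item $\rev{G}$ acts on $\rev{X}$ by left multiplication, lifting the action of $G$. Hence $p_{\Br(\rev X)}$ is surjective, and the covering is polar.
\end{enumerate}
Morphisms $f$ of extensions induce maps $f_Q$, which descend to the quotients because of the compatibility $f_Q\circ\rev{u}_1=\rev{u}_2$.

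\emph{The composites.} Start from $(\rev{X};\rev{x})$. The map $\phi_{\rev{x}}$ induces a homeomorphism $\rev{G}_Q/\rev{H}\to\rev{X}$ over $X$ that sends the section to the section, by Proposition~\ref{prop:polcov_polspace}. So $\textnormal{C}\circ\Br^*\cong\id$, and naturality is immediate.

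Start instead from $(\rev{G};\rev{u})$. There is a natural morphism $\rev{G}\to\Br(\textnormal{C}(\rev{G}))$ given by the left action. It is injective because the action of $\ker p_{\rev G}$ is free (step 3) and the action of $G$ on $X$ is faithful. It is surjective because a homeomorphism lifting $g$ differs from the action of any preimage of $g$ by a deck transformation, and deck transformations already come from $\ker p_{\rev{G}}$ since the covering is Galois with that group. The topologies agree by the uniqueness in Proposition~\ref{prop:topG'}. Under this identification the isotropy of $[1;q]$ is exactly $\rev{u}(H)$: if $\rev{g}\in\rev{G}_q$ fixes $[1;q]$, then $\rev{g}=\rev{u}(h)$ for some $h$. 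Hence the lift produced by $\Br^*$ is $\rev{u}$, and $\Br^*\circ\textnormal{C}\cong\id$. Finally, full faithfulness on morphisms follows by checking that the two constructions on morphisms are mutually inverse, using uniqueness of lifts of maps between connected coverings that are pinned down by a point of the section.
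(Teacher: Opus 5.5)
Your proposal is correct and follows essentially the same route as the paper. You build $\rev{u}$ from Proposition~\ref{prop:isotropy_and_regularity}, define $\textnormal{C}$ as $\rev{G}_Q/\rev{u}(H)$, get the covering property from the $H$-equivariant section $v$ obtained by homotopy lifting along the regular retraction, and identify both composites via polar coordinates and via $\ker(p_{\rev{G}})\cong\Aut_X(\rev{X})$. The only cosmetic difference is that you pass the trivialisation directly to the quotient to get $p^{-1}(U)\cong\ker(p_{\rev{G}})\times U$, which also gives Hausdorffness, whereas the paper shows the translates $k\cdot V$ are disjoint; in either version you should still translate by some $\rev{g}$ to obtain evenly covered neighbourhoods of points $g\cdot x(q)$ off the section.
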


\begin{proof}
	The Functor $\Br^*$: Let $\rev{X}$ be a polar covering of $(G;X)$ equipped with a lift $\rev{x}$ of $x$ along $p_{\rev{X}}$. We denote by $\rev{G}$, the discrete extension obtained from $\rev{X}$ via $\Br$. If $v:\rev{H}\rightarrow \rev{G}_Q$ denotes the inclusion of the isotropy of $\rev{x}$, we know from Proposition~\ref{prop:isotropy_and_regularity} that the restriction of $p_Q:\rev{G}_Q\rightarrow G_Q$ to $\rev{H}$ is an isomorphism. Thus, we can endow $H$ with a lift $\rev{u}$ of $u$ along $p_Q$. It is defined by: \begin{equation*}
		u'\coloneqq v\circ p_Q |_{\rev{H}}^{-1}.
	\end{equation*}
	Since, $p_Q |_{\rev{H}}^{-1}$ is onto and $v$ is ubiquitous ($\rev{X}$ is connected by assumption), $\rev{u}$ is also ubiquitous. We have our first functor.
	
	\vspace{5pt}
	
	 The functor $\textnormal{C}$: Conversely, let us consider a discrete extension $\rev{G}$ of $G$ and $\rev{u}$ a lift of $u$ along $p_{\rev{Q}}$. We form the space $\rev{X}$, the quotient of $\rev{G}_Q$ by the image of $\rev{u}$. The projection $p_{\rev{G}}$ and the polar coordinates of $X$ induce a projection $p_{\rev{X}}:\rev{X}\rightarrow X$. Moreover, the composition of $q\in Q \mapsto (1;q)\in \rev{G}_Q$ with the quotient map induces a lift $\rev{x}$ of $x$ along $p_{\rev{X}}$. Since $\rev{u}$ is ubiquitous, $\rev{X}$ is connected. Furthermore, $p_{\rev{X}}$ realises the quotient of $\rev{X}$ by the free and continuous action of the discrete subgroup $\ker(p_{\rev{G}})$. To show that our functor is well defined we need to prove that  $p_{\rev{X}}$ is a covering map, i.e. that $\rev{X}$ is Hausdorff and that the action of $\ker(p_{\rev{G}})$ is properly discontinuous. Since $X$ is Hausdorff and $p_{\rev{X}}$ is continuous, two points of $\rev{X}$ with distinct images under $p_{\rev{X}}$ have necessarily disjoint open neighbourhoods in $\rev{X}$. Now we will show that, for every $x_0$ in $X$, we can build an open set $V$ of $\rev{X}$ that contains only one inverse image of $x_0$ by $p_{\rev{X}}$ and which is disjoint from $k\cdot V$ for every non-trivial $k$ in $\ker(p_{\rev{G}})$. This will finish to prove that $X$ is Hausdorff, show that $\ker(p_{\rev{G}})$ acts properly on $\rev{X}$, and thus establish that $p_{\rev{X}}$ is a covering. 
	 
	 \vspace{5pt}
	 
	 Let $q$ be a point of $Q$. We consider a neighbourhood $U$ of $x(q)$ and a homotopy $h:[0;1]\times \phi_x^{-1}(U) \rightarrow \phi_x^{-1}(U)$ satisfying the requirements of Definition~\ref{dfn:regular}. Using the same reasoning as in the second part of the proof of Proposition~\ref{prop:isotropy_and_regularity}, we can find an $H$-equivariant\footnote{The action on the target is through $\rev{u}$.} extension $v$ of $\rev{u}|_{H\cap\phi^{-1}_x(U)}$ to $\phi^{-1}_x(U)$. This implies that the following commutative diagram:
	 \begin{equation*}
	 	\begin{tikzcd}
			\ker(p_{\rev{G}})\times \phi_x^{-1}(U) \ar[rr,"(k;y)\mapsto k\cdot v(y)"] \ar[dr,"\pr_{\phi_x^{-1}(U)}" swap ] & & (p_{\rev{G}})_Q^{-1}\phi_x^{-1}(U) \ar[dl,"(p_{\rev{G}})_Q"] \\
			& \phi_x^{-1}(U) & 
		\end{tikzcd}
	 \end{equation*}
	 is a trivialisation of $(p_{\rev{G}})_Q:(p_{\rev{G}})_Q^{-1}\phi_x^{-1}(U)\rightarrow \phi_x^{-1}(U)$. Hence, the $k\cdot v(\phi_x^{-1}(U))$'s are disjoint open sets of $\rev{G}_Q$. Moreover, the equivariance of $v$ ensures they are saturated under the action of $H$ via $\rev{u}$. Thus, the image $V$ of $v(\phi^{-1}_x(U))$ by $\phi_{\rev{x}}$ is an open set of $\rev{X}$ that is disjoint from $k\cdot V$ for every non-trivial $k$ in $\ker(p_{\rev{G}})$. It can only contain one inverse image of $x(q)$. It is the point $\rev{x}(q)$. For a more general point $x_0$, necessarily of the form $p_{\rev{G}}(\rev{g})\cdot x(q)$, we can consider the open set $\rev{g}\cdot V$. Thus, $p_{\rev{X}}:\rev{X}\rightarrow X$ is a covering. It is connected for $\rev{u}$ is assumed to be ubiquitous, and Galois by construction ($X$ is the quotient of $\rev{X}$ by $\ker(p_{\rev{G}})$). Hence, it is a polar covering.
	 
	 \vspace{5pt}
	 
	 We have now to establish the equivalence. One the one hand, the natural isomorphism between $\textnormal{C}\Br^*$ and the identity of $\PolC(G;X;x)$ is provided by the polar coordinates. Indeed, given $\rev{X}$ and $\rev{x}$, $\Br^*$ extracts the acting group $\rev{G}$ lifting the action and the lift of the isotropy $\rev{H}$, then $\textnormal{C}$ realises the quotient of $\rev{G}_Q$ by $\rev{H}$. This space is isomorphic to $\rev{X}$ for $(\rev{G};\rev{X})$ is a polar space, cf. Proposition~\ref{prop:polcov_polspace}. Since $\rev{u}$ is the lift of $u$ associated to $\rev{x}$, it is clear, using polar coordinates, that the lift of $x$ induced by $\textnormal{C}$ is $\rev{x}$. On the other hand, the isomorphism between $\Br^*\textnormal{C}$ and the identity of $\E(G;X;x)$ comes from Proposition~\ref{prop:exact_seq_lift_group}. Together with Proposition~\ref{prop:topG'}, they imply the following commutative diagram of Lie groups with exact rows:
	 \begin{equation*}
	 	\begin{tikzcd}
			1 \ar[r] & \Aut_X(\rev{X}) \ar[r] & \Br^*\textnormal{C}(\rev{G};\rev{u}) \ar[r] & G \ar[r] & 1\\
			1 \ar[r] & \ker(p_{\rev{G}}) \ar[u,"a"] \ar[r] & \rev{G} \ar[u,"b"] \ar[r] & G \ar[u,equal] \ar[r] & 1\\
		\end{tikzcd}
	 \end{equation*}
	 We saw in the preceding paragraph, that $a$ is an isomorphism. Hence, $b$ is also necessarily an isomorphism. Now, by construction, the isotropy of the lift $\rev{x}$ of $x$ along $\textnormal{C}(\rev{G};\rev{u})\rightarrow X$ is precisely $\rev{u}(H)$ where $H$ denotes the isotropy of $x$. The inverse of $(p_{\rev{G}})_Q$ restricted to $\rev{u}(H)$ is precisely $\rev{u}$ for $(p_{\rev{G}})_Q\circ\rev{u}$ is $u$, the inclusion of $H$ in $G$. 
\end{proof}

We deduce the following immediate corollary:

\begin{cor}\label{cor:effective_ubi}
	Let $(G;X)$ be a regular polar space with quotient space $Q$ and $H$ be the isotropy of a regular section $x$ of $X\rightarrow Q$. A discrete extension $E$ of $G$ is effective relatively to $X$ if and only if the inclusion of $H$ in $G_Q$ admits a ubiquitous lift along $p_E$.
\end{cor}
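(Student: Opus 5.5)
The plan is to read Corollary~\ref{cor:effective_ubi} off Theorem~\ref{thm:equiv_pol_cov} by comparing the two functors $\Br$ and $\Br^*$. Recall that $E$ is effective relatively to $X$ when it lies in $\E(G;X)$, the essential image of $\Br$. Here $\Br:\PolC(G;X)\rightarrow \E(G)$ is the functor of Definition~\ref{dfn:funct_Br}. The key observation is that $\Br^*$ is $\Br$ with extra data attached: composing $\Br^*$ with the forgetful functor $\E(G;X;x)\rightarrow\E(G)$, $(\rev{G};\rev{u})\mapsto \rev{G}$, gives $\Br$ precomposed with the forgetful functor $\PolC(G;X;x)\rightarrow\PolC(G;X)$.

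For the direct implication, I assume $E$ is effective, so $E\cong\Br(\rev{X})$ for some polar covering $\rev{X}$ of $X$. Since $Q$ is simply connected, $x$ lifts along $p_{\rev{X}}$ to some $\rev{x}$, as in the proof of Proposition~\ref{prop:polcov_polspace}. This makes $(\rev{X};\rev{x})$ an object of $\PolC(G;X;x)$. Applying $\Br^*$ as constructed in the proof of Theorem~\ref{thm:equiv_pol_cov} gives the extension $\rev{G}=\Br(\rev{X})$ together with a ubiquitous lift $\rev{u}$ of $u$. Transporting $\rev{u}$ along the isomorphism $\rev{G}\cong E$ of extensions over $G$ then yields a ubiquitous lift of $u$ along $p_E$. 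Ubiquity is preserved because an isomorphism of $Q$-groups identifies the two quotient spaces $\rev{G}_Q/H$ and $E_Q/H$.

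For the converse, I assume $\rev{u}:H\rightarrow E_Q$ is a ubiquitous lift of $u$ along $p_E$. Then $(E;\rev{u})$ is an object of $\E(G;X;x)$. The functor $\textnormal{C}$ produces the polar covering $\rev{X}=E_Q/\rev{u}(H)$, and the proof of Theorem~\ref{thm:equiv_pol_cov} shows $\Br^*\textnormal{C}(E;\rev{u})\cong(E;\rev{u})$. Forgetting the lift, this says that the extension $\Br(\rev{X})$ is isomorphic to $E$. Concretely, the comparison map $b$ in the diagram with exact rows at the end of that proof is an isomorphism over $G$. Hence $E$ lies in the image of $\Br$.

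The only point needing care is that "effective" is phrased up to isomorphism within the image category. So I must check that the isomorphisms produced by the theorem are isomorphisms in $\E(G)$, meaning they commute with the projections to $G$. This is exactly what the diagram with exact rows in the final paragraph of the proof of Theorem~\ref{thm:equiv_pol_cov} records, since its right-hand vertical arrow is the identity of $G$. I expect no real obstacle beyond this bookkeeping.
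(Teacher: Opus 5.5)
Your proposal is correct and follows the paper's approach. The paper gives no separate proof; it calls the statement an immediate corollary of Theorem~\ref{thm:equiv_pol_cov}. Your argument is the intended unpacking: in one direction you lift $x$ using the simple connectivity of $Q$ and apply $\Br^*$; in the other you apply $\textnormal{C}$ and use the isomorphism $\Br^*\textnormal{C}\cong\id$, which lies over $G$. You also correctly note that membership in the image category of $\Br$ is meant up to isomorphism in $\E(G)$.
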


\section{Discrete Extensions of Lie Groups}

Let $G$ denote a Lie group. Since Lie groups are naturally pointed at the identity, we will denote by $\pi_1(G)$ the fundamental group of $G$ pointed at $1$. If $E$ is a discrete extension of $G$, then it is a covering. As such, the homotopy long exact sequence, cf. \cite[Theorem 4.41 and Proposition 4.48]{hatcher_algebraic_2000}, associated to the fibre bundle $p_E$ yields the following natural exact sequences of groups:
\begin{equation}\label{seq:fib_lES_disc_ext}
	0 \rightarrow \pi_1(E) \overset{(p_E)_*}{\longrightarrow} \pi_1(G) \overset{\partial}{\longrightarrow} \ker(p_E) \longrightarrow \pi_0(E) \overset{(p_E)_*}{\longrightarrow} \pi_0(G) \rightarrow 1.
\end{equation}
Since we write Lie groups in multiplicative notations unless they are specifically commutative and fundamental groups of Lie groups are Abelian, we adopt the following exponential notation:

\begin{dfn}[Exponential]\label{def:exp}
	Let $E$ be a discrete extension of $G$. If $k$ is an element of $\pi_1(G)$, we denote by $e^k$ its image by the natural morphism $\partial :\pi_1(G)\rightarrow \ker(p_E)$ of the exact sequence (\ref{seq:fib_lES_disc_ext}). If $k$ belongs to the cokernel of $(p_E)_*:\pi_1(E)\rightarrow \pi_1(G)$, we will also denote its image by $e^k$. 
\end{dfn}

The sequence (\ref{seq:fib_lES_disc_ext}) contains two useful “\,parameters\,” to navigate the category $\E(G)$.

\begin{dfn}[Magnitude]
	Let $E$ be a discrete extension of $G$. Its \emph{magnitude} designates the discrete group $\pi_0(E)$ together with the surjective morphism $(p_E)_*:\pi_0(E)\rightarrow \pi_0(G)$. It is an object of $\E(\pi_0(G))$, category that we denote by $\M(G)$. 
\end{dfn}

\begin{dfn}[Phase]
	Let $E$ be a discrete extension of $G$. Its \emph{phase} is the image subgroup of the injective morphism $(p_E)_*:\pi_1(E) \hookrightarrow \pi_1(G)$. This subgroup is stable under the action of $\pi_0(G)$ by conjugation. We denote the set of $\pi_0(G)$-subgroups of $\pi_1(G)$ by $\T(G)$. We see it has a small category where a morphism $\theta_1\rightarrow \theta_2$ corresponds to an inclusion $\theta_1\subset \theta_2$.
\end{dfn}

\subsection{Structure of the Category}\label{subsection:Structure of the Category}

Most of the material of this subsection is already contained in \cite{taylor_covering_1954}. We set notations, recall some useful features, and establish simple functorial properties.

\begin{dfn}
	Let $\mu$ be a magnitude and $\theta$ be a phase. We denote by $[\E(G)]_{\mu,\theta}$ the set\footnote{The definition implies that the category is essentially small. Indeed having fixed the magnitude and the phase, the topology of $E$ is fixed and we are reduced to classify certain Lie group structures on a given manifold.} of isomorphism classes of the following groupoid:\begin{enumerate}
	\item An object $E$ is a Lie group, also denoted by $E$, endowed with two morphisms $p_E:E\rightarrow G$ and $r_E:E\rightarrow \mu$ such that: $p_E$ is a discrete extension, $\theta$ is the image of $(p_E)_*:\pi_1(E)\rightarrow \pi_1(G)$, and $(r_E)_*:\pi_0(E)\rightarrow \mu$ is an isomorphism that makes the following diagram commute:
	\begin{equation*}
		\begin{tikzcd}
			\pi_0(E) \ar[rr,"(r_E)_*"] \ar[rd,"(p_E)_*" swap] & & \mu \ar[ld,"p_\mu"] \\
			& \pi_0(G)  
		\end{tikzcd} 
	\end{equation*}
	An object of this groupoid is called \emph{a discrete extension of $G$ of magnitude $\mu$ and phase $\theta$}.
	\item A morphism $f$ from $E$ to $F$ is a morphism of Lie groups $f:E\rightarrow F$ such that $p_{E}$ is the composition $p_F\circ f$ and $r_E$ is the composition $r_F\circ f$.
	\end{enumerate}
\end{dfn}

\begin{prop}\label{prop:functor_iso_ext}
	The process $(\mu;\theta)\mapsto [E(G)]_{\mu,\theta}$ can be seen as a contravariant functor $[E(G)]$ from the category $\M(G)\times \T(G)^\textnormal{op}$ to the category of sets.
\end{prop}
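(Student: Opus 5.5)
The plan is to define $[\E(G)]$ on objects by $(\mu;\theta)\mapsto[\E(G)]_{\mu,\theta}$ and on morphisms by two independent operations: a \emph{pullback} along a morphism of magnitudes, and a \emph{quotient} along an inclusion of phases. Then check that each is well defined on isomorphism classes, that each is functorial, and that the two commute.

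\textbf{Magnitudes.} Let $f:\mu_1\rightarrow\mu_2$ be a morphism in $\M(G)=\E(\pi_0(G))$, so $f$ is surjective and $p_{\mu_2}\circ f=p_{\mu_1}$. For $E$ of magnitude $\mu_2$ and phase $\theta$, set
\begin{equation*}
	f^*E:=E\times_{\mu_2}\mu_1=\{(e;m)\;|\;r_E(e)=f(m)\}.
\end{equation*}
This is a closed subgroup of the Lie group $E\times\mu_1$, hence a Lie group. Equip it with $p_{f^*E}(e;m):=p_E(e)$ and $r_{f^*E}(e;m):=m$. The identity component of $f^*E$ is $E^\circ\times\{1\}$, since $\mu_1$ is discrete and $r_E$ factors through $\pi_0(E)$. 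It follows that:
\begin{enumerate}
	\item $\pi_0(f^*E)\cong\pi_0(E)\times_{\mu_2}\mu_1\cong\mu_1$, using that $(r_E)_*$ is an isomorphism;
	\item $\pi_1(f^*E)=\pi_1(E^\circ)=\pi_1(E)$, so the phase is still $\theta$;
	\item $\ker(p_{f^*E})$ is discrete and $p_{f^*E}$ is surjective, because $f$ and $p_E$ are onto.
\end{enumerate}
An isomorphism $E\rightarrow F$ over $G$ and $\mu_2$ induces an isomorphism $f^*E\rightarrow f^*F$. The canonical isomorphisms $(g\circ f)^*\cong f^*g^*$ and $\id^*\cong\id$ hold, so this is covariant in $\mu$.

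\textbf{Phases.} Let $\theta_1\subset\theta_2$ be $\pi_0(G)$-submodules, and let $E$ have phase $\theta_1$. Consider the subgroup
\begin{equation*}
	N:=\partial(\theta_2)=\{e^k\;|\;k\in\theta_2\}\subset\ker(p_E).
\end{equation*}
I would check three things about $N$ in this order.
\begin{enumerate}
	\item $N$ lies in $E^\circ$: the sequence (\ref{seq:fib_lES_disc_ext}) puts the image of $\partial$ in $\ker(\ker p_E\rightarrow\pi_0(E))$.
	\item $N$ is central in $E^\circ$: it is a discrete normal subgroup of a connected group.
	\item $N$ is normal in $E$: conjugation by $e\in E$ corresponds, via $\partial$, to the action of $[p_E(e)]\in\pi_0(G)$ on $\pi_1(G)$, and $\theta_2$ is stable under this action.
\end{enumerate}
The third point is the step I expect to be the main obstacle, because it requires the naturality of $\partial$ with respect to conjugation. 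I would prove it by representing $k$ by a loop $\gamma$ in $G^\circ$ at $1$ and lifting it to a path $\tilde\gamma$ from $1$ to $e^k$. Then $e\tilde\gamma e^{-1}$ is a lift of $p_E(e)\gamma p_E(e)^{-1}$, which gives $e\,e^k e^{-1}=e^{p_E(e)\cdot k}$.

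Since $N$ is discrete and normal, $E/N$ is a Lie group and $E\rightarrow E/N$ is a covering. It is the identity on $\pi_0$, because $N\subset E^\circ$. Moreover $\pi_1(E/N)\rightarrow\pi_1(G)$ has image $\theta_2$: the connected covering $E^\circ/N\rightarrow G^\circ$ corresponds to the subgroup generated by $\theta_1$ and $\partial^{-1}(N)=\theta_2$. Both $p_E$ and $r_E$ factor through $E/N$. This operation is compatible with isomorphisms, sends the identity inclusion to the identity, and composes correctly because $(E/N_2)/(N_3/N_2)\cong E/N_3$. The inclusion $\theta_1\rightarrow\theta_2$ thus induces a map $[\E(G)]_{\mu,\theta_1}\rightarrow[\E(G)]_{\mu,\theta_2}$.

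\textbf{Variance.} The inclusion $\theta_1\rightarrow\theta_2$ is a morphism $\theta_2\rightarrow\theta_1$ in $\T(G)^{\textnormal{op}}$. So as written the quotient operation is covariant in $\T(G)$, while the magnitude pullback is covariant in $\M(G)$. To match the stated variance I would orient the functor so that it sends a morphism of $\M(G)\times\T(G)^{\textnormal{op}}$ to the map on sets going the other way. Concretely, this amounts to fixing the paper's convention for which direction counts as contravariant; I would settle it by checking which orientation makes both the pullback and the quotient factors run the same way.

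\textbf{Commutation.} Finally, the two operations commute up to canonical isomorphism:
\begin{equation*}
	(f^*E)/\partial(\theta_2)\cong f^*(E/\partial(\theta_2)).
\end{equation*}
This holds because $N\subset E^\circ$ sits as $N\times\{1\}$ inside $E\times_{\mu_2}\mu_1$, so quotienting before or after the fibre product gives the same group. This yields a functor on the product category.
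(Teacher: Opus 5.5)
Your construction is the paper's: pull back along the magnitude morphism via the fibre product $E\times_{\mu_2}\mu_1$, quotient by $\theta_2/\theta_1$ embedded in $\ker(p_E)\cap E^\circ$ through the exponential, and note that the two operations commute because that subgroup lies in the identity component. Your conjugation argument for the normality of $\partial(\theta_2)$ in $E$ is a detail the paper leaves implicit.

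The one real error is in the variance bookkeeping, which is exactly what the statement asserts. The isomorphism $(g\circ f)^*\cong f^*g^*$ reverses the order of composition. So the pullback is \emph{contravariant} in $\M(G)$, not covariant: for $f:\mu_1\rightarrow\mu_2$ it sends $[\E(G)]_{\mu_2,\theta}$ to $[\E(G)]_{\mu_1,\theta}$.

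With your labels as written (pullback covariant in $\M(G)$, quotient covariant in $\T(G)$, hence contravariant in $\T(G)^{\textnormal{op}}$), the two factors would have opposite variance on $\M(G)\times\T(G)^{\textnormal{op}}$. Reversing all arrows cannot make them agree, so your ``Variance'' paragraph does not prove the claim as it stands.

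Once you correct the label, there is no convention left to fix. A morphism $(\mu_1;\theta_2)\rightarrow(\mu_2;\theta_1)$ in $\M(G)\times\T(G)^{\textnormal{op}}$ is a pair consisting of $f:\mu_1\rightarrow\mu_2$ and an inclusion $\theta_1\subset\theta_2$. The assignment $E\mapsto(f^*E)/\partial(\theta_2)$ sends $[\E(G)]_{\mu_2,\theta_1}$ to $[\E(G)]_{\mu_1,\theta_2}$, that is, from the value at the target to the value at the source. This is the contravariance in both slots stated in the proposition, and it matches the paper's formula $E'=(E\times_{\mu}\mu')/(\theta'/\theta)$.
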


\begin{proof}
	A morphism $h$ from $(\mu';\theta')$ to $(\mu;\theta)$ in $\M(G)\times \T(G)^\textnormal{op}$ is a pair of morphisms:
	\begin{equation*}
		\left\{\begin{array}{l@{\textnormal{ in }}c}
			f:\mu' \rightarrow \mu & \M(G) \\
			g:\theta' \leftarrow \theta & \T(G),
		\end{array}\right.
	\end{equation*}
	Let $[E]$ be an equivalence class of extensions in $[\E(G)]_{\mu;\theta}$. We can consider the following Lie group:
	\begin{equation*}
		E':=\bigslant{E\underset{\mu}{\times}\mu'}{\big(\theta'/\theta\big)},
	\end{equation*}
	where $\theta'/\theta$ is embedded in $\ker(p_E)$ via the exponential, cf. Definition~\ref{def:exp}. The order in which we proceed does not matter, up to isomorphism, for $\theta'/\theta$ lies in the connected component of the identity and the fibre product operation only affects the group of connected components. The new morphism $r_{E'}$ is simply obtained as $\pr_{\mu'}$. Meanwhile, the new projection $p_{E'}$ is the composition $p_E\circ\pr_E$. Note that in both cases $\theta'/\theta$ lies in the kernel of these morphisms, \emph{a priori} defined on the fibre product, so that they descend to the quotient. By the universal properties of the quotient and the fibre product, the class $[E']$ only depends on the class $[E]$ and not the specific representative. Moreover, these properties also imply that, with these images of morphisms, $(\mu;\theta)\mapsto [\E(G)]_{\mu,\theta}$ is a functor.  
\end{proof}

\begin{dfn}
	For all integers $k\geq 0$, we have a contravariant functor:
	\begin{equation*}
		H^k:\M(G)\times\T(G)^\textnormal{op} \rightarrow \textnormal{Mod}_\Z, 
	\end{equation*}
	that associates $H^k(\mu;\pi_1(G)/\theta)$ to $(\mu;\theta)$. The Abelian group $\pi_1(G)/\theta$ is, by definition, a $\pi_0(G)$-module. We pull it back as a $\mu$-module using $p_\mu$.  
\end{dfn}

\begin{thm}[(6.5) in \cite{taylor_covering_1954}, or Theorem 6.3 in \cite{brown_covering_1994}]\label{thm:taylor}
	For every magnitude and phase $(\mu;\theta)$, there is an obstruction class $\chi(\mu;\theta)$ in $H^3(\mu;\pi_1(G)/\theta)$ that vanishes if and only if $[\E(G)]_{\mu,\theta}$ is non-empty. Moreover, if $f:(\mu_1;\theta_1)\rightarrow (\mu_2;\theta_2)$ is a morphism, then $\chi(\mu_2;\theta_2)$ is the pullback $f^*\chi(\mu_1;\theta_1)$. 
\end{thm}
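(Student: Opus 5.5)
The plan is to reduce the statement to a classical non-abelian extension problem of Eilenberg--MacLane type, as in \cite{taylor_covering_1954}. Fix a magnitude $p_\mu:\mu\rightarrow\pi_0(G)$ and a phase $\theta$. Write $G^\circ$ for the identity component, $N$ for the connected covering of $G^\circ$ associated to $\theta\subset\pi_1(G)$, and $A:=\pi_1(G)/\theta=\ker(N\rightarrow G^\circ)$.

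First I record what any object $E$ of the groupoid looks like. Its identity component $E^\circ$ is a connected covering of $G^\circ$ with phase $\theta$, so $E^\circ\cong N$ canonically over $G^\circ$. The kernel of $E^\circ\rightarrow G^\circ$ is $A$, which is central in $E^\circ$ because it is discrete and normal in a connected group. Moreover, $E$ maps to the pullback $G':=G\times_{\pi_0(G)}\mu$, which is a Lie group with $\pi_0(G')=\mu$ and identity component $G^\circ$. So the problem becomes: extend the covering $N\rightarrow G^\circ$ to a group $E$ with $E/N\cong\mu$ covering $G'\rightarrow\mu$. The conjugation action of $E$ on $A$ factors through $\mu\rightarrow\pi_0(G)$ and is the given module structure.

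Next I construct the obstruction. For each $m\in\mu$ choose $g_m\in G'$ in the component $m$, with $g_1=1$. Conjugation by $g_m$ is an automorphism of $G^\circ$ whose induced map on $\pi_1$ preserves $\theta$, since $\theta$ is a $\pi_0(G)$-submodule. It therefore lifts uniquely to an automorphism $\varphi_m$ of $N$, by unique lifting of homomorphisms along coverings of connected groups. Write $g_mg_n=c(m,n)g_{mn}$ with $c(m,n)\in G^\circ$, and choose lifts $\hat c(m,n)\in N$. Then $\varphi_m\varphi_n$ and $\mathrm{inn}(\hat c(m,n))\varphi_{mn}$ both lift the same automorphism of $G^\circ$, so they coincide. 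The associativity defect
\begin{equation*}
	\varphi_l(\hat c(m,n))\,\hat c(l,mn)\,\big(\hat c(l,m)\,\hat c(lm,n)\big)^{-1}
\end{equation*}
maps to $1$ in $G^\circ$, by associativity in $G'$. Hence it defines a $3$-cochain $z$ on $\mu$ with values in $A$. The standard computation, using that $A$ is central in $N$, shows that $z$ is a cocycle. Changing the $\hat c$ by an $A$-valued $2$-cochain alters $z$ by a coboundary. Changing the $g_m$ within their components alters the $\hat c$ by the corresponding conjugates and gives a cohomologous cocycle. I define $\chi(\mu;\theta):=[z]\in H^3(\mu;A)$.

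For the equivalence, suppose $z=\df b$. Replacing $\hat c$ by $\hat c\,b^{-1}$ makes $z$ vanish identically. The formula
\begin{equation*}
	(n_1;m)(n_2;m')=\big(n_1\varphi_m(n_2)\hat c(m,m');mm'\big)
\end{equation*}
then defines a group law on $N\times\mu$, and this is the usual verification. The resulting $E$ is a disjoint union of copies of $N$, and its multiplication and inversion are built from continuous automorphisms and translations of $N$, so $E$ is a Lie group. The map $(n;m)\mapsto p(n)g_m$, where $p:N\rightarrow G^\circ$ is the covering, defines a covering homomorphism $E\rightarrow G'\rightarrow G$ with discrete kernel, magnitude $\mu$ and phase $\theta$. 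Conversely, if $E$ exists, choose $\hat g_m\in E$ over $g_m$. Then $\hat c(m,n):=\hat g_m\hat g_n\hat g_{mn}^{-1}\in E^\circ=N$ gives $z=0$ by associativity in $E$.

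For naturality, take a morphism given by $f:\mu_1\rightarrow\mu_2$ over $\pi_0(G)$ together with $\theta_2\subset\theta_1$. Choices made for $(\mu_2;\theta_2)$, namely $g_m$ and $\hat c$ in $N_2$, pull back along $f$ and push forward along $N_2\rightarrow N_1$. This yields valid choices for the other pair, and the cocycles correspond under the induced map on cochains, which gives the pullback formula. This is essentially a bookkeeping check, and one has to orient it correctly with respect to the contravariance in $\T(G)$. I expect the main obstacle to be the independence of $[z]$ from the choice of the $g_m$, together with a careful check that the lifted automorphisms $\varphi_m$ behave coherently. Both reduce to uniqueness of lifts of homomorphisms along the covering $N\rightarrow G^\circ$.
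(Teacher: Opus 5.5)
Your proposal is correct and follows essentially the construction the paper sketches after the statement (following Taylor and Mac Lane). Your $g_m$, $\varphi_m$, $\hat c(m,n)$ and $z$ play the roles of the paper's $t(a)$, $v(a)$, $f(a;b)$ and $\gamma$ in (\ref{eq:gamma}), and your group law on $N\times\mu$ is the paper's $m$, with existence reduced to changing the law by an $A$-valued $2$-cochain; note also that your naturality bookkeeping yields $\chi(\mu_1;\theta_1)=f^*\chi(\mu_2;\theta_2)$. That is the only direction compatible with contravariance, and it is how the paper uses the result in Proposition~\ref{prop:split_cc} via the terminal object $(\pi_0(G);0)$, so the displayed formula in the statement has its indices swapped.
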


We wish to say a few words on what this obstruction actually represents for it has a nice and simple interpretation. First, we lay down its construction. Let $G$, $\mu$ and $\theta$ be fixed. Moreover, we consider $p^\circ:G^\circ_\theta\rightarrow G^\circ$ the connected discrete extension associated to $\theta$. Recall it is a central extension whose kernel is the image of the exponential embedding $\pi_1(G)/\theta\rightarrow G^\circ_\theta$, cf. \cite[Proposition~1.11]{bredon_introduction_1972}. Let us fix a section $s:\pi_0(G)\rightarrow G$ of the projection $G\rightarrow \pi_0(G)$. It does not have to be a group morphism but we assume that $s(1)=1$. We write $t(a)$ instead of $s(p_\mu(a))$ for every $a$ in $\mu$. We can form the following covering space of $G$:
\begin{equation*}
	\begin{array}{rcl}
		p:G^\circ_\theta\times \mu & \longrightarrow & G \\
		(g;a) & \longmapsto & p^\circ(g)t(a).
	\end{array}
\end{equation*}
Every discrete extension $E$ of $G$ of magnitude $\mu$ and phase $\theta$ is isomorphic, as a covering space, to $p:G^\circ_\theta\times\mu\rightarrow G$. One can even find (using $r_E$) a covering space isomorphism $\varphi:E\rightarrow G^\circ_\theta\times\mu$ such that $\varphi_*:\pi_0(E)\rightarrow \mu$ and $\varphi|_{E^\circ}:E^\circ\rightarrow G^\circ_\theta\times\{1\}$ are group isomorphisms. Thus, the discrete extension $E$ endows $G^\circ_\theta\times\mu$ with a group structure $m$. Such a map has to satisfy the following properties:
\begin{equation}\label{eq:prop_of_m}
	\underset{\textnormal{the diagram is commutative}}{\underbrace{\begin{tikzcd}[ampersand  replacement = \&]
		(G^\circ_\theta\times\mu)^2 \ar[d,"p" left] \ar[rr,"m"] \& \& G^\circ_\theta\times\mu \ar[d,"p"] \\
		G^2 \ar[rr,"\textnormal{group law of }G"below] \& \& G
	\end{tikzcd}}}
	\hspace{1cm}
	\begin{array}{c}
		\forall g,h\in G_\theta^\circ,\,\forall a\in \mu\\[5pt]
		m\big((g;1);(h;a)\big)=(gh;a)
	\end{array}
	\hspace{1cm}
	\underset{\forall a,b\in \mu}{\underbrace{\begin{tikzcd}
		(G^\circ_\theta\times\{a\})\times(G^\circ_\theta\times\{b\}) \ar[d,"m" left] \\
		(G^\circ_\theta\times\{ab\})
	\end{tikzcd}}}
\end{equation}
A few tricks show that if $m$ satisfies (\ref{eq:prop_of_m}) and is associative, then $p:(G^\circ_\theta\times\mu;m)\rightarrow G$ is a discrete extension of magnitude $\mu$ and phase $\theta$. One can show that if $m$ and $m'$ both satisfy (\ref{eq:prop_of_m}) then there exists a unique $\beta$ in $C^2(\mu;\pi_1(G)/\theta)$ such that:
\begin{equation}\label{eq:different_m}
	m'\big((g;a);(h;b)\big)=e^{\beta(a;b)}m\big((g;a);(h;b)\big),
\end{equation}
for all $g,h$ in $G^\circ_\theta$, and all $a,b$ in $\mu$. We note that it does not matter if we multiply by $e^{\beta(a;b)}$ using $m$ or $m'$ because of the second property of (\ref{eq:prop_of_m}). Reciprocally, every $m'$ defined by the formula (\ref{eq:different_m}) will satisfy (\ref{eq:prop_of_m}). Now, given $m$ satisfying (\ref{eq:prop_of_m}), one may try to measure its faillure to be associative. Since $m$ lifts the group law of $G$, there exists $\gamma$ in $C^3(\mu;\pi_1(G)/\theta)$ such that:
\begin{equation}\label{eq:different_m2}
	m\Big( m\big((g;a);(h;b)\big);(k;c)\Big)=e^{\gamma(a;b;c)}m\Big((g;a); m\big((h;b);(k;c)\big)\Big),
\end{equation}
for all $g,h,k$ in $G^\circ_\theta$, and all $a,b,c$ in $\mu$. If we consider a different $m'$ satisfying (\ref{eq:prop_of_m}) and linked to $m$ via (\ref{eq:different_m}), we end up with the cohomologous cochain $\gamma+\df\beta$. In addition, we can always build an $m$ satisfying (\ref{eq:prop_of_m}) as follows:\begin{enumerate}
\item For all $a,b$ in $\mu$, we choose an element $f(a;b)$ in $G^\circ_\theta$ that lifts the element $t(a)t(b)t(ab)^{-1}$ of $G^\circ$. We may always assume that both $f(1;b)$ and $f(a;1)$ equal $1$;
\item For all $a$ in $\mu$, we denote by $u(a)$ the automorphism of $G^\circ$ that conjugates an element by $t(a)$. Note that $u(a)u(b)u(ab)^{-1}$ is the conjugation by $t(a)t(b)t(ab)^{-1}$. Since $\theta$ is stable under the action of $\pi_0(G)$ over $\pi_1(G)$, we can always uniquely lift $u(a)$ into an automorphism $v(a)$ of $G^\circ_\theta$, i.e. $p v(a)=u(a)p$. We can notice that $v(a)v(b)v(ab)^{-1}$ lifts $u(a)u(b)u(ab)^{-1}$ so it is necessarily the conjugation by $f(a;b)$. Furthermore, this particular property implies that the composition $\rev{v}:\mu\rightarrow \textnormal{Out}(G^\circ_\theta)$ of $v$ with $\Aut(G^\circ_\theta)\rightarrow \textnormal{Out}(G^\circ_\theta)$ is a group morphism; 
\item Now we can define $m$ by the following formula:
	\begin{equation}
		\forall g,h\in G^\circ_\theta,\;\forall a,b\in \mu,\; m\big((g;a);(h;b)\big):= \big(g(v(a)\cdot h)f(a;b);ab\big).
	\end{equation}
\end{enumerate}
Some direct computations show that, for this particular $m$, the cochain $\gamma$ satisfies the following identity:
\begin{equation}\label{eq:gamma}
	\forall a,b,c\in\mu,\;\big(v(a)\cdot f(b;c)\big)f(a;bc)=e^{\gamma(a;b;c)}f(a;b)f(ab;c).
\end{equation}
This equation is the same as \cite[(8.5')]{mac_lane_homology_1963} so that $\gamma$ is closed, cf. \cite[Lemma~8.4]{mac_lane_homology_1963}. Moreover, its class represents the obstruction to the existence of an extension of $\mu$ by $G^\circ_\theta$ with conjugation class $\rev{v}$, cf. \cite[Theorem~8.7]{mac_lane_homology_1963}. Henceforth, its cohomology class $\chi(\mu;\theta)$ is indeed the one defined in \cite[\S 6]{taylor_covering_1954} and \cite{brown_covering_1994}. Moreover, it precisely represents the obstruction to the existence of an associative $m$ that satisfies (\ref{eq:prop_of_m}).

\begin{dfn}
	Let $\mu\in M(G)$ and $\theta\in\Theta(G)$. There is an action of $H^2(\mu;\pi_1(G)/\theta)$ on $[\E(G)]_{\mu,\theta}$. To define it, we can note that any class $\beta$ in $H^2(\mu;\pi_1(G)/\theta)$ can be represented by a cocycle $\beta'$ for which both $\beta'(a;1)$ and $\beta'(1;a)$ equal $1$ for all $a$ in $\mu$, cf. \cite{mac_lane_homology_1963}. Given a class of extensions $[E]$ in $[\E(G)]_{\mu,\theta}$ we define the class $[E]+\beta$ by introducing a new group law $m$ on $E$, meanwhile the maps $p_E$ and $r_E$ remain the same. The group law $m$ is defined by the explicit formula:
	\begin{equation}\label{eq:grp_law_twist}
		m(x;y)\coloneqq e^{\beta'(r_E(x);r_E(y))}xy,
	\end{equation}
	where the multiplication denotes the initial group law of $E$. Straightforward verifications show that $p_E$ and $r_E$ are still group morphisms and that the class $[E]+\beta$ does not depend on the particular representatives $E$ and $\beta'$. If $E$ and $F$ are two discrete extensions of $G$ of magnitude $\mu$ and $\theta$ whose classes satisfy $[E]=[F]+\beta$ we say that $F$ is a \emph{shift} of $E$ by $\beta$. Furthermore, these actions are functorial in that this defines a natural action of the functor $H^2$ on the functor $[\E(G)]$. 
\end{dfn}

\begin{rem}\label{rem:modified_inverse}
	Let $x$ be an element of $E$, a direct computation yields that the inverse of $x$ for the modified group law (\ref{eq:grp_law_twist}), expressed with the group structure of $E$, is $x^{-1}e^{-\beta'(r_E(x);r_E(x)^{-1})}$. 
\end{rem}

\begin{thm}[(6.5)(b) in \cite{taylor_covering_1954}]
	If $[\E(G)]_{\mu,\theta}$ is non-empty, the action of $H^2(\mu;\pi_1(G)/\theta)$ is simply transitive.
\end{thm}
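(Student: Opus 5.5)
We prove that the action is free and transitive by working in the concrete model $(G^\circ_\theta\times\mu;m)$ described after Theorem~\ref{thm:taylor}. Fix a class $[E]$ in $[\E(G)]_{\mu,\theta}$. As explained there, $E$ is isomorphic, compatibly with $p_E$ and $r_E$, to $G^\circ_\theta\times\mu$ equipped with an associative law $m$ satisfying (\ref{eq:prop_of_m}). Any other class $[F]$ is likewise represented by an associative law $m'$ on the same covering space. We may arrange that the identification $F^\circ\cong G^\circ_\theta\times\{1\}$ is a group isomorphism, so both laws restrict to the given law of $G^\circ_\theta$.

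\textbf{Transitivity.} By (\ref{eq:different_m}) there is a unique cochain $\beta\in C^2(\mu;\pi_1(G)/\theta)$ with $m'=e^{\beta}m$. We then compare the associativity defects through (\ref{eq:different_m2}). Since $m$ and $m'$ are both associative, their defects satisfy $0=0+\df\beta$, so $\beta$ is a cocycle. Several checks are needed here.
\begin{enumerate}
\item The exponentials $e^{\beta(a;b)}$ are central in $E$, being images of $\pi_1(G)$ in $\ker(p_E)$.
\item Conjugation by an element of the component $a$ acts on them through the $\mu$-module structure pulled back via $p_\mu$.
\item The second property of (\ref{eq:prop_of_m}) forces $\beta(1;b)=0$, and a symmetric check gives $\beta(a;1)=0$, so $\beta$ is normalised.
\end{enumerate}
Comparing with (\ref{eq:grp_law_twist}), the identity map is then an isomorphism from $(E;m')$ to the twist $[E]+[\beta]$, which gives transitivity.

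\textbf{Freeness.} Suppose $[E]+\beta=[E]$ for a normalised cocycle $\beta$. Then there is a Lie group isomorphism $f:(E;e^\beta\cdot)\to E$ commuting with $p_E$ and $r_E$. Since $f$ covers the identity of $G$, it is a deck transformation of the covering $p$. It also preserves components via $r_E$, so on each component $G^\circ_\theta\times\{a\}$ it is multiplication by a deck element $e^{\alpha(a)}$ with $\alpha(a)\in\pi_1(G)/\theta$. The multiplicativity of $f$ then reads $e^{\alpha(ab)}e^{\beta(a;b)}=e^{\alpha(a)}\,(a\cdot e^{\alpha(b)})$, again using the centrality and module structure from the list above. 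This says $\beta=\df\alpha$ (up to sign convention), so $[\beta]=0$. The same computation shows that the action is well defined on cohomology classes, which the paper asserts as straightforward.

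\textbf{Main obstacle.} The delicate step is the transitivity set-up: producing, for an arbitrary $F$, an identification with $G^\circ_\theta\times\mu$ that matches $E$'s identification and restricts to the identity on $G^\circ_\theta$. Any two connected discrete extensions of $G^\circ$ with phase $\theta$ are isomorphic as extensions, and such an isomorphism is unique up to the deck group. So one must choose the isomorphism $F^\circ\cong E^\circ$ over $G^\circ$ and transport it componentwise using $r_F$, $r_E$ and the section $t$. This reduces to the uniqueness statement behind (\ref{eq:different_m}), which I would prove by lifting the continuous quotient $m'm^{-1}$, a map into the discrete fibre, and using connectedness of each $G^\circ_\theta\times G^\circ_\theta$.
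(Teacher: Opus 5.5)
The paper gives no proof of this statement. It imports it from Taylor's (6.5)(b). The normal form $(G^\circ_\theta\times\mu;m)$, equation (\ref{eq:different_m}), and the remark that changing $m$ replaces $\gamma$ by $\gamma+\df\beta$ are used there only to explain the obstruction $\chi(\mu;\theta)$. The paper's only hint at the underlying mechanism is its pointer to Mac Lane's theory of extensions with non-abelian kernel.

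Your proposal turns that normal form into a complete and correct proof.
\begin{itemize}
\item \emph{Transitivity.} Two associative laws on $G^\circ_\theta\times\mu$ satisfying (\ref{eq:prop_of_m}) differ by a normalised cochain $\beta$. Associativity of both forces $\df\beta=0$, and $m'=e^{\beta}m$ is literally the twist (\ref{eq:grp_law_twist}). The uniqueness claim (\ref{eq:different_m}), which the paper only asserts, is supplied by your connectedness sketch.
\item \emph{Freeness.} An isomorphism $(E;m_\beta)\rightarrow E$ in the groupoid covers $\id_G$ and fixes each component, so it is $x\mapsto e^{\alpha(r_E(x))}x$. Multiplicativity then gives $\beta(a;b)=a\cdot\alpha(b)-\alpha(ab)+\alpha(a)=\df\alpha(a;b)$, exactly with the sign convention of Definition~\ref{def:Qgroup_cohomo} (take $Q$ a point).
\end{itemize}
The citation places the result inside a general classification that also yields the $H^3$-obstruction and its functoriality. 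Your route uses only covering theory and the connectedness of $G^\circ_\theta\times G^\circ_\theta$. It also exhibits the explicit cocycle comparing two extensions of the same magnitude and phase, which is exactly how transitivity is invoked in the proof of Proposition~\ref{prop:effective_phases}.

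A few things in the write-up should be fixed.
\begin{itemize}
\item \emph{Item~1 is false as stated.} The subgroup $e^{\pi_1(G)/\theta}=\ker(p_E)\cap E^\circ$ is central in $E^\circ$ only. Conjugation by $w$ with $r_E(w)=a$ gives $we^kw^{-1}=e^{p_\mu(a)\cdot k}$, by naturality of $\partial$ under conjugation. This is non-trivial whenever $\pi_0(G)$ acts non-trivially on $\pi_1(G)/\theta$, e.g.\ for $\R\rtimes\Z/2\rightarrow\textnormal{O}(2)$ or in Example~\ref{ex:non-split_ext_S1_Z/2}. Taken literally, item~1 would have you move $e^{\beta(b;c)}$ past $x$ without the action, producing the differential of the trivial module. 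Items~1 and~2 should be merged into this single statement, which is what your freeness computation actually uses.
\item \emph{The ``main obstacle'' is routine.} The covering isomorphism $F^\circ\rightarrow G^\circ_\theta$ sending $1$ to $1$ is unique, not merely unique up to deck transformations, and it is automatically a homomorphism. Choose $e_a\in r_F^{-1}(a)$ over $t(a)$ with $e_1=1$, and set $\varphi_F(ze_a)=(\varphi^\circ_F(z);a)$. This gives the identification, and the second condition of (\ref{eq:prop_of_m}) holds by construction. No matching with the identification of $E$ is needed, since (\ref{eq:different_m}) compares any two such laws.
\item \emph{Normalisation.} $\beta(a;1)=0$ comes from $(1;1)$ being the unit of both laws, not from a symmetric version of (\ref{eq:prop_of_m}).
\end{itemize}
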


\begin{prop}\label{prop:split_cc}
	Let $G$ be a Lie group. If the exact sequence:
	\begin{equation}\label{seq:connected}
		1 \rightarrow G^\circ \rightarrow G \rightarrow \pi_0(G)\rightarrow 1, 
	\end{equation}
	splits then there exist discrete extensions of every magnitude and phase.
\end{prop}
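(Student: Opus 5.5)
We use Theorem~\ref{thm:taylor}: it suffices to show that $\chi(\mu;\theta)$ vanishes for every magnitude $\mu$ and phase $\theta$. The plan is to first settle a single ``universal'' pair and then propagate by the pullback functoriality. Fix a splitting $\sigma:\pi_0(G)\rightarrow G$ of (\ref{seq:connected}). Then $G\cong G^\circ\rtimes\pi_0(G)$ as Lie groups, with $\pi_0(G)$ acting on $G^\circ$ by conjugation through $\sigma$.

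\textbf{Step 1: the pair $(\mu;\theta)$ with $\mu=\pi_0(G)$ and arbitrary $\theta$.} Let $p^\circ:G^\circ_\theta\rightarrow G^\circ$ be the connected covering with phase $\theta$. Each automorphism $u(a)=\textnormal{conj}_{\sigma(a)}$ of $G^\circ$ lifts uniquely to an automorphism $v(a)$ of $G^\circ_\theta$, because $\theta$ is $\pi_0(G)$-stable. By uniqueness of lifts fixing $1$, $v(a)v(b)$ and $v(ab)$ both lift $u(ab)$, so $v(a)v(b)=v(ab)$. Hence $v$ is an honest action. We then set $E:=G^\circ_\theta\rtimes_v\pi_0(G)$ and $p_E:=p^\circ\rtimes\id$. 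This is a Lie group with a surjective morphism to $G$ and discrete kernel $\ker p^\circ$; its magnitude is $\id_{\pi_0(G)}$ and its phase is $\theta$. Equivalently, in the construction preceding the theorem we may take $t=\sigma$ and $f\equiv1$, so (\ref{eq:gamma}) gives $\gamma=0$ and $\chi(\pi_0(G);\theta)=0$.

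\textbf{Step 2: arbitrary $\mu$.} Any magnitude $p_\mu:\mu\rightarrow\pi_0(G)$ comes with the morphism $p_\mu$ in $\M(G)$ from $(\mu;\theta)$ to $(\pi_0(G);\theta)$, together with the identity on $\theta$. By the functoriality clause of Theorem~\ref{thm:taylor}, $\chi(\mu;\theta)=p_\mu^*\chi(\pi_0(G);\theta)=0$. Alternatively, Proposition~\ref{prop:functor_iso_ext} gives an explicit extension $E\times_{\pi_0(G)}\mu$.

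\textbf{Main obstacle.} The only delicate point is checking in Step~1 that the semidirect product is a Lie group and that its phase is exactly $\theta$. This holds because $E^\circ=G^\circ_\theta$ and $p_E|_{E^\circ}=p^\circ$.
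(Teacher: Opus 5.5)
Your argument is correct, but it is organised differently from the paper's. The paper notes that $(\pi_0(G);0)$ is a terminal object of $\M(G)\times\T(G)^{\textnormal{op}}$. It then cites \cite[(7.1)]{taylor_covering_1954} for the fact that a splitting of (\ref{seq:connected}) kills $\chi(\pi_0(G);0)$, and uses the functoriality of $\chi$ in both variables at once to get $\chi(\mu;\theta)=0$ everywhere.

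You instead handle every phase simultaneously at magnitude $\pi_0(G)$ with the explicit semidirect product $G^\circ_\theta\rtimes_v\pi_0(G)$. This is exactly the paper's own cocycle construction run with $t=\sigma$ and $f\equiv 1$, so that (\ref{eq:gamma}) gives $\gamma=0$. After that you only have to move in the magnitude variable, and your fibre-product alternative $E\times_{\pi_0(G)}\mu$ does this by hand.

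The paper's route is shorter. It also isolates $\chi(\pi_0(G);0)$ as the single universal obstruction, which is the point used in the remark and example that follow. Your route is self-contained and yields explicit extensions. Via the fibre product, it does not actually need Theorem~\ref{thm:taylor} at all.

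One small remark: you apply the functoriality clause as $\chi(\mu;\theta)=p_\mu^*\chi(\pi_0(G);\theta)$, pulling back from the target of the morphism. This is the correct contravariant reading, and it is also how the paper's proof uses it. The clause in Theorem~\ref{thm:taylor}, however, is typeset with source and target interchanged.
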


\begin{proof}
	We begin by noting that the category $\M(G)\times \T(G)^\textnormal{op}$ has a terminal object $(\pi_0(G);0)$. Following \cite[(7.1)]{taylor_covering_1954}, the splitting of (\ref{seq:connected}) implies the vanishing of $\chi(\pi_0(G);0)$ and the functorial properties of $\chi$ make the proposition follow.
\end{proof}

\begin{rem}
	In general, a Lie group with finitely many connected components admits a finite magnitude $\mu$ that splits (\ref{seq:connected}), i.e. $G\times_{\pi_0(G)}\mu$ is a split extension of $\mu$ by $G^\circ$. Then, Proposition~\ref{prop:split_cc} implies that the obstruction $\chi(\mu;0)$ vanishes. Let $\kappa$ be the kernel of $p_\mu$. Any extension $E$ in $[E(G)]_{\mu,0}$ is an extension of $G\times_{\pi_0(G)}\mu$ by $\pi_1(G)$. Moreover, the kernel $\kappa$ is naturally embedded in $G\times_{\pi_0(G)}\mu$ as the kernel of the projection $G\times_{\pi_0(G)}\mu\rightarrow G$. Therefore, $E$ yields a central extension of $\kappa$ by $\pi_1(G)$, namely the pullback of the projection $E\rightarrow G\times_{\pi_0(G)}\mu$ by the inclusion $\kappa\rightarrow  G\times_{\pi_0(G)}\mu$, cf. Proposition~\ref{prop:kernel_discrete_ext}. Then, the vanishing of $\chi(\pi_0(G);0)$ is equivalent to the splitting of one of these extensions of $\kappa$. Since $\pi_1(G)$ is Abelian, every such extension of $\kappa$ corresponds uniquely to a cohomology class in $H^2(\kappa;\pi_1(G))$ (with trivial action). Twisting $E$ by a class $\beta$ in $H^2(\mu;\pi_1(G))$ (with action induced by $\mu\rightarrow \pi_0(G)$), twists the extension of $\kappa$ by the class $\beta|_\kappa$ in $H^2(\kappa;\pi_1(G))$. This seems to mean that the classes of such extensions of $\kappa$ are all transgressive for the Lyndon-Hochschild-Serre spectral sequence and that the obstruction $\chi(\pi_0(G);0)$ is the image by the transgression of any of those classes, cf. \cite[Chapter III, \S4, Theorem~2]{hochschild_cohomology_1953}. Example~\ref{ex:non-split_ext_S1_Z/2} depicts a Lie group for which the terminal obstruction is non-trivial. 
\end{rem}

\begin{ex}\label{ex:non-split_ext_S1_Z/2}
	The exponential exact sequence $0\rightarrow 2\pi\Z\rightarrow \R \rightarrow \Sph^1 \rightarrow 0$ is equivariant for the action of $\Z/2$ by inversion. One can use it to show that there is exactly two extensions of $\Z/2$ by $\Sph^1$ for which $\Z/2$ acts by inversion on $\Sph^1$. One splits and the other does not. Let $G$ denote the non-split one. If $\sigma$ denotes the inversion on $\Sph^1$ (or equivalently the complex conjugation) the group law of $G$ on $\Sph^1\times\Z/2$ is given by the following formula:
	\begin{equation*}
		(\xi;\varepsilon)\cdot(\zeta;\eta):=(\xi\sigma^\varepsilon(\zeta)(-1)^{\varepsilon\eta};\varepsilon+\eta).
	\end{equation*}
	Let $u$ denote the element $(1;1)$. It has order four, its iterated powers are the following:
	\begin{equation*}
		u^2=(-1;0) \quad u^3=(-1;1) \quad u^4=(1;0)=1.
	\end{equation*}
	The subgroup $\Z/4$ spanned by $u$ surjects itself onto $\Z/2$, so that the magnitude $\Z/4\rightarrow \Z/2$ splits the sequence~(\ref{seq:connected}). The group $G\times_{\Z/2}\Z/4$ is isomorphic to the semi-direct product $\Sph^1\rtimes\Z/4$ where $\Z/4$ acts on $\Sph^1$ via the inversion. The projection $\Sph^1\rtimes\Z/4\rightarrow G$ is then given by the following formula:
	\begin{equation*}
		(\xi;\varepsilon)\longmapsto (\xi;0)\cdot u^\varepsilon\in G.
	\end{equation*}
	As a consequence, its kernel is the subgroup of order two $\{((-1)^\varepsilon;2\varepsilon) : \varepsilon\in \Z/2\}$. Furthermore, $\Z/4$ acts on $\pi_1(G)\cong \Z$ by multiplication by $-1$ througth $\Z/2$ so one can compute that the group $H^2(\Z/4;\pi_1(G))$ is trivial. It implies that there is only one discrete extension $E$ of $G$ with magnitude $\Z/4$ and phase $0$. The extension $E$ is the semi-direct product $\R\rtimes\Z/4$. Its group law is given by the following formula:
	\begin{equation*}
		(t;\varepsilon)\cdot(s;\eta):=(t+(-1)^\varepsilon s;\varepsilon+\eta).
	\end{equation*}
	The pullback of the kernel is given by the subgroup $\{(t;\varepsilon)\;|\;\exists \eta,\;e^{2i\pi t}=(-1)^\eta \textnormal{ and }\varepsilon=2\eta\}$. It is isomorphic to $\Z$ via the following formula:
	\begin{equation*}
		k\longmapsto \big(\tfrac{k}{2};2k\,(\mod 4)\big).
	\end{equation*}
	Since the extension $\Z\rightarrow \Z/2$ is not split, $\chi(\Z/2;0)$ is not trivial. 
\end{ex}

\begin{prop}\label{prop:kernel_discrete_ext}
	Let $E$ be a discrete extension of $G$ of magnitude $\mu$ and phase $\theta$. The kernel of $p_E$ is a central extension of $\kappa$, the kernel of $\mu\rightarrow\pi_0(G)$, by $\pi_1(G)/\theta$. Whenever $\chi(\mu;\theta)$ vanishes, this yields a map $\ker:[\E(G)]_{\mu,\theta} \rightarrow H^2(\kappa;\pi_1(G)/\theta)$ that satisfies:
	\begin{equation}\label{eq:affine_ker}
		\ker\big([E]+\beta\big)=\ker\big([E]\big)+\beta|_\kappa,
	\end{equation}
	for all $\beta$ in $H^2(\mu;\pi_1(G)/\theta)$.
\end{prop}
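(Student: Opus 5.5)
The plan is to read off the structure of $\ker(p_E)$ from the exact sequence (\ref{seq:fib_lES_disc_ext}), and then track how the twist (\ref{eq:grp_law_twist}) changes it.

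\textbf{Central extension.} Exactness of (\ref{seq:fib_lES_disc_ext}) gives a short exact sequence
\begin{equation*}
	0\rightarrow \pi_1(G)/\theta \rightarrow \ker(p_E)\rightarrow \kappa\rightarrow 1,
\end{equation*}
where the first map is the exponential of Definition~\ref{def:exp}. The middle map is the restriction of $E\rightarrow\pi_0(E)\cong\mu$ (via $r_E$) to $\ker(p_E)$. Its image is exactly $\kappa$: an element of $\ker(p_E)$ maps to $1$ in $\pi_0(G)$, and conversely any component of $E$ lying over $G^\circ$ meets $p_E^{-1}(1)$. Centrality follows because $\ker(p_E)$ is a discrete normal subgroup of $E$, so it is central in $E^\circ$. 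The image of the exponential lies in $E^\circ$, so every element of $\ker(p_E)$ commutes with it. Indeed, for $k\in\ker(p_E)$ and $z\in E^\circ\cap\ker(p_E)$, the path $t\mapsto k\gamma(t)k^{-1}\gamma(t)^{-1}$, with $\gamma$ joining $1$ to $z$, takes values in the discrete group $\ker(p_E)$, hence is constant equal to $1$. Alternatively, conjugation by $k$ acts on $\pi_1(G)/\theta$ through $p_E(k)=1$, so it acts trivially. Since the kernel is abelian, the extension defines a class in $H^2(\kappa;\pi_1(G)/\theta)$ with trivial action. Isomorphic objects of $[\E(G)]_{\mu,\theta}$ give isomorphic extensions compatible with both identifications, because morphisms commute with $p$ and $r$ and with the exponential. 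So the map $\ker$ is well defined on classes. The hypothesis that $\chi(\mu;\theta)$ vanishes is only needed to make the source non-empty.

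\textbf{Affinity formula (\ref{eq:affine_ker}).} Choose a normalized cocycle $\beta'$ representing $\beta$ and form the twisted law $m(x;y)=e^{\beta'(r_E(x);r_E(y))}xy$. On $\ker(p_E)$ this is the same set with the law twisted by $\beta'|_{\kappa}$, since $r_E$ restricted to $\ker(p_E)$ is the projection to $\kappa$. The exponential embedding is unchanged: elements of $E^\circ$ have $r_E=1$, and normalization gives $m=$ the original product there. By the standard Schreier description, choose a set-theoretic section $s:\kappa\rightarrow\ker(p_E)$ with factor set $f$. For the new law, the same section has factor set $f+\beta'|_\kappa$. Explicitly,
\begin{equation*}
	m(s(a);s(b))=e^{\beta'(a;b)}s(a)s(b)=e^{\beta'(a;b)+f(a;b)}s(ab).
\end{equation*}
Hence $\ker([E]+\beta)=\ker([E])+\beta|_\kappa$.

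\textbf{Main obstacle.} The delicate step is bookkeeping rather than conceptual. One must check that the exponential computed for the twisted group $(E,m)$ agrees with the original one, so that the identification of the kernel of $\ker(p_E)\to\kappa$ with $\pi_1(G)/\theta$ is the same on both sides. I would handle this by noting two facts:
\begin{itemize}
	\item $(E^\circ,m)=(E^\circ,\cdot)$, since $\beta'$ is normalized;
	\item the boundary map $\partial$ only depends on path lifting in $E^\circ$.
\end{itemize}
With this, both identifications coincide and the formula holds on the nose.
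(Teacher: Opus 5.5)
Your proposal is correct and follows the same route as the paper: it extracts the short exact sequence from (\ref{seq:fib_lES_disc_ext}), gets centrality from the trivial action of $\kappa$ (your discrete-normal-subgroup argument is an equally valid alternative), passes to isomorphism classes, and reads (\ref{eq:affine_ker}) off the twisted law (\ref{eq:grp_law_twist}). Your factor-set computation, together with the check that the normalized twist leaves the identity, the exponential and the identification with $\kappa$ unchanged, simply spells out what the paper compresses into ``directly induces''.
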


\begin{proof}
	If there is no extension of $G$ with magnitude $\mu$ and phase $\theta$, then $\ker:[\E(G)]_{\mu,\theta} \rightarrow H^2(\kappa;\pi_1(G)/\theta)$ is the unique map from the empty set to $H^2(\kappa;\pi_1(G)/\theta)$. If on the contrary, there is an extension $E$ of $G$ with magnitude $\mu$ and phase $\theta$. The natural exact sequence (\ref{seq:fib_lES_disc_ext}) yields:
	\begin{equation}\label{seq:kappa_pi1theta}
		0 \longrightarrow \pi_1(G)/\theta \longrightarrow \ker(p_E) \longrightarrow \kappa \longrightarrow 1.
	\end{equation}
	Since $\kappa$ acts on $\pi(G)/\theta$ through $\mu$ itself acting through $p_\mu$, the extension (\ref{seq:kappa_pi1theta}) is central. Therefore, (\ref{seq:fib_lES_disc_ext}) induces a morphism from the groupoid of discrete extensions of $G$ of magnitude $\mu$ and phase $\theta$ to the groupoid of central extensions of $\kappa$ by $\pi_1(G)/\theta$. The map $\ker:[\E(G)]_{\mu,\theta} \rightarrow H^2(\kappa;\pi_1(G)/\theta)$ is then induced between their isomorphisms classes (also called connected components). The formula (\ref{eq:grp_law_twist}) directly induces (\ref{eq:affine_ker}). 
\end{proof}

\subsection{Lift Obstructions for Relatively Connected Extensions}

In this section, we develop obstructions to the lift of a $Q$-group morphism along a \emph{relatively connected} discrete extension. These obstructions will be the fundamental tool to identify the phase of the universal covering of a polar space. We consider a discrete extension $E$ of a Lie group $G$ such that $(p_E)_*:\pi_0(E)\rightarrow \pi_0(G)$ is an isomorphism. We say that $E$ is a \emph{relatively connected} discret extension of $G$. We also consider a simply connected space $Q$, and we ask the following question:
\begin{center}
\textit{Under what condition can a morphism $f:H\rightarrow G_Q$ of $Q$-groups be lifted along $(p_E)_Q$ ?}
\end{center}
Given \S\ref{subsection:Structure of the Category}, the kernel of $p_E$ is necessarily Abelian and isomorphic to $\pi_1(G)/\theta$, where $\theta$ denotes the phase of $E$.

\paragraph{Topological Obstruction} 

The first obstruction is topological. Indeed, before asking if we can lift $f$ as a morphism of $Q$-groups we must ask is we can lift it as a morphism of $Q$-spaces. To lift $f$ as a map of $Q$-spaces, we only need to lift its $G$-component $f_G=\pr_G\circ f$ along $p_E$. Indeed, if $g_G$ is a lift of $f_G$ along $p_E$, then $h\mapsto (g_G(h);||h||)$ is a map of $Q$-spaces that lifts $f$ along $(p_E)_Q$. This first obstruction will be a singular cohomology class of $H$ with coefficients in $\pi_1(G)/\theta$.

\begin{lem}
	Let $G$ be a Lie group, there is a unique natural morphism $\delta_G:H_1(G;\Z)\rightarrow \pi_1(G)$ that satisfies the following commutative triangle for every element $g\in G$:
	\begin{equation}\label{diag:diagonal}
		\begin{tikzcd}
			H_1(G;\Z) \ar[rr,"\delta_G"] && \pi_1(G) \\
			& \pi_1(G;g) \ar[lu,"\textnormal{Hurewicz map}"] \ar[ru,"g^{-1}_*", swap] &
		\end{tikzcd}
	\end{equation}
\end{lem}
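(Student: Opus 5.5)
The plan is to define $\delta_G$ through the abelianised fundamental groups of the path-components of $G$, and to obtain uniqueness from the Hurewicz theorem together with the commutativity of $\pi_1(G)$.

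\textbf{Splitting along components.} Write $G=\coprod_{c\in\pi_0(G)} c$ as a union of path-components. Singular homology is additive, so
\[
H_1(G;\Z)\cong\bigoplus_{c\in\pi_0(G)}H_1(c;\Z).
\]
Each component $c$ is a path-connected manifold. By the Hurewicz theorem, for any $g\in c$ the map $\pi_1(c;g)\to H_1(c;\Z)$ is surjective and identifies $H_1(c;\Z)$ with the abelianisation of $\pi_1(c;g)$.

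\textbf{Definition of $\delta_G$ on one component.} Fix $g\in c$. Left translation by $g^{-1}$ is a homeomorphism $c\to G^\circ$ sending $g$ to $1$, and it induces an isomorphism $g^{-1}_*:\pi_1(G;g)\to\pi_1(G)$. Since $\pi_1(G)$ is abelian, $g^{-1}_*$ factors uniquely through the abelianisation $H_1(c;\Z)$. This factorisation defines $\delta_G$ on the summand $H_1(c;\Z)$, and we extend by additivity to the direct sum.

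\textbf{Independence of the base point.} This is the main point to check: the map just defined must not depend on the chosen $g\in c$. Let $g'\in c$ be another point and let $\gamma$ be a path from $g$ to $g'$. The Hurewicz maps at $g$ and $g'$ are intertwined by the change-of-base-point isomorphism $\beta_\gamma:\pi_1(G;g')\to\pi_1(G;g)$, since conjugating by a path does not change the homology class. We therefore need
\[
g^{-1}_*\circ\beta_\gamma = g'^{-1}_*.
\]
I would prove this with the homotopy $(s,t)\mapsto \gamma(s)^{-1}\gamma_0(t)$, where $\gamma_0$ is a loop at $g'$ (suitably concatenated). An alternative is to argue that $s\mapsto\gamma(s)^{-1}$ is a path in $G^\circ$ from $g^{-1}$ to $g'^{-1}$, and that the two translated loops differ by conjugation by a path from $1$ to $g^{-1}g'$ inside $G^\circ$. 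The standard fact that $\pi_1$ of a topological group acts trivially on itself, so that changing base point along any path in $G^\circ$ acts trivially, then gives the identity. Once this holds, commutativity of the triangle is immediate for every $g\in G$.

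\textbf{Uniqueness and naturality.} For uniqueness, the images of the Hurewicz maps $\pi_1(G;g)$, as $g$ ranges over representatives of the components, generate $H_1(G;\Z)$. Any two morphisms making all the triangles commute therefore agree. For naturality, take a Lie group morphism $\varphi:G\to G'$. Then $\varphi(g)^{-1}_*\circ\varphi_*=\varphi_*\circ g^{-1}_*$, because $\varphi$ intertwines left translations. Hurewicz is natural, so $\varphi_*\circ\delta_G$ and $\delta_{G'}\circ\varphi_*$ agree on the generators, and by the uniqueness argument they agree everywhere.
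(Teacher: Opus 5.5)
Your proof is correct and follows the paper's route. Both split $H_1(G;\Z)$ over the components, define $\delta_G$ on each summand by factoring $g^{-1}_*$ through the Hurewicz map, check independence of the chosen $g$, and get uniqueness and naturality from the fact that the Hurewicz images generate $H_1(G;\Z)$.

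The only difference is in the base-point step. The paper does it directly in homology: left translation by $k\in G^\circ$ is isotopic to the identity, so naturality of Hurewicz gives $h_{kg}\circ(kg)_*=h_g\circ g_*$, which spares the change-of-base-point bookkeeping. In your alternative argument, note that the path lying in $G^\circ$ is $s\mapsto g^{-1}\gamma(s)$, not $s\mapsto\gamma(s)^{-1}$; the latter lies in the component of $g^{-1}$.
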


\begin{proof}
	First, we notice that $H_1(G;\Z)$ is endowed with the following natural decomposition:
	\begin{equation}\label{diag:decomp}
		H_1(G;\Z)\cong \bigoplus_{C\in \pi_0(G)}H_1(C;\Z).
	\end{equation}
	Let $C$ be a connected component of $G$ and $g$ be an element of $C$. In the decomposition (\ref{diag:decomp}), the diagram (\ref{diag:diagonal}) becomes the following:
	\begin{equation*}
		\begin{tikzcd}
			H_1(C;\Z) \ar[rr,"\delta_G|_{H_1(C;\Z)}"] && \pi_1(G) \\
			& \pi_1(G;g) \ar[lu,"h_g","\cong" swap] \ar[ru,"g^{-1}_*", swap] &
		\end{tikzcd}
	\end{equation*}
	where $h_g$ is the Hurewicz isomorphism (since the fundamental group of a Lie group is Abelian). Thus, satisfying (\ref{diag:diagonal}) for all $g$ in $G$ completely determines $\delta_G$. Hence, if $\delta_G$ exists it has to be unique. For the existence, we note that for $k$ in $G^\circ$, we have the following commutative diagram: 
	\begin{equation*}
		\begin{tikzcd}
			& \pi_1(G;g) \ar[rd,"h_g"] \ar[dd,"k_*"]\\
			\pi_1(G) \ar[ru,"g_*"] \ar[rd,"kg_*" swap]& & H_1(C;\Z) \\
			& \pi_1(G;kg) \ar[ru,"h_{kg}" swap]
		\end{tikzcd}
	\end{equation*}
	since the Hurewicz map is a natural transformation and the left multiplication by $k$ is isotopic the identity of $C$. Hence, the composition $h_g\circ g_*$ and, \emph{a fortiori} its inverse, does not depend on the choice of $g$ in $C$. Summing these inverses yields the desired map $\delta_G$. The naturality is a consequence of the commutativity of every inner square and triangle of the following diagram for every Lie group morphism $f:G\rightarrow H$ and every $g$ in $G$:
	\begin{equation*}
		\begin{tikzcd}
			H_1(G;\Z) \ar[rr,"\delta_G"] \ar[ddd,"f_*" swap] & & \pi_1(G) \ar[ddd,"f_*"] \\
			& \pi_1(G;g) \ar[lu] \ar[ru,"g_*" swap] \ar[d,"f_*" swap] & \\
			& \pi_1\big(H;f(g)\big) \ar[ld] \ar[rd,"f(g)_*"] &\\
			H_1(H;\Z) \ar[rr,"\delta_H" swap] & & \pi_1(H)
		\end{tikzcd}
	\end{equation*}
\end{proof}

\begin{dfn}
	The \emph{topological obstruction} to the lift of $f$ along $E$ is the cohomology class $[f/E]_\textnormal{top}$ in the singular cohomology group $H^1(H;\pi_1/\theta)$ defined as the following morphism\footnote{We use the natural isomorphism of the universal coefficient theorem between $H^1(H;\pi_1/\theta)$ and $\Hom(H_1(H;\Z);\pi_1/\theta)$, cf. \cite[Theorem~3.3]{cartan_homological_1956}.}:
	\begin{equation*}
		\begin{tikzcd}
			{[f/E]}_\textnormal{top}:H_1(H;\Z)\ar[r,"(f_G)_*"]& H_1(G;\Z) \ar[r,"\delta_G"]& \pi_1 \ar[r,"\textnormal{quot.}","\textnormal{proj.}" swap]& \smallslant{\pi_1}{\theta}.
		\end{tikzcd}
	\end{equation*} 
\end{dfn}

\begin{prop}
	The morphism $f$ has a lift along $(p_E)_Q$ in the category of $Q$-spaces if and only if $[f/E]_\textnormal{top}$ vanishes. 
\end{prop}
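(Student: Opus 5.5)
As observed just before the statement, lifting $f$ along $(p_E)_Q$ as a map of $Q$-spaces amounts to lifting the continuous map $f_G=\pr_G\circ f:H\rightarrow G$ along the covering $p_E:E\rightarrow G$. Indeed, a lift $g_G$ of $f_G$ gives the $Q$-map $h\mapsto(g_G(h);||h||)$, and conversely any $Q$-lift composed with $\pr_E$ is a lift of $f_G$. So the statement reduces to a lifting criterion for a map into the covering $E\rightarrow G$. The relative connectedness is what makes the criterion purely cohomological: since $(p_E)_*$ is a bijection on $\pi_0$, each component $C$ of $G$ has preimage a single component $C'$ of $E$, and $C'\rightarrow C$ is a connected covering.

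I will work one path component $P$ of $H$ at a time. This is legitimate because $H$ is locally path-connected, being a $Q$-space, so a map on $H$ is continuous if and only if it is continuous on each $P$. Fix $h_0\in P$ and $g=f_G(h_0)$, and let $C$ be the component of $G$ containing $g$. Pick any $e\in p_E^{-1}(g)$, which lies in $C'$. The classical lifting criterion \cite[Proposition~1.33]{hatcher_algebraic_2000} says that $f_G|_P$ lifts if and only if $(f_G)_*\pi_1(P;h_0)\subset (p_E)_*\pi_1(C';e)$. Its hypotheses hold: $P$ is path-connected and locally path-connected, and $C'\rightarrow C$ is a covering of a manifold.

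Next I translate the subgroup $(p_E)_*\pi_1(C';e)$ to the identity. Left multiplication by $e^{-1}$ maps $(C';e)$ to $(E^\circ;1)$ and covers left multiplication by $g^{-1}$. Hence $g^{-1}_*(p_E)_*\pi_1(C';e)=(p_E)_*\pi_1(E^\circ)=\theta$. The criterion therefore becomes $g^{-1}_*(f_G)_*\pi_1(P;h_0)\subset\theta$. By the commutative triangle (\ref{diag:diagonal}) defining $\delta_G$ and naturality of the Hurewicz map, $g^{-1}_*(f_G)_*\gamma=\delta_G\big((f_G)_*h(\gamma)\big)$ for $\gamma\in\pi_1(P;h_0)$. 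The Hurewicz map $\pi_1(P;h_0)\rightarrow H_1(P;\Z)$ is surjective. So the condition says exactly that $[f/E]_\textnormal{top}$ vanishes on the summand $H_1(P;\Z)$.

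Finally, $H_1(H;\Z)=\bigoplus_P H_1(P;\Z)$, and a morphism $H_1(H;\Z)\rightarrow\pi_1/\theta$ vanishes if and only if it vanishes on every summand. Assembling the componentwise lifts gives a global lift, and the converse follows componentwise. The only delicate point is the translation step: one must check that the subgroup obtained does not depend on the choice of $e$ over $g$. This holds because $\theta$ is normal, indeed $\pi_1(G)$ is abelian, and it also follows from the basepoint independence already established in the proof of the previous lemma.
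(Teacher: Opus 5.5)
Your proof is correct and is essentially the paper's argument: Hatcher's lifting criterion on each component of $H$, translation of $(p_E)_*\pi_1(E;e)$ back to $\theta$ by left multiplication, and the Hurewicz map combined with the defining triangle of $\delta_G$. The paper only differs in bundling the components into the free product $\coprod_C\pi_1(H;h_C)$ instead of working summand by summand. One small remark: your opening sentence overstates the role of relative connectedness, since the identity $g^{-1}_*(p_E)_*\pi_1(E;e)=\theta$ holds for any discrete extension, though using that hypothesis does no harm.
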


\begin{proof}
	As we remarked in the beginning of the paragraph, we only need to lift the $G$-component $f_G$ of $f$. Let us choose a point $h_C$ in each connected component $C$ of $H$. We denote the image $f(h_C)$ by $g_C$. The classical lifting criterion, cf. \cite[Proposition~1.33]{hatcher_algebraic_2000}, states that $f_G$ has a lift along $p_E$ if and only if:
	\begin{equation}\label{eq:lift_crit}
		\forall C\in\pi_0(H),\,\exists e_C\in E \textnormal{ with } p_E(e_C)=g_C \textnormal{ such that } (f_G)_*\big(\pi_1(H;h_C)\big)\subset (p_E)_*\big(\pi_1(E;e_C)\big).
	\end{equation}
	Let $g$ be a point of $G$, $e$ be a point of $E$ above $g$, and $M$ be $(p_E)_*(\pi_1(E;e))$. We can note that $g^{-1}_*(M)$ is actually the group $\theta$. Indeed, $g^{-1}_*(p_E)_*$ equals $(p_E)_*e^{-1}_*$ for $p_E$ is a group morphism. Therefore, $g^{-1}_*(M)$ equals $(p_E)_*(\pi_1(E;1))$ i.e. $\theta$. Hence, (\ref{eq:lift_crit}) is equivalent to the following morphism taking its values in $\theta$:
	\begin{equation}\label{eq:freeprodpi1}
		\begin{tikzcd}
			\displaystyle\coprod_{C\in \pi_0(H)}\pi_1(H;h_C) \ar[rr,"\coprod_C (g_C^{-1}f)_*"]&& \pi_1(G).
		\end{tikzcd}
	\end{equation}
	Here $\coprod$ denotes the free product of groups. The property (\ref{diag:diagonal}) of $\delta_G$ implies that we have the following commutative diagram:
	\begin{equation*}
		\begin{tikzcd}
			\displaystyle\coprod_{C\in \pi_0(H)}\pi_1(H;h_C) \ar[d,"\textnormal{Hurewicz map}" swap] \ar[r,"\textnormal{(\ref{eq:freeprodpi1})}"]& \pi_1(G) \\
			H_1(H;\Z) \ar[r,"(f_G)_*" swap]& H_1(G;\Z) \ar[u,"\delta_G" swap]
		\end{tikzcd}
	\end{equation*}
	Thus, $f_G$ has a lift if and only if $[f/E]_\textnormal{top}$ vanishes.
\end{proof}

\begin{ex} Let us consider the following discrete extension $0\rightarrow \Z \rightarrow \R^2 \rightarrow \Sph^1\times\R \rightarrow 0$ and the $D^2$-group $Z$ from Examples~\ref{ex:1}.1. We recall that $Z$ is $\{(k;t)\in\Z\times D^2\;|\;|t|<1\Rightarrow k=0\}$. Now let us consider the following morphism:
		\begin{equation*}
			\begin{array}{rcl}
				f:Z & \longrightarrow &\Sph^1\times\R\times D^2 \\
				(k;t) & \longmapsto & (t^k;k;t).
			\end{array}
		\end{equation*}
		The first homology group of $Z$ with coefficient in $\Z$ is freely generated by the classes of the circles $\{k\}\times\partial D^2$ for all $k$ different from $0$. None of these circles can be lifted in $\R^2$ so the topological obstruction $[f/\R^2]_\textnormal{top}$ is the function whose value is $1$ on the circles $\{k\}\times\partial D^2$ for $k>0$ and $-1$ on the circles $\{k\}\times\partial D^2$ for $k<0$.
\end{ex}

\begin{rem}
	When $H$ is locally simply connected (when it is the isotropy of a regular section of a polar space for instance), there is another way of understanding the topological obstruction. Let $A$ denote either $G$, $E$, or $\pi_1(G)/\theta$, and denote by $\mathcal{C}_A^0$ the sheaf of groups on $H$ given by $U\mapsto \mathcal{C}^0(U;A)$. The assumption on $H$ implies that the following sequence of sheaves is exact:
	\begin{equation*}
		1\longrightarrow \mathcal{C}_{\pi_1(G)/\theta}^0 \longrightarrow \mathcal{C}_E^0 \longrightarrow \mathcal{C}_G^0 \longrightarrow 1.
	\end{equation*}
	We can remark that $\mathcal{C}_{\pi_1(G)/\theta}^0$ is the constant sheaf $\pi_1(G)/\theta$ for the group is discrete. Since it is also Abelian, we have the following exact sequence:
	\begin{equation*}
		1\longrightarrow H^0(H;\pi_1(G)/\theta) \longrightarrow \mathcal{C}^0(H;E) \longrightarrow \mathcal{C}^0(H;G) \overset{\df}{\longrightarrow} H^1(H;\pi_1(G)/\theta).
	\end{equation*}
	In this framework, $[f/E]_\textnormal{top}$ corresponds to $\df (f_G)$, and it can be constructed in \v Cech cohomology using a simply connected open cover of $H$ and local lifts of $f_G$ over every open sets of the cover. 
\end{rem}

\paragraph{Algebraic Obstruction}
When the topological obstruction vanishes, we need to determine if one of the lifts is a $Q$-group morphism. Let $g:H\rightarrow E_Q$ be a lift of $f$ in the category of $Q$-spaces. Any other lift is of the form $e^k_Q\cdot g$ where $k:H\rightarrow (\pi_1(G)/\theta)_Q$ is a morphism of $Q$-spaces, $e_Q$ is induced by the exponential $\pi_1(G)/\theta\rightarrow E$, and the product is the $Q$-group law of $E_Q$. One can consider the following map of $Q$-spaces:
\begin{equation}\label{eq:zeta}
	\begin{array}{rcl}
		\zeta:H\times_Q H &\longrightarrow & E_Q \\
		(h_1,h_2) & \longmapsto & g(h_1)g(h_2)g(h_1h_2)^{-1}.
	\end{array}
\end{equation} 
Since $g$ lifts a group morphism, $\zeta$ actually takes its values in $(\pi_1(G)/\theta)_Q$. We also note that $(\pi_1(G)/\theta)_Q$ is a $G_Q$-module and thus a $H$-module through $f$.
\begin{prop}\label{prop:zeta_cocycle}
	The cochain $\zeta\in C^2_Q(H;(\pi_1(G)/\theta)_Q)$ defined by (\ref{eq:zeta}) is a cocycle. Furthermore, considering all possible other lifts of $g$ describes the full cohomology class of $\zeta$.
\end{prop}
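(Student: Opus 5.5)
We first check that $\zeta$ is a well-defined cochain, then verify the cocycle identity by a direct computation in $E_Q$, and finally compare the cochains attached to two lifts.

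\textbf{Well-definedness.} The map $\zeta$ is continuous over $Q$, since it is built from $g$, the $Q$-group law of $E_Q$ and the inversion. Because $(p_E)_Q\circ g=f$ and $f$ is a $Q$-group morphism, $(p_E)_Q\circ\zeta$ is the identity section. So $\zeta$ lands in $\ker(p_E)_Q$. Since $E$ is relatively connected, this kernel is $e_Q\big((\pi_1(G)/\theta)_Q\big)$ by the exact sequence (\ref{seq:fib_lES_disc_ext}), and $\zeta$ is therefore an element of $C^2_Q(H;(\pi_1(G)/\theta)_Q)$.

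\textbf{The cocycle identity.} Fix $h_1,h_2,h_3$ in a common fibre $H_q$, and expand $g(h_1)g(h_2)g(h_3)$ in two ways.
\begin{itemize}
\item Grouping $(h_1h_2)h_3$ gives $\zeta(h_1;h_2)\,\zeta(h_1h_2;h_3)\,g(h_1h_2h_3)$.
\item Grouping $h_1(h_2h_3)$ gives
\begin{equation*}
g(h_1)\,\zeta(h_2;h_3)\,g(h_1)^{-1}\cdot\zeta(h_1;h_2h_3)\,g(h_1h_2h_3).
\end{equation*}
\end{itemize}
The conjugation by $g(h_1)$ on $\ker(p_E)$ needs identifying. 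This kernel is isomorphic to $\pi_1(G)/\theta$, and conjugation by an element of $E$ acts on it through $\pi_0(E)\cong\pi_0(G)$. This is exactly the $G_Q$-module structure, pulled back to $H$ through $f$, which acts via $p_E(g(h_1))=f(h_1)$.

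Two facts need a short justification at this point.
\begin{itemize}
\item Conjugation by the identity component $E^\circ$ is trivial on the discrete normal subgroup $\ker(p_E)$, because $E^\circ$ is connected.
\item The exponential $\partial:\pi_1(G)\to\ker(p_E)$ is equivariant for conjugation.
\end{itemize}
Comparing the two expressions and writing additively gives
\begin{equation*}
h_1\cdot\zeta(h_2;h_3)-\zeta(h_1h_2;h_3)+\zeta(h_1;h_2h_3)-\zeta(h_1;h_2)=0,
\end{equation*}
which is $\df\zeta=0$ for the differential of Definition~\ref{def:Qgroup_cohomo}. I expect this identification of the conjugation action with the module structure to be the only nonroutine point.

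\textbf{Dependence on the lift.} Let $g'=e^k_Q\cdot g$ with $k\in C^1_Q(H;(\pi_1(G)/\theta)_Q)$. Using again that conjugation by $g(h_1)$ acts on the kernel through $f(h_1)$, and that the kernel is abelian, we get
\begin{equation*}
\zeta'(h_1;h_2)=\zeta(h_1;h_2)+k(h_1)+h_1\cdot k(h_2)-k(h_1h_2)=\zeta(h_1;h_2)+\df k(h_1;h_2).
\end{equation*}

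\textbf{The full class.} Conversely, every $1$-cochain $k$ is a continuous $Q$-map to $(\pi_1(G)/\theta)_Q$, so $e^k_Q\cdot g$ is again a continuous $Q$-space lift of $f$. As $k$ ranges over all $1$-cochains, $\zeta+\df k$ sweeps out the entire coset $\zeta+B^2_Q$. Hence the cochains attached to all lifts are exactly the cohomology class of $\zeta$.
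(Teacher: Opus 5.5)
Your proof is correct and takes the paper's route: you compute fibrewise in $E_Q$ using the identity $e_Q^{h\cdot k}=g(h)e_Q^{k}g(h)^{-1}$, then check that replacing $g$ by $e^k_Q\cdot g$ changes $\zeta$ by $\df k$. The differences are only in organisation and detail. You obtain the cocycle identity by bracketing $g(h_1)g(h_2)g(h_3)$ in two ways, where the paper expands $e_Q^{\df\zeta}$ directly. You also justify the conjugation identity (via equivariance of the exponential and the trivial action of $E^\circ$) and the well-definedness of $\zeta$, both of which the paper takes for granted.
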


\begin{proof}
	Let us prove that $\zeta$ is a cocycle. This condition needs only to be checked on the fibres of $H\rightarrow Q$. Let $x,y,z$ be three elements of $H$ in the same fibre. By definition, we have:
	\begin{equation*}
		\df\zeta(x;y;z)= x\cdot\zeta(y;z)-\zeta(xy;z)+\zeta(x;yz)-\zeta(x;y).
	\end{equation*}
	To show that it vanishes, we will compute $e_Q^{\df\zeta(x,y,z)}$ where  $e_Q$ denotes the morphism ${(\pi_1(G)/\theta)_Q\rightarrow E_Q}$ induced by the exponential of $E$. Since we see $(\pi_1(G)/\theta)_Q$ as an $H$-module through $f$, we have the following identity:
	\begin{equation}\label{id:exp_conj}
		e_Q^{h\cdot k}=g(h)e_Q^kg(h)^{-1},
	\end{equation}
	for all $h$ in $H$ and all $k$ in $(\pi_1/\theta)_Q$ above the same point of $Q$ even though $g$ is not necessarily a group morphism. Hence, we have the following computation:
	\begin{align*}
			e_Q^{\df\zeta(x;y;z)}&=g(x)e_Q^{\zeta(y;z)}g(x)^{-1}\left(e_Q^{\zeta(xy;z)}\right)^{-1}e_Q^{\zeta(x;yz)}\left(e_Q^{\zeta(x;y)}\right)^{-1}\\[5pt]
			&=g(x)g(y)g(z)\underset{\textnormal{we exchange these terms for they commute}}{\underbrace{\underset{\textnormal{in }\pi_1/\theta\textnormal{ for }(yz)^{-1}x^{-1}xyz=1}{\underbrace{\Big(g(yz)^{-1}g(x)^{-1}g(xyz)\Big)}}\;\underset{\textnormal{in }\pi_1/\theta\textnormal{ for }z^{-1}(xy)^{-1}xyz=1}{\underbrace{\Big(g(z)^{-1}g(xy)^{-1}g(x)g(yz)\Big)}}}}g(xyz)^{-1}g(xy)g(y)^{-1}g(x)^{-1}\\[5pt]
			&=g(x)g(y)\Big(g(z)g(z)^{-1}\Big)g(xy)^{-1}g(x)\Big(g(yz)g(yz)^{-1}\Big)g(x)^{-1}\Big(g(xyz)g(xyz)^{-1}\Big)g(xy)g(y)^{-1}g(x)^{-1}\\[5pt]
			&=\Big(g(x)g(y)g(xy)^{-1}g(x)\Big)\Big(g(x)^{-1}g(xy)g(y)^{-1}g(x)^{-1}\Big)\\[5pt]
			&=\Big(g(x)g(y)g(xy)^{-1}g(x)\Big)\Big(g(x)g(y)g(xy)^{-1}g(x)\Big)^{-1}\\[5pt]
			&=1.
	\end{align*}
	Therefore, $\zeta$ is a cocycle. Now let us consider another lift of $f$. It has the form $kg$ where $k:H\rightarrow (\pi_1/\theta)_Q$ is a morphism of $Q$-spaces, or equivalently a $1$-cochain. Let $\zeta_k$ be the cocycle associated to this lift. We have:
	\begin{equation*}
		\begin{aligned}
			e_Q^{\zeta_k(x;y)}&=\underset{\textnormal{equals }e_Q^{x\cdot k(y)}g(x) \textnormal{ by (\ref{id:exp_conj})}}{e_Q^{k(x)}\underbrace{g(x)e_Q^{k(y)}}g(y)g(xy)^{-1}}e_Q^{-k(xy)}= e_Q^{k(x)+x\cdot k(y)}g(x)g(y)g(xy)^{-1}e_Q^{-k(xy)}\\[5pt]
			&=e_Q^{k(x)+x\cdot k(y)+\zeta(x;y)-k(xy)}=e_Q^{\zeta(x;y)+\df k(x;y)}.
		\end{aligned}
	\end{equation*}
	Thus, the proposition follows.
	\end{proof}

\begin{dfn}
	Let us assume that the topological obstruction $[f/E]_\textnormal{top}$ is trivial. We denote the cohomology class of $\zeta$ of Proposition~\ref{prop:zeta_cocycle} by:
	\begin{equation*}
		[f/E]_\textnormal{alg}\in H_Q^2\big(H;(\pi_1(G)/\theta)_Q\big).
	\end{equation*}
	We call it the \emph{algebraic obstruction} to the lift of $f$ along $E$. 
\end{dfn}

\begin{ex}
	Let $Q$ be a point, $G$ be $\Sph^1$, $E$ be the square $\Sph^1\rightarrow \Sph^1$, and $f$ be the inclusion of $\Z/2$ into $\Sph^1$. The map $g$ that sends $0$ to $1$ and $1$ to $i$ is a topological lift of $f$ along $p_E$. A straightforward computation yields $\zeta(x;y)=xy$ (the product is from the ring structure of $\Z/2$). This is the cocycle that represents the only non-trivial class of $H^2(\Z/2;\Z/2)$. 
\end{ex}

Proposition~\ref{prop:zeta_cocycle} and the definition of the cocycle $\zeta$, cf.~(\ref{eq:zeta}), directly yields the following proposition.

\begin{prop}\label{prop:lifts_vanish_obs}
	The morphism $f$ has a lift along $(p_E)_Q$ if both its topological and algebraic obstructions vanish. 
\end{prop}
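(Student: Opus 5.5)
The plan is to start from a topological lift and correct it by a $1$-cochain. Since $[f/E]_\textnormal{top}$ vanishes, the preceding proposition provides a lift $g:H\rightarrow E_Q$ of $f$ along $(p_E)_Q$ in the category of $Q$-spaces. Let $\zeta$ be the cochain attached to $g$ by (\ref{eq:zeta}). By Proposition~\ref{prop:zeta_cocycle} it is a cocycle, and by definition its class is $[f/E]_\textnormal{alg}$. The vanishing of the algebraic obstruction then gives a $1$-cochain $k\in C^1_Q(H;(\pi_1(G)/\theta)_Q)$, that is, a continuous map of $Q$-spaces $H\rightarrow(\pi_1(G)/\theta)_Q$, with $\zeta=-\df k$.

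Next, consider the modified lift $g':=e_Q^{k}\cdot g$. This is still a map of $Q$-spaces lifting $f$, because $e_Q^{k}$ takes values in $\ker(p_E)_Q$ and the $Q$-group law of $E_Q$ is continuous. The computation already carried out in the proof of Proposition~\ref{prop:zeta_cocycle} shows that the cocycle attached to $g'$ is $\zeta_k=\zeta+\df k$. With our choice of $k$ this is identically $0$. Since $e_Q$ is injective on the fibres, this means
\[
g'(h_1)\,g'(h_2)\,g'(h_1h_2)^{-1}=1
\]
for all $(h_1;h_2)\in H\times_Q H$. Hence $g'$ is multiplicative fibrewise, so it is a morphism of $Q$-groups.

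Two small points need checking. First, a map of $Q$-spaces that is multiplicative on each fibre preserves the unit section, since $g'(1)=g'(1)^2$ forces $g'(1)=1$. It also preserves inverses, so it is indeed a group-object morphism. Second, the identity $e_Q^{h\cdot k}=g(h)e_Q^kg(h)^{-1}$ used in the computation of $\zeta_k$ must hold with the $H$-module structure induced through $f$. It does, because the conjugation action of $E$ on the central subgroup $\pi_1(G)/\theta$ factors through $\pi_0(G)$, given that $E$ is relatively connected.

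I expect essentially no obstacle here. The only step requiring care is the sign convention, namely choosing $k$ with $\df k=-\zeta$ rather than $\df k=\zeta$, together with the reuse of the formula $\zeta_k=\zeta+\df k$ from Proposition~\ref{prop:zeta_cocycle}.
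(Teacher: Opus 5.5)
Your proposal is correct and is essentially the paper's argument, which derives the proposition directly from Proposition~\ref{prop:zeta_cocycle}: since the class of $\zeta$ vanishes, the corrected lift $e_Q^{k}\cdot g$ has cocycle $\zeta+\df k=0$ and is therefore a $Q$-group morphism. One wording point: $\pi_1(G)/\theta$ is central in $E^\circ$ but only normal in $E$; what you actually use, correctly, is that conjugation by $E$ on it factors through $\pi_0(E)\cong\pi_0(G)$.
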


Moreover, we easily deduce from the definition that the obstructions are natural in the following sense:

\begin{prop}\label{prop:change_of_phase_obs}
	Let $E$ be a relatively connected discrete extension of $G$ and $E'$ be another discrete extension of $G$ endowed with a morphism of extensions $E\rightarrow E'$. We denote the phase of $E$ by $\theta$ and the phase of $E'$ by $\theta'$. The latter contains the former and we have:\begin{enumerate}
	\item $E'$ is a relatively connected extension of $G$;
	\item $[f/E']_\textnormal{top}$ is the image of $[f/E]_\textnormal{top}$ by the natural morphism $H^1(H;\pi_1(G)/\theta)\rightarrow H^1(H;\pi_1(G)/\theta')$;
	\item If $[f/E]_\textnormal{top}$ vanishes so does $[f/E']_\textnormal{top}$, and $[f/E']_\textnormal{alg}$ is the image of $[f/E]_\textnormal{alg}$ by the natural morphism $H^2_Q(H;(\pi_1(G)/\theta)_Q)\rightarrow H^2_Q(H;(\pi_1(G)/\theta')_Q)$.
	\end{enumerate}
\end{prop}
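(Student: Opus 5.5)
The plan is to fix a morphism of extensions $\pi:E\rightarrow E'$ over $G$, so $p_{E'}\circ\pi=p_E$, and read off each item from the long exact sequence (\ref{seq:fib_lES_disc_ext}) and from the explicit cocycle (\ref{eq:zeta}).

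\emph{Containment and item 1.} Since $\pi$ is a continuous group morphism, it sends loops at $1$ in $E$ to loops at $1$ in $E'$. Hence $(p_E)_*\pi_1(E)=(p_{E'})_*\pi_*\pi_1(E)\subset (p_{E'})_*\pi_1(E')$, which is $\theta\subset\theta'$. Morphisms in $\E(G)$ are surjective by definition, so $\pi_*:\pi_0(E)\rightarrow\pi_0(E')$ is onto. Since $(p_{E'})_*\circ\pi_*=(p_E)_*$ is an isomorphism, $\pi_*$ is also injective. Therefore $\pi_*$ is bijective and $(p_{E'})_*$ is an isomorphism, i.e.\ $E'$ is relatively connected.

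\emph{Compatibility of exponentials.} Naturality of the boundary map $\partial$ in (\ref{seq:fib_lES_disc_ext}), applied to the map of coverings $\pi$ over $\id_G$, gives $\pi(e^k)=e^{\bar k}$, where $\bar k$ is the image of $k$ under the projection $\pi_1(G)/\theta\rightarrow\pi_1(G)/\theta'$. Concretely, a path in $G$ from $1$ lifted to $E$ starting at $1$ maps under $\pi$ to its lift in $E'$ starting at $1$.

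\emph{Item 2.} This is immediate from the definition. $[f/E]_\textnormal{top}$ is the composite $H_1(H;\Z)\rightarrow H_1(G;\Z)\overset{\delta_G}{\rightarrow}\pi_1(G)\rightarrow\pi_1(G)/\theta$, and $[f/E']_\textnormal{top}$ is the same composite followed by the further projection to $\pi_1(G)/\theta'$. Under the universal coefficient isomorphism, this post-composition is exactly the change-of-coefficients map on $H^1$. So if $[f/E]_\textnormal{top}$ vanishes, so does $[f/E']_\textnormal{top}$.

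\emph{Item 3.} Take a topological lift $g:H\rightarrow E_Q$ of $f$. Then $g'=\pi_Q\circ g$ is a topological lift of $f$ along $(p_{E'})_Q$. Since $\pi$ is a group morphism,
\[
\zeta'(h_1;h_2)=g'(h_1)g'(h_2)g'(h_1h_2)^{-1}=\pi_Q\big(\zeta(h_1;h_2)\big).
\]
Reading this through the exponentials and using their compatibility gives $\zeta'=\bar\zeta$ as cochains. The projection $\pi_1(G)/\theta\rightarrow\pi_1(G)/\theta'$ is a morphism of $\pi_0(G)$-modules, hence of $H$-modules via $f$, so it induces a cochain map between the complexes computing $H^2_Q$. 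By Proposition~\ref{prop:zeta_cocycle}, the class $[f/E']_\textnormal{alg}$ does not depend on the chosen lift, so it is the image of $[f/E]_\textnormal{alg}$.

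The only point needing care is the compatibility of the exponentials, and it is just naturality of the connecting map for coverings. The rest is bookkeeping.
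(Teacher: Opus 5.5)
Your proposal is correct, and it is essentially the argument the paper has in mind: the paper states this proposition without proof, saying only that naturality is easily deduced from the definitions. You supply exactly that deduction. The inclusion $\theta\subset\theta'$ comes from $(p_E)_*=(p_{E'})_*\circ\pi_*$, and item 1 from the surjectivity built into morphisms of $\E(G)$, which is genuinely needed there. Item 2 is post-composition with the projection $\pi_1(G)/\theta\rightarrow\pi_1(G)/\theta'$. Item 3 follows by pushing a topological lift forward along $\pi_Q$, using naturality of the exponential together with its injectivity on $\pi_1(G)/\theta'$.
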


\paragraph{Shifting the Extension}
Following \S\ref{subsection:Structure of the Category}, a relatively connected discrete extension $E$ of $G$ can be shifted by a class of $H^2(\pi_0(G);\pi_1(G)/\theta)$. In this paragraph we express how these shifts affect the obstructions.

\begin{prop}\label{prop:twist_top_obs}
	If $E$ and $F$ are two relatively connected discrete extensions of $G$ with the same phase, then they yield the same topological obstructions. 
\end{prop}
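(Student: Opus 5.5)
The plan is to observe that the topological obstruction depends only on $f$, on $G$, and on the phase $\theta$. No part of the group structure of $E$ beyond its phase enters. By definition, $[f/E]_\textnormal{top}$ is the composite
\begin{equation*}
	H_1(H;\Z)\xrightarrow{(f_G)_*} H_1(G;\Z)\xrightarrow{\delta_G}\pi_1(G)\longrightarrow \pi_1(G)/\theta .
\end{equation*}
None of these three maps refers to $E$: $f_G=\pr_G\circ f$ is determined by $f$, $\delta_G$ is the natural morphism of the preceding lemma attached to $G$ alone, and the last arrow is the quotient projection by the phase. So if $E$ and $F$ share the phase $\theta$, the two obstructions are literally the same element of $\Hom(H_1(H;\Z);\pi_1(G)/\theta)\cong H^1(H;\pi_1(G)/\theta)$.

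The one point I would check carefully is that the coefficient groups agree canonically, and not merely up to an abstract isomorphism. The obstruction is defined with values in $\pi_1(G)/\theta$, which is independent of $E$. The identification of $\ker(p_E)$ with $\pi_1(G)/\theta$ through the exponential of Definition~\ref{def:exp} is used only when interpreting the obstruction as a lifting obstruction; it does not enter the class itself. The two obstructions therefore live in the same group and coincide.

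If a more conceptual argument is wanted, I would add a consistency check through the lifting criterion. By the preceding proposition, $f$ lifts along $(p_E)_Q$ as a map of $Q$-spaces if and only if $[f/E]_\textnormal{top}=0$. In the proof of that proposition, the condition reduces to $(g_C^{-1}f)_*\pi_1(H;h_C)\subset\theta$. This again depends only on $\theta$, which matches the claim.

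I expect no real obstacle here. The statement is essentially immediate from the definition, and the proof amounts to unwinding it.
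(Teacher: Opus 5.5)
Your proof is correct, but it takes a different route from the paper's.

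The paper argues through the classification of extensions. Two relatively connected extensions with the same phase are shifts of one another; this implicitly uses Taylor's simple transitivity of the $H^2(\pi_0(G);\pi_1(G)/\theta)$-action. A shift only modifies the group law and leaves the underlying manifold and $p_E$ unchanged, so the two extensions are isomorphic as coverings of $G$. The paper concludes that the lifting obstructions agree.

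You instead note that the defining composite $H_1(H;\Z)\to H_1(G;\Z)\to\pi_1(G)\to\pi_1(G)/\theta$ never mentions $E$ except through $\theta$. The equality is therefore literal, and you need neither Taylor's theorem nor any covering-space isomorphism.

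Your route is more elementary and makes explicit what the paper leaves implicit. Even after establishing an isomorphism of coverings, the paper needs exactly your observation to pass from ``isomorphic coverings'' to ``identical classes'' rather than merely ``simultaneously vanishing'' ones.

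The paper's route buys the conceptual point that the topological obstruction is an invariant of the underlying covering, so it is insensitive to shifting the group law. That is the form in which the result is used immediately afterwards in Proposition~\ref{prop:twist_alg_obs}. Your consistency check via the lifting criterion recovers essentially that same viewpoint.
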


\begin{proof}
	If two relatively connected discrete extensions of $G$ are shifts of each others then they are isomorphic as coverings of $G$. Therefore, they yield identical topological obstructions.  
\end{proof}

Proposition~\ref{prop:twist_top_obs} implies that if the topological obstruction of a given extension vanishes then so are the topological obstructions of all its shifts. We note that the pullback:
	\begin{equation*}
		\eta^*_{G_Q}:H^2(\pi_0(G);\pi_1(G)/\theta) \longrightarrow H^2_Q(G_Q;(\pi_1(G)/\theta)_Q),
	\end{equation*}
is an isomorphism, cf. Remark~\ref{rem:cohomology_constant_groups}.

\begin{prop}\label{prop:twist_alg_obs}
	Let $E$ be a relatively connected discrete extension of $G$ whose topological obstruction $[f/E]_\textnormal{top}$ vanishes. If $F$ is a shift of $E$ by $\beta$ in $H^2(\pi_0(G);\pi_1(G)/\theta)$, then $[f/F]_\textnormal{top}$ vanishes and:
	\begin{equation*}
		[f/F]_\textnormal{alg}=[f/E]_\textnormal{alg}+f^*\eta^*_{G_Q}\beta. 
	\end{equation*}
\end{prop}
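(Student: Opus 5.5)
The plan is to compute the algebraic obstruction for $F$ by reusing a topological lift along $E$. The topological part is immediate from Proposition~\ref{prop:twist_top_obs}: $E$ and $F$ have the same phase and are isomorphic as coverings of $G$, so $[f/F]_\textnormal{top}=[f/E]_\textnormal{top}=0$. For the algebraic part, we work with the same underlying Lie group manifold $E$, equipped with the twisted law $m(x;y)=e^{\beta'(r_E(x);r_E(y))}xy$ of (\ref{eq:grp_law_twist}). Here $\beta'$ is a normalised cocycle representing $\beta$. Since $E$ is relatively connected, $r_E$ may be taken to be the composition $E\rightarrow\pi_0(E)\cong\pi_0(G)$. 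The projection $p_E$ is unchanged, so any lift $g:H\rightarrow E_Q$ of $f$ in $Q$-spaces along $(p_E)_Q$ is also such a lift for $F$.

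Next we compute the cocycle $\zeta_F$ attached to $g$ with the law $m$. By Remark~\ref{rem:modified_inverse}, the $m$-inverse of $g(h_1h_2)$ is $g(h_1h_2)^{-1}e^{-\beta'(a;a^{-1})}$, where $a=r_E(g(h_1h_2))$. Expanding the two $m$-products then gives
\begin{equation*}
	\zeta_F(h_1;h_2)=\zeta_E(h_1;h_2)+\beta'(a_1;a_2)+\beta'(a_1a_2;a_3^{-1})-\beta'(a_3;a_3^{-1}),
\end{equation*}
with $a_i=r_E(g(h_i))$ for $i\in\{1,2\}$ and $a_3=r_E(g(h_1h_2))$. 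All exponential factors are central elements of the kernel. They can be collected because the kernel $\pi_1(G)/\theta$ is abelian and $\theta$ lies in $E^\circ$, and conjugation by elements of $E$ acts on it through $\pi_0(G)$. This conjugation action has to be tracked, but it is where the $H$-module structure via $f$ enters, so it is harmless. Since $g$ lifts the morphism $f$, we have $a_3=a_1a_2$ in $\pi_0(G)$. The cocycle identity for $\beta'$ at $(a_1a_2;a_2^{-1}a_1^{-1})$ together with normalisation should kill the last two terms, up to a coboundary. This leaves
\begin{equation*}
	\zeta_F(h_1;h_2)=\zeta_E(h_1;h_2)+\beta'(a_1;a_2).
\end{equation*}

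It remains to identify the correction term. Because $a_i=\eta_{G_Q}(f(h_i))$ under $\hat{\pi}_0(G_Q)\cong\pi_0(G)$, the map $(h_1;h_2)\mapsto\beta'(a_1;a_2)$ is precisely the cochain $f^*\eta^*_{G_Q}\beta'$ in $C^2_Q(H;(\pi_1(G)/\theta)_Q)$. Passing to cohomology yields the formula $[f/F]_\textnormal{alg}=[f/E]_\textnormal{alg}+f^*\eta^*_{G_Q}\beta$. The independence of the chosen lift $g$ and of the chosen representative $\beta'$ then follows from Proposition~\ref{prop:zeta_cocycle} and the naturality of the constructions.

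The main obstacle is the bookkeeping in the second step. Conjugating kernel elements past $g(h_1)$ introduces the $\pi_0(G)$-action, so the terms must be matched exactly as $\beta'(a_1;a_2)$ rather than some twisted version. The inverse correction terms must also be shown to cancel exactly rather than merely up to a coboundary. If the terms do not cancel exactly, the first thing to try is to absorb them into $\df k$ for the $1$-cochain $k(h)=\beta'(a_h;a_h^{-1})$, which is an explicit function of $f(h)$.
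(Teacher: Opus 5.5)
Your proposal is correct and follows the paper's proof: twist $E$ by a normalised representative $\beta'$, keep the same topological lift $g$, expand the twisted products using Remark~\ref{rem:modified_inverse}, and read off the correction $\beta'(a_1;a_2)$ as the cochain $f^*\eta^*_{G_Q}\beta'$. The step you hedge on is simpler than you fear. Once $a_3=a_1a_2$, the two inverse-correction terms $\beta'(a_1a_2;a_3^{-1})$ and $\beta'(a_3;a_3^{-1})$ are literally identical, so they cancel exactly, with no cocycle identity, coboundary, or conjugation-action bookkeeping needed, because every exponential factor sits outside the kernel element $g(h_1)g(h_2)g(h_1h_2)^{-1}$ and all of them commute in the abelian kernel.
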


\begin{proof}
	The first part of the proposition is a consequence of Proposition~\ref{prop:twist_top_obs} so that it makes sense to compute the algebraic obstruction $[f/F]_\textnormal{alg}$. Remember that the algebraic obstruction $[f/E]_\textnormal{alg}$ is the class of the cocycle $\zeta$ defined in (\ref{eq:zeta}). We consider $\beta'$ a cocycle such that $\beta'(a;1)$ and $\beta'(1;a)$ both equal $1$ for all $a$ in $\pi_0(G)$, and that represents $\beta$. Therefore, the extension $F$ is isomorphic to $E'$ the extension with underlying manifold $E$ endowed with the modified group law (\ref{eq:grp_law_twist}) and projection $p_E$. We denote by $\zeta'$ the cocycle (\ref{eq:zeta}) representing $[f/E']_\textnormal{alg}$ and thus equivalently $[f/F]_\textnormal{alg}$. Let $(h_1;h_2)$ be a point in $H\times_Q H$, we denote by $a_1$ (resp. $a_2$) the image of $f(h_1)$ (resp. $f(h_2)$) in $\pi_0(G)_Q$. Since $p_E$ induces an isomorphism between the groups of connected components of $E$ and $G$, $a_1$ (resp. $a_2$) also represents the connected component of $E_G$ to which $g(h_1)$ (resp. $g(h_2)$) belongs. This also implies that if $k$ belongs to $(\pi_1(G)/\theta)_Q$ and $g$ belongs to $E_Q$ and that they both lie over the same point of $Q$, then $e_Q^kg$, $ge_Q^k$, and $g$ belong to the same connected component. Therefore, we have:
	\begin{align*}
		e_Q^{\zeta'(h_1;h_2)}&=\big(g(h_1)\underset{\beta}{\cdot}g(h_2)\big)\underset{\beta}{\cdot}g(h_1h_2)^{(-1)_\beta} \quad (\textnormal{ \footnotesize$\underset{\beta}{\cdot}$ is the modified group law and $(-)^{(-1)_\beta}$ the modified inverse}) \\
		&=\big(e_Q^{\beta'(a_1;a_2)}g(h_1)g(h_2)\big)\underset{\beta}{\cdot}\big(g(h_1h_2)^{-1}e_Q^{-\beta'(a_1a_2;(a_1a_2)^{-1})}\big) \quad (\textnormal{\footnotesize cf. (\ref{eq:grp_law_twist}) and Remark \ref{rem:modified_inverse}})\\
		&=e_Q^{\beta'(a_1a_2;(a_1a_2)^{-1})+\beta'(a_1;a_2)}g(h_1)g(h_2)g(h_1h_2)^{-1}e_Q^{-\beta'(a_1a_2;(a_1a_2)^{-1})}\\[5pt]
		&=e_Q^{\beta'(a_1a_2;(a_1a_2)^{-1})+\beta'(a_1;a_2)+\zeta(h_1;h_2)-\beta'(a_1a_2;(a_1a_2)^{-1})}\\[5pt]
		&=e_Q^{\beta'(a_1;a_2)+\zeta(h_1;h_2)}.
	\end{align*}
	And the proposition follows. 
\end{proof}

\subsection{Effective Extensions}
	Let us consider $(G;X)$ a regular polar space. Recall that, in Definition~\ref{dfn:funct_Br}, we introduced the subcategory $\E(G;X)$ of effective extensions of $G$ relatively to $X$. Moreover, we characterised its objects in Corollary~\ref{cor:effective_ubi}.
	
\begin{prop}\label{prop:full_subcat_effective}
	Let $E$ and $F$ be two discrete extensions of $G$ and $f:E\rightarrow F$ be a morphism of extensions. If $E$ is effective relatively to $X$, so is $F$. In particular, $\E(G;X)$ is a full subcategory of $\E(G)$.
\end{prop}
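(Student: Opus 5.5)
We rely on Corollary~\ref{cor:effective_ubi}: since $(G;X)$ is regular, fix a regular section $x$ with isotropy $u:H\rightarrow G_Q$. Then $E$ is effective exactly when $u$ admits a ubiquitous lift $\rev{u}_E:H\rightarrow E_Q$ along $(p_E)_Q$. The plan is to push such a lift forward along $f$.

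First, set $\rev{u}_F:=f_Q\circ\rev{u}_E:H\rightarrow F_Q$. It is a morphism of $Q$-groups, being a composite of two such. Because $p_E=p_F\circ f$, we get $(p_F)_Q\circ\rev{u}_F=(p_E)_Q\circ\rev{u}_E=u$, so $\rev{u}_F$ lifts $u$ along $(p_F)_Q$.

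Second, we check ubiquity through Proposition~\ref{prop:ubi_surj}; this is the only step with real content. Ubiquity of $\rev{u}_E$ means $(\rev{u}_E)_*:\hp(H)\rightarrow\pi_0(E)$ is onto. By naturality of $\hp$ and the identification $\hp(E_Q)\cong\pi_0(E)$, the map $(\rev{u}_F)_*$ equals $f_*\circ(\rev{u}_E)_*$. Morphisms in $\E(G)$ are surjective Lie group morphisms, so $f_*:\pi_0(E)\rightarrow\pi_0(F)$ is onto. Hence $(\rev{u}_F)_*$ is onto, and $\rev{u}_F$ is ubiquitous.

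As a sanity check, one can see the same conclusion topologically. The continuous surjection $E_Q\rightarrow F_Q$ is compatible with the $H$-actions, so it induces a continuous surjection $E_Q/H\rightarrow F_Q/H$. A continuous image of a connected space is connected.

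By Corollary~\ref{cor:effective_ubi}, $F$ is therefore effective.

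For fullness, take effective $E$ and $F$ and any morphism $f:E\rightarrow F$ in $\E(G)$. We must show that $f$ is induced by a morphism of coverings. Apply Theorem~\ref{thm:equiv_pol_cov}: the functor C sends $(E;\rev{u}_E)$ and $(F;\rev{u}_F)$ to polar coverings $E_Q/\rev{u}_E(H)$ and $F_Q/\rev{u}_F(H)$. The map $f_Q$ sends $\rev{u}_E(H)$-orbits into $\rev{u}_F(H)$-orbits, so it descends to a map $C(f)$ between these coverings. This map commutes with the projections to $X$ and is $f$-equivariant.

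Under $\Br$, the map $C(f)$ induces $f$, modulo the isomorphisms $\Br^*C\cong\id$ from the proof of the theorem. The step needing care is that an arbitrary effective structure on $F$ need not be the one with lift $\rev{u}_F=f_Q\circ\rev{u}_E$. We avoid this by choosing that lift: $F$ with $\rev{u}_F$ is an object of $\E(G;X;x)$, so $f$ becomes a morphism in $\E(G;X;x)$. The equivalence then yields a covering morphism inducing it. Since $\Br$ forgets the lift data, this shows that $f$ lies in the image category, so $\E(G;X)$ is full in $\E(G)$.
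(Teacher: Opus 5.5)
Your proof is correct and follows the paper's route. Like the paper, you push the ubiquitous lift forward as $f_Q\circ\rev{u}_E$, deduce ubiquity from the surjectivity of $f$, and conclude with Corollary~\ref{cor:effective_ubi}; your fullness argument, which uses that same lift together with the equivalence $\E(G;X;x)\simeq\PolC(G;X;x)$, simply makes explicit what the paper leaves implicit in its ``in particular''.
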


\begin{proof}
	Let $Q$ be the quotient space of $(G;X)$ and $u:H\rightarrow G_Q$ be the isotropy of a regular section. By Corollary~\ref{cor:effective_ubi}, we can find a ubiquitous lift $\rev{u}$ of $u$ along $(p_E)_Q$. Thus, $f_Q\circ \rev{u}$ is a lift of $u$ along $(p_F)_Q$. It is ubiquitous for $f$ is surjective by assumption. Therefore, by Corollary~\ref{cor:effective_ubi}, $F$ is effective relatively to $X$. 
\end{proof}

\begin{dfn}
	Let $(G;X)$ be a polar space and $X'$ be a polar covering of $X$. We will refer to the magnitude and phase of the discrete extension $G'$ of $G$ induced by the functor $\Br$ as the \emph{magnitude} and the \emph{phase} of the covering respectively. Moreover, we say that a magnitude $\mu$ (resp. a phase $\theta$) is \emph{effective relatively to} $X$ or (\emph{effective} if the context is clear) if it is the magnitude (resp. phase) of a polar covering of $X$. We denote by $\M(G;X)$ (resp. $\Theta(G;X)$) the subcategory of $\M(G)$ (resp. $\Theta(G)$) made of effective magnitudes (resp. phases) and morphisms arising from polar coverings.
\end{dfn}

\paragraph{Effective Magnitudes}
Let $(G;X)$ be a regular polar space with quotient space $Q$ and $H$ be the isotropy of a regular section $x$. Since $X$ is connected, Proposition~\ref{prop:ubi_surj} yields a distinguished magnitude of $G$. Namely, the extension $\hat{\pi}_0(H)\rightarrow \pi_0(G)$ induced by the inclusion of $H$ in $G_Q$.

\begin{prop}\label{prop:effective_magnitudes}
	A magnitude $\mu$ is effective relatively to $X$ if and only if it is dominated by $\hat{\pi}_0(H)$, i.e. it admits a morphism of magnitudes $\hat{\pi}_0(H)\rightarrow \mu$. Moreover, $\M(G;X)$ is a full subcategory of $\M(G)$.
\end{prop}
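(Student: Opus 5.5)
We prove both implications through Corollary~\ref{cor:effective_ubi} and Theorem~\ref{thm:equiv_pol_cov}, writing $u:H\rightarrow G_Q$ for the inclusion of the isotropy.

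\emph{Effective implies dominated.} Let $\mu$ be effective. It is the magnitude $\pi_0(E)\rightarrow\pi_0(G)$ of some polar covering, with associated extension $E$. By Corollary~\ref{cor:effective_ubi} there is a ubiquitous lift $\rev{u}:H\rightarrow E_Q$ of $u$ along $(p_E)_Q$. Apply $\hp$, which is a functor. Using $\hp(E_Q)\cong\pi_0(E)$ and $\hp(G_Q)\cong\pi_0(G)$, the lift gives a group morphism $\rev{u}_*:\hp(H)\rightarrow\pi_0(E)$. Its composite with $(p_E)_*$ is $u_*:\hp(H)\rightarrow\pi_0(G)$. By Proposition~\ref{prop:ubi_surj}, $\rev{u}_*$ is onto, because $\rev{u}$ is ubiquitous. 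Hence $\rev{u}_*$ is a morphism of magnitudes $\hp(H)\rightarrow\mu$ in $\M(G)=\E(\pi_0(G))$, and $\mu$ is dominated.

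\emph{Dominated implies effective.} Let $\varphi:\hp(H)\rightarrow\mu$ be a morphism of magnitudes. We must build a discrete extension $E$ of magnitude $\mu$ together with a ubiquitous lift of $u$. Take the fibre product
\begin{equation*}
E:=G\times_{\pi_0(G)}\mu .
\end{equation*}
This is a Lie group with the fibre-product topology. Its projection to $G$ is surjective with kernel $\ker(p_\mu)$, which is discrete, and $\pi_0(E)\cong\mu$. So $E$ is a discrete extension of magnitude $\mu$ and phase $\pi_1(G)$. No obstruction class $\chi$ arises: this is the terminal phase, and the construction is explicit.

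Define the lift by
\begin{equation*}
\rev{u}(h):=\big(u_G(h);\varphi(\eta_H(h));\|h\|\big).
\end{equation*}
It is continuous because $\eta_H$ is locally constant. It lands in the fibre product because $p_\mu\circ\varphi\circ\eta_H$ equals $[u_G]$ in $\pi_0(G)$; this holds by naturality of the unit $\eta$ and the identification $\hp(G_Q)\cong\pi_0(G)$. It is a $Q$-group morphism because $\eta_H$ and $u$ are. By construction $\rev{u}_*=\varphi$ after identifying $\hp(E_Q)\cong\mu$, so $\rev{u}_*$ is surjective, and Proposition~\ref{prop:ubi_surj} applied to the Lie group $E$ shows that $\rev{u}$ is ubiquitous. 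Corollary~\ref{cor:effective_ubi} then shows that $E$ is effective. Since $\mu$ is the magnitude of the polar covering $\textnormal{C}(E;\rev{u})$, $\mu$ is effective.

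\emph{Fullness.} Let $f:\mu_1\rightarrow\mu_2$ be a morphism of effective magnitudes. We must show it arises from a morphism of polar coverings. Take the dominating map $\varphi_1:\hp(H)\rightarrow\mu_1$ and set $\varphi_2:=f\circ\varphi_1$. The fibre-product construction above, applied to $\varphi_1$ and $\varphi_2$, gives pairs $(E_1;\rev{u}_1)$ and $(E_2;\rev{u}_2)$. The map $\id_G\times f:E_1\rightarrow E_2$ is a surjective morphism of extensions and satisfies $(\id_G\times f)_Q\circ\rev{u}_1=\rev{u}_2$, so it is a morphism in $\E(G;X;x)$. By Theorem~\ref{thm:equiv_pol_cov} it comes from a morphism of polar coverings, and its induced map on magnitudes is $f$. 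Hence every morphism between effective magnitudes is realised, and $\M(G;X)$ is a full subcategory of $\M(G)$.

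The step needing most care is the identification $\hp(E_Q)\cong\pi_0(E)$ together with the compatibility of $\rev{u}_*$ with $\varphi$ under it; this is what lets Proposition~\ref{prop:ubi_surj} be applied to $\rev{u}$.
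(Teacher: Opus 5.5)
Your proof is correct and follows essentially the same route as the paper. Necessity goes through Corollary~\ref{cor:effective_ubi} and Proposition~\ref{prop:ubi_surj}; sufficiency uses the fibre product $G\times_{\pi_0(G)}\mu$ with the lift $(u;\varphi\circ\eta_H)$; fullness applies the same construction to $f\circ\varphi_1$ and invokes Theorem~\ref{thm:equiv_pol_cov}. You also spell out two checks the paper leaves implicit: that the lift lands in the fibre product, and that $\rev{u}_*=\varphi$ under $\hp(E_Q)\cong\mu$.
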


\begin{proof}
	The necessity follows Corollary~\ref{cor:effective_ubi}. Indeed, if $p_\mu:\mu\rightarrow \pi_0(G)$ is an effective magnitude one can find an extension $E$ of $G$ with the given magnitude that admits an ubiquitous lift $u:H\rightarrow E_Q$ of the inclusion of $H$ in $G_Q$ along $p_Q$. Hence, we have the following commutative diagram:
	\begin{equation*}
		\begin{tikzcd}
			\pi_0(E) \ar[r,"(r_E)_*"] & \mu \ar[d,"p_\mu"]\\
			\hat{\pi}_0(H) \ar[u, "u_*"] \ar[r] & \pi_0(G) 
		\end{tikzcd}
	\end{equation*}
	Since $u_*$ is surjective, cf. Proposition~\ref{prop:ubi_surj}, $(r_E)_*\circ u_*$ is a morphism of magnitudes. The sufficiency follows the next construction. Assume $\mu$ is endowed with a morphism of magnitudes $u_*:\hat{\pi}_0(H)\rightarrow \mu$. We can form the following extension of $G$:
	\begin{equation*}
		p:\underset{\pi_0(G)}{G\times\mu} \longrightarrow G.
	\end{equation*} 
	Following Proposition~\ref{prop:functor_iso_ext}, the magnitude of this extension is $\mu$ and its phase is $\pi_1(G)$. The $Q$-group $(G\times_{\pi_0(G)}\mu)_Q$ is naturally isomorphic to the fibre product of $G_Q$ and $\mu_Q$ over $\pi_0(G)_Q$. The inclusion of $H$ in $G_Q$ and the morphism $(u_*)_Q\circ \eta_H : H\rightarrow \mu_Q$ induce the same morphism from $H$ to $\pi_0(G)_Q$. Therefore, they yield a lift of the inclusion of $H$ in $G_Q$ along $p$. This lift is ubiquitous for it induces $u_*:\hat{\pi}_0(H)\rightarrow \mu$ which is surjective by assumption. The same construction proves the remaining of the proposition. If $f_*:\mu\rightarrow \mu'$ is a morphism of magnitudes, then $\mu'$ is also dominated by $\hat{\pi}_0(H)$ by simply considering $f_*\circ u_*$. Moreover, $f_*$ gives rise to a morphism of extensions $f:(G\times_{\pi_0(G)}\mu) \rightarrow (G\times_{\pi_0(G)}\mu')$ by universal property of the fibre product. It sends the ubiquitous lift induced by $u_*$ on the ubiquitous lift yielded by $f_*\circ u_*$. Hence, this morphism is induced by a morphism of polar coverings, cf. Theorem~\ref{thm:equiv_pol_cov}. Since, it induces $f_*$ between the magnitudes, we find that $\M(G;X)$ is a full subcategory of $\M(G)$.
\end{proof}

\begin{cor}\label{cor:exact_seq_pi1}
	Let $(G;X)$ be a regular polar space with quotient space $Q$, and $H$ be the isotropy of a regular section. The fundamental group of $X$ is a central extension of the kernel of the morphism $\hat{\pi}_0(H)\rightarrow \pi_0(G)$ by a quotient of $\pi_1(G)$ by a $\pi_0(G)$-submodule $\theta_\textnormal{uni}$. In particular, we have an exact sequence:
	\begin{equation*}
		0 \longrightarrow \theta_\textnormal{uni} \longrightarrow \pi_1(G) \longrightarrow \pi_1(X) \longrightarrow \hat{\pi}_0(H)\rightarrow \pi_0(G) \longrightarrow 1.
	\end{equation*}
\end{cor}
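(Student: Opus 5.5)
We apply the machinery of \S2--\S3 to the universal covering. Let $\tilde X\to X$ be the universal covering. By Proposition~\ref{prop:univ=polar} it is polar, so the functor $\Br$ gives a discrete extension $p_{\tilde G}:\tilde G\to G$. Since $\tilde X$ is simply connected and Galois over $X$, Proposition~\ref{prop:exact_seq_lift_group} identifies $\ker(p_{\tilde G})$ with $\Aut_X(\tilde X)\cong\pi_1(X)$. We define $\theta_\textnormal{uni}$ to be the phase of $\tilde G$, namely the image of $\pi_1(\tilde G)\hookrightarrow\pi_1(G)$. It is a $\pi_0(G)$-submodule by definition of phases.

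Substituting these identifications into the exact sequence (\ref{seq:fib_lES_disc_ext}) for $E=\tilde G$ yields
\begin{equation*}
	0\rightarrow\theta_\textnormal{uni}\rightarrow\pi_1(G)\rightarrow\pi_1(X)\rightarrow\pi_0(\tilde G)\rightarrow\pi_0(G)\rightarrow1.
\end{equation*}
It remains to identify the magnitude $\pi_0(\tilde G)\to\pi_0(G)$ with $\hat\pi_0(H)\to\pi_0(G)$. Choose a lift $\tilde x$ of $x$, so that $(\tilde X;\tilde x)$ is an object of $\PolC(G;X;x)$. Theorem~\ref{thm:equiv_pol_cov} then provides a ubiquitous lift $\tilde u:H\to\tilde G_Q$ of $u$. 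By Proposition~\ref{prop:ubi_surj}, $\tilde u_*:\hat\pi_0(H)\to\pi_0(\tilde G)$ is onto, and it is a morphism of magnitudes.

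For injectivity, use Proposition~\ref{prop:effective_magnitudes}: the magnitude $\hat\pi_0(H)$ is effective. Concretely, $E_0:=G\times_{\pi_0(G)}\hat\pi_0(H)$ carries a ubiquitous lift of $u$, so through the functor $\textnormal{C}$ it gives a polar covering $X_0$ of $X$ with magnitude $\hat\pi_0(H)$. Because $\tilde X$ is universal, there is a covering morphism $\tilde X\to X_0$ compatible with the chosen lifts of $x$. The equivalence of Theorem~\ref{thm:equiv_pol_cov} turns this into a morphism of extensions $\tilde G\to E_0$ that carries $\tilde u$ to the lift $u_0$. On $\pi_0$ this produces a map $\pi_0(\tilde G)\to\hat\pi_0(H)$. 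Its composite with $\tilde u_*$ is $(u_0)_*$, which is the identity of $\hat\pi_0(H)$ by construction. Hence $\tilde u_*$ is injective, and therefore an isomorphism of magnitudes. Replacing $\pi_0(\tilde G)$ by $\hat\pi_0(H)$ in the display gives the claimed five-term exact sequence.

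The central-extension statement then follows from Proposition~\ref{prop:kernel_discrete_ext} applied with $\mu=\hat\pi_0(H)$ and $\theta=\theta_\textnormal{uni}$: $\ker(p_{\tilde G})\cong\pi_1(X)$ is a central extension of $\kappa=\ker(\hat\pi_0(H)\to\pi_0(G))$ by $\pi_1(G)/\theta_\textnormal{uni}$. The main obstacle is the injectivity step. One must check carefully that the morphism of extensions obtained from $\tilde X\to X_0$ really intertwines the chosen lifts $\tilde u$ and $u_0$, so that it is a morphism in $\E(G;X;x)$ and not merely in $\E(G)$. This is where the fixed lifts $\tilde x$ and $x_0$ of $x$ have to be tracked through the equivalence.
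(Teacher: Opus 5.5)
Your proof is correct and follows the paper's route. The paper likewise identifies the magnitude of the universal covering with $\hat\pi_0(H)\rightarrow\pi_0(G)$ via Proposition~\ref{prop:effective_magnitudes}, calling it an initial element of $\M(G;X)$, and then reads off the exact sequence and the central extension from Propositions~\ref{prop:exact_seq_lift_group} and~\ref{prop:kernel_discrete_ext}. Your check that $\pi_0(\tilde G)\rightarrow\hat\pi_0(H)$ composed with $\tilde u_*$ is $(u_0)_*=\id_{\hat\pi_0(H)}$ makes that ``initial element'' step explicit, which matters because surjections in both directions would not by themselves force an isomorphism; and the compatibility you flag as the main obstacle is immediate: choose $\tilde X\rightarrow X_0$ sending $\tilde x(q_0)$ to $x_0(q_0)$, and uniqueness of lifts of $x$ over the connected space $Q$ forces it to carry $\tilde x$ to $x_0$.
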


\begin{proof}
	The magnitude of the universal covering of $X$ is necessarily an initial element of $\M(G;X)$. By proposition~\ref{prop:effective_magnitudes} it has to be $u_*:\hp(H)\rightarrow \pi_0(G)$ where $u$ denotes the inclusion of $H$ in $G_Q$. The remaining of the corollary is then a simple consequence of Propositions~\ref{prop:exact_seq_lift_group}~and~\ref{prop:kernel_discrete_ext}. 
\end{proof}

\begin{ex}
	Let $X$ be a real toric variety under the action of $\G{\R}^n$. We assume its real locus $X(\R)$ is connected. Hence, $(\{\pm1\}^n;X(\R))$ is a polar space, cf. Example~\ref{ex:tor_var1}. Since the acting group is discrete Corollary~\ref{cor:exact_seq_pi1} breaks down to the following exact sequence:
	\begin{equation*}
		1 \longrightarrow \pi_1(X) \longrightarrow \underset{e\in \Cell\, X_+}{\colim} \{\pm1\}^n_e \rightarrow \{\pm1\}^n \longrightarrow 1.
	\end{equation*}
	Where we abuse our conventions and consider the stratification of $X_+$ as a cellular structure (this is an honest cellular structure when $X$ is complete), and denote by $\{\pm1\}^n_e$ the isotropy group of any point of the strata $e$. This is precisely the statement of \cite[Corollary~4.5]{davis_convex_1991}. We can also recall here that the colimit is a right-angled Coxeter group and that it is, in this case, the extension of the universal covering of $X(\R)$. Let us illustrate this with two concrete examples: \begin{enumerate}
	\item Let us consider the real projective line. We depicted its polar coordinate in Figure~\ref{fig:pol_coor_line}. Recall that its quotient is the line segment $[0;\infty]$. In this case, the colimit is the free product of $\{\pm1\}$ with itself, i.e. the group of presentation $\langle a;b\;|\;a^2;b^2\rangle$. In the polar coordinates of the universal covering of $\Proj^1(\R)$, the isotropy of $0$ is $\langle a\rangle$ while the isotropy of $\infty$ is $\langle b\rangle$. Figure~\ref{fig:real_torline} depicts the gluings induced by the polar coordinates. The fundamental group is spanned by $ba$. In the picture, the translation by $ba$ corresponds to a jump of two steps;
	\item If we consider now the case of the real projective plane, an elementary computation shows that the colimit is $\{\pm1\}^3$ and that a basis of $\F_2$ vector space is provided by the images of the generators of the isotropy of the three edges of the triangle, cf. Figure~\ref{fig:isoP2}. Figure~\ref{fig:real_torplane} depicts the gluings performed by the polar coordinates of the universal covering of $\Proj^2(\R)$. One can recover the lifts of the isotropy groups of the cells of the triangle from the picture.
	\end{enumerate}
\end{ex}

\begin{figure}[H]
	\centering
	\begin{subfigure}[t]{.45\textwidth}
		\centering
		\begin{tikzpicture}
			\draw[thick] (0,5) node[left]{$0$}-- (2,5) node[right]{$\infty$};
			\draw[thick] (0,4) node[left]{$0$}-- (2,4) node[right]{$\infty$};
			\draw[thick] (0,3) node[left]{$0$}-- (2,3) node[right]{$\infty$};
			\draw[thick] (0,2) node[left]{$0$}-- (2,2) node[right]{$\infty$};
			\draw[thick] (0,1) node[left]{$0$}-- (2,1) node[right]{$\infty$};
			\draw[<-,dashed] (2,5.1) -- (2,5.5);
			\draw[<->,dashed] (0,4.9) -- (0,4.1);
			\draw[<->,dashed] (2,3.9) -- (2,3.1);
			\draw[<->,dashed] (0,2.9) -- (0,2.1);
			\draw[<->,dashed] (2,1.9) -- (2,1.1);
			\draw[<-,dashed] (0,.9) -- (0,.5);
			\draw (-1,1) node[left]{$1$};
			\draw (-1,2) node[left]{$b$};
			\draw (-1,3) node[left]{$ba$};
			\draw (-1,4) node[left]{$bab$};
			\draw (-1,5) node[left]{$baba$};
			\fill (0,5) circle (.05) (2,5) circle (.05);
			\fill (0,4) circle (.05) (2,4) circle (.05);
			\fill (0,3) circle (.05) (2,3) circle (.05);
			\fill (0,2) circle (.05) (2,2) circle (.05);
			\fill (0,1) circle (.05) (2,1) circle (.05);
		\end{tikzpicture}
		\caption{The Polar Coordinates of the Universal Covering of the Real Projective Line.}
		\label{fig:real_torline}
	\end{subfigure}
	\hfill
	\begin{subfigure}[t]{.5\textwidth}
		\centering
		\begin{tikzpicture}
			\draw[thick] (.5,0,.5) -- (2.5,0,.5) -- (.5,0,2.5) -- cycle;
			\draw[thick] (.5,0,-.5) -- (2.5,0,-.5) -- (.5,0,-2.5) -- cycle;
			\draw[thick] (-.5,0,-.5) -- (-2.5,0,-.5) -- (-.5,0,-2.5) -- cycle;
			\draw[thick] (-.5,0,.5) -- (-2.5,0,.5) -- (-.5,0,2.5) -- cycle;
			\draw[thick] (.5,3,.5) -- (2.5,3,.5) -- (.5,3,2.5) -- cycle;
			\draw[thick] (.5,3,-.5) -- (2.5,3,-.5) -- (.5,3,-2.5) -- cycle;
			\draw[thick] (-.5,3,-.5) -- (-2.5,3,-.5) -- (-.5,3,-2.5) -- cycle;
			\draw[thick] (-.5,3,.5) -- (-2.5,3,.5) -- (-.5,3,2.5) -- cycle;
			\draw[<->, dashed] (1.5,0,.4) -- (1.5,0,-.4);
			\draw[<->, dashed] (1.5,3,.4) -- (1.5,3,-.4);
			\draw[<->, dashed] (-1.5,0,.4) -- (-1.5,0,-.4);
			\draw[<->, dashed] (-1.5,3,.4) -- (-1.5,3,-.4);
			\draw[<->, dashed] (.4,0,1.5) -- (-.4,0,1.5);
			\draw[<->, dashed] (.4,3,1.5) -- (-.4,3,1.5);
			\draw[<->, dashed] (.4,0,-1.5) -- (-.4,0,-1.5);
			\draw[<->, dashed] (.4,3,-1.5) -- (-.4,3,-1.5);
			\draw[<->, dashed] (1.5,2.9,1.5) -- (1.5,0.1,1.5);
			\draw[<->, dashed] (-1.5,2.9,1.5) -- (-1.5,.1,1.5);
			\draw[<->, dashed] (-1.5,2.9,-1.5) -- (-1.5,.1,-1.5);
			\draw[<->, dashed] (1.5,2.9,-1.5) -- (1.5,.1,-1.5);
			\draw (2,0,2) node[right]{$(-1;1;1)$};
			\draw (2,3,2) node[right]{$(-1;1;-1)$};
			\draw (-2,0,2) node{$(1;1;1)$};
			\draw (-2.1,3,2) node{$(1;1;-1)$};
			\draw (2.3,0,-2) node{$(-1;-1;1)$};
			\draw (2.3,3,-2) node{$(-1;-1;-1)$};
			\draw (-2,0,-2) node[left]{$(1;-1;1)$};
			\draw (-2,3,-2) node[left]{$(1;-1;-1)$};
		\end{tikzpicture}
		\caption{The Polar Coordinates of the Universal Covering of the Real Projective Plane.}
		\label{fig:real_torplane}
	\end{subfigure}
	\caption{The Polar Coordinates of the Universal Coverings of Two Real Toric Varieties.}
	\label{fig:real_tor}
\end{figure}

\paragraph{Effective Phases} In this last paragraph, we use our lift obstruction classes to characterise effective phases. As in the previous paragraph $(G;X)$ denotes a regular polar space with quotient space $Q$ and $u:H\rightarrow G_Q$ denotes the inclusion of the isotropy of a regular section of the quotient map. 

\begin{prop}
	Let $\theta$ and $\theta'$ be two phases of $G$. If $\theta'$ contains $\theta$ and $\theta$ is effective relatively to $X$, then $\theta'$ is also effective. In particular, $\Theta(G;X)$ is the full subcategory of $\Theta(G)$ made of phases containing the phase of the universal covering of $X$. 
\end{prop}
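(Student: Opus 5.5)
Given an effective phase $\theta\subset\theta'$, the plan is to produce an effective extension of phase $\theta'$ by quotienting an effective extension of phase $\theta$. By definition there is a polar covering $\rev{X}$ of $X$ whose extension $E:=\Br(\rev{X})$ has phase $\theta$. By Corollary~\ref{cor:effective_ubi} (or directly from Theorem~\ref{thm:equiv_pol_cov}), the inclusion $u:H\rightarrow G_Q$ then admits a ubiquitous lift $\rev{u}:H\rightarrow E_Q$ along $(p_E)_Q$. We use the functor of Proposition~\ref{prop:functor_iso_ext}, taking the identity on the magnitude and the inclusion $\theta\subset\theta'$ on the phase. This gives the Lie group
\begin{equation*}
	E':=\bigslant{E}{e^{\theta'/\theta}},
\end{equation*}
where $\theta'/\theta$ is embedded in $\ker(p_E)$ through the exponential of Definition~\ref{def:exp}.

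We must first check that $e^{\theta'/\theta}$ is a discrete normal subgroup of $E$. It is discrete because it lies in $\ker(p_E)$. It is normal because $\theta'$ is a $\pi_0(G)$-submodule, so conjugation by $E$, which acts on $e^{\pi_1(G)/\theta}$ through $\pi_0(G)$, preserves it. Hence $E'$ is a discrete extension of $G$. Its phase is $\theta'$: the quotient map $E\rightarrow E'$ is a covering, and reading off the long exact sequence (\ref{seq:fib_lES_disc_ext}) for $E'$ shows that $\pi_1(E')$ maps onto $\theta'$ in $\pi_1(G)$. Since the quotient map $q:E\rightarrow E'$ is a surjective morphism of extensions, Proposition~\ref{prop:full_subcat_effective} shows $E'$ is effective. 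Concretely, $q_Q\circ\rev{u}$ is a ubiquitous lift of $u$ along $(p_{E'})_Q$, and $C(E';q_Q\circ\rev{u})$ is a polar covering of phase $\theta'$. This proves the first assertion.

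For the ``in particular'', let $\theta_\textnormal{uni}$ be the phase of the universal covering, which is polar by Proposition~\ref{prop:univ=polar}. The first assertion then shows that every phase containing $\theta_\textnormal{uni}$ is effective. Conversely, let $\theta$ be effective, realized by a polar covering $\rev{X}$. The universal covering $\tilde{X}$ factors through $\rev{X}$, and this is a morphism of coverings. Applying $\Br$ gives a morphism of extensions $\tilde{G}\rightarrow\rev{G}$ over $G$, and on $\pi_1$ it yields $\theta_\textnormal{uni}\subset\theta$. Fullness also comes out of this argument. The phases form a poset, so any inclusion $\theta_1\subset\theta_2$ between effective phases must be shown to arise from some morphism of polar coverings. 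For this we apply the same quotient construction to a polar covering realizing $\theta_1$. The quotient map $E_1\rightarrow E_1/e^{\theta_2/\theta_1}$ respects the chosen lifts, so by Theorem~\ref{thm:equiv_pol_cov} it is induced by a morphism of polar coverings inducing $\theta_1\subset\theta_2$.

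The main obstacle I expect is verifying the phase of the quotient: that $\pi_1(E')\rightarrow\pi_1(G)$ has image exactly $\theta'$, together with the normality of $e^{\theta'/\theta}$. Both need care with the $\pi_0$-action on the exponential, namely the identity $g e^k g^{-1}=e^{[g]\cdot k}$, which is the part I would check first.
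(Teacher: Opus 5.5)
Your proof is correct and follows the paper's argument. Like the paper, you quotient an effective extension $E$ of phase $\theta$ by $e^{\theta'/\theta}\subset\ker(p_E)$ and invoke Proposition~\ref{prop:full_subcat_effective}. Your checks of normality, of the phase of the quotient, and of the ``in particular'' clause (the universal covering factoring through every polar covering, and Theorem~\ref{thm:equiv_pol_cov} for fullness) make explicit what the paper compresses into ``Hence, the proposition follows.''
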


\begin{proof}
	Let $E$ be a discrete extension of $G$ of phase $\theta$. If $\theta'$ is a phase that contains $\theta'$, then $\theta'/\theta$ is contained in the kernel of $p_E$ and $E/(\theta'/\theta)$ endowed with the morphism induced by $p_E$ is a discrete extension of $G$ of phase $\theta'$. Furthermore, the quotient morphism $E\rightarrow E/(\theta'/\theta)$ is a morphism of discrete extensions so, by Proposition~\ref{prop:full_subcat_effective}, if $E$ is effective so is $E/(\theta'/\theta)$. Hence, the proposition follows. 
\end{proof}

\begin{prop}\label{prop:topological_minoration}
	If a phase $\theta$ is effective relatively to $X$, it must contain $\theta_\textnormal{top}$, the $\pi_0(G)$-submodule spanned by the image of the morphism:
	\begin{equation*}
		H_1(H;\Z)\overset{u_*}{\longrightarrow} H_1(G;\Z) \overset{\delta_G}{\longrightarrow}\pi_1(G). 
	\end{equation*}
\end{prop}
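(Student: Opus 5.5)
Let $\theta$ be an effective phase. We will produce an extension of phase $\theta$ admitting a lift of $u$, and then read off the vanishing of the topological obstruction. The problem is that the topological obstruction was only defined for relatively connected extensions. So the first step is to reduce to that case.

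\textbf{Step 1 (reduction to a relatively connected extension).} Since $\theta$ is effective, Corollary~\ref{cor:effective_ubi} provides a discrete extension $E$ of phase $\theta$ together with a (ubiquitous) lift $\rev{u}:H\rightarrow E_Q$ of $u$ along $(p_E)_Q$. Let $\mu$ be its magnitude and $r:\mu\rightarrow\pi_0(G)$ the projection. We push $E$ forward along $r$ using the functor of Proposition~\ref{prop:functor_iso_ext}, or more concretely by quotienting $E$ by the preimage of $\ker(r)$ in $\ker(p_E)$ modulo the connected part. Since $\theta$ lies in the identity component, this should give an extension $E'$ of magnitude $\pi_0(G)$ and the same phase $\theta$, with a morphism of extensions $E\rightarrow E'$. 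This is the step I expect to be the main obstacle: one must check that the phase is unchanged, i.e. that $\pi_1(E')$ still maps onto $\theta$. The key input is that $\ker(p_E)\cap E^\circ$ is exactly the image of $\pi_1(G)/\theta$ from sequence~(\ref{seq:fib_lES_disc_ext}), and that we only kill elements outside $E^\circ$, so $E^\circ$ maps isomorphically onto $E'^\circ$.

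If that fails, the fallback is to work directly with $E$. The argument in the proof of the topological lifting criterion never used relative connectedness: the identity $g^{-1}_*(p_E)_*\pi_1(E;e)=\theta$ holds for any discrete extension.

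\textbf{Step 2 (the lift persists).} Composing $\rev{u}$ with $E_Q\rightarrow E'_Q$ gives a lift of $u$ along $(p_{E'})_Q$, which is in particular a lift in the category of $Q$-spaces.

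\textbf{Step 3 (vanishing of the topological obstruction).} By the proposition characterising lifts of $Q$-spaces, the existence of this lift forces $[u/E']_\textnormal{top}=0$. In other words, the composite
\begin{equation*}
H_1(H;\Z)\xrightarrow{u_*}H_1(G;\Z)\xrightarrow{\delta_G}\pi_1(G)\rightarrow\pi_1(G)/\theta
\end{equation*}
is zero. Hence the image of $\delta_G\circ u_*$ is contained in $\theta$.

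\textbf{Step 4 (conclusion).} Since $\theta$ is a $\pi_0(G)$-submodule by definition of phases, it contains the $\pi_0(G)$-submodule spanned by this image, which is $\theta_\textnormal{top}$.
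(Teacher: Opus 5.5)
\textbf{Step 1 is a genuine gap, and the obstacle is not the one you anticipate.} Preserving the phase is automatic. If $N\subset\ker(p_E)$ is a normal subgroup of $E$ with $N\cap E^\circ=1$, then $E^\circ$ maps isomorphically onto $(E/N)^\circ$.

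What fails is the existence of $N$. To reach magnitude $\pi_0(G)$, $N$ must map onto $\kappa=\ker(\mu\to\pi_0(G))$. That means $N$ is a normal complement to $\pi_1(G)/\theta$ in the central extension $0\to\pi_1(G)/\theta\to\ker(p_E)\to\kappa\to1$ of Proposition~\ref{prop:kernel_discrete_ext}. Your description does not produce one: the preimage of $\ker(r)$ in $\ker(p_E)$ is all of $\ker(p_E)$, and ``modulo the connected part'' does not define a subgroup of $E$.

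Such a complement need not exist. A relatively connected extension of phase $\theta$ exists if and only if $\chi(\pi_0(G);\theta)=0$ (Theorem~\ref{thm:taylor}). For the non-split extension $G$ of $\Z/2$ by $\Sph^1$ in Example~\ref{ex:non-split_ext_S1_Z/2}, one has $\chi(\Z/2;0)\neq0$.

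Effectiveness does not rescue this. Take $H=(\langle(1;1)\rangle\times\{0\})\cup(\{1\}\times[0;1])$. The regular polar space $G_{[0;1]}/H$ retracts onto $G/\langle(1;1)\rangle\cong\Sph^1$. The orbit map of $G^\circ$ there is the double cover $\Sph^1\to\Sph^1/\{\pm1\}$, so $\pi_1(G)\to\pi_1(X)$ is injective and the universal covering has the effective phase $0$. Yet no relatively connected extension of phase $0$ exists.

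Proposition~\ref{prop:functor_iso_ext} cannot be invoked either. It moves toward larger magnitudes ($E\mapsto E\times_\mu\mu'$ for $\mu'\to\mu$), never down to $\pi_0(G)$.

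\textbf{Your fallback, however, is correct, and it gives a complete proof once Step 1 is dropped.} Write $u_G=\pr_G\circ u$ and $\rev{u}_E=\pr_E\circ\rev{u}$. Take a loop $\gamma$ in $H$ at $h$, and put $e=\rev{u}_E(h)$ and $g=p_E(e)=u_G(h)$. Then $e^{-1}(\rev{u}_E\circ\gamma)$ is a loop at $1$ in $E$ lifting $g^{-1}(u_G\circ\gamma)$. Hence $\delta_G$ sends the Hurewicz class of $u_G\circ\gamma$ to $g^{-1}_*[u_G\circ\gamma]\in(p_E)_*\pi_1(E)=\theta$. Such classes generate $H_1(H;\Z)$, so the image of $\delta_G\circ u_*$ lies in $\theta$, and your Step 4 finishes. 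Only the easy direction (a lift exists $\Rightarrow$ the obstruction vanishes) is used, and relative connectedness never enters.

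\textbf{Comparison with the paper.} The paper keeps the obstruction formalism confined to relatively connected extensions and changes the \emph{base} rather than shrinking the extension. It pulls back along $\rev{u}_*$ and $u_*$ to $E\times_{\pi_0(E)}\hp(H)\to G\times_{\pi_0(G)}\hp(H)$. This is always a relatively connected extension of the same phase, and $(\rev{u};\eta_H)$ lifts $(u;\eta_H)$ along it. The paper then transports the vanishing back to $\delta_G\circ u_*$ by naturality of $\delta$ along $\pr_G$, which is an isomorphism on $\pi_1$.

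Your route is shorter and shows that the topological obstruction makes sense for arbitrary discrete extensions. The paper's route applies its lifting proposition verbatim. It also sets up the device $(u;\eta_H)$ that Proposition~\ref{prop:effective_phases} needs, since the algebraic obstruction and the shifting argument do rely on relative connectedness.
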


\begin{proof}
	Let $E$ be an effective discrete extension of $G$ of phase $\theta$ and $\rev{u}:H\rightarrow E_Q$ be a ubiquitous lift of $u$ along $(p_E)_Q$. We can use $u_*:\hp(H)\rightarrow \pi_0(G)$ and $\rev{u}_*:\hp(H)\rightarrow \pi_0(E)$ to form the fibre products $G\times_{\pi_0(G)}\hp(H)$ and $E\times_{\pi_0(E)}\hp(H)$ so that we have the following commutative diagram:
	\begin{equation*}
		\begin{tikzcd}
			&& E_Q\times_{\pi_0(G)_Q}\hp(H)_Q \ar[r,equal]& \left( E\times_{\pi_0(E)}\hp(H)\right)_Q   \ar[d,"(p_E\times\id_{\hp(H)})_Q"] \\
			H \ar[rr,"(u;\eta_H)" swap] \ar[rru,"(\rev{u};\eta_H)"] && G_Q\times_{\pi_0(G)_Q}\hp(H)_Q \ar[r,equal] & \left( G\times_{\pi_0(G)}\hp(H)\right)_Q
		\end{tikzcd}
	\end{equation*}
	Since the vertical arrow is a relatively connected discrete extension, the topological obstruction to the lift of $(u;\eta_H)$ along $E\times_{\pi_0(G)}\hp(H)$ vanishes and the morphism:
	\begin{equation*}\begin{tikzcd}
		H_1(H;\Z)\ar[rr,"(u;\eta_H)_*"] && H_1\big(G\times_{\pi_0(G)}\hp(H);\Z\big) \ar[rr,"\delta_{G\times_{\pi_0(G)}\hp(H)}"] &&\pi_1\big(G\times_{\pi_0(G)}\hp(H)\big),
	\end{tikzcd}\end{equation*}
	takes its values in $\theta$. But we have the following commutative diagram:
	\begin{equation*}\begin{tikzcd}
		&& H_1\big(G\times_{\pi_0(G)}\hp(H);\Z\big) \ar[rr,"\delta_{G\times_{\pi_0(G)}\hp(H)}"] \ar[dd,"(\pr_G)_*"] &&\pi_1\big(G\times_{\pi_0(G)}\hp(H)\big) \ar[dd,"(\pr_G)_*", "\cong" swap]\\
		H_1(H;\Z)\ar[rru,"(u;\eta_H)_*"] \ar[rrd,"u_*" swap] \\
		&& H_1(G;\Z) \ar[rr,"\delta_{G}" swap] &&\pi_1(G)
	\end{tikzcd}\end{equation*}
	Thus $\theta$ must contain $\theta_\textnormal{top}$ for it is a $\pi_0(G)$-submodule of $\pi_1(G)$.
\end{proof}

\begin{dfn}
	Let us denote by $\Theta^*(G;X)$ the full subcategory of phases $\theta$ containing $\theta_\textnormal{top}$ and whose obstruction $\chi(\hat{\pi}_0(H);\theta)$ vanishes. It contains $\Theta(G;X)$ by direct application of Theorem~\ref{thm:taylor} and Proposition~\ref{prop:topological_minoration}.  
\end{dfn}

\begin{prop}\label{prop:effective_phases}
	For all $\theta$ in $\Theta^*(G;X)$, there is a class:
	\begin{equation*}
		\xi(\theta)\in\Hom_{\Z[\hp(H)]}(\Delta_2(H);\pi_1(G)/\theta),
	\end{equation*}
	that vanishes if and only if $\theta$ is effective relatively to $X$. Moreover if $\theta_1\leq \theta_2$ are two phases in $\Theta^*(G;X)$, then $\xi(\theta_2)$ is the reduction of $\xi(\theta_1)$ modulo $\theta_2/\theta_1$.
\end{prop}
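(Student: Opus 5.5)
Fix $\theta\in\Theta^*(G;X)$ and put $\mu:=\hp(H)$, with magnitude morphism $u_*:\hp(H)\rightarrow\pi_0(G)$. Since $\chi(\mu;\theta)$ vanishes, Theorem~\ref{thm:taylor} provides a discrete extension $E$ of $G$ of magnitude $\mu$ and phase $\theta$. I would reduce to a relatively connected situation. Set $G_\mu:=G\times_{\pi_0(G)}\mu$; then $E\rightarrow G_\mu$ is a relatively connected discrete extension with phase $\theta$, using $\pi_1(G_\mu)\cong\pi_1(G)$ as in the proof of Proposition~\ref{prop:topological_minoration}. Consider the morphism $\tilde u:=(u;\eta_H):H\rightarrow (G_\mu)_Q$. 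Since $\theta\supset\theta_\textnormal{top}$, the commutative square of Proposition~\ref{prop:topological_minoration} shows that $[\tilde u/E]_\textnormal{top}$ vanishes. Hence the algebraic obstruction $[\tilde u/E]_\textnormal{alg}\in H^2_Q(H;(\pi_1(G)/\theta)_Q)$ is defined. I then define $\xi(\theta)$ as its image under the surjection $H^2_Q(H;M_Q)\rightarrow\Hom_{\Z[\hp(H)]}(\Delta_2(H);M)$ of Corollary~\ref{cor:exact_seq_h2}, with $M=\pi_1(G)/\theta$.

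Well-definedness comes first. Any other choice of $E$ in $[\E(G)]_{\mu,\theta}$ is a shift by some $\beta\in H^2(\mu;\pi_1(G)/\theta)$. This also shifts $E\rightarrow G_\mu$, and by Proposition~\ref{prop:twist_alg_obs} the obstruction changes by $\tilde u^*\eta^*\beta$. Since $\tilde u$ induces the identity on $\hp$, this term is $\eta_H^*\beta$, i.e.\ an element of the image of $H^2(\hp(H);M)$. That image is exactly the kernel in Corollary~\ref{cor:exact_seq_h2}, so $\xi(\theta)$ does not depend on $E$. The same computation shows that the classes $[\tilde u/E']_\textnormal{alg}$, as $E'$ runs over all shifts, fill the whole fibre of the surjection over $\xi(\theta)$. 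Therefore $\xi(\theta)=0$ if and only if some $E'$ of magnitude $\mu$ and phase $\theta$ has vanishing algebraic obstruction.

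Next comes the equivalence with effectivity. If some $E'$ has vanishing obstruction, Proposition~\ref{prop:lifts_vanish_obs} lifts $\tilde u$ to a $Q$-group morphism $H\rightarrow E'_Q$. Composing gives a lift of $u$ along $(p_{E'})_Q$ inducing an isomorphism $\hp(H)\rightarrow\pi_0(E')$, which is ubiquitous by the obvious analogue of Proposition~\ref{prop:ubi_surj}. By Corollary~\ref{cor:effective_ubi}, $\theta$ is then effective. Conversely, suppose $\theta$ is effective, say via $F$ with ubiquitous lift $\rev u$. I would like $F$ to have magnitude $\hp(H)$, and this is the main obstacle: the magnitude of $F$ is only dominated by $\hp(H)$. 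To handle it, I would use the universal covering. Its magnitude is $\hp(H)$ (Corollary~\ref{cor:exact_seq_pi1}) and its phase $\theta_\textnormal{uni}$ is contained in $\theta$. Quotienting its extension by $\theta/\theta_\textnormal{uni}$ gives an effective extension $E'$ of magnitude $\hp(H)$ and phase $\theta$, with ubiquitous lift $\rev u$. Then $(\rev u;\eta_H)$ lifts $\tilde u$ along $E'\rightarrow G_\mu$ as a $Q$-group morphism. Hence $[\tilde u/E']_\textnormal{alg}=0$ and $\xi(\theta)=0$.

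Finally, naturality. For $\theta_1\le\theta_2$ in $\Theta^*(G;X)$, take $E_1$ of phase $\theta_1$ and $E_2:=E_1/(\theta_2/\theta_1)$, which are related by a morphism of extensions over $G_\mu$. Proposition~\ref{prop:change_of_phase_obs} shows that $[\tilde u/E_2]_\textnormal{alg}$ is the reduction of $[\tilde u/E_1]_\textnormal{alg}$. The naturality in $M$ of the sequence of Corollary~\ref{cor:exact_seq_h2} then gives that $\xi(\theta_2)$ is the reduction of $\xi(\theta_1)$ modulo $\theta_2/\theta_1$.
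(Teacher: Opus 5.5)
Your definition of $\xi(\theta)$ matches the paper's. So do the independence of the choice of $E$ (via shifts and Proposition~\ref{prop:twist_alg_obs}), the implication ``$\xi(\theta)=0\Rightarrow\theta$ effective'', and the naturality argument. The paper in fact leaves the naturality statement implicit, so your treatment of it is a useful addition.

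You diverge in the converse. The paper never needs an effective extension of magnitude exactly $\hp(H)$. Given any effective $F$ of phase $\theta$ with ubiquitous lift $u'$, it pulls back along $u'_*:\hp(H)\rightarrow\pi_0(F)$. It then observes that $p_F\times\id:F\times_{\pi_0(F)}\hp(H)\rightarrow G\times_{\pi_0(G)}\hp(H)$ is a relatively connected extension of phase $\theta$, and that $(u';\eta_H)$ is a lift of $(u;\eta_H)$ along it. This is exactly the fibre-product construction from the proof of Proposition~\ref{prop:topological_minoration}, which you already cite. So your ``main obstacle'' dissolves without any appeal to the universal covering, and your own notation $(u';\eta_H)$ really belongs to that construction.

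Your detour can be made to work, but as written it has an unjustified step. To lift $\tilde u=(u;\eta_H)$ along $(p_{E'};r_{E'}):E'\rightarrow G_\mu$ you need $r_{E'}\circ u'=\eta_H$. That amounts to $u'_*:\hp(H)\rightarrow\pi_0(E')$ being an isomorphism, with $r_{E'}$ induced by its inverse. Corollary~\ref{cor:exact_seq_pi1} only tells you that the magnitude of the universal covering is isomorphic to $\hp(H)$ as a magnitude. That does not make the particular surjection $u'_*$ injective: a surjective homomorphism onto an abstractly isomorphic group can have nontrivial kernel when the group is non-Hopfian.

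One way to close this:
\begin{itemize}
\item The universal covering, with a lift of $x$, maps in $\PolC(G;X;x)$ to $\textnormal{C}(G\times_{\pi_0(G)}\hp(H);(u;\eta_H))$.
\item Applying $\Br^*$ gives a morphism $f$ with $f_Q\circ u'_{\textnormal{uni}}=(u;\eta_H)$.
\item Hence $\pi_0(f)$ is a left inverse of $(u'_{\textnormal{uni}})_*$, so $(u'_{\textnormal{uni}})_*$ is injective as well as surjective.
\item Quotienting by $\theta/\theta_{\textnormal{uni}}$ does not change $\pi_0$, since the exponential image of $\pi_1(G)$ lies in the identity component.
\end{itemize}
With this added your route is valid. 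It uses noticeably more machinery than the one-line pullback: the universal covering, the equivalence of categories, and $\theta_{\textnormal{uni}}\subseteq\theta$.
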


\begin{proof}
	We start by considering the following commutative diagram:
	\begin{equation*}
		\begin{tikzcd}
			& (G\times_{\pi_0(G)}\hp(H))_Q \ar[d,"(\pr_G)_Q"] \\
			H \ar[r,"u" swap] \ar[ru,"(u;\eta_H)"] & G_Q
		\end{tikzcd}
	\end{equation*}
	where $(u;\eta_H)$ is, by construction, a ubiquitous lift of $u$ along $(\pr_G)_Q$. Let now $\theta$ be a phase of $\Theta^*(G.X)$. By hypothesis, there exists a relatively connected discrete extension $E$ of $G\times_{\pi_0(G)}\hp(H)$ of phase $\theta$. Since $\theta$ contains $\theta_\textnormal{top}$, Proposition~\ref{prop:topological_minoration} and its proof ensure that $[(u;\eta_H)/E]_\textnormal{top}$ vanishes. We define $\xi(\theta)$ to be the image of $[(u;\eta_H)/E]_\textnormal{alg}$ in $\Hom_{\Z[\hp(H)]}(\Delta_2(H);\pi_1(G)/\theta)$ through the projection of the natural exact sequence of Corollary~\ref{cor:exact_seq_h2}: 
	\begin{equation}\label{seq:Cohomo-Q-Cohomo}
		0 \longrightarrow H^2(\hp(H);\pi_1(G)/\theta) \overset{\eta_H^*}{\longrightarrow} H^2_Q(H;(\pi_1(G)/\theta)_Q) \longrightarrow \Hom_{\Z[\hp(H)]}(\Delta_2(H);\pi_1(G)/\theta) \longrightarrow 0
	\end{equation}
	The class $\xi(\theta)$ does not depend on the particular choice of relatively connected discrete extension $E$ for (\ref{seq:Cohomo-Q-Cohomo}) is exact. Indeed, a different choice would necessarily be a shift of $E$ by a class $\beta$ in $H^2(\hp(H);\pi_1(G)/\theta)$, hence resulting in a obstruction $[(u;\eta_H)/E]_\textnormal{alg}+\eta^*_H\beta$ for $\eta_{G\times_{\pi_0(G)}\hp(H)}\circ (u;\eta_H)$ equals $\eta_H$, cf. Proposition~\ref{prop:twist_alg_obs}. 
	
	\vspace{5pt}
	
	If $\xi(\theta)$ vanishes, there is a (unique) $\beta$ in $H^2(\hp(H);\pi_1(G)/H)$ such that $[(u;\eta_H)/E]_\textnormal{alg}$ equals $\eta_H^*\beta$. Thus any shift $F$ of $E$ by $-\beta$ will admit a lift $\rev{u}$ of $(u;\eta_H)$, cf. Propositions~\ref{prop:twist_alg_obs}~and~\ref{prop:lifts_vanish_obs}. This lift is necessarily ubiquitous. The composition $\pr_G\circ p_F:F\rightarrow G$ is discrete extension of $G$ of phase $\theta$ that has a ubiquitous lift $\rev{u}$ of $u$. Therefore, $\theta$ is effective. 
	
	\vspace{5pt}
	
	Reciprocally, let us assume that $\theta$ is effective. We can find a discrete extension $E$ of $G$ of phase $\theta$ that admits a ubiquitous lift $\rev{u}$ of $u$. As in the proof of Proposition~\ref{prop:topological_minoration}, we use $u_*:\hp(H)\rightarrow \pi_0(G)$ and $\rev{u}_*:\hp(H)\rightarrow \pi_0(E)$ to form the fibre products $G\times_{\pi_0(G)}\hp(H)$ and $E\times_{\pi_0(E)}\hp(H)$. The projection $p_E\times\id_{\hp(H)}: E\times_{\pi_0(E)}\hp(H) \rightarrow G\times_{\pi_0(G)}\hp(H)$ is a relatively connected discrete extension of phase $\theta$ that admits a lift $(\rev{u};\eta_H)$ of $(u;\eta_H)$. Thus $\xi(\theta)$ vanishes by construction. 
\end{proof}

\begin{dfn}
	Let $\theta$ be a phase in $\Theta^*(G;X)$. The \emph{effective closure} $\overline{\theta}$ of $\theta$ is the inverse image by the quotient projection $\pi_1(G)\rightarrow \pi_1(G)/\theta$ of $\xi(\theta)(\Delta_2(H))$, the image of $\xi(\theta)$.
\end{dfn}

The naturality of (\ref{seq:Cohomo-Q-Cohomo}) directly imply the following property of the effective closure:

\begin{prop}\label{prop:idemp}
	If $\theta_1$ is contained in $\theta_2$, then $\overline{\theta}_2=\theta_2+\overline{\theta}_1$. In particular, the effective closure is an idempotent operation. 
\end{prop}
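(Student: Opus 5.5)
We want $\overline{\theta}_2=\theta_2+\overline{\theta}_1$ for $\theta_1\subset\theta_2$ in $\Theta^*(G;X)$. Write $r:\pi_1(G)/\theta_1\rightarrow \pi_1(G)/\theta_2$ for the reduction, with kernel $\theta_2/\theta_1$. The plan is to translate the statement into one about images of the classes $\xi$, and then use the compatibility clause of Proposition~\ref{prop:effective_phases}, namely $\xi(\theta_2)=r_*\xi(\theta_1)$. That clause is essentially the naturality of the exact sequence (\ref{seq:Cohomo-Q-Cohomo}) in the coefficient module, combined with Proposition~\ref{prop:change_of_phase_obs}.

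\textbf{Step 1: compute the image of $\xi(\theta_2)$.} Since $\xi(\theta_2)=r\circ\xi(\theta_1)$ as $\Z[\hp(H)]$-linear maps on $\Delta_2(H)$, we get
\begin{equation*}
	\xi(\theta_2)(\Delta_2(H))=r\big(\xi(\theta_1)(\Delta_2(H))\big).
\end{equation*}

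\textbf{Step 2: take preimages in $\pi_1(G)$.} Let $\pi_i:\pi_1(G)\rightarrow\pi_1(G)/\theta_i$ denote the projections, so that $\pi_2=r\circ\pi_1$. Then
\begin{equation*}
	\overline{\theta}_2=\pi_2^{-1}\big(r(\xi(\theta_1)(\Delta_2(H)))\big).
\end{equation*}
For any subgroup $A\subset \pi_1(G)/\theta_1$ we have the elementary identity
\begin{equation*}
	\pi_1^{-1}\big(r^{-1}(r(A))\big)=\pi_1^{-1}\big(A+\theta_2/\theta_1\big)=\pi_1^{-1}(A)+\theta_2.
\end{equation*}
This uses only that $r$ is surjective with kernel $\theta_2/\theta_1$ and that $\pi_1$ is surjective. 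Applying it with $A=\xi(\theta_1)(\Delta_2(H))$ gives $\overline{\theta}_2=\overline{\theta}_1+\theta_2$.

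\textbf{Step 3: idempotence.} First observe that $\theta\subset\overline{\theta}$ by definition, since $0$ lies in the image of $\xi(\theta)$. If $\overline{\theta}$ belongs to $\Theta^*(G;X)$, apply the formula with $\theta_1=\theta$ and $\theta_2=\overline{\theta}$:
\begin{equation*}
	\overline{\overline{\theta}}=\overline{\theta}+\overline{\theta}=\overline{\theta}.
\end{equation*}
So $\overline{\theta}$ is closed.

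\textbf{Main obstacle.} I expect the delicate point to be justifying that $\overline{\theta}$ lies in $\Theta^*(G;X)$. It contains $\theta\supset\theta_\textnormal{top}$, so the containment condition is automatic; the issue is the vanishing of $\chi(\hp(H);\overline{\theta})$. For this I would invoke the functoriality in Theorem~\ref{thm:taylor}. The morphism $\theta\subset\overline{\theta}$ pulls $\chi(\hp(H);\theta)=0$ back to $\chi(\hp(H);\overline{\theta})$, so the latter vanishes as well. Alternatively, quotienting an extension of phase $\theta$ by $\overline{\theta}/\theta$ produces an extension of phase $\overline{\theta}$ directly.
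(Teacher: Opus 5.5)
Your proposal is correct and takes the same route as the paper, which derives the formula directly from the naturality of the exact sequence (\ref{seq:Cohomo-Q-Cohomo}) in the coefficients, i.e.\ from $\xi(\theta_2)=r\circ\xi(\theta_1)$; you additionally make explicit the preimage computation and the check that $\overline{\theta}\in\Theta^*(G;X)$, which is what idempotence requires. One point is left implicit, both by you and by the paper: $\overline{\theta}$ must be a $\pi_0(G)$-submodule at all, and this follows from the $\Z[\hp(H)]$-linearity of $\xi(\theta)$ together with the surjectivity of $\hp(H)\rightarrow\pi_0(G)$ given by Proposition~\ref{prop:ubi_surj}.
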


We can reformulate Proposition~\ref{prop:effective_phases} as follows:

\begin{prop}\label{prop:effective_closed}
	A phase $\theta$ is effective relatively to $X$ if and only if it is closed, i.e. $\theta$ equals $\overline{\theta}$.
\end{prop}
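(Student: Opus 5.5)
The plan is to derive the statement directly from Proposition~\ref{prop:effective_phases}, with a preliminary step placing $\theta$ inside $\Theta^*(G;X)$ so that $\overline{\theta}$ is defined. I read ``closed'' as applying to phases in $\Theta^*(G;X)$. Every effective phase lies in this subcategory, since $\Theta(G;X)\subset\Theta^*(G;X)$ by Theorem~\ref{thm:taylor} and Proposition~\ref{prop:topological_minoration}. So I fix $\theta$ in $\Theta^*(G;X)$ and prove that $\theta$ is effective if and only if $\theta=\overline{\theta}$.

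The key observation is elementary. By definition, $\overline{\theta}$ is the preimage under $\pi_1(G)\rightarrow\pi_1(G)/\theta$ of the image $\xi(\theta)(\Delta_2(H))$. A preimage always contains $\theta$, the kernel. It equals $\theta$ exactly when the image is the zero subgroup of $\pi_1(G)/\theta$, that is, exactly when the $\Z[\hp(H)]$-linear map $\xi(\theta)$ is the zero morphism.

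I then invoke Proposition~\ref{prop:effective_phases}, which says that $\xi(\theta)$ vanishes if and only if $\theta$ is effective relatively to $X$. Chaining the two equivalences gives the claim: $\theta$ is effective iff $\xi(\theta)=0$ iff $\theta=\overline{\theta}$.

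The only step needing care is the convention about the domain of the closure. Outside $\Theta^*(G;X)$ the class $\xi(\theta)$ is undefined, but such phases are never effective, so the equivalence holds vacuously once ``closed'' is taken to include membership in $\Theta^*(G;X)$. Proposition~\ref{prop:idemp} confirms this is consistent: $\overline{\theta}$ is again closed, and hence effective, provided it lies in $\Theta^*(G;X)$, which holds because it contains $\theta\supseteq\theta_\textnormal{top}$ and the obstruction $\chi$ is functorial in the phase.
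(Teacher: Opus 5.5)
Your proposal is correct and matches the paper's argument. The paper gives this statement no separate proof: it presents it as an immediate reformulation of Proposition~\ref{prop:effective_phases}, using that $\overline{\theta}=\theta$ exactly when $\xi(\theta)$ has zero image. Your observation that effective phases automatically lie in $\Theta^*(G;X)$, so the closure is defined wherever it matters, makes explicit a convention the paper leaves implicit.
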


\begin{prop}
	$\overline{\theta}$ is the smallest effective phase containing $\theta$.
\end{prop}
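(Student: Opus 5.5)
We must show three things: $\overline{\theta}$ is effective, it contains $\theta$, and it lies in every effective phase containing $\theta$. Throughout, $\theta$ is taken in $\Theta^*(G;X)$, which is where the effective closure is defined.

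\emph{Containment.} Since $\xi(\theta)$ takes values in $\pi_1(G)/\theta$, its image contains $0$. The inverse image of that image under $\pi_1(G)\rightarrow\pi_1(G)/\theta$ therefore contains $\theta$, so $\theta\subset\overline{\theta}$.

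\emph{Effectivity.} By Proposition~\ref{prop:idemp}, $\overline{\overline{\theta}}=\overline{\theta}$, so $\overline{\theta}$ is closed. Proposition~\ref{prop:effective_closed} then says that a closed phase is effective. One hypothesis must be checked first: $\overline{\theta}$ has to lie in $\Theta^*(G;X)$.
\begin{itemize}
\item It contains $\theta_\textnormal{top}$, because it contains $\theta$.
\item For the obstruction, the inclusion $\theta\subset\overline{\theta}$ gives a morphism $(\hp(H);\overline{\theta})\rightarrow(\hp(H);\theta)$ in $\M(G)\times\T(G)^\textnormal{op}$. By the functoriality in Theorem~\ref{thm:taylor}, $\chi(\hp(H);\overline{\theta})$ is the pullback of $\chi(\hp(H);\theta)$, which vanishes.
\end{itemize}
Hence $\overline{\theta}\in\Theta^*(G;X)$, and $\overline{\theta}$ is effective.

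\emph{Minimality.} Let $\theta'$ be an effective phase with $\theta\subset\theta'$. By Proposition~\ref{prop:topological_minoration}, $\theta'$ contains $\theta_\textnormal{top}$. Its obstruction vanishes because it is the magnitude–phase pair of an actual extension: the shift-to-$\hp(H)$ construction in the proof of Proposition~\ref{prop:effective_phases} produces a relatively connected extension of $G\times_{\pi_0(G)}\hp(H)$ with phase $\theta'$. So $\theta'\in\Theta^*(G;X)$.

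By Proposition~\ref{prop:effective_closed}, $\theta'=\overline{\theta'}$. By Proposition~\ref{prop:idemp} applied to $\theta\subset\theta'$, $\overline{\theta'}=\theta'+\overline{\theta}$. Combining the two gives $\theta'=\theta'+\overline{\theta}$, so $\overline{\theta}\subset\theta'$.

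The main obstacle is justifying the compatibility $\overline{\theta}_2=\theta_2+\overline{\theta}_1$ from Proposition~\ref{prop:idemp}, which both the effectivity and minimality steps rest on. If that proposition needs support, I would derive it from the last clause of Proposition~\ref{prop:effective_phases}: $\xi(\theta_2)$ is the reduction of $\xi(\theta_1)$ modulo $\theta_2/\theta_1$. Taking images and inverse images under $\pi_1(G)\rightarrow\pi_1(G)/\theta_2$ then gives exactly $\theta_2+\overline{\theta}_1$.
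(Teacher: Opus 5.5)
Your proof is correct and follows the paper's argument: for an effective $\theta'\supset\theta$, Propositions~\ref{prop:idemp} and~\ref{prop:effective_closed} give $\theta'=\overline{\theta'}=\theta'+\overline{\theta}$, and $\overline{\theta}$ is effective because it is closed. You additionally verify that $\overline{\theta}$ and $\theta'$ lie in $\Theta^*(G;X)$, which the paper leaves implicit, and those checks are sound (the paper asserts $\Theta(G;X)\subset\Theta^*(G;X)$ in the definition of $\Theta^*(G;X)$).
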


\begin{proof}
	Let $\theta$ be a phase in $\Theta^*(G;X)$ and $\theta'$ be an effective phase that contains $\theta$. By Propositions~\ref{prop:idemp}~and~\ref{prop:effective_closed}, we have:
	\begin{equation*}
		\theta'= \bar{\theta}' = \theta'+\bar{\theta}.
	\end{equation*}
	Thus, $\theta'$ contains $\bar{\theta}$ which is effective by Proposition~\ref{prop:effective_closed}. 
\end{proof}

\begin{cor}\label{cor:phase}
	The phase $\theta_\textnormal{uni}$ of the universal covering of $X$ is given by the following formula:
	\begin{equation*}
		\theta_\textnormal{uni}=\bigcap_{\theta\in \Theta^*(G;X)}\overline{\theta}.
	\end{equation*}
	In particular, if $\chi(\hat{\pi}_0(H);\theta_\textnormal{top})$ vanishes, then $\theta_\textnormal{uni}$ is the effective closure of $\theta_\textnormal{top}$.
\end{cor}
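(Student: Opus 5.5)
The plan is to show two inclusions, using that the universal covering is initial among polar coverings. First, $\theta_\textnormal{uni}$ is itself an effective phase by Proposition~\ref{prop:univ=polar}, so it lies in $\Theta(G;X)\subset\Theta^*(G;X)$. By Proposition~\ref{prop:effective_closed} it is closed, $\overline{\theta_\textnormal{uni}}=\theta_\textnormal{uni}$, so it appears among the terms of the intersection. This gives $\bigcap_{\theta\in\Theta^*(G;X)}\overline{\theta}\subset\theta_\textnormal{uni}$.

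For the reverse inclusion, let $\theta\in\Theta^*(G;X)$ be arbitrary. By the proposition just before the corollary, $\overline{\theta}$ is an effective phase. Every effective phase contains $\theta_\textnormal{uni}$, by the proposition stating that $\Theta(G;X)$ is the full subcategory of phases containing the phase of the universal covering. The underlying reason is the following. An effective phase $\theta'$ is the phase of some polar covering $X'$. The universal covering $\tilde{X}$ factors through $X'$, and this factorisation is a morphism of polar coverings, hence induces a morphism of extensions $\tilde{G}\to G'$ over $G$. Such a morphism forces $\pi_1(\tilde{G})\to\pi_1(G)$ to factor through $\pi_1(G')$, so $\theta_\textnormal{uni}\subset\theta'$. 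This is the only point needing care, and I would justify it via the functoriality of $\Br$ (Definition~\ref{dfn:funct_Br}). Hence $\theta_\textnormal{uni}\subset\overline{\theta}$ for every $\theta$, which gives equality.

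For the ``in particular'', assume $\chi(\hp(H);\theta_\textnormal{top})$ vanishes. Trivially $\theta_\textnormal{top}\supset\theta_\textnormal{top}$, so $\theta_\textnormal{top}\in\Theta^*(G;X)$. Any $\theta\in\Theta^*(G;X)$ contains $\theta_\textnormal{top}$, so by Proposition~\ref{prop:idemp} we get $\overline{\theta}=\theta+\overline{\theta_\textnormal{top}}\supset\overline{\theta_\textnormal{top}}$. The intersection is therefore attained at $\theta_\textnormal{top}$, and $\theta_\textnormal{uni}=\overline{\theta_\textnormal{top}}$.

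The main obstacle is the step ``every effective phase contains $\theta_\textnormal{uni}$''. It rests on the universal covering dominating every polar covering compatibly with the lifted group actions, including a compatible choice of lifted section (Theorem~\ref{thm:equiv_pol_cov}). The earlier proposition is phrased to yield exactly this, so I would cite it directly.
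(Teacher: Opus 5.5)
Your proposal is correct and is essentially the paper's argument. The paper likewise uses two facts: $\theta_\textnormal{uni}$ is the minimum of $\Theta(G;X)$, and $\Theta(G;X)$ is exactly the set of closures $\overline{\theta}$ for $\theta\in\Theta^*(G;X)$ (your two inclusions unpack this); it settles the \emph{in particular} clause by the same monotonicity of the closure coming from Proposition~\ref{prop:idemp}, with your morphism-of-extensions argument for the minimality of $\theta_\textnormal{uni}$ merely spelling out what the paper asserts in one line as ``$\theta_\textnormal{uni}$ must be an initial object''.
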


\begin{proof}
	The phase $\theta_\textnormal{uni}$ must be an initial object in $\Theta(G;X)$, i.e. its minimum. The set $\Theta(G;X)$ can be parametrised by $\Theta^*(G;X)$ using the closure operator. Hence:
	\begin{equation*}
		\theta_\textnormal{uni}=\bigcap_{\theta\in \Theta(G;X)}\theta=\bigcap_{\theta\in \Theta^*(G;X)}\overline{\theta}.
	\end{equation*} 
	Moreover, if $\chi(\hat{\pi}_0(H);\theta_\textnormal{top})$ vanishes, then $\theta_\textnormal{top}$ is, by definition, the minimum of $\Theta^*(G;X)$. Since the closure operator is non-decreasing, $\theta_\textnormal{uni}$ is in this case the closure of $\theta_\textnormal{top}$.
\end{proof}

\begin{ex}
	Let $X$ be a toric variety. We consider the polar space associated to its complex locus $((\Sph^1)^n;X(\C))$, cf. Example~\ref{ex:tor_var1}. Since the group and the isotropy of the orbits are connected the magnitude plays no role here. Moreover, if $(\Sph^1)^k$ is included in $(\Sph^1)^l$ and $\pi_1((\Sph^1)^k)$ is contained in a subgroup $\theta$ of $\pi_1((\Sph^1)^l)$ then there is a unique group morphism $(\Sph^1)^k\rightarrow (\pi_1((\Sph^1)^l)\otimes_\Z \R)/\theta$ that lifts the inclusion of $(\Sph^1)^k$ in $(\Sph^1)^l$ along the covering $(\pi_1((\Sph^1)^l)\otimes_\Z \R)/\theta\rightarrow (\Sph^1)^l$. Thus, the algebraic obstruction will not play any role either. Only the topological obstruction matters here. We find that $\pi_1(X(\C))$ is isomorphic to $\pi_1((\Sph^1)^n)/\theta_\textnormal{top}$. In addition, $\theta_\textnormal{top}$ is nothing but the sum the images of the fundamental groups of the isotropies of the points of $X(\C)$. If we use the natural isomorphism between the fundamental group of a complex torus (which retracts onto its maximal compact subgroup) and its group of cocharacter we recover \cite[Theorem~9.1 and Proposition~9.3]{danilov_geometry_1978}.
\end{ex}

\paragraph{Extension of the Universal Covering and Fundamental Group}Once the phase $\theta_\textnormal{uni}$ is determined, one can determine the extension $G_\textnormal{uni}$ of the universal covering as follows:\begin{enumerate}
	\item We choose any relatively connected extension $E$ of $G\times_{\pi_0(G)}\hp(H)$ of phase $\theta_\textnormal{uni}$;
	\item We compute the class $[(u;\eta_H)/E]_\textnormal{alg}$ in $H^2_Q(H;(\pi_1(G)/\theta_\textnormal{uni})_Q)$;
	\item We find the unique $\beta$ in $H^2(\hp(H);\pi_1(G)/\theta_\textnormal{uni})$ whose pullback $\eta_H^*\beta$ equals $[(u;\eta_H)/E]_\textnormal{alg}$;
	\item We consider a shift $F$ of $E$ by $-\beta$;
	\item $G_\textnormal{uni}$ is the isomorphic to $\textnormal{pr}_G\circ p_F:F\rightarrow G\times_{\pi_0(G)}\hp(H) \rightarrow G$.
\end{enumerate}
Similarly, the class of the central extension:
	\begin{equation*}
		0\rightarrow \pi_1(G)/\theta_\textnormal{uni} \rightarrow \pi_1(X) \rightarrow \ker(\hp(H)\rightarrow G)\rightarrow 1,
	\end{equation*}
	in $H^2(\ker(\hp(H)\rightarrow G);\pi_1(G)/\theta_\textnormal{uni})$ is given by $\ker[\pr_G\circ p_E: E\rightarrow G]-\beta|_{\ker(\hp(H)\rightarrow G)}$, cf. Proposition~\ref{prop:kernel_discrete_ext}.

\begin{ex}[Klein Bottles]
	Let $G$ be a connected Lie group and $\sigma_0,\sigma_1$ two elements of order two. We can consider the cellular subgroup $H(\sigma_0;\sigma_1)$ of $G_{[0;1]}$ given by $(\langle\sigma_0\rangle\times\{0\}) \cup (0\times[0;1]) \cup(\langle\sigma_1\rangle\times\{1\})$ and the polar space $B(G;\sigma_0;\sigma_1)$ obtained as the quotient of $G_{[0;1]}$ by $H(\sigma_0;\sigma_1)$. We easily see that $B(G;\sigma_0;\sigma_1)$ has a structure of differentiable manifold. If $G$ is the circle, there is only one element of order two and $B(\Sph^1;-1;-1)$ is the usual Klein bottle. One can compute that $H(\sigma_0;\sigma_1)$, which as a $[0;1]$-group does not depend on $G$ or $\sigma_0,\sigma_1$, has trivial first singular homology group and trivial defect. Hence, $\theta_\textnormal{uni}$ is $0$ and the fundamental group of $B(G;\sigma_0;\sigma_1)$ is a central extension of $\pi_1(G)$ by $\Z/2*\Z/2$. Following the recipe described in the last paragraph, we can determine $G_\textnormal{uni}$. An extension of trivial phase of $G\times(\Z/2*\Z/2)$ is simply given by $G'\times(\Z/2*\Z/2)$ where $G'$ denotes the universal covering of $G$. To compute $[(u;\eta_H)/G'\times(\Z/2*\Z/2)]_\textnormal{alg}$, we may use (\ref{eq:zeta}). Let $\sigma'_0$ (resp. $\sigma'_1$) be a lift of $\sigma_0$ (resp. $\sigma_1$) in $G'$. Then, we denote by $\lambda_0$ (resp. $\lambda_1$) the unique element of $\pi_1(G)$ that satisfies:
	\begin{equation*}
		(\sigma_0')^2=e^{\lambda_0} \quad (\textnormal{resp.}\; (\sigma_1')^2=e^{\lambda_1}).
	\end{equation*}
	Now let $r:\Z/2\rightarrow \Z$ denote the function sending $0$ to $0$ and $1$ to $1$. If $k$ belongs to $\Z/2*\Z/2$, we denote by $|k|_0$ (resp. $|k|_1$) the image of $k$ by the unique projection $\Z/2*\Z/2\rightarrow \Z/2$ that sends the second (resp. first) generator to $0$. Then some group cohomology computations\footnote{We use the fact that if $M$ is a trivial $(\Z/2*\Z/2)$-module the sum of the pullbacks $H^2(\Z/2;M)\oplus H^2(\Z/2;M)\rightarrow H^2(\Z/2*\Z/2;M)$ is an isomorphism. The inverse morphism is constructed with the two projections $\Z/2*\Z/2\rightarrow \Z/2$ each sending a generator to 0.} show that the $\eta_G$-pullback of the class $\beta$ of the cocycle:
	\begin{equation*}
		(k;l)\mapsto r(|k|_0|l|_0)\lambda_0 + r(|k|_1|l|_1)\lambda_1,
	\end{equation*}
	equals $[(u;\eta_H)/G'\times(\Z/2*\Z/2)]_\textnormal{alg}$ (here the product in $r$ means the actual product of the ring $\Z/2$). Hence, the group law of $G_\textnormal{uni}$ on $G'\times(\Z/2*\Z/2)$ is given by the following formula:
	\begin{equation*}
		(g;k)\cdot(h;l)\coloneqq (ghe^{r(|k|_0|l|_0)\lambda_0 + r(|k|_1|l|_1)\lambda_1};kl).
	\end{equation*}
	The one of the fundamental group $\pi_1(B(G;\sigma_0;\sigma_1))$ on $\pi_1(G)\times(\Z/2*\Z/2)$ is:
	\begin{equation*}
		(v;k)\cdot(w;l)\coloneqq (v+w+{r(|k|_0|l|_0)\lambda_0 + r(|k|_1|l|_1)\lambda_1};kl).
	\end{equation*}
\end{ex}

\bibliographystyle{apalike}
\bibliography{PolarSpaces}
\end{document}